\newcommand{\calL}{\mathcal{L}}
\newcommand{\R}{\mathbb{R}}
\newcommand{\C}{\mathbb{C}}
\newcommand{\K}{\mathbb{K}}
\newcommand{\Z}{\mathbb{Z}}
\newcommand{\N}{\mathbb{N}}
\newcommand{\Oo}{\mathcal{O}}
\newcommand{\B}{\mathcal{B}}
\newcommand{\lan}{\langle}
\newcommand{\ran}{\rangle}
\newcommand{\CC}{\mathcal{C}}
\newcommand{\g}{\mathfrak{g}}
 \newcommand{\ttop}{{\overline{\top}}}
\newcommand{\Id}{\rm{Id}}
 \def\shuffle{{\sqcup\mathchoice{\mkern-3mu}{\mkern-3mu}{\mkern-3.2mu}{\mkern-3.8mu}\sqcup}} 
\newcommand{\cy}[1]{{ #1}}  
\newcommand{\sy}[1]{{ #1}} 
 \newcommand{\lf}[1]{{ #1}} 
\newcommand{\di}[1]{{ #1}} 
\newcommand\restr[2]{{
  \left.\kern-\nulldelimiterspace 
  #1 
  \vphantom{\big|} 
  \right|_{#2} 
  }}
\newtheorem{theorem}{Theorem}[section]
\newtheorem{prop}[theorem]{Proposition} 
\newtheorem{defnprop}[theorem]{Definition-Proposition}
\theoremstyle{definition}
\newtheorem{rk}[theorem]{Remark}
\newtheorem{defn}[theorem]{Definition}
\newtheorem{ex}[theorem]{Example}
\newtheorem{coex}[theorem]{Counter-example}
\newtheorem{lem}[theorem]{Lemma}
\newtheorem{cor}[theorem]{Corollary}
\newtheorem*{conj*}{Conjecture}
\newtheorem{conj}[theorem]{Conjectural statement}
\newtheorem{thm}[theorem]{Theorem}
\theoremstyle{remark}
\newtheorem{notation}{Notations}[subsection]
\title{Tensor products   and the Milnor-Moore theorem\\
		 in the locality setup }
\author{Pierre J. Clavier${}^{1}$, Loic Foissy${}^{2}$,  Diego A. L\'opez${}^3$, Sylvie Paycha${}^{3,4}$\\
	~\\
	{\small \it $^1$ Université de Haute Alsace, IRIMAS,}\\
	{\small \it 12 rue des Frères Lumière,}\\
	{\small \it 68 093 MULHOUSE Cedex, France}\\ 
	~\\
	{\small \it $^2$ 
		LMPA, Centre Universitaire de la Mi-Voix,} \\
	{\small \it  50, rue Ferdinand Buisson,}\\
	{\small \it 62228 Calais Cedex - France }\\ 
	~\\
	{\small \it $^3$ Universit\"at Potsdam, Institut f\"ur Mathematik,} \\
	{\small \it Campus II - Golm, Haus 9} \\
	{\small \it Karl-Liebknecht-Stra\ss e 24-25} \\
	{\small \it D-14476 Potsdam, Germany}\\ 
	~\\
	{\small \it $^4$ On leave from the University of Clermont Auvergne, Clermont-Ferrand, France}\\
	~\\
	{\small emails: \it pierre.clavier@uha.fr, diealopezval@unal.edu.co, paycha@math.uni-potsdam.de}}
\date{}
\begin{document}

\maketitle

\begin{abstract}
The present exploratory paper deals with    tensor products  in the locality framework   {developed in previous work}, a natural setting  for an algebraic formulation of the locality principle in quantum field theory.  Locality  tensor products of locality vector spaces  raise challenging questions,  such as    whether the  locality   tensor
product of two locality vector spaces is a locality vector space. A related question is whether the quotient of locality vector spaces is a locality vector space,  which we first reinterpret in a group theoretic language and then in terms of  short exact sequences.   We prove a universal property for  the locality tensor algebra   and for  the   locality enveloping algebra, the analogs in  the locality  framework  of the tensor algebra   and of the   enveloping algebra. These universal properties hold under   compatibility assumptions between the locality and the multilinearity underlying the construction of tensor products which  we formulate   in the form of conjectural statements.  Assuming they hold true, we generalise the Milnor-Moore theorem  to the locality setup and discuss some of its consequences.
    \end{abstract}
    
    \noindent
	{\bf Classification:} 08A55, 16T05, 15A72

\tableofcontents

\section*{Introduction}

\addcontentsline{toc}{section}{Introduction}

\subsection*{Locality: state of the art}
 
 \addcontentsline{toc}{subsection}{Locality: state of the art}

 The notion of locality is ubiquitous in the physics and  the mathematics literature. The physical concept of locality states that an object 
or an event can only   interact with objects or events in its vicinity. Depending on the  approach one adopts, the concept of locality    can be expressed in various ways including commutativity of observables, disjointedness of supports or (partial) multiplicativity. We focus on the latter which is relevant in the Hopf algebraic approach to renormalisation and led to  
an abstract mathematical formulation of the physical notion of 
locality proposed    in \cite{CGPZ1}. Roughly speaking,   the "locality" framework  consists in enhancing the mathematician workman's categories such as that of sets, vector spaces, monoids, algebras, coalgebras, { Hopf algebras}  to locality categories by adjoining to a set, vector space, monoid, coalgebras {algebras or Hopf} algebra a symmetric binary relation $\top$  that we call a locality. This locality should satisfy some compatibility properties with the underlying structure. Modulo some small adjustments, this  "locality" framework can also host  the related concept of causality  in quantum field theory \cite{Rej}.   This locality setup tailored to ensure locality while renormalising was then implemented and discussed in various contexts  
\cite{CGPZ2,CGPZ3,CGPZ4}.

Tensor products are known to play an important role in renormalisation procedures e.g., in  Hopf algebras  used in  Connes and Kreimer's approach to renormalisation \cite{CK2}. This prompted the  notion of locality tensor product {and locality Hopf algebra} introduced in \cite{CGPZ1}.  In the  locality setup, tensor products require a special treatment  which gives rise to challenging questions. Whether the locality tensor product of two locality vector spaces is a locality vector space, a property which we expect to hold, raises the  question    whether the quotient of locality vector spaces 
is a locality vector space. Both issues are discussed in this paper, leading to interesting open questions.

In the present exploratory paper, we  formulate    conjectural statements  related to these questions and explore their consequences. Our main results are generalisations to the locality setup, of the  universality properties of tensor algebras and universal enveloping algebras as well as of the  Milnor-Moore theorem. The locality Milnor-Moore theorem provides new information which is not accessible in the ordinary non-locality set up.

\subsection*{Objectives and main results}

\addcontentsline{toc}{subsection}{Objectives and main results}

A first objective of this paper is to explore paths towards a full-fledged locality setup for tensor products. For this purpose we introduce {\bf pre-locality vector spaces}, namely vector spaces equipped with a set locality structure, the locality not being required to be compatible with the linear structure. \cy{We show in Theorem \ref{thm:equiv_categories} that the relation $\top_E$ on pre-locality vector space $(E;\top_E)$ can be  extended to a relation $\top_E^\ell$ such that $(E,\top_E^\ell)$ is a locality vector space. }
\lf{This defines a non essentially surjective functor} \sy{ from the category  of pre-locality spaces to the category  of  locality spaces, whose morphisms are locality linear maps in both cases, namely linear maps that map the graph of a locality relation on the source space to the graph of the locality relation on the target space (Definition \ref{defn:basic_loc_defn}).}

We then proceed to show that many of the important theorems which hold for usual (i.e. non pre-locality nor locality) tensor products still hold true in the pre-locality setup. We first show  the universality of the pre-locality tensor product (Theorem \ref{thm:univpptyloctensprod-nolocalityrel}). We then refine this result by formulating it in the pre-locality category (Theorem \ref{thm:univpropltp-preloc}).

We further prove the universality of the locality tensor algebra over a pre-locality vector space (Theorem \ref{thm:univpptyloctenalg-preloc}), and of the universal locality enveloping algebra in the category of pre-locality (Lie) algebras (Theorem \ref{thm:univproplocunivenvalg-preloc}). Whereas the universal property of the locality tensor product for pre-locality  vector spaces turns out to be equivalent to the universal property of the usual  tensor product of two vector spaces,  the universality of the  universal locality enveloping algebra only implies the universality of the universal enveloping algebra of a pre-locality algebra.    Along the way,      we prove various  useful properties and in particular the associativity of the  locality tensor product (Theorem \ref{thm:asso_pre_loc_tensor_prod}).

\vspace{0.5cm}

The second objective of the paper is to formulate precise conditions implying {the expected stability property, namely that} locality tensor products of locality vector spaces are  locality vector spaces. Like ordinary tensor products,  locality tensor products are defined as quotient spaces,  which brings us to the study of quotients of locality vector spaces and raises the question  (formulated in Question \ref{question}))  whether  the quotient of a locality vector space by a linear subspace is a locality vector space. We first reformulate this question  in group-theoretic terms (Theorem \ref{thm:cond_locality_quotient}) and then (see Proposition \ref{prop:simple_consequence_question})  in terms of locality short exact sequences     introduced in Definition \ref{defn:loc_short_ex_sequence}. By means of  some  counterexamples, we further show that Question \eqref{question} does not  always have a positive answer.

    Whether the quotient of a locality vector space by a locality subspace is also a locality space, relates  in turn 
to  the existence  of a {\bf strong locality complement} (Definition-Proposition \ref{defnprop:stronglocality}) of the locality subspace.   Roughly speaking, a strong locality complement of a subspace of a locality vector space is an algebraic complement of the given vector space with some locality conditions on the canonical projections induced by the direct sum of the subspace and its complement.
 We  show in Corollary \ref{cor:split_exact_seq} that  the quotient of a locality space by a   subspace with a strong locality complement is a locality space, which yields a sufficient condition  under which  Question \ref{question} has a  positive answer.

We then introduce  a second  sufficient condition -- the {\bf locality compatibility} condition (Definition \ref{defn:loccompatible}) -- on a linear subspace, which also ensures that the quotient of a locality space by this subspace is a locality space (Theorem \ref{thm:quotientlocality-vs}). It is  less stringent than the existence of a locality complement (see Proposition \ref{prop:strongloccomp}) and    turns out to be easier to work with. Theorem \ref{thm:quotientlocality-vs} underlies most of the forthcoming constructions, in particular the locality Milnor-Moore theorem.

We are now ready to  formulate  two main conjectural statements in terms of the (weaker) locality compatibility condition: 
	\begin{enumerate}
		\item Conjectural statement  \ref{conj:main1},  which ensures that the tensor product of  locality vector spaces is a locality vector space (Proposition \ref{prop:Conjecturefornbigger}) 
		\item   Conjectural statement \ref{conj:main2}, which ensures that the locality tensor algebra of a locality vector space is indeed a locality algebra ( {Theorem}  \ref{prop:tensor_alg_graded_loc}).
		\end{enumerate}

Assuming these conjectural properties hold true, we  can enhance the results of the first part {from the pre-locality} to the locality setup. Assuming they hold true, we prove the  universal {property} of the tensor power of a locality vector space (Theorem   \ref{thm:univpropltp}), which is the locality version of Theorem \ref{thm:univpropltp-preloc}. We also prove the universal property of {the} locality tensor algebra (Theorem \ref{thm:univpptyloctenalg}), which is the locality version of Theorem \ref{thm:univpptyloctenalg-preloc}. {   Assuming  that Conjectural statement \ref{conj:main2}  holds,, and under one further assumption (Conjectural statement \ref{conj:univenvalg}), we moreover}   prove the locality version of Theorem \ref{thm:univproplocunivenvalg-preloc}, namely the universal property of the locality universal enveloping algebra in the category of locality (Lie) algebras. Its universal property is then proven in Theorem \ref{thm:univproplocunivenvalg} under the assumption that the conjectural statements (\ref{conj:univenvalg}) and   \ref{conj:main2} both hold.

\vspace{0.5cm}

As a third and final objective of the paper, we prove  a locality version of the celebrated Milnor-Moore Theorem (also called Cartier-Quillen-Milnor-Moore Theorem, \cite{Car2,MM,Qui,Cha}), namely Theorem  \ref{thm:CQMMTheorem} which holds asuming the statements \ref{conj:main2} and \ref{conj:univenvalg} hold true. We hope that this is a first step towards  a  classification of locality Hopf algebras,  which was one of our original motivations.

The traditional proof of this theorem relies on a corollary of the Poincaré-Birkhoff-Witt theorem. Poincaré-Birkhoff-Witt theorem {is not yet available} in the locality setup mainly due to the fact that, unlike in the ordinary (non-locality) setup, one cannot reconstruct a locality vector space from a locality basis (whose generating vectors are mutually independent). Instead we prove the  required   intermediate step (Proposition \ref{prop:pbwcor}) using Zorn's lemma. To our knowledge, our proof is also new in the usual (non-locality) set up.

The article closes on some consequences of the Milnor-Moore Theorem inherent to the locality framework. Interestingly, it turns out that one cannot simply ``switch on'' locality, at least when the locality Milnor-Moore Theorem applies (Corollary \ref{cor:Nogocorollary}). 

The known correspondence between a class of sub-Hopf algebras and sub-Lie algebras of the set of their primitive elements extends to the locality setup, see Corollary \ref{cor:subalgsubHop} and the notations therein: 
\begin{equation*}
 \left((H, \top)\supset (H', \top')\right)\quad \longleftrightarrow \quad \left((\g', \top')\subset (\g, \top)\right).
\end{equation*}
However it follows from  Corollary \ref{cor:Nogocorollary} that  $H=H'$ and ${\mathfrak g}={\mathfrak g}'$  implies $\top=\top'$. We give an example (Proposition \ref{prop:locality_GL}) for which 
 $\mathfrak g=\mathfrak g'$ but $H\neq H'$, that is specific to the locality setup, since it cannot occur in the non-locality setup due to the Milnor-Moore Theorem.

 \subsection*{Organisation of the paper}
 
 \addcontentsline{toc}{subsection}{Organisation of the paper}

 The paper is divided into three parts (and one appendix), each corresponding to one of the  objectives described above. 
 The properties proved in the pre-locality setup in this first part shed light on  the challenges of the locality structures that we address in the second part of the paper. They also serve as a preparation to establish the  corresponding properties in the full-fledged locality setup when assuming that the conjectural statements    hold true.
 This preliminary study in the pre-locality framework  which is interesting  in its own right,   serves to identify   obstructions preventing tensor products of locality structures from  being locality vector spaces.  
 
 The first part  introduces and proves universality results in the pre-locality setup. In the second part,  we question the locality property  of   the quotient of a locality vector space by a locality subspace,  formulate conjectural statements that play a role when dealing with locality tensor products and   further investigate useful consequences of these conjectural properties. 
 
The various conjectural statements discussed in  this second part are formulated in a way that suggests a strategy for proving them. Although such proofs are not within the scope of the current work, we expect to tackle them in future work. The fact that these conjectural statements    allow to extend important results of the theory of Hopf algebras in the locality setup serves as a motivation. Hopf algebraic results are  the object of the third part of this paper. This final third part is dedicated to more sophisticated consequences of these conjectural statements, namely the locality version of the Milnor-Moore Theorem. Each of these parts is divided into sections whose contents we now present in more detail.
 
 The first section of the first part, namely Section \ref{section:one} introduces the basic concepts we use throughout this paper: locality vector spaces, pre-locality vector spaces and locality tensor products of these structures. Section \ref{sec:localityrelationontp} introduces locality on quotients of locality vector spaces, applying them to locality tensor products of pre-locality spaces. Section \ref{section:three} deals with products of more than two pre-locality vector spaces. Finally, Section \ref{sec:Tensoranduniversalpla} discusses properties of the locality tensor algebra and the locality universal enveloping algebra in the pre-locality setup.

  The second part of the paper starts with Section \ref{section:five} which discusses 
 when the   quotient of a locality vector space with a linear subspace, inherits a locality vector space structure. 
  	 These considerations on quotients allow us to give in Section \ref{sec:main_conjectures} two sufficient conditions, one stronger than the other, for quotients of locality vector spaces to be locality vector spaces. The remaining part of Section \ref{sec:main_conjectures} is dedicated to the formulation and  implications of two conjectural statements under which  locality tensor products and locality tensor algebras are locality vector spaces and locality algebra respectively. In Section \ref{sec:Firstconsequences} we infer that assuming the conjectural properties to hold true,     the universal properties shown in the first part of the paper in the pre-locality setup still hold in the locality one. One of these universal properties is proved under the assumption of a third conjectural property, also formulated in Section \ref{sec:Firstconsequences}.
  
 The third and final part of the paper focuses on the locality version of the Milnor-Moore Theorem. It starts in Section \ref{section:seven} by recalling (locality version of) the structures the Milnor-Moore Theorem implies, namely graded connected Hopf algebras, and establishing some useful preliminary results. In the final section \ref{section:eight} we state and prove, assuming the validity of the conjectural statements in the second part, the Milnor-Moore Theorem
 in the locality setup  and state some of its consequences specific to the locality framework. 
 
  For the sake of completeness, we  dedicate a short appendix to the description of an alternative locality tensor product which unlike the one we use throughout the paper, is not a subspace of the usual tensor product.

 	\subsection*{Openings}
 	
 	\addcontentsline{toc}{subsection}{Openings}
 	
This paper  provides
 	first steps toward  a full-fledge theory of locality tensor products. Thanks  to the notion of pre-locality vector space, we were able to identify the   challenges and formulate precise conditions under which we could merge locality and tensors products. 
We believe that discussing  these issues paves to way to a  complete theory of locality Hopf algebras. Beyond its relevance as a generalisation of the theory of ordinary Hopf algebras, it  is of importance in the context of  renormalisation theory, which in turn was one of the original motivations for introducing locality structures.
 	
Let us hint to what could be the next steps of our program. The three conjectural statements discussed in this paper raise challenging questions which we feel are interesting for their own sake. As shown in the paper, they are related to open questions in group theory. Yet other approaches might also be fruitful, such as rewriting systems and Gröbner bases  which seem natural candidates to tackle these questions.
 	
 	Another natural goal is  to  establish a locality version of  the Poincaré-Birkhoff-Witt Theorem. Such a result would simplify our proof of the Milnor-Moore Theorem and (possibly) avoid the use of the axiom of choice. Indeed, while the original proof of  Birkhoff and Witt requires the use of the axiom of choice, more recent  generalisations of their theorem do not. It could also be of interest to see if the three conjectural statements of this paper (and in particular the third one) are enough to prove a locality version of the Poincaré-Birkhoff-Witt Theorem.


\sy{  An interesting open question  concerns the interplay of locality and duality. Indeed, there is not yet a natural notion of locality on the algebraic dual of a locality vector space, which turns the dual into a (non-trivial) locality vector space. Similarly, the algebraic dual of a locality algebra (resp. coalgebra) is not in general a locality coalgebra (resp. algebra). 
%

	The locality Milnor-Moore Theorem gives a classification of  cocommutative graded connected locality Hopf algebras and we hope for   other classification results for locality structures, and in particular for locality Lie algebras. 
Our motivation to search for a locality dual is that the  dual version of the locality Milnor-Moore Theorem could potentially give rise to a classification  of graded, connected commutative Hopf algebras  derived from Theorem \ref{thm:CQMMTheorem}. Such a dual Milnor-Moore Theorem can also be proved in a more pedestrian way, dualising the proof of this paper, which for the sake of the length of the paper, we have omitted here.  }

\section*{Acknowledgments}

The  paper was initiated and much of it was written while the first author held a postdoctoral position at the University of Potsdam and the Technical University of Berlin. The authors are grateful to the Perimeter Institute for hosting them during the preparation of this paper. The second author thanks the DAAD for funding his PhD of which this article is a part. We are very grateful to   Joachim Kock for inspiring discussions and providing useful references, as well as to Li Guo and Bin Zhang for their  precious  comments on a first version of the paper.

\section*{Notations} 

	Throughout the   document we use the notation $[n]:=\{1,\cdots,n\}$, for $n\in\N$ and with the convention $[0]=\emptyset$. $\K$ denotes a commutative field of characteristic zero, and unless otherwise specified we take  $\K=\C$ or $\R$ to be the underlying field of every vector space. Given a set $X$, we denote the free span of $X$ by $\K(X)$, $\K X$ or  $\lan X\ran$ indistinctly.
	
	\newpage

\part{Universal properties in the pre-locality setup} \label{part:one}

This first part of the document is devoted to the construction of tensor products as quotient spaces in the locality and 
pre-locality setup and to derivation of their first properties.

\section{ {  (Pre-) locality spaces and the locality tensor product}} \label{section:one}

Let us start by introducing the most important concepts of this paper, namely locality and pre-locality structures.

 \subsection{ {(Pre-) locality vector spaces}}

 We recall and partially extend some basic notions  introduced in \cite{CGPZ1}  relative to linear structures.
\begin{defn} \label{defn:basic_loc_defn}
\begin{itemize}
	\item A {\bf locality set} is a pair $(S,\top)$ where $S$ is a set and $\top\subset S\times S$ is a symmetric relation on $S$. We sometimes denote $(x,y)\in\top$ as $x\top y$.
	\item  A {\bf locality map} is a map $f: (X, \top_X)\longrightarrow (Y, \top_Y)$ between locality sets which preserves locality i.e., such that $(f\times f)(\top_X)\subset \top_Y$.
	\item  Two maps $f,g:(X, \top_X)\longrightarrow (Y, \top_Y)$ between locality sets are called locally independent (or simply independent if there is no risk of ambiguity) if \[(f\times g)(\top_X)\subset \top_Y.\] When $X=Y$ we sometimes denote $f$ and $g$ being locality independent as $f\top g$.
	\item  A {\bf pre-locality $\K$-vector space} is a  locality set $(V, \top)$ such that $V$ has the structure of a $\K$-vector space{, and $(0_V,0_V)\in\top$}.
	\item  A {\bf locality $\K$-vector space} or simply {\bf locality vector space} is a  pre-locality $\K$-vector space $(V,\top)$ such that the polar set $U^\top=\{x\in V: (\forall u\in U),\, u\top x\}$ of   any  subset $U\subset V$ is a linear subspace of $V$. Equivalently, the following condition should be fulfilled
 	\begin{equation} \label{eq:linearloc} 
 	 u\top v\, {\rm  and}\, u'\top v \Longrightarrow (\lambda u+  \lambda' u')\top v \quad \forall (\lambda,  \lambda' )\in \K^2, \, \forall (u, u', v )\in V^3. 
    \end{equation}
{We sometimes refer to the previous property as linear locality.}
 \item Let $(V,\top_V)$ and $(W,\top_W)$ be two  pre-locality $\K$-vector spaces. We call a linear map $f:V\to W$  a \sy{ {\bf locality  morphism} or {\bf locality linear map} } if it is also a locality map i.e., if it is locality independent of itself: 
 \begin{equation} \label{eq:localf}
  f\top f~\Longleftrightarrow~(f\times f)(\top_V)\subset \top_W.
 \end{equation}
 	\item Let $(W,\top_W)$ be a (pre-)locality vector space and $V\subseteq W$. We call $(V, \top_V)$ a {\bf (pre-)locality subspace} of $(W, \top_W)$ if the inclusion map $\iota: V\hookrightarrow W$ is a locality linear map.
 
 \item  Let $(V,\top_V)$ and $(W,\top_W)$ be two pre-locality $\K$-vector spaces (resp. locality vector spaces). We say they are isomorphic as pre-locality (resp. locality) vector spaces if there is an invertible locality linear map $f:V\to W$ such that $f^{-1}$ is also a locality linear map.
 \end{itemize}
\end{defn}
\begin{rk}
 (Pre-)locality is a hereditary property. Indeed, any linear subspace $W$ of a (resp. pre-) locality vector space  $(V,\top)$  can be endowed with a locality structure $\top_W:=\top\cap(W\times W)$  induced by $\top$, in which case $(W,\top_W)$ is a (resp. pre-) locality vector space. Our definition of (pre-)locality subspace (which is more general than the one in \cite{CGPZ1}) amounts {to require} that $V\subseteq W$ and $\top_V\subseteq\top_W$ such that $(V,\top_V)$ is a (pre-)locality vector space.
\end{rk}
We recall that locality sets  build a category ${\mathbf S}_{\rm loc}$  whose morphisms are locality maps.
\begin{prop}
     Let $(V,\top_V)$ and $(W,\top_W)$ be two (resp. pre-)locality vector spaces and $f:V\to W$ be a locality linear map. Then the image of any (resp. pre-)locality subspace of $(V,\top_V)$ by $f$ is a (resp. pre-)locality subspace of $(W,\top_W)$ and the  {collection} ${\bf V}_{\rm loc}$   (resp. ${\bf V}_{\rm pre-loc}$)  of (resp. pre-)locality vector spaces, forms a category whose morphisms are  locality morphisms.
    \end{prop}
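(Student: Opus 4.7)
The statement splits into two assertions, which I would handle in turn.

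\emph{Image of a subspace.} Let $(U, \top_U)$ be a (pre-)locality subspace of $(V, \top_V)$, so $U$ is a linear subspace of $V$ and $\top_U \subseteq \top_V$. Since $f$ is linear, $f(U)$ is a linear subspace of $W$. I equip it with the locality relation inherited from $W$,
\[
\top_{f(U)} := \top_W \cap \bigl(f(U) \times f(U)\bigr).
\]
Symmetry of $\top_{f(U)}$ and the condition $(0_W, 0_W) \in \top_{f(U)}$ follow from the corresponding properties of $\top_W$, yielding a pre-locality vector space structure on $f(U)$. In the locality case, for any subset $A \subseteq f(U)$, the polar computed inside $f(U)$ equals $A^{\top_{f(U)}} = A^{\top_W} \cap f(U)$, which is a linear subspace of $f(U)$ as the intersection of two linear subspaces of $W$. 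Hence $(f(U), \top_{f(U)})$ is a (pre-)locality vector space. By construction $\top_{f(U)} \subseteq \top_W$, so the inclusion $f(U) \hookrightarrow W$ is locality linear, making $(f(U), \top_{f(U)})$ a (pre-)locality subspace of $(W, \top_W)$.

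\emph{Category axioms.} I take as objects all (pre-)locality $\K$-vector spaces and as morphisms all locality linear maps between them. The identity $\mathrm{Id}_V$ is linear and obviously satisfies $(\mathrm{Id}_V \times \mathrm{Id}_V)(\top_V) = \top_V$, so it is a locality linear map. If $f : (V,\top_V) \to (W,\top_W)$ and $g : (W,\top_W) \to (X,\top_X)$ are locality linear, then $g \circ f$ is linear and
\[
\bigl((g\circ f) \times (g\circ f)\bigr)(\top_V) = (g \times g)\bigl((f \times f)(\top_V)\bigr) \subseteq (g \times g)(\top_W) \subseteq \top_X,
\]
so $g \circ f$ is again locality linear. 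Associativity of composition and the unit laws are inherited from the underlying category of $\K$-vector spaces.

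\emph{Main obstacle.} The argument is essentially a direct unfolding of the definitions, so there is no serious difficulty. The only point requiring a moment of care is the verification of the polar-set condition on $f(U)$ in the locality case, where one must realize that the polar is computed inside $f(U)$ and use that intersecting the linear subspace $A^{\top_W}$ of $W$ with the linear subspace $f(U)$ preserves linearity.
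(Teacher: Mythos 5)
Your proposal is correct and follows essentially the same route as the paper: equip the image with the relation induced from $\top_W$, observe that polars computed inside the image are intersections $A^{\top_W}\cap f(U)$ of linear subspaces (using that $(W,\top_W)$ is a locality vector space), and check the category axioms via identities and composition of locality maps. The only cosmetic difference is that you verify closure of locality maps under composition directly, where the paper cites an earlier reference, and you treat an arbitrary subspace $U$ where the paper writes the argument for $f(V)$; neither changes the substance.
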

\begin{proof}
  Let $f: V\to W$  {be a  morphism as in the statement.}
	\begin{itemize}
		\item
	 Since $f$ is a  locality map, it sends locality  sets to locality sets, so it maps the (pre-)locality vector space $(V, \top_V)$ to  the   vector space  $f(V)$ equipped with the locality structure $ \top_{f(V)}:=\top_W\cap (f(V)\times f(V))$ inherited from $  \top_W$, which makes $(f(V),\top_{f(V)})$ a  { pre-locality linear subspace of $(W, \top_W)$}.
 \item  If $(V, \top_V)$ is a locality vector space, so is $(f(V), \top_{f(V)}) $. Indeed, the polar set \[f(X)^{\top_{f(V)}}=\left\{w\in f(V), w\top f(x),\, \forall x\in X\right\}=f(X)^{\top_W}\cap f(V)\] of the range $f(X)$ of a subset $X\subset V$ is a vector space for by assumption $f(X)^{\top_W}$ is a vector space.
 \item The fact that the composition of two locality maps is a locality maps was already proven in \cite{CGPZ1} (see Remark 2.6). Associativity of the composition of locality maps follows from the usual associativity of composition. Thus the fact that ${\mathbf V}_{\rm loc}$ and ${\mathbf V}_{\rm pre-loc}$ are categories as claimed follows from the fact that the identity maps trivially are locality morphisms. \qedhere
\end{itemize}
	\end{proof}
	We have the following forgetful functors between the various  categories: 
\begin{equation*} 
\begin{tikzcd}
& & &  {{\mathbf V}}\\
{\mathbf V}_{\rm loc}\arrow[rr, hook, "\iota_ {{\mathbf V}_{pl}}"]& &{\mathbf V}_{\rm pre-loc}\arrow[ur, hook, "\iota_ {\mathbf V}"] \arrow[dr, hook, "\iota_ {\mathbf S}"] &\\
& & &{\mathbf S}_{\rm loc}
\end{tikzcd}
\end{equation*}  { where $\mathbf V$ denotes the category of vector spaces. The map $\iota_{V_{pl}}$  } takes a locality vector space to its underlying pre-locality vector space (thus ignoring the linearity condition on polar sets),  {$\iota_V$ ignores the pre-locality and takes it into (usual) vector spaces}	and $\iota_S$   ignores the linearity.

  \begin{defn}
 Let $V$ and $W$ be two linear subspaces of a (resp. pre-)locality vector space $(E,\top)$. 
 The \textbf{locality Cartesian product} of $V$ and $W$, denoted by $V\times_\top W$ is  {the restriction }

 \begin{equation*}
      V\times_\top W:=\top\cap(V\times W)
     \end{equation*}
 \end{defn}
 {Note that $(0_V\times 0_W)\in V\times_\top W$  {since $0_V=0_W=0_E$}.}
	\begin{rk}
	  Assume that $(E, \top)$ is a locality vector space. For any subsets $X\subset V$ or $Y\subset W$, the  relative polar sets $X^{\top}\cap W=\{w\in W: (x,w)\in\top\  \forall x\in X\}  $   and $ Y^\top \cap V:=\{v\in V: (v,y)\in\top\ \forall y\in Y\}$ are linear subspaces of $W$ and $V$ since $(E, \top)$ is a locality vector space. In the terminology  of \cite[Sect. 4.1]{CGPZ1}, the triple $(V,W,\top)$  is a  relative locality vector space. 
\end{rk}

Before generalising the concept of bilinear map to the locality set up, recall that the freely generated vector space $\K X$ of a set $X$ obeys the following universal property. 

Given a set $X$  and a vector space $G$, any map $f:X\to G$  uniquely extends  to a linear map $\bar{f}:\K X \to G$ as follows \[\bar f \left(\sum_{x\in X}\alpha_x\,x\right)=\sum_{x\in X}\alpha_x\,f(x).\]

The following diagram, where $\iota$ stands for the canonical injection, therefore commutes:
	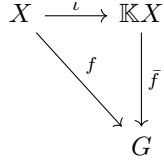
\begin{figure}[h!] \label{diag:GammaPgk} 
	\begin{center}
		\begin{tikzcd}
		X\arrow[r,"\iota"]\arrow[rdd,"f"]&\K X \arrow[dd,"\bar f"]\\
		\\
		&G\\
		\end{tikzcd}
		\caption{Universal property of freely generated vector spaces}\label{fig:}
	\end{center}
\end{figure}

In the following, $V$ and $W$ are two vector spaces over the same field $\K$. In the free linear span $\K(V\times W)$, we consider the linear subspace ${\rm I_{bil}}$ generated by all elements of the form
\begin{gather}\label{eq:bilforms1} 
(a+b,x)-(a,x)-(b,x)\\
(a,x+y)-(a,x)-(a,y)\label{eq:bilforms2}\\
(ka,x)-k(a,x)\label{eq:bilforms3}\\
(a,kx)-k(a,x)\label{eq:bilforms4}
\end{gather}

with $a,b\in V$, $x,y\in W$ and $k\in\K$ the underlying field. Clearly, given any vector space $G$, a  map $f: X\times  Y\to G$ is bilinear if and only if $ \bar f(I_{\rm bil})=\{0_G\}$.

The notion of local bilinearity is inspired from \cite[Paragraph 3.3]{CGPZ1}, adapted to our construction of the (pre-)locality tensor product.
\begin{defn}\label{defn:biliresplocality}
Let  {$V$ and  $W$ be subspaces of a pre-locality vector space $(E,\top)$} and $G$ any vector space. We call {\bf  $\top_\times$-bilinear} a map $f:V\times_\top W\to G$  
which satisfies the  $\top_\times$-bilinearity condition:  \begin{equation}\label{eq:topbilinear}  \bar f(I_{\rm bil}^{\top_\times})=\{0_G\},\end{equation}
	where we have set $I_{\rm bil}^{\top_\times}:=\K(V\times_\top W) \cap I_{\rm bil}$. 
 \end{defn}

	\begin{defn}\label{defn:locbiliresplocality}
		Let  {$V$ and  $W$ be subspaces of a pre-locality vector space $(E,\top)$} and   { $(G, \top_G)$ a pre-locality vector space}. We call a map $f:V\times_\top W\to G$  
		{\bf locality  $\top_\times$-bilinear}, if it is $\top_\times$-bilinear   (see (\ref{eq:topbilinear}))  and   satisfies the 
		locality condition 
		\begin{equation}\label{eq:locbilinear} 
		  {(f\times f)}( {\top_{V\times_\top W} })\subset \top_G,\quad{\rm  i.e.,} \quad  (v, w) {\top_{V\times_\top W}}(v',w')\Rightarrow f(v, w)\top_Gf(v',w'),
		 \end{equation} 
		  where $ {\top_{V\times_\top W}}$ is a locality relation on $V\times_\top W$ given by 
		 \begin{equation}\label{eq:locreltoptimes}
		   {\top_{V\times_\top W}}:=\{((v,w),(v',w'))\in (V\times_\top W)^2: v\top_V v'\, \wedge w\top_W w' \}. 
		  \end{equation}	
		  \end{defn} 
		  

\cy{Our new concept of pre-locality vector space is related to the previous notion of locality vector space through the following result.}
\begin{thm} \label{thm:equiv_categories}
 Let $(E,\top_E)$ be a pre-locality $\K$-vector space. There is a unique locality relation $\top^\ell_E$ on $E$ containing $\top_E$
such that for any locality space $(F,\top_F)$ and any \sy{locality} linear map $f:(E,\top_E)\longrightarrow (F,\top_F)$,
$f$ is a locality linear map  from $(E,\top_E^\ell)$ to $(F,\top_F)$.

\cy{The map  \begin{eqnarray*}\calL: {\bf V}_{\rm pre-loc}&\longrightarrow &{\bf V}_{\rm loc}\\
	 (E,\top_E)&\longmapsto &(E,\top_E^\ell)\end{eqnarray*}
on the  objects of the category combined with the identity map $\calL(f)=f$   on the morphisms   of the category given by locality linear maps $f$,} \lf{defines a non essentially surjective functor between the two categories.  } 
 
\end{thm}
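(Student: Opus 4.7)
The plan is to construct $\top_E^\ell$ as the smallest locality relation on $E$ containing $\top_E$, so that uniqueness and the universal property will follow from elementary categorical arguments. Concretely, I would let $\mathcal{R}$ denote the collection of all locality relations $\top'$ on $E$ that contain $\top_E$, and set
\[
\top_E^\ell \;:=\; \bigcap_{\top' \in \mathcal{R}} \top'.
\]
This family is nonempty, since $E \times E$ is trivially a locality relation on $E$ containing $\top_E$. The intersection is symmetric and contains $(0_E,0_E)$ because each $\top' \in \mathcal{R}$ does; and the linear locality axiom \eqref{eq:linearloc} is preserved under arbitrary intersections, since if $(u,v)$ and $(u',v)$ belong to every $\top' \in \mathcal{R}$ then so does $(\lambda u + \lambda' u', v)$. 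Hence $(E, \top_E^\ell)$ is a locality vector space containing $\top_E$.

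The universal property then reduces to the key observation that, for any locality linear map $f : (E, \top_E) \to (F, \top_F)$ into a locality space, the pulled-back relation
\[
R_f \;:=\; \{ (x, y) \in E \times E \,:\, f(x) \top_F f(y) \}
\]
is itself a locality relation on $E$ belonging to $\mathcal{R}$. Symmetry of $R_f$ is inherited from $\top_F$, and linearity follows from $f(\lambda u + \lambda' u') = \lambda f(u) + \lambda' f(u')$ together with the fact that the polar set $\{f(v)\}^{\top_F}$ is a linear subspace of $F$; finally $\top_E \subset R_f$ is exactly the hypothesis that $f$ is locality linear. Therefore $\top_E^\ell \subset R_f$, which unwinds to $(f \times f)(\top_E^\ell) \subset \top_F$, so $f$ is locality linear as a map $(E, \top_E^\ell) \to (F, \top_F)$.

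For uniqueness, I would show that any other locality relation $\top^*$ on $E$ containing $\top_E$ with the same universal property must equal $\top_E^\ell$, by applying the universal property of each to $\mathrm{id}_E$ viewed appropriately: $\mathrm{id}_E : (E, \top_E) \to (E, \top^*)$ is locality linear since $\top_E \subset \top^*$, so by the universal property of $\top_E^\ell$ we obtain $\top_E^\ell \subset \top^*$, and symmetrically $\top^* \subset \top_E^\ell$. For the functoriality of $\calL$, given a locality linear map $f : (E, \top_E) \to (F, \top_F)$ between pre-locality spaces, the inclusion $\top_F \subset \top_F^\ell$ implies $f$ remains locality linear into $(F, \top_F^\ell)$, and the universal property applied with this target yields that $f : (E, \top_E^\ell) \to (F, \top_F^\ell)$ is locality linear, so $\calL(f) := f$ is a well-defined morphism in $\mathbf{V}_{\rm loc}$; preservation of identities and of composition is immediate since $\calL$ acts as the identity on underlying maps. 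I do not anticipate any serious obstacle; the only subtlety is the verification that both the intersection and $R_f$ satisfy the linear locality axiom, and in each case this is a direct consequence of the fact that polar sets in a locality vector space are linear subspaces.
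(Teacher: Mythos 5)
Your construction of $\top_E^\ell$ as the intersection of all locality relations on $E$ containing $\top_E$, your verification that the pulled-back relation $R_f=(f\times f)^{-1}(\top_F)$ is itself a locality relation containing $\top_E$ (so that $\top_E^\ell\subset R_f$ gives the universal property), the uniqueness argument via the identity map, and the functoriality argument using $\top_F\subset\top_F^\ell$ are all correct and follow essentially the same route as the paper, which denotes your $R_f$ by $\top_f$.

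However, the theorem also asserts that $\calL$ is a \emph{non essentially surjective} functor, and your proposal does not address this clause at all; it does not follow from the universal property and requires an explicit example. The paper settles it by taking $(\K,\top)$ with $\top=\{(0,0),(1,1)\}$: a linear map $f:\K\to\K$ is a locality map for $\top$ only if $(f(1),f(1))\in\{(0,0),(1,1)\}$, so $\mathrm{End}_{\mathrm{pre-loc}}(\K,\top)=\{0,\mathrm{Id}_\K\}$, whereas $\top^\ell=\K\times\K$ (since $1\top 1$ forces $\lambda\top^\ell\mu$ for all scalars) gives $\mathrm{End}_{\mathrm{loc}}(\K,\top^\ell)=\{\lambda\,\mathrm{Id}_\K:\lambda\in\K\}$, so the map $f\mapsto\calL(f)$ between these morphism sets is not surjective. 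Without some such counterexample your argument proves the existence, uniqueness and functoriality statements but leaves the final claim of the theorem unproven.
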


\begin{proof}
\begin{itemize}
 \item Existence: \sy{With the notations of the theorem}, let $\mathcal{E}$ be the set of locality relations $\top$ on $E$ such that $\top_E\subseteq \top$. 
This is not empty, as it contains $E\times E$. We consider
\[\top^\ell_E=\bigcap_{\top\in \mathcal{E}} \top.\]
Then $\top_E\subseteq \top^\ell_E$. If $x\top_E^\ell y$,  $x\top_E^\ell z$ and $\lambda \in \mathbb{K}$,
then for any $\top$ in $ \mathcal{E}$, $x\top y$ and $x\top z$, so $x\top (y+\lambda z)$ \cy{since by definition of $\mathcal{E}$, $\top$ is a locality relation}. Hence,
$x\top_E^\ell \cy{(x+\lambda z)}$, and $\top_E^\ell$ is a locality relation on $E$.

Let  $f:(E,\top_E)\longrightarrow (F,\top_F)$ be a \sy{locality} linear map. We define a relation $\top$ \cy{on $E$} by
\sy{	\[   \top_f:= (f\times f)^{-1}(\top_F). \]
	 Since $f$ is a locality map, $(f\times f)(\top_E)\subset \top_F$ so $\top_E\subset \top_f$.}
Combining the locality  of the map $f$, which  implies   \[ \left(x\sy{\top_f} y\wedge x\sy{\top_f} z\right)\Longrightarrow \left(f(x)\top_F f(y)\wedge  f(x)\top_F f(z)\right),\] with the fact that $(F,\top_F)$ is a locality space yields   
\[f(x)\top_F \cy{(f(y)+\lambda f(z))}~\Longrightarrow f(x)\top_F f(y+\lambda z),\]
so $x \sy{\top_f} \cy{(y+\lambda z)}$, and hence $\sy{\top_f}$ lies in $ \mathcal{E}$. \sy{Thus, $ \top_E^\ell \subset \top_f$, which as before implies that $f: (E, \top^\ell)_E\to (F, \top_F)$ is a locality linear map.}

\item  Uniqueness: let $\top'$ be another relation \sy{obeying the assumptions of the theorem}.
 Since  $\top_E\subseteq \top_E^\ell$, the identity map $\Id_E:(E,\top_E)\longrightarrow (E,\top_E^\ell)$ is a pre-locality linear map. Thus, \sy{by the very definition of $\top'$,} $\Id_E:(E,\top')\longrightarrow (E,\top_E^\ell)$ is a locality linear map. \cy{Consequently, $\top'\subseteq\top_E^\ell$}. 
Similarly, exchanging the roles of $\top'$ and $\top_E^\ell$, $\Id_E:(E,\top_E^\ell)\longrightarrow (E,\top')$
is a locality linear map, so $\top_E^\ell=\top'$. 

\cy{
\item ${\mathcal L}$ defines a functor: Given two pre-locality spaces $(E,\top_E)$,  $(F,\top_F)$, we want to show that a \sy{locality linear map}  $f:(E,\top_E)\longrightarrow(F,\top_F)$ extends to  a locality linear map $f:(E,\top_E^\ell)\longrightarrow(F,\top^\ell_F)$.   We first observe that since $\top_F\subset \top_F^\ell$, $f:(E,\top_E)\longrightarrow(F,\top^\ell_F)$ is also a \sy{locality linear map}. It then follows from \sy{the  definition of $T_E^\ell$},  that the map $f:(E,\top_E^\ell)\longrightarrow(F,\top^\ell_F)$ is indeed a locality linear map.}

\item  \lf{${\mathcal L}$ is not essentially surjective:  the vector space
$(\K,\top)$ with $\top=\{(0,0),(1,1)\}$  is a pre-locality vector space. A linear map $f:\K\longrightarrow \K$ is  a pre-locality map if and only if $(f(1),f(1))\in \{(0,0),(1,1)\}$. Therefore,
\[\mathrm{End}_{\mathrm{pre-loc}}(\K,\top)=\{0,Id_\K\}.\] }
 \sy{For a locality relation $\top'$ containing $\top$, we have $1\top 1\Rightarrow 1\top'\lambda$ for any $\lambda\in \K$ and from there $\lambda \top'\mu$  for any $(\lambda, \mu)\in \K^2$ so that $\top^\ell=\K\times \K$.} \lf{It follows that 
\[\mathrm{End}_{\mathrm{loc}}(\K,\top^\ell)=\{\lambda Id_\K,\lambda \in \K\}.\]
The map $f\mapsto \calL(f)$ from $\mathrm{End}_{\mathrm{pre-loc}}(\K,\top)$ to $\mathrm{End}_{\mathrm{loc}}(\K,\top^\ell)$
is therefore not surjective.}
\qedhere
\end{itemize}

\end{proof}

\subsection{ {Locality tensor product of two pre-locality spaces}} \label{subsec:tensor_prod_pre_loc}

	We adapt to the pre-locality setup, concepts  introduced in \cite{CGPZ1} relative to  locality  tensor products. The assumptions are weakened, yet the definition stays the same so we keep the terminology "locality tensor product".  
 As before,  $V$ and $W$ are two vector spaces over the same field $\K$.  Let us recall that the tensor product of two vector spaces $V$ and $W$ reads

\[V\otimes W:=\sfrac{\K (V\times W)}{{\rm I_{bil}}}.\]
 {It comes with a map 
	\[\otimes :=\pi\circ\iota:V\times W\to V\otimes W\] 
	built from the canonical inclusion $\iota:V\times  W\to\K(V\times  W)$ and  the canonical quotient map  $\pi:\K(V\times  W)\to V\otimes  W$, which makes the following diagram commute}

\begin{figure}[h!] \label{diag:GammaPgk} 
	\begin{center}
		\begin{tikzcd}
		V\times W\arrow[r,"\iota"]\arrow[rdd,"\otimes"]&\K (V\times W) \arrow[dd,"\pi"]\\
		\\
		&V\otimes W
		\end{tikzcd}
		\caption{ {The} tensor product  {from the Cartesian product} }
	\end{center}
\end{figure}

 Following \cite{CGPZ1}, we define the  (pre-)locality counterpart.

\begin{defn}\label{defn:loctensprod1}			
	Given  $V$ and  $W$ subspaces of a pre-locality vector space $(E,\top)$, the  locality  tensor product is the  vector space
	\begin{equation} \label{eq:VotimestopW}
	 V\otimes_{\top}W:=\sfrac{\K(V\times_\top W)}{I_{\rm bil}^{ \top_\times}}. 
	\end{equation}
	
\end{defn}
\begin{rk}
	Since $ V\times_\top W\subset V\times W$ and  {$I_{\rm bil}^{\top_\times}:=\K(V\times_\top W) \cap I_{\rm bil}$},  we have an inclusion of vector spaces $V\otimes_{\top}W\subset V\otimes W$. If $V\times_\top W= V\times W$, then $V\otimes_{\top}W= V\otimes W$.
\end{rk}
 {$V\otimes_{\top}W$ comes with a map 
	\begin{equation}\label{eq:otimestop}\otimes_\top :=\pi_\top \circ\iota_\top:V\times_\top W\to V\otimes_\top W\end{equation}
	built from the canonical inclusion $\iota_\top:V\times_\top  W\to\K(V\times_\top  W)$ and  the canonical quotient map  $\pi_\top:\K(V\times_\top  W)\to V\otimes_\top  W$, which makes the following diagram commute:}

\begin{figure*}[h!]
	\begin{center}
		\begin{tikzcd}
		V\times_\top W\arrow[r,"\iota_\top"]\arrow[rdd,"\otimes_\top"]&\K (V\times_\top W) \arrow[dd,"\pi_\top"]\\
		\\
		&V\otimes_\top W
		\end{tikzcd}
		\caption{ {The} locality tensor product  {from subspaces of a pre-locality vector space.} }\label{fig:}
	\end{center}
\end{figure*}  
\begin{prop}\label{prop:otimesastxbilinear}
 Given $V$ and  $W$ subspaces of a pre-locality vector space $(E,\top)$, the map \[\otimes_\top: V\times_\top W\to V\otimes_\top W\] is a $\top_\times$-bilinear map.
\end{prop}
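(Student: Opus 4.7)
The plan is to unfold the definitions and reduce the claim to a tautology via the universal property of the free vector space $\K(V\times_\top W)$. By Definition \ref{defn:biliresplocality}, $\top_\times$-bilinearity of a map $f:V\times_\top W\to G$ amounts to the condition $\bar f(I_{\rm bil}^{\top_\times})=\{0_G\}$, where $\bar f$ is the unique linear extension of $f$ along $\iota_\top:V\times_\top W\hookrightarrow \K(V\times_\top W)$. So my goal is to identify $\overline{\otimes_\top}$ explicitly, then check it vanishes on $I_{\rm bil}^{\top_\times}$.

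The first step is the identification $\overline{\otimes_\top}=\pi_\top$. By construction (see \eqref{eq:otimestop}), we have $\otimes_\top=\pi_\top\circ \iota_\top$, and $\pi_\top$ is linear as it is a canonical quotient map between vector spaces. The universal property of $\K(V\times_\top W)$ (Figure in the text, applied with the set $X=V\times_\top W$ and the vector space $G=V\otimes_\top W$) then guarantees a unique linear map $\overline{\otimes_\top}:\K(V\times_\top W)\to V\otimes_\top W$ extending $\otimes_\top$. Since $\pi_\top$ is such a linear extension, uniqueness forces $\overline{\otimes_\top}=\pi_\top$.

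The second step is trivial once the identification is in hand: by Definition \ref{defn:loctensprod1}, $\pi_\top$ is precisely the canonical projection onto $\K(V\times_\top W)/I_{\rm bil}^{\top_\times}$, so $I_{\rm bil}^{\top_\times}\subseteq \ker \pi_\top$. Hence $\overline{\otimes_\top}(I_{\rm bil}^{\top_\times})=\pi_\top(I_{\rm bil}^{\top_\times})=\{0\}$, which is exactly the $\top_\times$-bilinearity condition \eqref{eq:topbilinear}.

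There is no real obstacle here: the statement is essentially built into the construction of $V\otimes_\top W$ as a quotient, and the only thing to watch out for is a notational pitfall, namely keeping $V\times_\top W$ (a mere set for the purposes of the universal property) distinct from the vector space $\K(V\times_\top W)$ it freely generates. In particular, one should emphasise that no locality condition on $\otimes_\top$ is being verified at this stage (that would be the locality $\top_\times$-bilinearity of Definition \ref{defn:locbiliresplocality}); only the $\top_\times$-bilinearity part is claimed and proved.
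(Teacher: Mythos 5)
Your proof is correct and follows essentially the same route as the paper: identify the linear extension $\overline{\otimes_\top}$ with the quotient map $\pi_\top$ and observe that $\pi_\top(I_{\rm bil}^{\top_\times})=\{0\}$ by construction of $V\otimes_\top W$. You merely make the uniqueness argument behind the identification explicit, which the paper leaves implicit in the phrase ``by construction.''
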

\begin{proof} Let  $\overline{{\otimes}_\top}$ be  {the linear extension of  (\ref{eq:otimestop}) to a map  $\K( V\times_\top W)\to V\otimes_\top W$.  By construction, } $ \overline{{\otimes}_\top}( I_{\rm bil}^{\top_\times})= \pi_\top\left(I_{\rm bil}^{\top_\times}\right)$   coincides with $\{0_{V\otimes_\top W}\}$.  The map ${\otimes}_\top$  therefore satisfies (\ref{eq:topbilinear}) and defines a $\top_\times$-bilinear map. 
\end{proof} 
Properties  of the non-locality tensor product commonly used are the distributivity with respect to direct sums \begin{equation}\label{eq:distributivity}(V_1\oplus V_2)\otimes W=(V_1\otimes W)\oplus (V_2\otimes W), \end{equation} and with respect to  intersections, namely $(V_1\cap V_2)\otimes W=(V_1\otimes W)\cap(V_2\otimes W)$. These two properties rely on the existence of a direct complement to every subspace $V_1\subset V$, i.e, the existence of a subspace $V_2\subset V$ such that $V_1\oplus V_2=V$.  However, for the locality tensor product, distributivity does not always hold  as the following example illustrates.
    \begin{coex}
    Consider $\R^2$ with the orthogonality locality. Then \begin{equation}
    (\langle e_1\rangle\otimes_\top \R^2)\oplus(\langle e_2\rangle\otimes_\top \R^2)=\langle e_1\otimes e_2\rangle \oplus \langle e_2\otimes e_1\rangle, 
\end{equation}
is not $\left(\langle e_1\rangle \oplus \langle e_2\rangle\right)\otimes_\top \R^2$ since it does not contains  $(e_1+e_2)\otimes (e_1-e_2)\in \R^2\otimes_\top\R^2$.    
    \end{coex}

In order to accomodate for distributivity properties of the tensor product in the locality set up, we need to ensure some compatibility of the splitting $V=V_1\oplus V_2$ with the locality relation $\top$. The following proposition gives sufficient conditions to   ensure the   distributivity of the locality tensor product w.r. to the direct sum. 

\begin{prop} \label{prop:tproductanddirectsum}
Let $V$ and $W$ be linear subspaces of a pre-locality vector space $(E,\top)$, and let $V_1$ and $V_2$ be subspaces of $V$ such that $V_1\oplus V_2=V$.
\begin{enumerate}
	\item  If for $\{i, j\}=\{1, 2\}$ the projection maps $\pi_i:V\to V_i$ onto $V_i$ along $V_j$ are locally independent of the identity map ${\rm Id}_W$ on $W$ (i.e., $(\pi_i\times {\rm Id}_W)(\top\vert_{V\times W})\subset \top\vert_{V_i\times W}$), then \[(V_1\otimes_\top W)\oplus(V_2\otimes_\top W)=V\otimes_\top W.\]

\item If   $(E,\top)$ is a locality vector space, and   one of the projections $\pi_i$ is locally independent of ${\rm Id}_W$, then the other projection is also locally independent of ${\rm Id}_W$.
\end{enumerate}
\end{prop}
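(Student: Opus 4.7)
The plan for part 1 is to establish the two set-theoretic ingredients of the direct sum: that every element of $V\otimes_\top W$ decomposes, and that the decomposition is unique. The inclusion $V_i\otimes_\top W\subseteq V\otimes_\top W$ already holds, since $V_i\subseteq V$ gives $V_i\times_\top W\subseteq V\times_\top W$ and hence an inclusion at the level of free spans compatible with the bilinearity ideals; so the sum $(V_1\otimes_\top W)+(V_2\otimes_\top W)$ is an internal sum inside $V\otimes_\top W$. For the reverse inclusion, I would pick a pair $(v,w)\in V\times_\top W$, use $v=\pi_1(v)+\pi_2(v)$, and invoke the hypothesis $\pi_i\top\Id_W$ to conclude that both $(\pi_1(v),w)$ and $(\pi_2(v),w)$ lie in $V\times_\top W$. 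Thus the generator
\[
(\pi_1(v)+\pi_2(v),w)-(\pi_1(v),w)-(\pi_2(v),w)
\]
belongs to $\K(V\times_\top W)\cap I_{\rm bil}=I_{\rm bil}^{\top_\times}$, which in $V\otimes_\top W$ yields $v\otimes_\top w=\pi_1(v)\otimes_\top w+\pi_2(v)\otimes_\top w$, with the $i$-th summand coming from $V_i\otimes_\top W$. Extending by linearity handles an arbitrary element of $V\otimes_\top W$.

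To show that the sum is direct, I would leverage the inclusion $V\otimes_\top W\hookrightarrow V\otimes W$ stated in the remark after Definition \ref{defn:loctensprod1}, together with the functoriality of that inclusion on $V_i\otimes_\top W\hookrightarrow V_i\otimes W$. Because $V=V_1\oplus V_2$, the ordinary tensor product decomposes as $(V_1\otimes W)\oplus (V_2\otimes W)=V\otimes W$, so $(V_1\otimes W)\cap(V_2\otimes W)=\{0\}$; pulling back along the injection gives $(V_1\otimes_\top W)\cap(V_2\otimes_\top W)=\{0\}$, completing part 1.

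Part 2 should follow from a one-line application of the linear locality axiom \eqref{eq:linearloc}. Given a pair $(v,w)\in V\times_\top W$, the assumption $\pi_1\top\Id_W$ yields $\pi_1(v)\top w$, while $v\top w$ is the very datum of $(v,w)$ lying in the locality Cartesian product. Since $(E,\top)$ is assumed to be a locality vector space, the polar set $\{w\}^\top$ is a linear subspace, so $\pi_2(v)=v-\pi_1(v)\in\{w\}^\top$, i.e.\ $\pi_2(v)\top w$, which is precisely $\pi_2\top\Id_W$.

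The only genuine subtlety, and the point I expect a reader to find least obvious, is the bookkeeping in part 1 that every pair appearing in the bilinearity relator must itself sit inside $V\times_\top W$ for that relator to belong to $I_{\rm bil}^{\top_\times}$; this is exactly what the compatibility hypothesis $\pi_i\top\Id_W$ buys, and its failure explains the orthogonality counterexample on $\R^2$ given just before the proposition. Part 2 has no comparable difficulty once linear locality is invoked.
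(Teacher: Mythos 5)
Your proof is correct and follows essentially the same route as the paper's: both obtain $V_i\otimes_\top W\subseteq V\otimes_\top W$ from $V_i\subseteq V$, use $\pi_i\top\Id_W$ to split a generator $v\otimes w=\pi_1(v)\otimes w+\pi_2(v)\otimes w$ via a relator in $I_{\rm bil}^{\top_\times}$, and prove part 2 by noting that $\{w\}^\top$ is a subspace so $\pi_2(v)=v-\pi_1(v)\in\{w\}^\top$. The only difference is that you explicitly verify directness of the sum by pulling back $(V_1\otimes W)\cap(V_2\otimes W)=\{0\}$ along the inclusion $V\otimes_\top W\hookrightarrow V\otimes W$, a point the paper's proof leaves implicit; this is a welcome addition rather than a divergence.
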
  
\begin{proof}
\begin{enumerate}
\item We  first prove that $(V_1\otimes_\top W)\oplus(V_2\otimes_\top W)\subset V\otimes_\top W.$ Since $V_i\subset V$ then $V_i\otimes_\top W\subset V\otimes_\top W$ and the expected inclusion follows. To prove   the other direction, w.l.o.g.  consider $a\otimes b$ in $V\otimes_\top W$ such that $(a,b)$ lies in $\top\vert_{V\times W}$. Since $\pi_i$ and ${\rm Id}_W$ are locally independent $(\pi_i(a),b)\in\top|_{V_i\times W}$ thus $\pi_i(a)\otimes b\in V_i\otimes_\top W$, which implies that \[a\otimes b=(\pi_1(a)+\pi_2(a))\otimes b=\pi_1(a)\otimes b + \pi_2(a)\otimes b\in (V_1\otimes_\top W)\oplus(V_2\otimes_\top W).\]
\item Moreover, if $(E,\top)$ is a locality vector space and if $\pi_1$ is independent of ${\rm Id}_W$, then for any pair $(a,b)\in V\times W$ we have
$a\top b\Longrightarrow \pi_1(a)\top b$. Using  $\pi_2(a)=a-\pi_1(a)$, it follows that $\pi_2(a)\top b$ since $\{b\}^\top$ is a subvector space of $E$. Thus $\pi_2$ is independent of ${\rm Id}_W$ as claimed. \qedhere
\end{enumerate}
\end{proof}

We prove a (weak) locality version of the distributivity property w.r. to the intersection $(V_1\cap V_2)\otimes W=(V_1\otimes W)\cap(V_2\otimes W)$.

\begin{cor}\label{cor:Localitytensorintersection}
Let $V$ and $W$ be linear subspaces of a pre-locality vector space $(E,\top)$, and let $V_1$, $V_2$ be subspaces of $V$. Let   $V_2'$ be a direct complement  of the intersection $V_1\cap V_2$ in $V_2 $ i.e.  $(V_1\cap V_2)\oplus V_2'=V_2$.
\begin{enumerate}
\item If the projection maps $\pi:V_2\to V_1\cap V_2$ onto $V_1\cap V_2$ along $V_2'$  and ${\rm Id}_{V_2}-\pi$  are locally independent of the identity map ${\rm Id}_W$ on $W$, then 
\[(V_1\cap V_2)\otimes_\top W=(V_1\otimes W)\cap(V_2\otimes_\top W). \]
\item In particular, if $V_1\subset V$, and if the projection maps $\pi:V \to V_1$ onto $V_1 $ along $V_2'$ and ${\rm Id}_{V_2}-\pi$ are locally independent of the identity map ${\rm Id}_W$ on $W$, then
\[V_1\otimes_\top W=(V_1\otimes W)\cap(V\otimes_\top W).\]
\end{enumerate}
\end{cor}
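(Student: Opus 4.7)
The plan is to deduce both statements from Proposition \ref{prop:tproductanddirectsum} applied to the direct sum decomposition $V_2 = (V_1 \cap V_2) \oplus V_2'$. The hypothesis that $\pi$ and ${\rm Id}_{V_2} - \pi$ are locally independent of ${\rm Id}_W$ is precisely what that proposition demands for the pair of projections associated with this splitting, so it delivers the direct sum decomposition
\[V_2 \otimes_\top W = \bigl((V_1 \cap V_2) \otimes_\top W\bigr) \oplus (V_2' \otimes_\top W),\]
which will be the engine of the argument.

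For part (1), I would first dispatch the easy inclusion $(V_1 \cap V_2) \otimes_\top W \subset (V_1 \otimes W) \cap (V_2 \otimes_\top W)$: since $V_1 \cap V_2$ is a subspace of both $V_1$ and $V_2$, combining the canonical inclusions of ordinary tensor products with the remark following Definition \ref{defn:loctensprod1} (that $U \otimes_\top W \subset U \otimes W$) places $(V_1 \cap V_2)\otimes_\top W$ inside each of the two factors on the right. For the reverse inclusion, I would take $x \in (V_1 \otimes W) \cap (V_2 \otimes_\top W)$ and use the direct sum above to write $x = x_1 + x_2$ with $x_1 \in (V_1 \cap V_2)\otimes_\top W$ and $x_2 \in V_2' \otimes_\top W$. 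Since $x_1$ already lies in $V_1 \otimes W$, the remainder $x_2 = x - x_1$ sits in $(V_1 \otimes W) \cap (V_2' \otimes W)$. At this point I would invoke the classical distributivity of the ordinary tensor product over intersections to identify this with $(V_1 \cap V_2')\otimes W$. From $V_2' \subset V_2$ one gets $V_1 \cap V_2' \subset V_1 \cap V_2$, while $V_2' \cap (V_1 \cap V_2) = \{0\}$ by the direct sum hypothesis, forcing $V_1 \cap V_2' = \{0\}$ and hence $x_2 = 0$. Therefore $x = x_1 \in (V_1 \cap V_2)\otimes_\top W$.

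Part (2) is then just the specialization $V_2 = V$, noting that $V_1 \cap V = V_1$ when $V_1 \subset V$, so the hypotheses and conclusion of (1) collapse to those of (2) with no further work. The only slightly delicate point is the appeal to the classical identity $(V_1 \otimes W) \cap (V_2' \otimes W) = (V_1 \cap V_2') \otimes W$, which should be cited explicitly; beyond this the argument is essentially bookkeeping of inclusions, and I do not anticipate any substantive obstacle.
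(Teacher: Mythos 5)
Your proof is correct and follows essentially the same route as the paper: both proofs hinge on the direct-sum decomposition $V_2\otimes_\top W = ((V_1\cap V_2)\otimes_\top W)\oplus(V_2'\otimes_\top W)$ delivered by Proposition \ref{prop:tproductanddirectsum}, and both ultimately rely on $V_1\cap V_2'=\{0\}$ and the classical distributivity of the ordinary tensor product over intersections. The only cosmetic difference is that the paper packages the final step into the linear-algebra identity $A\cap(B\oplus C)=B$ (for $B\subset A$, $A\cap C=\{0\}$) while you carry out the component-wise bookkeeping explicitly, thereby also making explicit the verification that $(V_1\otimes W)\cap(V_2'\otimes_\top W)=\{0\}$, a point the paper leaves implicit.
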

\begin{proof}

\begin{enumerate}
\item  Using the distributivity property of the locality tensor product 
$ ((V_1\cap V_2)\otimes_\top  W) \oplus(  V_2'\otimes_\top W) = V_2\otimes_\top W  $  which follows from Proposition \ref{prop:tproductanddirectsum}, we have
\begin{equation}\label{eq:distriboftensorandcap}
 (V_1\otimes W)\cap(V_2\otimes_\top W)= (V_1\otimes W)\,\bigcap\,\left(((V_1\cap V_2)\otimes_\top  W)\oplus (V_2'\otimes_\top  W)\right).
\end{equation}
We now make use   of a result of elementary linear algebra, namely $A\cap(B\oplus C)=B$ whenever $A,B$ and $C$ are linear subspaces of a   linear space $E$ with $B\subset A$ and $A\cap C=\{0\}$. Since $(V_1\cap V_2)\otimes_\top  W\subset V_1\otimes W$ and $(V_1\otimes W)\cap(V_2'\otimes _\top W)=\{0\}$, the right hand side of \eqref{eq:distriboftensorandcap} is equal to $(V_1\cap V_2)\otimes_\top  W$, which yields the result.

\item  Setting $V_2=:V$ in the previous item, yields the result since $V_1'=\{0\}$. \qedhere
\end{enumerate}
\end{proof}

\subsection{A first universal property of the locality tensor product}

We recall the universal property of the usual tensor product.  Given three $\K$-linear spaces $ V,W $ and  $G$,  a $\K$-bilinear map $f:V\times  W\to G$,  there is a unique $\K$- linear map $\phi_f:V\otimes  W\to G$ such that the following 
diagram commutes:\begin{equation*}
\begin{tikzcd}
V\times  W\arrow[r,"\otimes "]\arrow[rdd,"f"]&V\otimes  W \arrow[dd,"\phi_f"]\\
\\
&G
\end{tikzcd}
\end{equation*}

 {In a similar manner, the locality tensor product of a pair of subspaces of a pre-locality vector space also satisfy a universal property.}

\begin{thm}\label{thm:univpptyloctensprod-nolocalityrel} 
Given  {$V$ and $W$ two subspaces of a pre-locality vector space $(E,\top)$} over $\K$, $G$ a $\K$-linear space and $f_\top:V\times_\top W\to G$ a 
$\top_\times$-$\K$-bilinear map,  there is a unique $\K$-linear map $\phi_{f_\top}: V\otimes_\top W\to G$ such that the following diagram commutes:\begin{equation}\label{eq:otimesT}
\begin{tikzcd}
		V\times_\top W\arrow[r,"\otimes_\top"]\arrow[rdd,"f_\top"]&V\otimes_\top W \arrow[dd,"\phi_{f_\top}"]\\
		\\
		&G
		\end{tikzcd}
\end{equation}
\end{thm}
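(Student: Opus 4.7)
The plan is to mirror the classical proof of the universal property of tensor products, applied to the restricted Cartesian product $V\times_\top W$ and the restricted subspace $I_{\rm bil}^{\top_\times}=\K(V\times_\top W)\cap I_{\rm bil}$. The construction uses only the universal property of the free vector space $\K(V\times_\top W)$ and the fact that a linear map on a quotient is uniquely determined by a linear map on the ambient space that vanishes on the relations.

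First, I would apply the universal property of the freely generated vector space $\K(V\times_\top W)$ to the map $f_\top:V\times_\top W\to G$ (viewed as a map of sets), producing a unique $\K$-linear extension $\overline{f_\top}:\K(V\times_\top W)\to G$ with $\overline{f_\top}\circ \iota_\top = f_\top$. By the defining condition of $\top_\times$-bilinearity, namely $\overline{f_\top}(I_{\rm bil}^{\top_\times})=\{0_G\}$, one has $I_{\rm bil}^{\top_\times}\subseteq \ker \overline{f_\top}$. Hence $\overline{f_\top}$ factors uniquely through the quotient $\K(V\times_\top W)/I_{\rm bil}^{\top_\times}=V\otimes_\top W$ via the canonical projection $\pi_\top$, yielding a unique $\K$-linear map $\phi_{f_\top}:V\otimes_\top W\to G$ satisfying $\phi_{f_\top}\circ \pi_\top=\overline{f_\top}$.

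Next I would verify commutativity of the diagram: since $\otimes_\top=\pi_\top\circ \iota_\top$, one computes
\[
\phi_{f_\top}\circ \otimes_\top = \phi_{f_\top}\circ \pi_\top\circ \iota_\top = \overline{f_\top}\circ \iota_\top = f_\top,
\]
which is exactly the required identity.

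For uniqueness, suppose $\psi:V\otimes_\top W\to G$ is another $\K$-linear map such that $\psi\circ \otimes_\top=f_\top$. The image of $\otimes_\top=\pi_\top\circ \iota_\top$ consists of all elementary tensors $v\otimes_\top w$ with $(v,w)\in V\times_\top W$, which generate $V\otimes_\top W$ as a $\K$-vector space (because $\iota_\top(V\times_\top W)$ spans $\K(V\times_\top W)$ and $\pi_\top$ is surjective). Since $\psi$ and $\phi_{f_\top}$ agree on this generating set by construction and both are $\K$-linear, they coincide on $V\otimes_\top W$. No real obstacle arises here; the proof is essentially a bookkeeping exercise of checking that the classical argument survives the restriction to $V\times_\top W$ and $I_{\rm bil}^{\top_\times}$, which it does precisely because $I_{\rm bil}^{\top_\times}$ was defined to be exactly $\K(V\times_\top W)\cap I_{\rm bil}$.
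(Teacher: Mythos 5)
Your proposal is correct and follows essentially the same route as the paper: linearly extend $f_\top$ over $\K(V\times_\top W)$, use the defining property $\overline{f_\top}(I_{\rm bil}^{\top_\times})=\{0_G\}$ to factor through the quotient, and deduce uniqueness from the fact that elementary tensors span $V\otimes_\top W$. No gaps.
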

	
 \begin{proof}  {
  \textit{Existence of $\phi_{f_\top}$}. There exists a unique linear map $ {\overline{f_\top}}$ from $\K(V\times_\top W)$
to $G$ sending $(v,w)\in V\times_\top W$ to $f_\top(v,w)$. As  { $f_\top$ is $\top_\times$-bilinear, i.e.,} ${\overline{f_\top}}(I_{\rm bil}^{ \top_\times})=\{0_G\}$,
this map induces a linear map $\phi_{f_\top}$ from $V\otimes_\top W$ to $G$. For any $(v,w)\in V\times_\top W$,
\[\phi_{f_\top}(v\otimes w)=\phi_{f_\top}([(v,w)])= {\overline{f_\top}}(v,w)=f_\top(v,w).\]
Therefore, $\phi_{f_\top}\circ \otimes=f_\top$. \\
\textit{Uniqueness of $\phi_{f_\top}$}. As $V\times_\top W$ is a basis of $\K(V\times_\top W)$,
the elements $(v\otimes {w})_{(v,w)\in V\times_\top W}$ linearly generate $V\otimes_\top W$, which 
implies the uniqueness of $\phi_{f_\top}$.  }   
 \end{proof}

 {We now prove the equivalence between Theorem \ref{thm:univpptyloctensprod-nolocalityrel} and the universal property of the usual tensor product. For that purpose,} we recall that a subset $B$ of a vector space $V$  is a (Hamel or algebraic) basis if it satisfies the linear independence property i) for every finite subset $\{b_1, \cdots, b_n\}$ of $B$ and every $\alpha_1, \cdots, \alpha_n$ in $\K$ we have $\sum_{i=1}^n \alpha_i b_i=0\Longrightarrow \alpha_1=\cdots=\alpha_n=0$ and  
	 the spanning property ii)
	 every  vector $v$ in $V$ can be written as a finite linear combination $v=\sum_{k=1}^n\alpha_i b_i$  in which case there is an  	isomorphism of vector spaces $\K B\simeq V$.
	
	 We further recall a classical result of linear algebra which relies on Zorn's lemma. The latter reads: A partially ordered set  ${\mathcal P}$ whose chains all have  an upper bound in $({\mathcal P}, \preceq)$ contains at least one maximal element. Consequently,
	\begin{lem}\label{lem:compbasis} 
	Let $G$ be a generating subset of a vector space $V$.
	A linearly independent set  {$A\subset G$} can be extended to a  Hamel basis  $B\subset G$ of $V$.
	\end{lem}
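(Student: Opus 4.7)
The plan is to apply Zorn's lemma to the collection
\[
\mathcal{P} := \{L \subseteq G : A \subseteq L \text{ and } L \text{ is linearly independent}\},
\]
partially ordered by inclusion, and then show that a maximal element of $\mathcal{P}$ is the desired basis. First I would observe that $\mathcal{P}$ is non-empty since $A \in \mathcal{P}$ by hypothesis.

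Next I would verify the chain condition. Let $\mathcal{C} = \{L_i\}_{i \in I}$ be a totally ordered chain in $\mathcal{P}$, and consider $L_\infty := \bigcup_{i \in I} L_i$. Clearly $A \subseteq L_\infty \subseteq G$. To show $L_\infty$ is linearly independent, take any finite subset $\{b_1,\ldots,b_n\} \subseteq L_\infty$: each $b_k$ lies in some $L_{i_k}$, and since $\mathcal{C}$ is totally ordered, there exists a single $L_{i_0}$ containing all of them. Linear independence of $L_{i_0}$ then forces every non-trivial linear combination to be non-zero. Hence $L_\infty \in \mathcal{P}$ is an upper bound for $\mathcal{C}$.

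By Zorn's lemma, $\mathcal{P}$ admits a maximal element $B$. The main step is to show that $B$ spans $V$. Since $G$ generates $V$ by assumption, it suffices to show that every $g \in G$ is a linear combination of elements of $B$. If $g \in B$ this is trivial; otherwise, by maximality of $B$ in $\mathcal{P}$, the set $B \cup \{g\} \subseteq G$ fails to be linearly independent, so there exist distinct $b_1,\ldots,b_n \in B$ and scalars $\alpha_0, \alpha_1,\ldots,\alpha_n \in \K$, not all zero, such that
\[
\alpha_0 g + \sum_{k=1}^n \alpha_k b_k = 0.
\]
Linear independence of $B$ forces $\alpha_0 \neq 0$ (otherwise all $\alpha_k$ would vanish), so $g = -\alpha_0^{-1}\sum_k \alpha_k b_k \in \mathrm{span}(B)$. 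Combined with linear independence, $B$ is a Hamel basis of $V$ contained in $G$ and containing $A$, as required.

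The only genuinely delicate point is checking the chain condition, and in particular invoking total ordering to collect finitely many elements of the union into a single $L_{i_0}$; the rest is essentially the classical "maximal linearly independent sets are bases" argument restricted to living inside the prescribed generating set $G$.
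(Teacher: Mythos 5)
Your proof is correct and is exactly the standard Zorn's-lemma argument that the paper invokes (the paper states the lemma as a classical consequence of Zorn's lemma without writing out the details). You correctly note the one point that actually matters — that the chain must be totally ordered so that any finite collection of elements of the union lives in a single member of the chain — and you correctly use the generating property of $G$ to reduce the spanning claim to elements $g\in G$, which is precisely why the basis can be taken inside $G$.
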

	
  \begin{prop}\label{prop:locbiltobil} 
   Let $V$ and $W$ be subspaces of a pre-locality vector space $(E,\top)$ and $G$ a $\K$-linear  space. Any $\top_\times$-($\K$-)bilinear map $f:V\times_\top W\to G$  extends to a $\K$-bilinear map $g:V\times W\to G$ i.e.,  $g|_{V\times_\top W}=f$.
  \end{prop}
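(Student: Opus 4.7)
The plan is to use the universal property of the locality tensor product established in Theorem \ref{thm:univpptyloctensprod-nolocalityrel}, and then to extend linearly via a basis argument relying on Lemma \ref{lem:compbasis}. Recall that by the remark following Definition \ref{defn:loctensprod1}, the inclusion $V \times_\top W \subset V \times W$ together with $I_{\rm bil}^{\top_\times} = \K(V \times_\top W) \cap I_{\rm bil}$ yields a canonical injection of vector spaces $\iota : V \otimes_\top W \hookrightarrow V \otimes W$ sending $v \otimes_\top w$ to $v \otimes w$.

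First I would apply Theorem \ref{thm:univpptyloctensprod-nolocalityrel} to the $\top_\times$-bilinear map $f : V \times_\top W \to G$ to obtain a unique linear map $\phi_f : V \otimes_\top W \to G$ satisfying $\phi_f \circ \otimes_\top = f$. Next, regarding $V \otimes_\top W$ as a linear subspace of $V \otimes W$ via $\iota$, I would pick any basis $B_0$ of $V \otimes_\top W$ and, invoking Lemma \ref{lem:compbasis}, extend it to a basis $B \supset B_0$ of $V \otimes W$. Defining $\tilde{\phi} : V \otimes W \to G$ to agree with $\phi_f$ on $B_0$ and vanish on $B \setminus B_0$, and extending by linearity, produces a $\K$-linear map whose restriction to $V \otimes_\top W$ coincides with $\phi_f$.

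Finally, I would set
\[
g : V \times W \longrightarrow G, \qquad g(v,w) := \tilde{\phi}(v \otimes w).
\]
Bilinearity of $g$ follows at once from the bilinearity of $\otimes : V \times W \to V \otimes W$ and the linearity of $\tilde{\phi}$. For $(v,w) \in V \times_\top W$, the element $v \otimes w \in V \otimes W$ is the image under $\iota$ of $v \otimes_\top w \in V \otimes_\top W$, so
\[
g(v,w) = \tilde{\phi}(v \otimes w) = \phi_f(v \otimes_\top w) = f(v,w),
\]
which gives the required extension $g|_{V \times_\top W} = f$.

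The only genuinely non-trivial ingredient is the basis extension, which is precisely where Zorn's lemma enters through Lemma \ref{lem:compbasis}; aside from that, every step is formal. I expect no serious obstacle, though one must take care to observe that $\iota$ identifies $v \otimes_\top w$ with $v \otimes w$ so that the agreement on $V \times_\top W$ follows from the agreement on the subspace $V \otimes_\top W \subset V \otimes W$ rather than from any additional compatibility check.
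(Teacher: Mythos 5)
Your proof is correct and reaches the same conclusion by a somewhat different route. The paper's proof operates directly at the level of pairs: it applies Zorn's lemma to find a maximal subset $\B\subset V\times_\top W$ whose image under $\otimes$ is linearly independent, then uses Lemma \ref{lem:compbasis} with the generating set $\otimes(V\times W)$ to complete $\otimes(\B)$ to a basis of $V\otimes W$ consisting entirely of pure tensors, chooses preimages $\overline\B\subset V\times W$, and defines $g$ on $\overline\B$ (as $f$ on $\B$, zero elsewhere) before extending to a bilinear map; the equality $g|_{V\times_\top W}=f$ is then checked by a small computation with elements of $I_{\rm bil}^{\top_\times}$. You instead first invoke Theorem \ref{thm:univpptyloctensprod-nolocalityrel} to trade $f$ for the induced linear map $\phi_f:V\otimes_\top W\to G$, then do a plain linear extension $\tilde\phi$ over the inclusion $V\otimes_\top W\subset V\otimes W$, and pull back along $\otimes$. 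This cleanly separates the bilinear-algebra content (handled once and for all by the universal property) from the extension step, and it avoids having to arrange for the completing basis to consist of pure tensors. Both proofs ultimately rely on Zorn's lemma via Lemma \ref{lem:compbasis}, and there is no circularity in your use of Theorem \ref{thm:univpptyloctensprod-nolocalityrel} since its proof in the paper does not appeal to this proposition. The only care required in your version is the observation you make at the end: that the basis extension $B\supset B_0$ forces $\tilde\phi|_{V\otimes_\top W}=\phi_f$, which is what ensures $g$ restricts to $f$.
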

  
  \begin{proof}
  Consider the set $\Oo:=\{B\subset V\times_\top W|\, \otimes B\ {\rm is\ a\ linearly\ independent\ subset\ of}\, V\otimes W\}$,  {where $ \otimes B$ is the image of $B$ under the canonical map} $\otimes:V\times W\to V\otimes W$. We equip $\Oo$ with the partial inclusion order $ B_1\subset B_2$ and consider a chain $\CC$ of $\Oo$. We  {observe} that $\bigcup_{B\in\CC}B\in\Oo$ since $\otimes\left(\bigcup_{B\in\CC}B\right)=\bigcup_{B\in\CC}\otimes(B)$, and the union of nested linearly independent sets is a linearly independent set.
  
  Thus,  $\Oo$ satisfies the assumption of   Zorn's Lemma which ensures the existence of a maximal element $\B\in\Oo$ and correspondingly a linearly independent set $\otimes(\B)\subset V\otimes W$.   {Since $\otimes(\B)\subset\otimes(V\times W)$ and $\otimes(V\times W)$ generates $V\otimes W$}, by Lemma  \ref{lem:compbasis}, we can complete $\otimes(\B)$ to a basis $\overline{\otimes(\B)}\subset\otimes(V\times W)$ of $V\otimes W$.
  
   Since by construction,  $\overline{\otimes(\B)}\subset \otimes (V\times W)$, any element $y\in\overline{\otimes(\B)}\setminus \otimes(\B)$ can be written  $y=\otimes(x_y) $ for some  $x_y\in V\times W$. We claim that the set $\overline\B:=\{x_y: y\in\overline{\otimes(\B)}\setminus \otimes(\B)\}\cup \B$ fulfills the relation  \[\overline{\otimes (\B)}=\otimes(\overline \B).\] Indeed, if $x\in\overline\B$, either $x\in\B$, in which case $ {y:=\otimes(x)\in}\otimes(\B)\subset\overline{\otimes(\B)}$, or $y:=\otimes(x)\in\overline{\otimes(\B)}\setminus \otimes(\B)\subset\overline{\otimes(\B)}$ and therefore $\otimes(\overline\B)\subset \overline{\otimes(\B)}$. Conversely, for $y\in\overline{\otimes(\B)}$ either $y\in\otimes(\B)\subset\otimes(\overline\B)$ or $y\in\overline{\otimes(\B)}\setminus \otimes(\B)$ in which case $x_y\in\overline\B$ and therefore $\overline{\otimes(\B)}\subset\otimes(\overline\B).$

 Let $g:V\times W\to G$ be the unique $\K$-bilinear map defined on $\overline{\B}$  by 
  \[g(x,y):=\left\{\begin{array}{ll}
  f(x,y)&\rm{if}\ (x,y)\in\B\\
  0&\rm{if}\ (x,y)\notin\B
  \end{array}\right.\]
 It  remains to  show that $g|_{V\times_\top W}=f$.  {Given  $(p,q)\in V\times_\top W$, the maximality of $\B$ yields the existence of $(x_i,y_i)\in \B$ with $1\leq i\leq n$ for some $n\in\N$ such that $\sum_{i=1}^n\alpha_ix_i\otimes y_i=p\otimes q$, and thus $\sum_{i=1}^n\alpha_i(x_i, y_i)=(p, q)+ \omega$ which amounts to $\omega\in I_{\rm bil}^{\top_\times}$. Using the extension of $f$ and $g$ to $\bar f$ and $\bar g$, and the fact that $\bar f(\omega)=\bar g(\omega)=0$, this implies 
 \[f(p,q)=\sum_{i=1}^n\alpha_if(x_i,y_i)=\sum_{i=1}^n\alpha_i g(x_i,y_i)=g(p,q).\] } 
 
It follows that $g|_{V\times_\top W}=f$ as expected.
  \end{proof}
\begin{cor}\label{cor:univpptyloctensprod-nolocalityrel}   
	The universal property of the locality tensor product of two  {subspaces of a pre-locality vector space} is equivalent to the universal property of the usual (by usual we mean non locality) tensor product of two  subspaces of an ordinary vector space.
\end{cor}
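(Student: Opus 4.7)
The plan is to prove the equivalence by establishing both directions, leveraging Proposition \ref{prop:locbiltobil} as the key bridge between $\top_\times$-bilinear maps and ordinary bilinear maps.

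For the direction "locality universal property implies the usual one", I would observe that the usual tensor product is a degenerate instance of the locality tensor product corresponding to the full locality relation $\top = E \times E$. In this case, $V \times_\top W = V \times W$, the subspace $I_{\rm bil}^{\top_\times}$ coincides with $I_{\rm bil}$, so that $V \otimes_\top W = V \otimes W$, and a $\top_\times$-bilinear map is simply a $\K$-bilinear map. Specialising the statement of Theorem \ref{thm:univpptyloctensprod-nolocalityrel} to this setting recovers the universal property of the usual tensor product verbatim.

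For the converse direction, starting from a $\top_\times$-$\K$-bilinear map $f_\top: V \times_\top W \to G$, I would apply Proposition \ref{prop:locbiltobil} to extend $f_\top$ to a $\K$-bilinear map $g: V \times W \to G$ satisfying $g|_{V \times_\top W} = f_\top$. The universal property of the usual tensor product then produces a unique $\K$-linear map $\phi_g: V \otimes W \to G$ with $\phi_g \circ \otimes = g$. Since $V \otimes_\top W \subset V \otimes W$, I would define $\phi_{f_\top} := \phi_g|_{V \otimes_\top W}$ and check that for every $(v,w) \in V \times_\top W$,
\[\phi_{f_\top}(v \otimes_\top w) = \phi_g(v \otimes w) = g(v,w) = f_\top(v,w),\]
so diagram \eqref{eq:otimesT} commutes.

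The uniqueness of $\phi_{f_\top}$ does not require any extension: it follows directly from the fact that, by construction of $V \otimes_\top W$ as a quotient of $\K(V \times_\top W)$, the pure tensors $\{v \otimes_\top w : (v,w) \in V \times_\top W\}$ linearly span $V \otimes_\top W$, so any linear map out of $V \otimes_\top W$ is determined by its values on these generators. The only subtle point, and the one I would single out as requiring care, is ensuring that the extension provided by Proposition \ref{prop:locbiltobil} is available in full generality; but since that proposition is proved using Zorn's lemma without additional hypotheses on $\top$, this step goes through. Putting both directions together yields the claimed equivalence.
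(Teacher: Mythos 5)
Your proof is correct and follows essentially the same route as the paper's: the direction ``locality implies usual'' is obtained by specialising to the trivial locality relation $\top = E\times E$, and the direction ``usual implies locality'' is obtained by extending $f_\top$ via Proposition~\ref{prop:locbiltobil} and restricting the resulting $\phi_g$ to $V\otimes_\top W$. The only minor difference is that you explicitly supply a uniqueness argument (via the spanning of $V\otimes_\top W$ by pure tensors) in the ``usual implies locality'' direction, whereas the paper leaves this implicit, relying on the uniqueness already established in Theorem~\ref{thm:univpptyloctensprod-nolocalityrel}; this is a harmless and in fact slightly more careful presentation.
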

\begin{proof}

 We know that  the statement holds for   $\top=E\times E$ as a result of the universal property of the usual tensor product.
  From there, we build the map $\phi_{f_\top}$ for any locality relation $\top$ on $E$.
		
 {We now prove that the usual universal property implies the one in the locality set up. Thanks to }		
Proposition \ref{prop:locbiltobil}, $f_\top$  {extends} to a bilinear map $g:V\times W\to G$ such that $g|_{V\times_\top W}=f_\top$. The universal property of the usual tensor product yields the existence of a unique linear map $\phi_g:V\otimes W\to G$ such that \begin{equation}\label{eq:conmunivpropltp}g=\phi_g\circ \otimes.\end{equation} Since $V\otimes_\top W\subset V\otimes W$, we can      restrict  $\phi_g$ to $V\otimes_\top W$ and set $\phi_{f_\top}:=\phi_g|_{V\otimes_\top W}$. Since $\otimes|_{V\times_\top W}=\otimes_\top$, we can further restrict  (\ref{eq:conmunivpropltp}) to $V\times_\top W$  and we have \[f_\top=\phi_{f_\top}\circ \otimes_\top\]
as expected. 
Now assuming that  the universal property holds for locality tensor products, we want to show that it holds for ordinary tensor products. So, let $V, W$ be two subspaces of a vector space $E$, $G$ another vector space. We equip $E$ with the trivial locality relation $\top= E\times E$, in which case   $V\times_\top W=(V\times W)\cap \top=  V\times W$ and the bilinear map $f:V\times W\to G$   can be interpreted as a $\top_\times$-bilinear map  $f_\top:=f:V\times_\top W\to G$ .  Applying Theorem \ref{thm:univpptyloctensprod-nolocalityrel} yields a linear map $\phi_f:=\phi_{f_\top}: V\otimes_\top W=V\otimes W\to G$ such that $f=\phi_f\circ \otimes$ since $\otimes_\top=\pi_\top\circ \iota_\top= \pi\circ \iota=\otimes$. The uniqueness of $\phi_f$ then follows from that of $\phi_{f_\top}$.
\end{proof}

\section{ {A locality relation on locality tensor products}} \label{sec:localityrelationontp}

We want  to define a locality relation $\top_\otimes$ in such a way that $\otimes_\top$   defined in (\ref{eq:otimesT}) is a locality map.  {For this purpose, we define a notion of final locality inspired by that of final topology. Let us first recall some definitions.}

\subsection{Free locality vector spaces}

 We recall a rather straightforward construction from \cite[Lemma 2.3 (ii)]{CGPZ1}.
A locality relation $\top$ on a set $X$ induces another one in the power set $\mathcal P(X)$, {which with some abuse of notation, we denote by the same symbol $\top$}:
	\[\forall U, V\in {\mathcal P}(X), \quad U {\top} V\Longleftrightarrow u\top v\,\, \forall (u, v)\in U\times V.\]
Conversely a locality relation $\top$ on a power set $\mathcal{P}(X)$ induces a locality relation, again written $\top$ on the set $X$ by restriction:
\begin{equation*}
 \forall(x,y)\in X^2,~x\top y\Longleftrightarrow \{x\}\top \{y\}.
\end{equation*}
 A locality relation $\top$  on a set $X$ further induces a locality relation (denoted with some abuse of notation by the same symbol $\top$)
on the vector space $\K\, X$ generated by $X$ given by the linear extension of the locality relation $\top$ on $X$. Explicitly, two elements $a$ and $b$ in $\K X$ are independent if the basis elements from $X$ appearing in  {$a$} are independent of  the basis elements arising in  $b$.   More precisely, the linear span $(\K \, X, \top)$  of a locality set $(X, \top)$  is a pre-locality vector space when equipped with the symmetric binary relation
\begin{equation} \label{eq:locrelonthefreespan} 
 \left(a:=\sum_{x\in X_a} \alpha_x\, x\right)\, \top  \, \left(b:=\sum_{x\in X_b} \beta_x\, x   \right)\Longleftrightarrow X_a \top_X X_b
\end{equation} 
where the coefficients $\alpha_x$ and $\beta_x$ are all non zero.

\begin{lem}\label{lem:linearspan} The linear span $(\K \, X, \top)$  of a locality set $(X, \top)$  is a locality (and hence also a pre-locality) vector space.
\end{lem}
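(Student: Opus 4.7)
The plan is to verify the two conditions defining a locality vector space for $(\K X, \top)$ where $\top$ is extended from $X$ via (\ref{eq:locrelonthefreespan}): first the pre-locality condition $0\top 0$, then the linear locality property (\ref{eq:linearloc}). Once (\ref{eq:linearloc}) is established, the fact that each polar set $U^{\top}$ is a linear subspace follows by the standard argument (the polar set is the intersection over $u\in U$ of $\{u\}^{\top}$, and an intersection of subspaces is a subspace).

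For any $a\in \K X$, write its support $X_a\subset X$ as the (finite) set of basis vectors appearing with nonzero coefficient in the unique expansion of $a$, so that $a=\sum_{x\in X_a}\alpha_x\, x$ with all $\alpha_x\neq 0$. The pre-locality requirement $0\top 0$ is immediate since $X_0=\emptyset$ and the condition $\emptyset\,\top_X\,\emptyset$ holds vacuously, so by definition (\ref{eq:locrelonthefreespan}) we have $0\top 0$.

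To check (\ref{eq:linearloc}), suppose $u\top v$ and $u'\top v$ in $\K X$, and let $\lambda,\lambda'\in \K$. By definition of $\top$ on $\K X$, this means $X_u\,\top_X\,X_v$ and $X_{u'}\,\top_X\,X_v$, i.e.\ $x\top y$ for every $x\in X_u\cup X_{u'}$ and every $y\in X_v$. The key observation is that in the unique expansion of the linear combination $\lambda u+\lambda'u'$ one has
\[
X_{\lambda u+\lambda' u'}\subseteq X_u\cup X_{u'},
\]
since possible cancellations can only shrink the support. Hence for each $x\in X_{\lambda u+\lambda'u'}$ and each $y\in X_v$ we still have $x\top y$, giving $X_{\lambda u+\lambda'u'}\,\top_X\,X_v$ and thus $(\lambda u+\lambda'u')\top v$, which is (\ref{eq:linearloc}).

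There is no genuine obstacle: the only mild subtlety is the behaviour of supports under linear combinations (possible cancellations), but this only makes the support smaller, which is exactly the direction that preserves the locality relation. Once (\ref{eq:linearloc}) is verified, the statement that $U^{\top}$ is a linear subspace, and therefore that $(\K X,\top)$ is a locality vector space (and a fortiori a pre-locality vector space), follows.
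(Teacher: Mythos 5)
Your proof is correct and follows essentially the same route as the paper's: both arguments expand $u$, $u'$ and $v$ over the generating set and use the definition of the linearly extended relation to conclude that $\lambda u+\lambda' u'$, whose support lies in $X_u\cup X_{u'}$, remains independent of $v$. You are in fact slightly more careful than the paper, explicitly noting that cancellations only shrink the support and checking the condition $0\top 0$, which the paper leaves implicit.
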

\begin{proof} By definition we have 
	\[ \left(\sum_{i=1}^n\lambda_i\, u_i\right)\, \top \, \left(  \sum_{j=1}^{n^\prime}\lambda_j^\prime\,  u_j^\prime\right) \Longleftrightarrow  u_i\top u_j^\prime \,  \, 
	\forall (i, j)\in \{1, \cdots, n\}\times \{1, \cdots, n^\prime\}.  \]  
 In order to check Condition (\ref{eq:linearloc}), we take $v:=\sum_{k=1}^{m}\mu_k\,  v_k$, $u= \sum_{i=1}^n\lambda_i\, u_i$  {in $\K X$}, and $u^\prime= \sum_{j=1}^{n^\prime}\lambda_j^\prime\,  u_j^\prime$ {with $u \top v$ and $u'\top  {v}$}. For any $(\lambda, \lambda^\prime)\in \K^2$, 
	the element $\lambda u+\lambda^\prime u^\prime = \sum_{i=1}^n\lambda\,\lambda_i\, u_i + \sum_{j=1}^{n^\prime}\lambda^\prime\,\lambda_j^\prime\,  u_j^\prime$ is locality independent of $v$. Indeed, it follows from the definition of the linearly extended relation $\top$, and from the $ u_i$'s and $u_i'$'s being locality independent of $v$ for all $(i, j)\in \{1, \cdots, n\}\times \{1, \cdots, n^\prime\}$.
\end{proof}

\subsection{Quotient locality as a final locality relation} 
 We define a final locality in much the same way as a final topology. 
Recall that given two topologies $\tau_1$, $\tau_2$ on some set $X$,   $\tau_1$ is said to be {\bf coarser  {(weaker or smaller)}}  than $\tau_2$, or equivalently $\tau_2$ {\bf finer    {(stronger or larger)}}
 than $\tau_1$ if, and only if $\tau_1\subset \tau_2$.  
Also, given  a set $X$ and  $(X_i,\tau_i)_{i\in I}$  a family of topological spaces together with a family of maps  $f_i:X_i\to X$, the {\bf final topology  { (or strong,  colimit, coinduced, or inductive topology)}  
 $\bar\tau$} is the finest topology on $X$ such that all maps $f_i$ are continuous.
  With a small abuse of language, one says that the topology $\bar\tau$ is final with respect to the maps $f_i$.

  A typical example is the quotient topology on $X\times I$ where $I$ is a subset of a  locality set $(X, \top)$,  defined as the final topology for  the projection map $\pi: X\to X\setminus I$.

Let us now transpose this terminology to the locality setup. 
\begin{defn}
Let $\top_1$ and $\top_2$ be two locality relations over a set $A$. We say $\top_1$ is  {\bf coarser} than $\top_2$  or equivalently, that $\top_2$ is 
 {\bf finer} than $\top_1$ if, and only if $\top_1\subset \top_2$.
\end{defn}
\cy{
\begin{rk}
 In Theorem \ref{thm:equiv_categories}, a pre-locality relation $\top$ on a vector space is completed into a locality relation $\top^\ell$ on the same vector space. Then $\top^\ell$ can be defined as the coarsest locality relation that is finer than $\top$.
\end{rk}
}
The following example provides a justification of the terminology in our transposition from a topological to a locality context.
\begin{ex} {Let $X$ be a set and ${\mathcal P}(X)$ its powerset. Disjointness of sets:
		\[A\top B\Longleftrightarrow A\cap B=\emptyset\] defines a locality relation on any subset ${\mathcal O}$ of ${\mathcal P}(X)$. 
	If $(X, {\mathcal O})$ is a topological space with topology ${\mathcal O}  \subset {\mathcal P}(X)$, this disjointness relation gives rise to  
	another locality relation (which with some abuse of notation, we denote by the same notation) given by the separation of points:
	\[x\top y\Longleftrightarrow \exists U, V\in {\mathcal O}, \quad \left( U\, \top \, V\right)\, \wedge \,  \left(x\in U\wedge y\in V\right).\]
The finer (coarser) the  topology ${\mathcal O}$, the  {larger (smaller)} the graph $\{(x, y), \, x\top y\}$ of the locality relation, hence the terminology we have chosen. } 
 	\end{ex}

\begin{defn}\label{defn:final_relation}
Let $X$ be a set, $(X_i,\top_i)_{i\in I}$ a family of locality sets, and $f_i:X_i\to X$ a family of maps. The {\bf final locality relation $\ttop$} 
on $X$ is the  {coarsest  locality relation among the locality relations $\top$ on $X$ for which \[  f_i: (X_i,\top_i)\longrightarrow (X, \top), \quad i\in I \] are locality maps}. 

As before, with a slight abuse of language, we shall say that $\ttop$ is a {\bf  final locality relation} on $X$  for the maps $f_i$.
\end{defn}

\begin{prop} \label{prop:description_final_relation}
 Given a  surjective map $ {\phi:A\to B}$, the locality relation $\top$ on $A$ induces a locality relation $\ttop$ on $B$ defined by 
\[ b_1\ttop b_2\Longleftrightarrow (\exists (a_1,a_2)\in A\times A : \phi(a_i)=b_i\ \rm{and}\ a_1\top a_2),\]
which is the final locality relation for the map $\phi$.
\end{prop}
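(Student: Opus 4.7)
The plan is to recognise this as a direct unpacking of Definition~\ref{defn:final_relation} specialised to a single surjective map, in complete analogy with the construction of the quotient topology. I need to verify three things about $\ttop$: that it is a (symmetric) locality relation on $B$, that it makes $\phi$ into a locality map, and that it is minimal with respect to inclusion (i.e.\ coarsest) among locality relations with that property.

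First I would check symmetry of $\ttop$. If $b_1 \ttop b_2$, one can pick witnesses $a_1, a_2 \in A$ with $\phi(a_i) = b_i$ and $a_1 \top a_2$. The symmetry of $\top$ yields $a_2 \top a_1$, so exchanging the roles of $a_1$ and $a_2$ gives $b_2 \ttop b_1$. Since a locality relation on a set is by definition just a symmetric binary relation, $\ttop$ is a locality relation on $B$.

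Second, I would observe that $\phi: (A, \top) \to (B, \ttop)$ is a locality map essentially by construction: for any $(a_1, a_2) \in \top$ the pair $(a_1, a_2)$ itself is a witness for $\phi(a_1) \ttop \phi(a_2)$ in the defining equivalence of $\ttop$, so $(\phi \times \phi)(\top) \subset \ttop$. Third and finally, I would prove minimality. Let $\top'$ be any locality relation on $B$ such that $\phi: (A, \top) \to (B, \top')$ is a locality map, i.e.\ $(\phi \times \phi)(\top) \subset \top'$. Given $(b_1, b_2) \in \ttop$ with witnesses $(a_1, a_2) \in \top$ such that $\phi(a_i) = b_i$, one has
\[
(b_1, b_2) \;=\; (\phi(a_1), \phi(a_2)) \;\in\; (\phi \times \phi)(\top) \;\subset\; \top',
\]
whence $\ttop \subset \top'$, which is exactly the defining property of the coarsest locality relation making $\phi$ locality.

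No step presents a serious obstacle: everything reduces to chasing the existence quantifier in the defining equivalence of $\ttop$. I would remark \emph{en passant} that the surjectivity of $\phi$ plays no role in the three verifications themselves; it is natural to impose it because $B$ is meant to be interpreted as a quotient of $A$ through $\phi$, and it guarantees that $\ttop$ is expressed purely in terms of the fibres of $\phi$ without leaving elements of $B$ outside the reach of the construction.
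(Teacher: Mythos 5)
Your proof is correct and follows essentially the same route as the paper's: $\phi$ is a locality map by construction, and minimality is obtained by the same quantifier chase showing $\ttop\subseteq\top'$ for any competing relation $\top'$. The extra explicit checks (symmetry of $\ttop$, the remark on surjectivity) are fine but the paper simply treats the first part as immediate from the definition.
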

\begin{proof} 
 {It is clear from the definition  of $\ttop$,}  that $\phi:(A,\top)\longrightarrow(B,\ttop)$ is a locality map.

Let $\top_B$ be a locality relation on $B$ such that $ {\phi:(A,\top)\longrightarrow(B,\top_B)}$ is a locality map. For any $(b_1,b_2)\in B^2$ we have
\begin{align*}
 b_1\ttop b_2 ~&\Longrightarrow~\left(\exists(a_1,a_2)\in A^2|\phi(a_i)=b_i~\wedge~a_1\top a_2\right)\quad \text{for }i\in\{1,2\}\\
 &\Longrightarrow~\left(\exists(a_1,a_2)\in A^2|\phi(a_i)=b_i~\wedge~\phi(a_1)\top_B \phi(a_2)\right)\quad \text{since }\phi\text{ is a locality map}\\
 &\Longrightarrow~b_1\top_B b_2.
\end{align*}
Therefore $\ttop\subseteq\top_B$.
\end{proof}

\begin{ex}The map 
\begin{align*}
\phi&:\left\{\begin{array}{rcl}
\phi: \N&\longrightarrow & 2\, \N\\
		m&\longmapsto & 2m
\end{array}\right.
\end{align*}
is surjective. We equip $A:=\N$ with the  locality relation $m_1\top m_2\Longleftrightarrow \vert m_1-m_2\vert =3$. Then $n_1\overline\top n_2$ if and only if $\vert n_1-n_2\vert =6$.
\end{ex}

Applying Proposition \ref{prop:description_final_relation}   to  the canonical   projection map $\pi:V\to V/ W$  of a  {pre-locality}  vector space  $(V, \top)$ to its quotient $ V/ W$ by a linear subspace $W$, we equip  the quotient with the quotient locality relation.
\begin{defnprop}\label{defn:quotientlocality}
 For a subspace $W$ of a  pre-locality vector space $(V, \top)$, we call {\bf quotient locality} on 
 the quotient $V/ W$, the final locality relation
  \begin{equation}\label{eq:quotientloc}
 \left([u]\ttop[v]~\Longleftrightarrow \exists(u',v')\in[u]\times[v]:~u'\top v'\right)\quad\quad \forall ([u],[v])\in (V/ W)^2 
 \end{equation}
  for the canonical projection map    $\pi:V\to V/ W$. This way, the  pre-locality space $(V, \top)$ gives rise to a  {pre-locality} vector quotient space $(V/ W, \ttop)$ and the projection map $\pi:(V, \top)\to (V/ W, \ttop)$ is a morphism of pre-locality vector spaces.
\end{defnprop}
\begin{proof}
     The facts that $(V/W,\ttop)$ is a pre-locality space and that $\pi:(V, \top)\to (V/ W, \ttop)$ is a morphism of pre-locality spaces hold by definition of $\ttop$, since it is the coarsest locality relation such that $\pi$ is a locality map.
    \end{proof}

 The following simple examples  illustrate this last concept.
\begin{ex} Consider the  {pre-locality} vector space $(\R^3,\top)$ where $\top$ is the orthogonality relation, namely $v\top w\Leftrightarrow v\perp w$. Let $W=\K e_1\subset \R^3$ be the span of $e_1$ where $\{e_i\}_{i=1}^3$ is the canonical basis of $\R^3$. The quotient locality on $\R^3/ W$ is $\overline\top=(\R^3/ W)\times(\R^3/ W)$ since for any pair $([q_2e_e+q_3e_3], [k_2e_2+k_3e_3])\in (\R^3/ W)^2$ there are  {scalars} $q_1$ and $k_1$ in $\K$ such that $(q_1e_1+q_2e_2+q_3e_3)\perp(k_1e_1 {+}k_2e_2 {+}k_3e_3)$, so that $[q_2e_e+q_3e_3]\overline\top[k_2e_2+k_3e_3]$.
\end{ex}

\begin{ex}
Consider the  pre-locality vector space $(V,\top)$ where $V=\R^4$ and $\top=\R^4\times \{0\}\cup \{0\}\times\R^4\cup(\lan\{e_1,e_3\}\ran\times\lan e_2+e_4\ran)\cup(\lan e_2+e_4\ran\times\lan\{e_1,e_3\}\ran)$. For $W=\K(e_4)$,   the quotient locality on $V/ W$ is given by $\overline\top=\left(V/ W\times\{[0]\}\right)\cup\left(\{[0]\}\times V/ W\right)\cup\left( \lan[e_1+e_3]\ran\times \lan[e_2]\ran\right)\cup \left(\lan[e_2]\ran\times\lan[e_1+e_3]\ran\right)$.
\end{ex}

\subsection{An enhanced universal property on tensor products of pre-locality spaces}\label{subsec:LTPofPL}

	 Let  $(E,\top)$ be a pre-locality vector space, and let us consider the locality cartesian product $V\times_\top W= (V\times W)\cap \top$ of two linear subspaces  $V$ and $W$ of $E$. Both subspaces inherit from   $\top$  a locality relation $\top_V=\top\cap (V\times V)$ and similarly for $W$, turning them into  pre-locality subspaces. We want to equip their locality tensor product $V\otimes_\top W$ defined in (\ref{eq:VotimestopW})  with a locality relation. 
\begin{defn} \label{defn:top_otimes}
 The locality relation $\top_\otimes$ on $V \otimes_\top W$ is defined as the quotient relation  (see  Definition \ref{defn:quotientlocality}) for the quotient map $\pi:\K\left(V \times_{\top}W\right)\to V \otimes_\top W$, where the locality relation on $\K\left(V \times_{\top}W\right)$ is \cy{the relation $\top_{V\times_\top W}$ defined in Equation} \eqref{eq:locreltoptimes}.
\end{defn}
 {With this locality relation on the locality tensor product, the map $\otimes_\top$ is a locality map:}
\begin{prop}

 The map $\otimes_\top:(V\times_\top W,\top_\times)\to (V\otimes_\top W,\top_\otimes)$ is a locality $\top_\times$-bilinear map.
\end{prop}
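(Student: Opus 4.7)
The plan is to decompose $\otimes_\top$ as the composition $\pi_\top \circ \iota_\top$ already displayed in Figure (following Definition \ref{defn:loctensprod1}), and to verify separately that each factor preserves the locality relations, with the $\top_\times$-bilinearity part being invoked directly from Proposition \ref{prop:otimesastxbilinear}.

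First I would note that the $\top_\times$-bilinearity of $\otimes_\top$ is exactly the content of Proposition \ref{prop:otimesastxbilinear}, so nothing new is required on that front; only the locality property needs to be checked. To this end, I would endow the free span $\K(V\times_\top W)$ with the linearly extended locality relation constructed as in \eqref{eq:locrelonthefreespan} from the locality set $(V\times_\top W,\top_{V\times_\top W})$. By Lemma \ref{lem:linearspan}, this turns $\K(V\times_\top W)$ into a pre-locality (in fact locality) vector space, and by construction of this extension, the canonical injection $\iota_\top: (V\times_\top W,\top_{V\times_\top W})\to \K(V\times_\top W)$ is a locality map, since two singleton basis elements are declared independent in $\K(V\times_\top W)$ precisely when the corresponding pairs are related by $\top_{V\times_\top W}$.

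Next, by Definition \ref{defn:top_otimes}, the locality relation $\top_\otimes$ on $V\otimes_\top W$ is the final (quotient) locality relation for the canonical projection $\pi_\top: \K(V\times_\top W)\to V\otimes_\top W$. By Definition-Proposition \ref{defn:quotientlocality} (and Proposition \ref{prop:description_final_relation}), this ensures that $\pi_\top$ itself is a locality map. Composing the two, and using that compositions of locality maps are locality maps, I conclude that
\[
\otimes_\top \;=\; \pi_\top\circ \iota_\top\;:\;(V\times_\top W,\top_{V\times_\top W})\longrightarrow (V\otimes_\top W,\top_\otimes)
\]
is a locality map, i.e.\ $(\otimes_\top\times \otimes_\top)(\top_{V\times_\top W})\subseteq \top_\otimes$, which combined with the bilinearity from Proposition \ref{prop:otimesastxbilinear} gives the required locality $\top_\times$-bilinearity.

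There is no real obstacle here: the proof is essentially a bookkeeping exercise gluing together the definition of $\top_\otimes$ as a final locality (making $\pi_\top$ automatically a locality morphism) with the fact that $\iota_\top$ preserves locality by construction of the linearly extended relation on the free span. The only point where one should be slightly careful is to make sure that the locality relation used on $\K(V\times_\top W)$ when defining $\top_\otimes$ in Definition \ref{defn:top_otimes} is indeed the linear extension of $\top_{V\times_\top W}$ (so that $\iota_\top$ is a locality map)—this is implicit in the reference to \eqref{eq:locreltoptimes} and the discussion of free locality vector spaces preceding it.
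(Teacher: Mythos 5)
Your proof is correct and follows essentially the same route as the paper: both invoke Proposition \ref{prop:otimesastxbilinear} for the $\top_\times$-bilinearity, decompose $\otimes_\top=\pi_\top\circ\iota_\top$, observe that $\iota_\top$ is a locality map because the relation on $\K(V\times_\top W)$ is the linear extension of $\top_{V\times_\top W}$ and that $\pi_\top$ is a locality map by construction of the quotient relation $\top_\otimes$, and conclude by composition of locality maps. Your extra remark about checking which relation on $\K(V\times_\top W)$ is used in Definition \ref{defn:top_otimes} is a reasonable point of care, but it matches what the paper does implicitly.
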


\begin{proof}
It was shown in Proposition \ref{prop:otimesastxbilinear} that $\otimes_\top$ is $\top_\times$-bilinear. We therefore only need to show that it is a locality map. Recall that $\otimes_\top=\pi_\top\circ\iota_\top$, where $\iota_\top:V\times_\top W\to \K(V\times_\top W)$ is the canonical inclusion map. The latter  is a locality map since the locality $\top_\times$ on $\K(V\times_\top W)$ is a linear extension of the locality relation in $V\times_\top W$. The map $\pi_\top: \K(V\times_\top W)\to V\otimes_\top W$ is also a locality map by construction of the locality relation $\top_\otimes$. The statement then follows from the fact that the composition of locality maps is again a locality map.
\end{proof}

Locality tensor products equipped with the  locality relation $\top_\otimes$ are pre-locality vector spaces and the 
universal property of the   tensor product proved in Theorem \ref{thm:univpptyloctensprod-nolocalityrel}  can be enhanced as follows.

\begin{thm}[Universal property of  the locality tensor product on pre-locality vector spaces]  \label{thm:univpropltp-preloc}
	Let
$(G,\top_G)$ be a pre-locality vector space,  and $f:(V\times_\top W,\top_{V\times_\top W})\to (G,\top_G)$ a locality 
	$\top_\times$-bilinear map. 
	There is a unique locality linear map $\phi:V\otimes_\top W\to G$ such that the following diagram commutes.
	\begin{equation*}
	\begin{tikzcd}
	(V\times_\top W,\top_{V\times_\top W})\arrow[r,"\otimes_\top"]\arrow[rdd,"f"]&(V\otimes_\top W,\top_\otimes) \arrow[dd,"\phi"]\\
	\\
	&(G,\top_G)
	\end{tikzcd}
	\end{equation*}
\end{thm}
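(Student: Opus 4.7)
The plan is to decouple the linear universal property from the locality condition, handling each in turn. Forgetting the pre-locality structure on $G$ and viewing $f$ as a $\top_\times$-bilinear map into the bare vector space $G$, Theorem \ref{thm:univpptyloctensprod-nolocalityrel} produces a unique $\K$-linear map $\phi: V \otimes_\top W \to G$ with $\phi \circ \otimes_\top = f$. Uniqueness of $\phi$ as a locality linear map then follows a fortiori from its uniqueness as a linear map, so only its locality remains to be checked.

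To verify $(\phi \times \phi)(\top_\otimes) \subset \top_G$, I would take a pair $([a], [b]) \in \top_\otimes$ and unwind the definition of the quotient locality. By Definition \ref{defn:top_otimes} together with the characterization in Proposition \ref{prop:description_final_relation}, there exist representatives $a', b' \in \K(V \times_\top W)$ with $\pi_\top(a') = [a]$, $\pi_\top(b') = [b]$, related under the locality on $\K(V \times_\top W)$ induced from $\top_{V \times_\top W}$ through the canonical inclusion $V\times_\top W \hookrightarrow \K(V\times_\top W)$. Hence $a' = (v,w)$ and $b' = (v',w')$ for some pairs $(v,w), (v',w') \in V \times_\top W$ satisfying $(v,w) \top_{V\times_\top W} (v',w')$, so $[a] = v \otimes_\top w$ and $[b] = v' \otimes_\top w'$. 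The commuting diagram from Step~1 gives $\phi([a]) = f(v,w)$ and $\phi([b]) = f(v',w')$, and the locality condition on $f$ (Definition \ref{defn:locbiliresplocality}) immediately yields $f(v,w) \top_G f(v',w')$, completing the verification.

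The main potential obstacle lies in correctly identifying the ambient locality relation on $\K(V \times_\top W)$ used in Definition \ref{defn:top_otimes}. If one interpreted it as the full linear extension \emph{à la} Lemma \ref{lem:linearspan}, one would be forced to pass pairwise $\top_G$-relations to relations between linear combinations $\sum_i \alpha_i f(v_i,w_i)$ and $\sum_j \beta_j f(v'_j,w'_j)$, which is not automatic for a pre-locality target since $\top_G$ need not satisfy the linear locality condition \eqref{eq:linearloc}. The argument sketched above goes through cleanly under the literal reading where only pure-tensor representatives from $V \times_\top W$ enter $\top_\otimes$; this reading is both consistent with the pre-locality hypothesis on $G$ and with the role of this theorem as a stepping stone toward the locality version (Theorem \ref{thm:univpropltp}), where the linear compatibility will be supplied by the conjectural statements of the second part.
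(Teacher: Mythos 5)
Your overall plan --- get the linear map from Theorem \ref{thm:univpptyloctensprod-nolocalityrel}, then check locality --- is the paper's route, and your instinct to worry about the ambient locality on $\K(V\times_\top W)$ is well placed. But the step ``Hence $a' = (v,w)$ and $b' = (v',w')$'' does not hold. Representatives in $\K(V\times_\top W)$ are linear combinations of basis elements, and the locality used in Definition \ref{defn:top_otimes} is the linear extension of $\top_{V\times_\top W}$, as the paper itself spells out in the proof of the proposition immediately following that definition (``the locality $\top_\times$ on $\K(V\times_\top W)$ is a linear extension of the locality relation in $V\times_\top W$''). So $a'$ and $b'$ can have several summands, and your ``literal reading'' with only pure-pair representatives is not the one the text adopts. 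Under the correct reading, proving $\phi([a])\top_G\phi([b])$ requires passing from the pairwise relations $f(x_i,y_i)\top_G f(u_j,v_j)$ to $\sum_i\alpha_i f(x_i,y_i)\top_G\sum_j\beta_j f(u_j,v_j)$, and that passage is precisely the linear-locality property \eqref{eq:linearloc}, which a pre-locality $\top_G$ need not satisfy.

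This gap is not, however, a failing peculiar to your attempt: the paper's own proof asserts exactly this implication (``Since $f$ is locality bilinear, then $f(\sum_{i=1}^n\alpha_i(x_i,y_i))\top_G f(\sum_{j=1}^m\beta_j(u_j,v_j))$'') without the linear-locality hypothesis on $G$ that would justify it. Your diagnosis of the obstruction is therefore accurate, and your remark that the argument closes when the target is a locality vector space is confirmed by the later Theorem \ref{thm:univpropltp}, which makes precisely that assumption. As it stands, your proof rests on a reading of $\top_\otimes$ that the definitions do not support, so it cannot be accepted as written; but the concern you raise identifies a genuine weakness in the paper's proof of this pre-locality version.
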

\begin{proof}
	Theorem \ref{thm:univpptyloctensprod-nolocalityrel} yields the existence and uniqueness of the linear map $\phi$.
	We are only left to show that $\phi$ is a locality map.  Recall that two equivalence classes $[a]$ and $[b]$ in 
	$V\otimes_{\top}W$ verify $[a]\top_{\otimes}[b]$ if there are $\sum_{i=1}^n\alpha_i(x_i,y_i)\in[a]$ and 
	$\sum_{j=1}^m\beta_j(u_j,v_j)\in[b]$ such that every possible pair taken from the set $\{x_i,y_i,u_j,v_j\}$ lies in 
	$V\times_\top W$ for every $1\leq i\leq n$ and every $1\leq j\leq m$. Since $f$ is locality bilinear, then 
	$f(\sum_{i=1}^n\alpha_i(x_i,y_i))\top_Gf(\sum_{j=1}^m\beta_j(u_j,v_j))$ which amounts  to $\phi([a])\top_G\phi([b])$. Therefore 
	$\phi$ is locality as expected.
\end{proof}

\begin{rk}
	The universal property of  the locality tensor product on pre-local vector spaces clearly implies the  universal property of  the  tensor product on ordinary vector spaces; take $\top_V= V\times V, \top_W= W\times W, \top_G=G\times G$ in the above theorem.  
\end{rk}

\section{Higher locality tensor products} \label{section:three}

 We generalise Definition  \ref{defn:loctensprod1} and build  the locality tensor product of  $n$ pre-locality vector spaces, which we equip with a locality relation.  In this paragraph, 
  	$(E,\top)$ is a pre-locality vector space over $\K$, and $V_1, \cdots, V_n$ are linear subspaces of $E$.
We build  the quotient  of $\K( V_1\times \cdots \times V_n) $   as the subspace $I_{{\rm mult}} {(V_1, \cdots, V_n)}$  generated by all elements of the form
\begin{equation}\label{eq:multlinform1}
(x_1,...,x_{i-1},a_i+b_i,x_{i+1},...,x_n)-(x_1,...,x_{i-1},a_i,x_{i+1},...,x_n)-(x_1,...,x_{i-1},b_i,x_{i+1},...,x_n)
\end{equation}
\begin{equation}\label{eq:multlinform2}
(x_1,...,kx_i,...,x_n)-k(x_1,...,x_i,...,x_n)
\end{equation}
for every $i\in[n]$, $k\in\K$ and $a_i,b_i,x_i\in V_i$. If $V_1=\cdots =V_n=V$, we write  $I_{{\rm mult},n}(V) $.
\begin{defn}\label{defn:loctensalgebra} 
	We   define  
	\begin{itemize}
		\item\cite[\S 3.1]{CGPZ1} the {\bf  locality cartesian product} 
		\begin{equation}\label{eq:Vtopn} 
		 V_1\times_\top \cdots \times_\top V_n:=\{(x_1,...,x_n)\in  V_1\times  \cdots \times  V_n|\forall(i,j\in[n]): i\neq j\Rightarrow (x_i,x_j)\in V_i\times_\top V_j:= (V_i\times V_j)\cap \top\};
		 \end{equation}
If  $V_i=V$ for any $i\in [n]$, we set $V^{\times_{\top}^n}:=  V_1\times_\top \cdots \times_\top V_n=V\times_\top\dots\times_\top V$.
		In particular $V^{\times_{\top}2}=\top$, $V^{\times_{\top}1}=V$ and we set by convention  $V^{\times_{\top}0}=\K$ and $V^{\times_\top\infty}:=\bigcup_{n\geq0}V^{\times_\top n}.$  {Note that $V_1\times_\top \cdots \times_\top V_n\ni (0, 0, \cdots, 0)$ where $0$ is the zero element in $E$, since $\top\ni (0, 0)$ by definition of a pre-locality vector space.}
		\item \cite[\S 4.1]{CGPZ1} the {\bf     locality tensor product} \begin{equation} \label{eq:Votimesn}
		 V_1\otimes_{\top} \cdots \otimes_{\top} V_n~:=~\sfrac{\K(V_1\times_\top \cdots \times_\top V_n)}{(I_{{\rm mult}} {(V_1, \cdots, V_n)}\cap\K(V_1\times_\top \cdots \times_\top V_n))}
		 \end{equation}
 If $V_i=V$ for any $i\in [n]$, we set $V^{\otimes_{\top}^n }:=  V_1\otimes_\top \cdots \otimes_\top V_n$.
		
\item
We endow the locality tensor product $V_1\otimes_\top\cdots\otimes_\top V_n$ with the locality relation $\top_{\otimes n}$ defined as the quotient locality (see Definition \ref{defn:quotientlocality}) for the quotient map $\pi_n:\K(V_1\times_\top \cdots \times_\top V_n)\longrightarrow V_1\otimes_{\top} \cdots \otimes_{\top} V_n$.
		\end{itemize}
	\end{defn} 
\begin{rk}
For $n=2$ we recover Definition \ref{defn:loctensprod1}.
\end{rk}
	The size of $V^{\otimes_{\top}^n}\subset V^{\otimes n}$ depends on   the locality relation, namely on how many elements   mutually independent  it allows.
	\begin{ex} Consider the pre-locality vector space $(\R^2,\perp)$, then $V^{\times_{\top}n}=\{0\}$ for all $n\geq3$ since there are no three pairwise orthogonal non zero elements in $\R^2$.
	\end{ex}
	This contrasts with  the following example.
	
%
%
\begin{ex}
  {We equip the vector space $V:=\R^{\infty}$ with the canonical orthogonality relation $\perp$ as locality relation: $u\perp v:\Longleftrightarrow\langle u,v\rangle=0$ (see \cite[Subsection 2.2.1]{CGPZ1} for details). One easily checks that $(\R^\infty,\perp)$ is a locality vector space. In this case, there is no integer $n$ in $\N$ such that $V^{\otimes_{\perp}n}=\{0\}$. }
\end{ex}

\begin{defn} \label{defn:top_otimes_m_comma_n}
\begin{itemize}
 \item 		We define the relation $\top_{\times m,n}\subset\left( V_1\times_\top\cdots\times_\top  V_m\right) \times \left(V_{m+1}\times_\top\cdots\times_\top  V_{m+n}\right)$ as follows 
		\begin{equation} \label{eq:locreltimesmn}
		(x,y)\in\top_{\times m,n}\Longleftrightarrow \forall (i,j)\in[m]\times[n], (x_i,y_{ {m+}j})\in\top,
		\end{equation}  extend it linearly to $\K\left(V_1\times_\top\cdots\times_\top  V_m\right)\times \K\left(V_{m+1}\times_\top\cdots\times_\top  V_{m+n}\right)$ as follows
		\begin{align*}
&\left( \sum_{k\in K}\alpha_k\, x_k,\sum_{l\in L}\beta_l\,y_l\right)\in   \K\left(V_1\times_\top\cdots\times_\top  V_m\right)\times_{ {\top_{\times m,n}}} \K\left(V_{m+1}\times_\top\cdots \times_\top V_{m+n}\right)\\
&\Longleftrightarrow \forall (k,l)\in K\times L, (x_k,y_l)\in {\top },
				\end{align*}
	for any  {$K$ and $L$ finite sets,} for every $k\in K$ and every $l\in L$, $(\alpha_k,\beta_l)\in(\K\setminus\{0_\K\})^2$,  $x_k\in V_k$ and $y_l\in V_l$.
	
\item  Finally,
    	we define the relation $\top_{\otimes m,n}\subset\left( V_1\otimes_\top\cdots\otimes_\top  V_m\right) \times \left(V_{m+1}\otimes_\top\cdots\otimes_\top  V_{m+n}\right)$   induced from $\top_{\times m,n}$ as follows:
	\begin{equation} \label{eq:locrelotimesmn}
 (x,y)\in\top_{\otimes m,n}\Leftrightarrow \left(\exists x'\in x\right)\wedge\left(\exists y'\in y\right) : \,(x',y')\in\top_{\times m,n},
 \end{equation}
 where $x\in V_1\otimes_\top\cdots \otimes_\top V_m$ and $y\in V_{m+1}\otimes_\top\cdots \otimes_\top V_{m+n}$ (notice that according to Definition \ref{defn:loctensalgebra}, $x$ and $y$ are equivalence classes so that the notation $x'\in x$ makes sense). 
\end{itemize}
		\end{defn}
		\begin{rk}
		     $\top_{\times m,n}$ and $\top_{\otimes m,n}$ are not locality relations since they are not in general subset of a set of the form $S\times S$. Instead, $\top_{\times m,n}$ can be seen as a relation between $ {\K(}V_1\times_\top\cdots\times_\top V_m {)}$ and $ {\K(}V_{m+1}\times_\top\cdots\times_\top V_{m+n} {)}$; and $\top_{\otimes m,n}$ as a relation between $V_1\otimes_\top\cdots\otimes_\top V_m$ and $V_{m+1}\otimes_\top\cdots\otimes_\top V_{m+n}$. In contrast, $\top_{\otimes n}$ is a locality relation on $V_1\otimes_\top\cdots\otimes_\top V_n$.
		    \end{rk}

\begin{lem} \label{lem:iso_loc_cart_prod}
 The map	\begin{align} \label{eq:Psiiso}
	\Psi_{m,n}&:\left\{\begin{array}{rcl} \K\left(V_1\times_\top\cdots\times_\top  V_m\right)\times_\top \K\left(V_{m+1}\times_\top\cdots\times_\top  V_{m+n}\right) &\longrightarrow & \K\left(V_1\times_\top\cdots\times_\top  V_{m+n}\right)\\
	\left((x_1, \cdots, x_m),(y_{1}, \cdots, y_{n})\right)&\longmapsto & (x_1, \cdots,x_m, y_1, \cdots, y_n)
	\end{array}\right.
	\end{align} linearly  extends 
	to a  {surjective} morphism of {pre-locality} vector spaces:
\begin{equation}\label{eq:locisocartesianproduct}  {\Psi_{m,n}:}\K\Big(	 \K\left(V_1\times_\top\cdots\times_\top  V_m\Big)\times_\top \K\left(V_{m+1}\times_\top\cdots\times_\top  V_{m+n}\right)\right)\to  \K\left(V_1\times_\top\cdots\times_\top  V_{m+n}\right).\end{equation} \end{lem}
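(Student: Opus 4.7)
The plan is to carry out the argument in three stages: well-definedness on the locality Cartesian product at the generator level, linear extension together with surjectivity, and preservation of the (linearly extended) locality relations.

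First I would check that the set-theoretic assignment in \eqref{eq:Psiiso} takes its values in $V_1\times_\top\cdots\times_\top V_{m+n}$. For a generator $((x_1,\dots,x_m),(y_1,\dots,y_n))$ of the locality Cartesian product at the source, one has $(x_i,x_j)\in\top$ for $i\neq j$ in $[m]$ (by definition of $V_1\times_\top\cdots\times_\top V_m$), $(y_i,y_j)\in\top$ for $i\neq j$ in $[n]$ (by definition of $V_{m+1}\times_\top\cdots\times_\top V_{m+n}$), and $(x_i,y_j)\in\top$ for every $(i,j)\in[m]\times[n]$ (by definition of $\top_{\times m,n}$ in \eqref{eq:locreltimesmn}). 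Every pair drawn from the concatenated tuple $(x_1,\dots,x_m,y_1,\dots,y_n)$ is therefore $\top$-related, so this tuple lies in $V_1\times_\top\cdots\times_\top V_{m+n}$ as required by \eqref{eq:Vtopn}, and $\Psi_{m,n}$ is well-defined as a map of locality sets.

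Next I would extend $\Psi_{m,n}$ linearly by invoking the universal property of freely generated vector spaces already used in the proof of Theorem \ref{thm:univpptyloctensprod-nolocalityrel}, producing a linear map from $\K\bigl(\K(V_1\times_\top\cdots\times_\top V_m)\times_\top \K(V_{m+1}\times_\top\cdots\times_\top V_{m+n})\bigr)$ to $\K(V_1\times_\top\cdots\times_\top V_{m+n})$. Surjectivity is then immediate: any basis element $(z_1,\dots,z_{m+n})$ of the codomain is the image of the source generator $((z_1,\dots,z_m),(z_{m+1},\dots,z_{m+n}))$, since the pairwise locality of the full $(m+n)$-tuple forces both subtuples to satisfy their respective locality Cartesian product conditions and the cross pairs to satisfy $\top_{\times m,n}$; linearity then gives surjectivity onto the whole free span.

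Finally I would verify that $\Psi_{m,n}$ is a morphism of pre-locality vector spaces. Both source and target carry locality relations obtained by the linear extension rule \eqref{eq:locrelonthefreespan} from locality relations on their underlying generating sets, so by Lemma \ref{lem:linearspan} it suffices to check preservation on generators. If two source generators $((x_1,\dots,x_m),(y_1,\dots,y_n))$ and $((x'_1,\dots,x'_m),(y'_1,\dots,y'_n))$ are locality-related, unwinding the componentwise definition of $\top$ on the pair shows that every cross pair drawn from $\{x_1,\dots,x_m,y_1,\dots,y_n\}$ and $\{x'_1,\dots,x'_m,y'_1,\dots,y'_n\}$ is $\top$-related, which is exactly the condition for their concatenations, viewed as elements of $V_1\times_\top\cdots\times_\top V_{m+n}$, to be locality-related. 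The main obstacle, such as it is, is purely organisational: the lemma juggles three layers of locality structures (the ambient $\top$ on $E$, the cross-relation $\top_{\times m,n}$ between two factors, and the linearly extended locality on free spans), and the content of the statement amounts to checking that concatenation of tuples translates faithfully between these layers.
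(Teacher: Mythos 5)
Your proof is correct and follows essentially the same route as the paper: the paper likewise treats the linear extension and surjectivity as routine linear algebra, and reduces the locality-morphism property to the equivalence that $(x,y)\in\top_{\times m,n}$ iff every cross pair $(x_i,y_j)$ lies in $\top$ iff the concatenated tuple lies in $V_1\times_\top\cdots\times_\top V_{m+n}$, verified on pairs of generators exactly as you do. The one point to state a bit more carefully is that the generators of the source free span are pairs of \emph{linear combinations} of tuples, on which $\Psi_{m,n}$ is extended by bilinear expansion (this is what the paper's remark on the non-injectivity of $\psi_{1,1}$ uses), but your componentwise unwinding handles this once one notes that the linearly extended relation of Definition \ref{defn:top_otimes_m_comma_n} relates such pairs exactly when all constituent tuples are pairwise related.
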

\begin{rk}{Note that $\Psi_{m, n}$ is not expected to be an isomorphism. Let us take $m=n=1$ to simplify. 
A basis of $\K\Big(	 \K (V_1 )\times_\top \K\left(V_2\right)\Big)$ is given by elements
$(k_1v_1+\ldots+k_p v_p,l_1w_1+\ldots+l_qw_q)$, with  $p,q$ in $\N$, $(v_i,w_j)$ in $V_1\times_\top V_2$   and $k_i,l_j$ in $\K$ non zero for all indices  $i,j$. A basis of $\K(V_1\times_\top V_2)$ is given by pairs $(v_1,v_2)\in V_1\times_\top V_2$.  Since
\[\psi_{1,1}((k_1v_1+\ldots+k_p v_p,l_1w_1+\ldots+l_qw_q))
=\sum_{i,j}k_i l_j (v_i,w_j)=\psi_{1,1}\left(\sum_{i,j} k_il_j (v_i,w_j)\right),\]
$\psi_{1,1}$ is not injective. 
}
\end{rk}
\begin{proof}
	The fact that $\Psi_{m,n}$ extends to a    {surjective} morphism of vector spaces is a classical result of linear algebra. To show that $\Psi_{m,n}$ is a locality morphism we just need to check that  {it is a} locality map, which is an easy consequence of the following equivalence:
	for any $x:=(x_1,\dots,x_m)\in V_1\times_\top\cdots\times_\top  V_m$ and $y:=(y_1,\dots,y_n)\in V_{m+1}\times_\top\cdots\times_\top  V_{m+n}$, we have 
	\begin{align*}
	(x,y)\in\top_{\times m, n}  &\Longleftrightarrow (x_i,y_j) \in\top\,  \quad\forall (i,j)\in[m]\times[n]\\
	&\Longleftrightarrow \Psi(x, y)= \left(x_1,\dots,x_m,y_1, \cdots, y_n\right)\in \K\left(V_1\times_\top\cdots\times_\top  V_{m+n}\right)
	\end{align*}
	One simply needs to take two independent pairs $(x,y)$ and $(x',y')$ and work with these equivalences. We omit here the  detailed proof which is straightforward but rather combersome to write.
\end{proof} 

The locality  morphism (\ref{eq:locisocartesianproduct}) induces a locality   morphism between locality tensor products.
 
  \begin{thm}\label{thm:asso_pre_loc_tensor_prod}
	 	For any subspaces $V_1, \cdots, V_{m+n}$ of the pre-locality space $(E, \top)$, we set    
	  \begin{align*}  
        &\left(V_1\otimes_\top\cdots \otimes_\top V_m\right)\otimes_\top \left(V_{m+1}\otimes_\top\cdots \otimes_\top V_{m+n}\right)\\
	 		:&=\K\left(\left(V_1\otimes_\top\cdots \otimes_\top  V_m\right)\times_{\top_{\otimes m,n}} \left(V_{m+1}\otimes_\top\cdots \otimes_\top V_{m+n}\right)\right)\\
	 		&/ \left( {I_{\rm bil}} \cap \K\left(\left(V_{1}\otimes_\top\cdots \otimes_\top V_m\right)\times_{\top_{\otimes m, n} } \left(V_{m+1}\otimes_\top\cdots \otimes_\top V_{m+n}\right)\right)\right)
        \end{align*} 
        and  Definition \ref{defn:top_otimes} yields a pre-locality relation
        on this quotient.
        
	 	\sy{  There is an isomorphism of pre-locality vector spaces
	 	\begin{align}
 \label{eq:conjtensor}
	& \Phi_{m, n}:	\left( \left(V_1\otimes_\top\cdots \otimes_\top V_m\right)\otimes_\top \left(V_{m+1}\otimes_\top\cdots \otimes_\top V_{m+n}\right), \top_\otimes^{m,n}\right) \\
\nonumber &	 \overset{\sim}{\longrightarrow} 
	 	\left(V_1\otimes_\top\cdots \otimes_\top V_m\otimes_\top V_{m+1}\otimes_\top\cdots \otimes_\top V_{m+n}, \top_{\otimes (m+n)}\right).	 	
\end{align}	 	}
	 \end{thm}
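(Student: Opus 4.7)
The plan is to construct $\Phi_{m,n}$ and its inverse explicitly at the level of free vector spaces, verifying that they descend through the relevant bilinearity and multilinearity ideals and are mutually inverse locality morphisms.

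First, I would build the forward map by defining a linear map $\widetilde{\Phi}_{m,n}: \K(V_1 \times_\top \cdots \times_\top V_{m+n}) \to (V_1 \otimes_\top \cdots \otimes_\top V_m) \otimes_\top (V_{m+1} \otimes_\top \cdots \otimes_\top V_{m+n})$ on generators by $\widetilde{\Phi}_{m,n}(x_1, \ldots, x_{m+n}) := [(x_1, \ldots, x_m)] \otimes_\top [(x_{m+1}, \ldots, x_{m+n})]$. This is well defined because pairwise locality of all $m+n$ entries (by definition of $V_1 \times_\top \cdots \times_\top V_{m+n}$) implies pairwise locality both within each block, so that each equivalence class makes sense in the corresponding inner locality tensor product, and across the two blocks, so that the resulting pair lies in $\top_{\otimes m,n}$. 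I would then check that $\widetilde{\Phi}_{m,n}$ vanishes on $I_{\rm mult}(V_1, \ldots, V_{m+n}) \cap \K(V_1 \times_\top \cdots \times_\top V_{m+n})$: each of its generators is a multilinearity relation in one coordinate $i$, killed by the multilinearity of the first inner tensor when $i \leq m$ and of the second inner tensor when $i > m$. Hence $\widetilde{\Phi}_{m,n}$ descends to a linear map $\Phi_{m,n}$ between the relevant quotients.

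For the inverse, I would invoke Theorem \ref{thm:univpropltp-preloc} applied to the locality $\top_\times$-bilinear map sending $([(x_1, \ldots, x_m)], [(y_1, \ldots, y_n)])$ to $[(x_1, \ldots, x_m, y_1, \ldots, y_n)]$ in $V_1 \otimes_\top \cdots \otimes_\top V_{m+n}$, whose underlying linear map on free vector spaces is essentially $\Psi_{m,n}$ from Lemma \ref{lem:iso_loc_cart_prod} composed with the canonical projection onto the $(m+n)$-fold tensor product. Independence from the choice of representatives reduces to checking that the multilinearity ideals in each inner tensor product are carried, under concatenation, into the multilinearity ideal of the $(m+n)$-fold tensor product, while locality $\top_\times$-bilinearity inherits directly from the locality of $\Psi_{m,n}$. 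Theorem \ref{thm:univpropltp-preloc} then yields a locality linear map $\Phi_{m,n}^{-1}$ out of the outer locality tensor product into $V_1 \otimes_\top \cdots \otimes_\top V_{m+n}$.

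I would then verify on elementary tensors that $\Phi_{m,n}$ and $\Phi_{m,n}^{-1}$ are mutually inverse, which is immediate on generators, and check locality of both maps: if $x \top_{\otimes(m+n)} x'$, Definition-Proposition \ref{defn:quotientlocality} provides lifts in $\K(V_1 \times_\top \cdots \times_\top V_{m+n})$ whose supports are pairwise locally independent; applying $\widetilde{\Phi}_{m,n}$ produces representatives on the target side whose supports pairwise satisfy $\top_{\otimes m,n}$, so that $\Phi_{m,n}(x) \top_\otimes^{m,n} \Phi_{m,n}(x')$, and the converse direction is handled analogously via $\Psi_{m,n}$. The main obstacle will be the careful bookkeeping of the several locality relations at play ($\top$, $\top_\times$, $\top_{\times m,n}$, $\top_\otimes$, $\top_{\otimes m,n}$ and the various quotient localities) in order to ensure that locality representatives can be chosen compatibly on both sides; this rests crucially on the coarseness characterisation of the quotient locality given in Definition-Proposition \ref{defn:quotientlocality} and on Proposition \ref{prop:description_final_relation}, which guarantee that one may always select representatives whose supports witness the locality relation in the quotient.
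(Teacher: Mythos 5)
Your overall strategy is legitimate and genuinely different from the paper's: the paper only constructs the unfolding map $\Phi_{m,n}$ from the nested product to $V_1\otimes_\top\cdots\otimes_\top V_{m+n}$ and then proves injectivity and surjectivity by chasing the commutative square built from the surjection $\Psi_{m,n}$ of Lemma \ref{lem:iso_loc_cart_prod} and the various quotient maps, whereas you build both directions explicitly (one by descending a map defined on $\K(V_1\times_\top\cdots\times_\top V_{m+n})$, the other via the universal property of Theorem \ref{thm:univpropltp-preloc}) and check they are mutually inverse. That is a perfectly reasonable alternative, and it has the advantage of making the locality of the two maps an explicit step rather than leaving it implicit, as the paper does.

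However, there is a genuine gap in the way you justify the two well-definedness steps. You argue that $\widetilde{\Phi}_{m,n}$ vanishes on $I_{\rm mult}(V_1,\dots,V_{m+n})\cap\K(V_1\times_\top\cdots\times_\top V_{m+n})$ by checking it on ``generators'' which are multilinearity relations in a single coordinate, and similarly you reduce the independence of representatives for the inverse to the statement that the inner multilinearity ideals are carried by concatenation into the big one. This tacitly assumes that the intersection of $I_{\rm mult}$ with the span of the \emph{local} tuples is spanned by multilinearity relations all of whose terms are themselves local tuples. That is false in general: the paper's appendix (Example \ref{ex:countex2}) exhibits, already for two factors, an element of $I_{\rm bil}\cap\K(V\times_\top V)$ that is not a combination of locality-respecting bilinearity relations, i.e. ${\rm I_{bil,\top}}\subsetneq I_{\rm bil}\cap\K(V\times_\top V)$. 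So the step as written would fail; moreover, when you vary the representative of only one factor, the concatenations you need to manipulate need not even stay inside $\K(V_1\times_\top\cdots\times_\top V_{m+n})$, since the new representative has no reason to be cross-local with the chosen representative of the other factor. Both conclusions are nevertheless true and can be repaired: since each locality tensor product is by definition the quotient of the span of local tuples by its intersection with the ordinary ideal, it embeds injectively into the corresponding ordinary tensor product, and composing your maps with these injections reduces the required identities to the ordinary (non-locality) multilinearity calculus, after which injectivity gives the vanishing in the locality quotients. With that correction (and a word acknowledging that $\top_{\otimes m,n}$ is only a relation between the two blocks, so Theorem \ref{thm:univpropltp-preloc} is being applied to the quotient construction it encodes rather than literally to subspaces of one ambient space), your argument goes through at the same level of rigour as the paper's.
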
   
	 \begin{rk}
	      Notice that the locality relation $\top_{\otimes(m+n)}$ is the locality relation $\top_{\otimes N}$ of the third item  of Definition \ref{defn:loctensalgebra} with $N=m+n$, not to be confused with the   relation $\top_{\otimes m,n}$ of Definition \ref{defn:top_otimes_m_comma_n}, nor with the newly introduced locality $\top_\otimes^{m,n}$.
	     \end{rk}
 \begin{proof}
 
We build the isomorphism $\Phi_{{m,n}}$
from  the morphism $\Psi_{{m,n}}$ defined in (\ref{eq:Psiiso}).

\begin{itemize}

	  		\item  
An element $[y]$ of $ \left(V_1\otimes_\top\cdots \otimes_\top V_m\right)\otimes_\top \left(V_{m+1}\otimes_\top\cdots \otimes_\top V_{m+n}\right) $ reads 
\begin{equation*}
 [y]=\left[\sum_{i\in I}\alpha_i\left( \sum_{j\in J}\beta_j^iv_{j,1}^i\otimes\dots\otimes v_{j,m}^i,\sum_{k\in K}\gamma_k^iv_{k,1}^i\otimes\dots\otimes v_{k,n}^i\right)\right],
\end{equation*}
for some vectors $v_{j,r}^i\in V_r$ and $ v_{k,s}^i\in V_s$ and scalars $\alpha_i,\beta_j^i,\gamma_k^i$ in $ \K$ with $I,\,J,\, K$ three finite sets, such that 
 $\sum_{j\in J}\sum_{k\in K}\beta_j^i\gamma_k^i\left( v_{j,1}^i, \dots, v_{j,m}^i,v_{k,1}^i,\dots, v_{k,n}^i\right)\in \left(V_1\times_\top \cdots \times_\top V_m\right)\times_{\top n,m}\left(V_{m+1}\times_\top \cdots \times_\top V_{m+n}\right)\simeq \left(V_1\times_\top \cdots \times_\top V_{m+n}\right)$. 
\item	
	The linear map $\Phi_{{m,n}}:\left(V_1\otimes_\top \cdots \otimes_\top V_m\right)\otimes_{\top}\left(V_{m+1}\otimes_\top \cdots \otimes_\top V_{m+n}\right)\longrightarrow  \left(V_1\otimes_\top \cdots \otimes_\top V_{m+n}\right) $ is defined by the following action on 
	an element  $[y]$ of $ \left(V_1\otimes_\top \cdots \otimes_\top V_m\right)\otimes_{\top n,m}\left(V_1\otimes_\top \cdots \otimes_\top V_n\right)$:
	\begin{equation*}
	 \Phi_{{m,n}}([y])=\sum_{i\in I}\sum_{j\in J}\sum_{k\in K}\alpha_i\beta_j^i\gamma_k^i\left( v_{j,1}^i\otimes \dots\otimes v_{j,n}^i\otimes v_{k,1}^i\otimes\dots\otimes v_{k,m}^i\right).
	\end{equation*}
	Notice that $\left( v_{j,1}^i, \cdots, v_{j,n}^i,v_{k,1}^i,\cdots, v_{k,m}^i\right)$ is an element of $V_1\times_\top \cdots \times_\top V_{m+n}$ by definition of 
	$\left(V_1\otimes_\top \cdots \otimes_\top V_m\right)\otimes_{\top}\left(V_{m+1}\otimes_\top \cdots \otimes_\top V_{m+n}\right)$, and that the difference of two 
	representatives of $[y]$ lies in $I_{\rm bil}$ whose image by $\Phi$ lies  in ${ I_{\rm mult} (V_1, \cdots, V_{m+n})}$. Thus 
	$\Phi_{m, n}$ is well-defined. 
	
\item The injectivity of $\Phi_{m,n}$  {  follows from the commutativity of Diagram \ref{diag:comm_tens_prod} in which the vertical arrows are quotient maps.}

\begin{figure}[h!]
	\begin{center}

\begin{tikzcd}
\K\Big(\K\left(V_1\times_\top \cdots \times_\top V_m\right)\times_\cy{\top_{ \times m, n}}\K\left(V_{m+1}\times_\top \cdots \times_\top V_{m+n}\right)\Big)\arrow[d,"{\pi_{m}\times\pi_n}"]   \arrow[r,"\Psi_{m,n}"] &\K(V_{1}\times_\top \cdots  \times_\top V_{m+n}) \arrow[dd,"\pi_{m+n}"]\\ 
\K\Big(\left(V_1\otimes_\top \cdots \otimes_\top V_m\right)\times_\cy{\top_{\otimes{m, n}  }}\left(V_{m+1}\otimes_\top \cdots \otimes_\top V_{m+n}\right)\Big) \arrow[d] &\\
\left(V_1\otimes_\top \cdots \otimes_\top V_m\right)\otimes_{\top}\left(V_{m+1}\otimes_\top \cdots \otimes_\top V_{m+n}\right) \arrow[r,"\Phi_{m,n}"] &  V_1\otimes_\top \cdots  \otimes_\top V_{m+n} 
\end{tikzcd}
\caption{Tensor products of pre-locality spaces}
\label{diag:comm_tens_prod}
\end{center}

\end{figure}
Assume  $\Phi_{m, n}([y])=0$ for some $[y]\in \left(V_1\otimes_\top \cdots \otimes_\top V_m\right)\otimes_{\top}\left(V_{m+1}\otimes_\top \cdots \otimes_\top V_{m+n}\right) $. Then the preimage of $\Phi_{m, n}([y])$ under the quotient map $\pi_{m+n} $ in 
	$V_1\times_\top \cdots  \times_\top V_{m+n}$ lies in $ {I_{\rm mult} (V_1, \cdots, V_{m+n})}$. This implies that its preimage under $\pi_{m+n}\circ\Psi_{m, n}$ lies in 
	$ \K({I_{\rm mult} (V_1, \cdots, V_{m })\times_{\top m, n} I_{\rm mult} (V_{m+1}, \cdots, V_{m+n})})$ or in $I_{\rm bil}(\K\left(V_1\times_\top \cdots \times_\top V_m\right),\K\left(V_{m+1}\times_\top \cdots \times_\top V_{m+n}\right))$ by construction of $\Psi_{m, n}$ in Lemma \ref{lem:iso_loc_cart_prod}.\\
	 This element of $ \K({I_{\rm mult} (V_1, \cdots, V_{m })\times_{\top m, n} I_{\rm mult} (V_{m+1}, \cdots, V_{m+n})})$ combined with \\$I_{\rm bil}(\K\left(V_1\times_\top \cdots \times_\top V_m\right),\K\left(V_{m+1}\times_\top \cdots \times_\top V_{m+n}\right))$ is the preimage of $[y]$ under the two projections on the left column of Figure \ref{diag:comm_tens_prod}, and therefore $[y]=0$.

	\item The surjectivity of $\Phi_{m,n}$  \sy{  follows from that of $\Psi_{m, n}$ combined with} the  commutativity of Diagram \ref{diag:comm_tens_prod}.
Indeed, any element $y\in V_1\otimes_\top \cdots  \otimes_\top V_{m+n} $ has a preimage $\tilde y\in \K(\K\left(V_1\times_\top \cdots \times_\top V_m\right)\times_{\top n,m}\K\left(V_{m+1}\times_\top \cdots \times_\top V_{m+n}\right)) $      under $\pi_{m+n}\circ\Psi_{m, n}$ since $\Psi_{m, n}$ and $\pi_{m, n}$ are  {surjections}. Taking the image of $\tilde y$ under the two projections of the left column of Figure \ref{diag:comm_tens_prod} we obtain a preimage of $y$ in $\left(V_1\otimes_\top \cdots \otimes_\top V_m\right)\otimes_{\top}\left(V_{m+1}\otimes_\top \cdots \otimes_\top V_{m+n}\right) $.
\qedhere
\end{itemize} 
\end{proof}

\section{Tensor and universal pre-locality algebras}\label{sec:Tensoranduniversalpla}

 We now turn to the pre-locality counterpart of the notions of tensor algebras and universal envelopping algebras, and discuss  their universal properties. Both are pre-locality algebras, a notion we first introduce.

\subsection{Pre-locality algebras}

We slightly adapt various notions  introduced in \cite{CGPZ1} to the pre-locality framework.
\begin{defn}\label{defn:prelocalg}\begin{itemize}
\item  A {\bf non-unital pre-locality algebra} is a triple $(A,\top,m)$, where $(A,\top)$ is a pre-locality vector space, equipped with a partially defined product, namely a $\top_\times-$bilinear map 
$m:A\times_{\top}A\to A$, 
 {which is}  associative in the following sense  
\[m(m(x, y), z)=m(x, m(y, z))\quad \forall (x,y,z)\in A^{\times_{\top}3},\]
whenever $m(m(x, y), z)$ and $m(x, m(y, z))$ are defined.
	
\item \cite[Definition 3.16 (ii)]{CGPZ1} We call {\bf non-unital locality algebra} a non-unital pre-locality algebra $(A,\top,m)$,  whose underlying vector space is a locality vector space (so that in particular $U^\top$ is a vector space for any $U\subset A$), and whose partial  product $m:A {\times_{\top}}A\to A$ is  compatible with the locality relation in the following sense
\begin{equation}\label{eq:mlocalgm}m\left(U^{\top} {  \times_\top}U^{\top}\right)\subset U^{\top}\quad \forall   U\subset A, \end{equation}
  where we have set $V\times_\top V:= (V\times V)\cap \top$ for any subset $V\subset A$.
\item A non-unital locality (resp. pre-locality) algebra is called {\bf unital} (or simply (pre-)locality algebra) if there is a map $u:\K\to A$  such that 
		 $u(\K)\subseteq A^\top$, which makes the following diagram  \ref{diag:unit} commute
\begin{center}
 \begin{tikzpicture}[->,>=stealth',shorten >=1pt,auto,node distance=2cm,thick] \label{diag:unit}

    \node (1) {$\mathbb{K}\otimes A$};
    \node (2) [right of=1] {$A\otimes_\top A$};
    \node (3) [right of=2] {$A\otimes\mathbb{K}$};
    \node (4) [below of=2] {$A$};

    \path
      (1) edge node [above] {$u\otimes Id$} (2)
      (3) edge node [above] {$Id\otimes u$} (2)
      (2) edge node [right] {$\mu$} (4)
      (4) edge [<->] node [right] {$\simeq$} (1)
      (4) edge [<->] node [right] {$\simeq$} (3);
  \end{tikzpicture}
  \captionof{figure}{The unit map}
\end{center}
\item A  (resp. pre-)locality subspace $I$ of a (resp. pre-)locality  algebra $\left(A,\top ,m  \right) $ is called a  left, resp. right {\bf (pre-)locality ideal} of $A$, if 
\begin{equation} \label{eq:locideal} 
 m\left(  I\times I^\top\right)\subset I; \, {\rm resp.}\,     m\left(  I^\top \times I \right)\subset I.
\end{equation} 
If it is both a left and a right ideal, we call it a (resp. pre-) locality ideal.

 If  $(A, \top, m)$ is a locality algebra, then  $I^\top$ is a linear subspace of $A$, and we call $I$ a  locality ideal. 
\item \cite[Definition 3.16 (ii)]{CGPZ1} Given two   {(resp. pre-)locality algebras} $(A_i,\top_i,m_i,u_i), i=1,2 $, a  locality  linear map $f: A_1\to A_2$  is called a {\bf (resp. pre-) locality algebra morphism} if \begin{equation}\label{eq:localgmorp}f\circ m_1\vert_{\top_1}= m_2\circ(f\times f)\vert_{{\top}_1}.\end{equation}
 \item We call $(A_1, \top_{A_1}, m_1, u_1)$ a (pre-)locality subalgebra of $(A_2, \top_{A_2}, m_2, u_2)$ if {there is an} inclusion map $\iota: A_1\hookrightarrow A_2$ {which is also} a (pre-)locality algebra morphism.
\end{itemize}
\end{defn}
\begin{rk}
     Note that this definition is more general than the definition of sub-locality algebra given in \cite{CGPZ1}. We will need this degree of generality for the locality version of the Milnor-Moore theorem. A case of particular importance is when $A_1=A_2$ and $\top_2\subseteq\top_1$.
    \end{rk}
{\begin{ex}
Let $(A,\top, m)$ be a locality algebra, the polar set $U^\top$ of  any  {non-empty} subset $U$ of $A$ gives rise to a non-unital locality subalgebra $(U^\top,\top_{U^\top}, m)$ of $(A, \top, m)$. Here $\top_{U^\top}=\top\cap(U^\top\times U^\top)$. 
\end{ex}}

 {\begin{rk}\label{rk:partial-product-is-locality}
Notice that for a non-unital locality algebra $(A,\top,m)$, Condition \eqref{eq:mlocalgm} is equivalent to  the product $m:(A\times_\top A,\top_{A\times_\top A})\to(A,\top)$ being a locality map, and thus a locality $\top_\times$-bilinear map. 
\end{rk}}

\begin{lem}\label{lem:Kerlocideal}
  Let $f: A_1\longrightarrow A_2$ be a locality linear map between two  (resp. pre-)locality algebras $(A_i,m_i, \top_i), i\in \{1, 2\}$. Its kernel is a 
   (resp. pre-)locality ideal of $A_1$ and its range is a   (resp. pre-)locality subalgebra of $A_2$. 
\end{lem}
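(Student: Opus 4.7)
The plan is to treat the kernel and the range separately, with the morphism identity \eqref{eq:localgmorp} as the common tool.

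For the kernel, I first note that $\ker(f)$ is a linear subspace of $A_1$ which, equipped with the induced relation $\top_1\cap(\ker(f)\times \ker(f))$, becomes a (pre-)locality subspace by the hereditary property noted after Definition \ref{defn:basic_loc_defn}. To verify the left ideal condition \eqref{eq:locideal}, I would pick $x\in \ker(f)$ and $y\in \ker(f)^{\top_1}$. The definition of the polar set forces $x\top_1 y$, so $m_1(x,y)$ is defined, and the morphism identity yields
\[f(m_1(x,y))=m_2(f(x),f(y))=m_2(0,f(y)).\]
Since $f$ is a locality map, $(f(x),f(y))=(0,f(y))\in\top_2$, so the right-hand side is defined, and $\top_\times$-bilinearity of $m_2$ applied to $0+0=0$ forces it to vanish. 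Thus $m_1(x,y)\in\ker(f)$; the right ideal condition is entirely symmetric.

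For the range, the natural choice is to equip $f(A_1)$ with the pushforward relation $\top':=\{(f(x),f(y)):(x,y)\in \top_1\}$. This is symmetric, contains $(0,0)$, and sits inside $\top_2\cap(f(A_1)\times f(A_1))$ since $f$ is a locality map, so $(f(A_1),\top')$ is a pre-locality vector space. The morphism identity guarantees that the restriction of $m_2$ to $\top'$-related pairs stays inside $f(A_1)$: for $(z,w)=(f(x),f(y))\in\top'$, one has $m_2(z,w)=f(m_1(x,y))\in f(A_1)$, a value depending only on $z$ and $w$. The inclusion $\iota:f(A_1)\hookrightarrow A_2$ then satisfies the algebra morphism condition by construction, so $f(A_1)$ becomes a pre-locality subalgebra of $A_2$.

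The hard part will be upgrading this to a genuine locality subalgebra in the locality case. The issue is to verify that $(f(A_1),\top')$ is a locality vector space, i.e., that each polar set $U^{\top'}$ is a linear subspace of $f(A_1)$. Given $z_1,z_2\in U^{\top'}$ and $u\in U$, the preimages $x_{1,u},x_{2,u}\in A_1$ of $u$ witnessing $(u,z_i)\in\top'$ may differ by a non-trivial element of $\ker(f)$, so one cannot directly combine the witnesses to produce one for $u\top'(z_1+z_2)$. I would sidestep this either by replacing $\top'$ with its locality closure $(\top')^{\ell}$ from Theorem \ref{thm:equiv_categories}, after checking that $m'$ remains well defined on the larger relation, or by tying the problem to the quotient locality on $A_1/\ker(f)$ and to Question \ref{question} addressed in the second part of the paper.
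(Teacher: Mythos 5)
Your kernel argument is correct and matches the paper's, with a bit more care than the paper gives to justifying $m_2(0,f(y))=0$ via bilinearity. The pre-locality range argument is also fine as far as it goes.

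The genuine gap is the locality case for the range, and you flag it yourself without closing it. You equip $\mathrm{Im}(f)$ with the pushforward relation $\top'=(f\times f)(\top_1)$ and are then stuck on showing $(\mathrm{Im}(f),\top')$ is a locality vector space. The paper avoids this entirely by choosing a different, larger relation: the \emph{induced} relation $\top_{\mathrm{Im}(f)}:=\top_2\cap(\mathrm{Im}(f)\times\mathrm{Im}(f))$. For the induced relation, the locality-vector-space property is automatic by heredity (the remark after Definition~\ref{defn:basic_loc_defn}): restricting a locality relation to a linear subspace always yields a locality vector space, so the polar-set question you were wrestling with never arises. The remaining locality-specific work is to verify condition~\eqref{eq:mlocalgm} for the induced relation, which the paper does by a short computation starting from $U^{\top_{\mathrm{Im}(f)}}=U^{\top_2}\cap\mathrm{Im}(f)$ and using condition~\eqref{eq:mlocalgm} for $A_2$ together with closure of $\mathrm{Im}(f)$ under $m_2$. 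Your two proposed repairs would not reach this directly: the locality closure $(\top')^\ell$ of Theorem~\ref{thm:equiv_categories} is sandwiched between $\top'$ and the induced relation (the induced one being already a locality relation containing $\top'$), and you would still have to re-establish that $m_2$ on this enlarged and less explicit relation stays in $\mathrm{Im}(f)$; routing through the quotient locality on $A_1/\ker f$ and Question~\ref{question} would make a self-contained lemma rest on an open problem. The clean fix is simply to replace $\top'$ by the induced relation from the outset.
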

\begin{proof} We prove that the kernel $ {\rm Ker}(f) $ is a   {(resp. pre-)}locality ideal.  Take
$ a\in {\rm Ker}(f) $ and $ b\in  {\rm Ker}(f)^{\top_1}$, then $f(m_1(a\otimes b))=m_2(f(a)\otimes f(b))=m_2(0\otimes f(b))=0,$ hence
	$m_1\left({\rm Ker}(f)\times {\rm Ker}(f)^{\top_1}\right)\subset {\rm Ker}(f)$. Similarly we check that $m_1\left({\rm Ker}(f)^{\top_1}\times {\rm Ker}f) \right)\subset {\rm Ker}(f)$.
	
	 If $A_1$ is a locality algebra, then ${\rm Ker}(f)^\top$ is a linear subspace of $A_1$ and ${\rm Ker}(f)$ a locality ideal in $A_1$.
	
	We prove that the range  ${\rm Im}(f) $ is a   (resp. pre-)locality algebra. Given 
	$ (f(a), f(b))\in ({\rm Im}(f)\times  {\rm Im}(f))\cap  {{\top}_2} $, by (\ref{eq:localgmorp}) we have
	 $m_2(f(a)\otimes f(b))=f\circ m_1(a\otimes b)\in  {\rm Im}(f)$.
	 
	  If $A_2$ is a locality algebra, then ${\rm Im}(f)^\top$ is a linear subspace of $A_2$.  {Moreover, setting $\top_{{\rm Im}(f)}:=\top_2\cap({\rm Im}(f)\times {\rm Im}(f))$, and given $U\subset{\rm Im}(f)$, 
	 \begin{align*}m(U^{\top_{{\rm Im}(f)}}\times_{\top_{{\rm Im}(f)}} U^{\top_{{\rm Im}(f)}})&=m\left((U^{\top_2}\cap{\rm Im}(f))\times_{\top_2} (U^{\top_2}\cap{\rm Im}(f))\right)\\
	 &=m\left((U^{\top_2}\times_{\top_2} U^{\top_2})\cap{\rm Im}(f)^{\times_{\top_2} 2}\right)\\
	 &\subset U^{\top_2}\cap {\rm Im}(f)=U^{\top_{{\rm Im}(f)}}.
	 \end{align*} 
	 The last inclusion is a consequence of Condition (\ref{eq:mlocalgm}) for $A_2$ and ${\rm Im}(f)$ being closed under the product $m$. Therefore} Condition (\ref{eq:mlocalgm}) is satisfied for ${\rm Im}(f)$, so that ${\rm Im}(f)$ a locality subalgebra of $A_2$.
	 
	\end{proof}

\subsection{The locality tensor algebra  {of a pre-locality vector space}}  \label{sect:lta}		

The tensor algebra over a vector space $V$ is defined as ${\mathcal T}(V):=\bigoplus_{n\geq 0}V^{\otimes n}$ with the convention that 
$V^{\otimes 0}=\K$ and $V^{\otimes 1}=V$.  Following \cite{CGPZ1}, we define    a  locality tensor algebra in a similar manner, modulo the fact that we first build it on a pre-locality vector space.
\begin{defn}\label{defn:filteredtensoralg}
\begin{itemize}
\item The $n$-th filtration of the locality tensor algebra over a pre-locality vector space $(V,\top)$ is defined as 
\[{\mathcal T}_{\top}^n (V):= \bigoplus_{k=0}^n V^{\otimes_\top^k}.\]

\item  For any integers $0\leq m\leq n$, the canonical injections
\begin{equation}\iota_{m, n}: {\mathcal T}_{\top}^m (V) \longrightarrow{\mathcal T}_{\top}^n (V)
\end{equation}
are linear maps.
\item The {\bf locality tensor algebra} over a pre-locality vector space $(V, \top)$ is defined as 
\[{\mathcal T}_\top (V):= \bigcup_{n\geq0}\mathcal T_\top^n(V)=\bigoplus_{n\geq0} V^{\otimes_{\top}n}.\]
\end{itemize}
\end{defn}
The following results set the basis to define a locality relation on $\mathcal T_{\top}(V)$ as a quotient relation.
\begin{prop}\label{prop:talgaseqclasses}
Given a pre-locality vector space  $(V,\top)$, then
	 \[{\mathcal T}_{\top}^n(V)={}^{\K\left(\bigcup_{k=0}^nV^{\times_{\top}k}\right)}{\mskip -4mu\big/\mskip -3mu}_{\left(I_{\rm mult}^n {(V)}\cap \K\left(\bigcup_{k=0}^nV^{\times_{\top}k}\right)\right)}\]
	where we have set 
	$I_{\rm mult}^n {(V)}:=\bigoplus_{k=1}^nI_{{\rm mult},k} {(V)}$.
\end{prop}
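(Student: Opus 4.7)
The plan is to unfold the definitions and reduce the statement to the elementary fact that, for a disjoint union of sets, the free vector space splits as a direct sum, and that quotients distribute over direct sums. I will proceed in four short steps.

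First, I would observe that for distinct integers $k\neq k'$ the sets $V^{\times_{\top}k}\subset V^{\times k}$ and $V^{\times_{\top}k'}\subset V^{\times k'}$ are formally disjoint (their elements are tuples of different length, where by convention the ``empty tuple'' set $V^{\times_\top 0}=\K$ is disjoint from the others). Consequently
\[\K\!\left(\bigcup_{k=0}^nV^{\times_{\top}k}\right)=\bigoplus_{k=0}^n\K(V^{\times_{\top}k}).\]

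Second, by the very definition $I_{\rm mult}^n(V):=\bigoplus_{k=1}^nI_{{\rm mult},k}(V)$, where each summand satisfies $I_{{\rm mult},k}(V)\subset\K(V^{\times k})$. Since $\K(V^{\times_\top k})\subset\K(V^{\times k})$ and the ambient decomposition $\bigoplus_k\K(V^{\times k})$ is a direct sum of subspaces, the intersection with $\K\!\left(\bigcup_{k=0}^nV^{\times_{\top}k}\right)$ distributes over the direct summands, giving
\[I_{\rm mult}^n(V)\cap\K\!\left(\bigcup_{k=0}^nV^{\times_{\top}k}\right)=\bigoplus_{k=1}^n\!\Big(I_{{\rm mult},k}(V)\cap\K(V^{\times_{\top}k})\Big).\]

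Third, I would invoke the elementary fact from linear algebra that if $E=\bigoplus_k E_k$ and $F_k\subset E_k$ are subspaces, then $E/\bigoplus_k F_k\simeq\bigoplus_k (E_k/F_k)$. Applying this with $E_k=\K(V^{\times_{\top}k})$ and $F_k=I_{{\rm mult},k}(V)\cap\K(V^{\times_{\top}k})$ (with $F_0=\{0\}$, so that the $k=0$ summand is just $\K$), and recalling the definition \eqref{eq:Votimesn} which gives $V^{\otimes_\top^k}=\K(V^{\times_{\top}k})/F_k$, yields
\[\K\!\left(\bigcup_{k=0}^nV^{\times_{\top}k}\right)\Big/\!\left(I_{\rm mult}^n(V)\cap\K\!\left(\bigcup_{k=0}^nV^{\times_{\top}k}\right)\right)\simeq\bigoplus_{k=0}^nV^{\otimes_\top^k}={\mathcal T}_\top^n(V),\]
which is the desired identification.

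The only delicate point, and the one I would pay most attention to, is the bookkeeping at $k=0$: the conventions $V^{\times_\top 0}=\K$ and $V^{\otimes_\top 0}=\K$ have to be reconciled with the formula for $k\geq 1$, and the absence of an $I_{{\rm mult},0}(V)$ term in the definition of $I_{\rm mult}^n(V)$ must be matched on the quotient side by the fact that the $k=0$ piece contributes a factor of $\K$ without any relations. Once this is spelled out explicitly, the three identifications above compose into the claimed equality.
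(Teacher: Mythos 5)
Your proposal is correct and follows essentially the same route as the paper: both apply the standard fact that a quotient of a direct sum by a direct sum of subspaces splits as the direct sum of the quotients, with $E_k=\K(V^{\times_\top k})$ and $F_k=I_{{\rm mult},k}(V)\cap\K(V^{\times_\top k})$, after checking that the intersection distributes over the graded pieces (the paper handles your $k=0$ bookkeeping by simply setting $I_{{\rm mult},0}:=\{0\}$).
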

\begin{proof} 
	 Given a direct sum of vector spaces $V=\bigoplus_{k= 0}^nV_k$ and $I_k$ a linear subspace of $V_k$ for each $0\leq k\leq n$, it is a standard result of linear algebra that the quotient space 
	 $\mathbb{V}=\sfrac{(\bigoplus_{k=0}^nV_k)}{\bigoplus_{k=0}^nI_k}$ inherits the structure of a direct sum of vector space 
	 $\mathbb{V}=\oplus_{k= 0}^n \mathbb{V}_k$ with   $\mathbb{V}_k=\sfrac{V_k}{I_k}$.  Applying this to 
	 $V_k:=  \K(V^{\times_{\top}k})$ and $I_k:= I_{\rm mult, k} {(V)}\cap\K(V^{\times_\top k})$ ({we simplify the notation leaving out the dependence on $V$}) and noticing that 
\begin{equation*}
\bigoplus_{k=0}^nI_k = \bigoplus_{k=0}^n\left(I_{\rm mult,k}\cap\K(V^{\times_\top k})\right) = \bigoplus_{k=0}^1 I_{\rm mult,k}\cap\bigoplus_{k=0}^n\K(V^{\times_\top k})
=I_{\rm mult}^n\cap \K\left(\bigcup_{k=0}^nV^{\times_{\top}k}\right).
\end{equation*}
then yields the result, where we have set $I_{\rm mult,0}:=\{0\}.$
\end{proof} 
\begin{defn}\label{defnprop:locrelonfilteredtenalg} Let $N\in \N$ and $(V,\top)$ be a pre-locality vector space.
\begin{itemize} 
\item 
 We define the locality relation $\top_\times^N$ on $\bigcup_{k=0}^NV^{\times_{\top}k}$ as \[\top_\times^N:=\bigcup_{k=1}^N\bigcup_{l=1}^N \top_{\times k,l}\ \ (\mathit{see }\ (\ref{eq:locreltimesmn})),\]
which we then linearly extend  to $\K(\bigcup_{k=0}^NV^{\times_{\top}k})$ as in (\ref{eq:locrelonthefreespan}).
\item The locality relation $\top_{\otimes}^N$ on $\mathcal{T}_\top^N(V)$ is defined as the quotient relation for the canonical map $\pi:\K(\bigcup_{k=0}^NV^{\times_\top k})\to\mathcal{T}_\top^N(V)$.  
\end{itemize}
\end{defn}
\begin{rk}
For pre-locality vector space $(V,\top)$ and any $n\in \N$, the pair $(\mathcal T_\top^n(V),\top_\otimes^n)$ is a pre-locality vector space.
\end{rk}

It follows from the definition of $\top^n_\times$ and $I_{\rm mult}^n$, that $\top_\otimes^n\subset\top_{\otimes}^{n+1}$ for any $n\in\N$. Consequently, for every pair $0\leq m, n$ natural numbers, there is a canonical embedding $\iota_{m,m+n}:\top_{\otimes}^m\to\top_\otimes^{m+n}$. This leads to the following definition.

\begin{defn} \label{defn:loc_rel_tensor_alg}
Given a pre-locality vector space $(V,\top)$, the locality relation on the locality tensor algebra $\mathcal T_\top(V)$ is defined as a direct limit $\top_\otimes:=\underset{\longrightarrow}{\rm Lim}\, \top_\otimes^n.$

\end{defn} 
\begin{rk}
 The pair $(\mathcal T_\top(V),\top_\otimes)$ is a trivially a pre-locality vector space, since $\top_\otimes$ is symmetric by construction.
\end{rk}

\begin{notation}
From now on we use $\top_\otimes$ instead of $\top_{\otimes m, n}$, $\top_{\otimes n}$, or $\top_{\otimes}^n$ since they are all restrictions of the first one. 
\end{notation}

\begin{rk} \label{rk:unit_loc_tensor_prod}
		Since $V^{\times_\top^0}:=\K$, and $k\top_\times^N (v_1,\cdots,v_n)$ for any $k\in\K$ and 
		$(v_1,\cdots,v_n)\in V^{\times_\top^n}$ with $N\geq n\geq1$, then $k\top_\otimes\, \left(v_1\otimes\dots\otimes v_n\right)$ for every 
		$v_1\otimes\dots\otimes v_n\in V^{\otimes_\top^n}$ for every $n\geq 1$ and hence
		$(\K=V^{\otimes_\top^0})\top_\otimes {\mathcal T}_\top (V)$.
\end{rk}

\begin{prop}\label{prop:tensprodaslocmap}
	Let $(V,\top)$ be a pre-locality vector space. The usual product $\bigotimes:{\mathcal T}(V)\times {\mathcal T}(V)\to {\mathcal T}(V) $ on 
	the tensor algebra  restricts to $\top_\otimes\di{=}{\mathcal T}_\top(V)\times_{{\top_\otimes}}  {\mathcal T}_\top(V)$ where it defines a   $\top_\times$-bilinear map (see (\ref{eq:topbilinear})) and 
	\[\left({\mathcal T}_\top (V), \top_\otimes, \bigotimes,u\right) \] defines a pre-locality algebra, where $u$ is the canonical injection $u:\K\to V^{\otimes_\top^0}$.
\end{prop}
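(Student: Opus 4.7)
The plan is to establish, in turn, (i) that $\bigotimes$ genuinely restricts to a map $\mathcal{T}_\top(V)\times_{\top_\otimes}\mathcal{T}_\top(V)\to \mathcal{T}_\top(V)$, (ii) the $\top_\times$-bilinearity, (iii) associativity, and (iv) the unit axiom. Throughout, we may reduce to homogeneous components since $\top_\otimes$ is defined as a direct limit (Definition \ref{defn:loc_rel_tensor_alg}), so any locality pair $(x,y)$ with $x\top_\otimes y$ can be taken to satisfy $x\in V^{\otimes_\top^m}$ and $y\in V^{\otimes_\top^n}$ for some $m,n\geq 0$.

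For step (i), which is the crux of the argument, I would unravel the definition of $\top_\otimes$ restricted to $V^{\otimes_\top^m}\times V^{\otimes_\top^n}$. By Definition \ref{defn:top_otimes_m_comma_n} and Definition \ref{defn:loc_rel_tensor_alg}, $x\top_\otimes y$ means we can find representatives $x'=\sum_i\alpha_i(v_{i,1},\dots,v_{i,m})$ and $y'=\sum_j\beta_j(w_{j,1},\dots,w_{j,n})$ such that every cross-pair $(v_{i,k},w_{j,l})$ lies in $\top$. Combined with the fact that the within-tuple pairs are already pairwise in $\top$ (as $x,y$ belong to the respective $V^{\otimes_\top^*}$), the concatenated tuples $(v_{i,1},\dots,v_{i,m},w_{j,1},\dots,w_{j,n})$ are elements of $V^{\times_\top^{m+n}}$. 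This is precisely the content captured by the isomorphism $\Phi_{m,n}$ of Theorem \ref{thm:asso_pre_loc_tensor_prod}, which identifies $V^{\otimes_\top^m}\otimes_\top V^{\otimes_\top^n}$ with $V^{\otimes_\top^{m+n}}$ and hence shows $x\otimes y\in V^{\otimes_\top^{m+n}}\subset\mathcal{T}_\top(V)$.

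For step (ii), $\top_\times$-bilinearity follows directly from the usual bilinearity of $\bigotimes$ on $\mathcal{T}(V)$: since $V^{\otimes_\top^k}\subset V^{\otimes k}$, the restriction inherits bilinearity, and every element of $I_{\rm bil}^{\top_\times}=\K(\mathcal{T}_\top(V)\times_{\top_\otimes}\mathcal{T}_\top(V))\cap I_{\rm bil}$ is sent to zero. For step (iii), associativity amounts to checking $(x\otimes y)\otimes z=x\otimes(y\otimes z)$ for triples of elements of $\mathcal{T}_\top(V)$ for which all the intermediate products are defined; this equality is inherited verbatim from associativity of $\bigotimes$ on $\mathcal{T}(V)$, once well-definedness on each side is guaranteed by two successive applications of step (i). Finally, for step (iv), Remark \ref{rk:unit_loc_tensor_prod} already ensures $u(\K)\subset\mathcal{T}_\top(V)^{\top_\otimes}$, and the commutativity of the unit diagram reduces to that in the ordinary tensor algebra.

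The main obstacle is step (i), because it requires carefully threading the definitions of $\top_\times^N$, $I_{\rm mult}^N$, the quotient relation $\top_\otimes^N$, and the direct limit $\top_\otimes$, then matching representatives across the isomorphism $\Phi_{m,n}$. Once this bookkeeping is carried out, the remaining steps are essentially formal consequences of properties of the ordinary tensor algebra combined with the linearity of the locality relation $\top_\otimes$ on the free span $\K\left(\bigcup_k V^{\times_\top^k}\right)$.
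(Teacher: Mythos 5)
Your proposal is correct and follows essentially the same route as the paper's proof: the core argument in both is the observation that for representative tuples with all cross-pairs in $\top$, the concatenated tuple lies in $V^{\times_\top^{m+n}}$, from which membership of the product in $\mathcal{T}_\top(V)$ and the remaining axioms are formal. Your appeal to the isomorphism $\Phi_{m,n}$ of Theorem \ref{thm:asso_pre_loc_tensor_prod} and your explicit treatment of the unit axiom are additional scaffolding the paper leaves implicit, but they do not change the substance of the argument.
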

\begin{proof}
	\begin{itemize}
	 \item Let us first check that the restriction is  ${\mathcal T}_\top(V)$-valued, namely that 
	 $\bigotimes\left(\top_{\otimes}\right)\subset {\mathcal T}_\top(V)$.  
	 
	For $([a],[b])\in\top_{\otimes}$, we may assume without loss of generality that $a=(a_1,\cdots,a_m)\in V^{\times_{\top}m}$, 
	 $b=(b_1,\cdots,b_n)\in V^{\times_{\top}n}$ and $(a,b)\in\top_{\times m,n}$. Therefore $(a_i,b_j)\in\top$ for every $i$ and $j$ implying that 
	 $ab:=(a_1,\cdots,a_m,b_1,\cdots,b_n)\in V^{\times_{\top}(m+n)}$ so that $\bigotimes([a],[b])=[ab]\in {\mathcal T}_{\top}(V)$.
	 \item The fact that it is  $\top_\times$-bilinear follows from the fact that $\bigotimes$ is a restriction of the usual product on 
	 the tensor algebra.
	 \item The associativity of the usual product $\bigotimes$ is preserved when we restrict to $\top_\otimes$ whenever it is well defined. Therefore $(\mathcal T_{\top}(V),\top_\otimes, \bigotimes,u)$ is indeed a pre-locality algebra. \qedhere
	\end{itemize}
	\end{proof}

	The following Proposition will be of use in the sequel.
	\begin{prop}\label{prop:CompareTensorAlgebras}
Let $(W,\top')$ be a (pre-)locality subspace of $(V,\top)$ a (pre-)locality vector space. Then $\mathcal T_{\top'}(W)$ is a (pre-)locality subalgebra of $\mathcal T_\top(V)$.
\end{prop}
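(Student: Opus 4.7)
My plan is to build the inclusion $\mathcal{T}_{\top'}(W)\hookrightarrow \mathcal{T}_\top(V)$ graded piece by graded piece, and then verify it is compatible with both the locality relation $\top_\otimes$ and the algebra structure $\bigotimes$. The main input is the set-theoretic inclusion $\top'\subseteq \top$ that defines $(W,\top')$ as a (pre-)locality subspace of $(V,\top)$, combined with the classical injection $W^{\otimes n}\hookrightarrow V^{\otimes n}$ for subspaces.

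First, I would check that at the level of locality Cartesian products one has $W^{\times_{\top'}n}\subseteq V^{\times_\top n}$ for every $n\geq 0$, which is immediate from $W\subseteq V$ and $\top'\subseteq \top$. Applying the free span functor gives an inclusion of vector spaces $\K(W^{\times_{\top'}n})\hookrightarrow \K(V^{\times_\top n})$. Since $I_{{\rm mult},n}(W)\subseteq I_{{\rm mult},n}(V)$ (the generators in \eqref{eq:multlinform1}--\eqref{eq:multlinform2} written with vectors in $W$ are, in particular, of the required form in $V$), this inclusion descends to a well-defined linear map
\[ j_n:W^{\otimes_{\top'}n}\longrightarrow V^{\otimes_\top n}. \]

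The key step is showing $j_n$ is injective. I would rely on the standard linear-algebra fact that the canonical map $W^{\otimes n}\hookrightarrow V^{\otimes n}$ is injective, i.e.\ $I_{{\rm mult},n}(V)\cap \K(W^{\times n})=I_{{\rm mult},n}(W)$. Since $\K(W^{\times_{\top'}n})\subseteq \K(W^{\times n})$, intersecting both sides with $\K(W^{\times_{\top'}n})$ yields
\[ I_{{\rm mult},n}(V)\cap \K(W^{\times_{\top'}n})=I_{{\rm mult},n}(W)\cap \K(W^{\times_{\top'}n}), \]
which is exactly the injectivity of $j_n$. Taking direct sums (and using Proposition \ref{prop:talgaseqclasses} to identify ${\mathcal T}_{\top'}^n(W)$ and ${\mathcal T}_{\top}^n(V)$ with appropriate quotients), the maps $j_n$ assemble into an injective linear map $j:\mathcal{T}_{\top'}(W)\hookrightarrow \mathcal{T}_\top(V)$.

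It remains to verify that $j$ is a (pre-)locality algebra morphism. For the locality relation: by Definition \ref{defn:loc_rel_tensor_alg} it suffices to check $\top'_\otimes\subseteq \top_\otimes$ on each ${\mathcal T}_{\top'}^n(W)$, which follows from Definition \ref{defnprop:locrelonfilteredtenalg} together with $\top'\subseteq\top$: a relation $(x',y')\in \top'_{\times k,l}$ lifts under the quotient map to a relation in $\top_{\times k,l}$, hence the quotient relations satisfy $\top'_\otimes\subseteq \top_\otimes$. For the product: the map $\bigotimes$ on $\mathcal{T}_\top(V)$ restricts via $j$ to concatenation on $\mathcal{T}_{\top'}(W)$, which is exactly the product on the latter (Proposition \ref{prop:tensprodaslocmap}), so $j\circ \bigotimes_W=\bigotimes_V\circ(j\times j)$ on $\top'_\otimes$. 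Finally, in the locality (rather than pre-locality) case, one needs to observe that linearity of polar sets in $(\mathcal{T}_\top(V),\top_\otimes)$ automatically descends to $(\mathcal{T}_{\top'}(W),\top'_\otimes)$ by the hereditary remark following Definition \ref{defn:basic_loc_defn}, since the inherited polar sets in $W$ are intersections of polar sets in $V$ with linear subspaces. The main obstacle in the proof is the injectivity of $j_n$, but it is entirely reducible to the classical $W^{\otimes n}\hookrightarrow V^{\otimes n}$ via a direct intersection argument, so there is no genuinely new difficulty.
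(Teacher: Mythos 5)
Your proposal follows essentially the same route as the paper's proof: the same key identity $I_{\rm mult}(W,\dots,W)=I_{\rm mult}(V,\dots,V)\cap\K(W\times\dots\times W)$ (i.e.\ the classical injectivity of $W^{\otimes n}\hookrightarrow V^{\otimes n}$), intersected with $\K(W\times_{\top'}\cdots\times_{\top'}W)$, to realise $W^{\otimes_{\top'}^n}$ as a subspace of $V^{\otimes_\top^n}$, followed by $\top'\subseteq\top\Rightarrow\top'_\otimes\subseteq\top_\otimes$ for the locality of the inclusion and the easy compatibility with the concatenation product. One small caveat: your closing remark that linearity of polar sets ``descends'' to $(\mathcal T_{\top'}(W),\top'_\otimes)$ is neither needed for the statement as the paper proves it nor quite accurate, since $\top'_\otimes$ is the quotient relation built from $\top'$ rather than the restriction of $\top_\otimes$ to $\mathcal T_{\top'}(W)$, but this side observation does not affect the main argument.
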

\begin{proof}
We prove first that for every $n\in\Z_{\geq0}$, $W^{\otimes_{\top'}^n}$ is a subspace of $V^{\otimes_{\top}^n}$. This is trivially true for $n\in\{0,1\}$. For $n\geq2$. Notice that 
\[I_{\rm mult}(\underbrace{W,\dots,W}_{n-\text{times}})=I_{\rm mult}(\underbrace{V,\dots,V}_{n-\text{times}})\cap\K(\underbrace{W\times\dots\times W}_{n-\text{times}}).\] 
Intersecting both sides with $\K(\underbrace{W \times_{\top'}\dots\times_{\top'} W}_{n-\text{times}})$ it follows that 
\[\left(I_{\rm mult}(\underbrace{V,\dots,V}_{n-\text{times}})\cap\K(\underbrace{W\times\dots\times W}_{n-\text{times}})\right)\cap\K(\underbrace{W\times_{\top'}\dots\times_{\top'} W}_{n-\text{times}})= I_{\rm mult}(\underbrace{W,\dots,W}_{n-\text{times}})\cap\K(\underbrace{W\times_{\top'}\dots\times_{\top'} W}_{n-\text{times}}).\]
Moreover, since 
 $\K({W\times_{\top'}\dots\times_{\top'} W})\subset\K(W\times\dots\times W)$ and 
$\K({W\times_{\top'}\dots\times_{\top'} W})\subset\K(V\times_\top\dots\times_\top V)$ then
\[\left(I_{\rm mult}(\underbrace{V,\dots,V}_{n-\text{times}})\cap\K(\underbrace{V\times_{\top}\dots\times_{\top}V}_{n-\text{times}})\right)\cap\K(\underbrace{W\times_{\top'}\dots\times_{\top'} W}_{n-\text{times}})= I_{\rm mult}(\underbrace{W,\dots,W}_{n-\text{times}})\cap\K(\underbrace{W\times_{\top'}\dots\times_{\top'} W}_{n-\text{times}})\] 
and hence, using the identity $\K({W\times_{\top'}\dots\times_{\top'} W}) \cap \K(V\times_\top\dots\times_\top V)= \K({W\times_{\top'}\dots\times_{\top'} W})$ we have
 {\begin{eqnarray*}
 W^{\otimes_{\top'}^n} & =& \K({W\times_{\top'}\dots\times_{\top'} W})\,  / \,  I_{\rm mult}({W,\dots,W})\cap\K({W\times_{\top'}\dots\times_{\top'} W})\\
 & =&  \K(V\times_\top\dots\times_\top V) \cap \K({W\times_{\top'}\dots\times_{\top'} W}) \, /\,  I_{\rm mult}({W,\dots,W})\cap\K({W\times_{\top'}\dots\times_{\top'} W})  \\
 & \subset &  \K({V\times_{\top}\dots\times_{\top}V})\, / \, I_{\rm mult}({V,\dots,V})\cap\K({V\times_{\top}\dots\times_{\top}V})\\
 &=&V^{\otimes_{\top}^n}, 
\end{eqnarray*}} 
(since $(A\cap C)/(B\cap C)\subseteq A/B$ for $B\subseteq A$).
In  particular $\mathcal T_{\top'}(W)$ is a subspace of $\mathcal T_\top(V)$. We are only left to prove that the injection map $\iota:\mathcal T_{\top'}(W)\to \mathcal T_\top(V)$ is a locality map. The inclusion $\top'\subset\top$ implies that $\top_{\times}^{'n}\subset \top_{\times}^n$ for any $n$ (See \ref{eq:locreltimesmn} and Definition \ref{defnprop:locrelonfilteredtenalg}), and therefore \begin{align*}
([w_1],[w_2])\in\top_{\otimes}^{'n}&\Rightarrow(\exists w_1'\in[w_1])(\exists w_2'\in[w_2]):\ (w_1',w_2')\in\top_\times^{'n}\\
&\Rightarrow(\exists w_1'\in[w_1])(\exists w_2'\in[w_2]):\ (w_1',w_2')\in\top_\times^n\\
&\Rightarrow ([w_1],[w_2])\in\top_{\otimes}^{n}.
\end{align*} Thus $\iota$ is a morphism of (pre-)locality vector spaces. One easily checks  that it is moreover a morphism of (pre-)locality algebras which proves the statement of the proposition.
\end{proof}

As in the usual (i.e. non-locality) case, the pre-locality tensor algebra enjoys a universal property.

\begin{thm}[Universal property of locality tensor algebra over a pre-locality vector space]\label{thm:univpptyloctenalg-preloc}		
		Let $(V,\top)$ be a pre-locality vector space, $(A,\top_A)$ a pre-locality algebra  {whose product $m_A:(A\times_{\top}A,\top_{A\times_{\top}A})\to(A,\top_A)$ is a locality map}, and $f:V\to A$ a locality linear map. There is a unique pre-locality algebra morphism $\sy{\psi}:{\mathcal T}_{\top}(V)\to A$ such that the following diagram commutes		\begin{equation*}
		\begin{tikzcd}
		(V,\top)\arrow[r,"\otimes_\top"]\arrow[rdd,"f"]&({\mathcal T}_\top(V),\top_\otimes) \arrow[dd,"\sy{\psi}"]\\
		\\
		&(A,\top_A)
		\end{tikzcd}
		\end{equation*}
		
		where $\otimes_\top:V\to {\mathcal T}_{\top}(V)$ is the canonical (locality) injection map.
	\end{thm}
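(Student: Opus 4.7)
The plan is to build $\psi$ in three stages: first define set-theoretic maps $\tilde\psi_n:V^{\times_\top n}\to A$ by iterated multiplication, then use $n$-ary versions of the universal property to obtain linear maps $\psi_n:V^{\otimes_\top n}\to A$, and finally assemble $\psi:=\bigoplus_{n\geq 0}\psi_n$ and check it is a pre-locality algebra morphism. Set $\tilde\psi_0:=u$, $\tilde\psi_1:=f$, and recursively $\tilde\psi_n(v_1,\ldots,v_n):=m_A(f(v_1),\tilde\psi_{n-1}(v_2,\ldots,v_n))$. The first subtlety is that these iterated products must lie in the domain of $m_A$. I would therefore first prove, by induction on $n$, the auxiliary statement: for every $(v_1,\ldots,v_n)\in V^{\times_\top n}$ and every $w\in V$ with $(v_i,w)\in\top$ for all $i$, the element $\tilde\psi_n(v_1,\ldots,v_n)$ is defined and satisfies $\tilde\psi_n(v_1,\ldots,v_n)\top_A f(w)$. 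The base case uses that $f$ is a locality linear map; the inductive step applies the locality of $m_A$ to the quadruple $\bigl((f(v_1),\tilde\psi_{n-1}(v_2,\ldots,v_n)),(f(w),1_A)\bigr)$, whose four pairwise $\top_A$-conditions are supplied by the inductive hypothesis together with the unital property $1_A\in A^{\top_A}$, and one uses $m_A(f(w),1_A)=f(w)$.

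Next, I would verify that each $\tilde\psi_n$ is $\top_\times$-multilinear, which follows from the bilinearity of $m_A$ in each of its slots applied inductively. The universal property of the $n$-fold locality tensor product, obtained either by a direct generalisation of Theorem \ref{thm:univpptyloctensprod-nolocalityrel} or by iterating Theorem \ref{thm:univpropltp-preloc} through the associativity result Theorem \ref{thm:asso_pre_loc_tensor_prod}, then yields a unique linear map $\psi_n:V^{\otimes_\top n}\to A$ with $\tilde\psi_n=\psi_n\circ \otimes_\top^{\,n}$. Setting $\psi:=\bigoplus_{n\geq 0}\psi_n$ produces a linear map $\mathcal{T}_\top(V)\to A$ satisfying $\psi|_V=f$.

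To finish, I would check that $\psi$ is a pre-locality algebra morphism. Multiplicativity, on pure tensors $[a]=v_1\otimes\cdots\otimes v_m$ and $[b]=w_1\otimes\cdots\otimes w_n$ with $([a],[b])\in\top_\otimes$, reduces to the equality $\tilde\psi_{m+n}(v_1,\ldots,v_m,w_1,\ldots,w_n)=m_A(\tilde\psi_m(v_1,\ldots,v_m),\tilde\psi_n(w_1,\ldots,w_n))$, which follows by unfolding the recursion, and extends by linearity. The locality of $\psi$ follows from a two-index extension of the auxiliary claim: whenever $(v_1,\ldots,v_m)\in V^{\times_\top m}$ and $(w_1,\ldots,w_n)\in V^{\times_\top n}$ satisfy $(v_i,w_j)\in\top$ for all $i,j$, one has $\tilde\psi_m(v_1,\ldots,v_m)\top_A\tilde\psi_n(w_1,\ldots,w_n)$; the proof is by the same inductive scheme as in the auxiliary claim, again using the locality of $m_A$ together with $1_A$ as a universal locality partner. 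Uniqueness is then immediate: any pre-locality algebra morphism $\psi'$ with $\psi'|_V=f$ must send a pure tensor to the corresponding iterated product of $f$-values by multiplicativity, hence agree with $\psi$ everywhere by linearity.

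The main obstacle will be the auxiliary claim in the first paragraph and its two-index generalisation used for locality in the third paragraph. Both steps rely on repeatedly invoking the locality of $m_A$ paired with the unit $1_A$ to promote the pairwise $\top$-independence carried by an element of $V^{\times_\top n}$ into the locality of the iterated products in $A$; once this bookkeeping is in place, the remaining verifications reduce to standard manipulations via Theorem \ref{thm:univpropltp-preloc} and Theorem \ref{thm:asso_pre_loc_tensor_prod}.
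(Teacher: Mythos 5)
Your proof follows essentially the same approach as the paper: define iterated products $f_n(x_1,\ldots,x_n)=m_A^{n-1}(f(x_1),\ldots,f(x_n))$, pass to $\psi_n$ via the universal property of the locality tensor power, and assemble $\psi=\bigoplus_n\psi_n$, with uniqueness forced by multiplicativity. Your auxiliary claim (that the iterated products are well-defined and locality-compatible, proved by induction using the locality of $m_A$ together with $1_A\in A^{\top_A}$) fills in a verification that the paper's proof implicitly assumes when it asserts without justification that each $f_n$ is a locality $n$-linear map.
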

	
	\begin{proof}
		Let $f:V\to A$ be a locality linear map. We define for every $n\in\N$, a locality $n$-linear map $f_n:V^{\times_{\top^n}}\to A$ as $f_n(x_1,\cdots,x_n):= {m_A^{n-1}(f(x_1),\cdots, f(x_n))}$.
		Thanks to the universal property of the locality tensor product (Theorem \ref{thm:univpropltp-preloc}), there are locality linear maps $\sy{\psi}_n:V^{\otimes_{\top^n}}\to A$ such that $f_n=\sy{\psi}_n\circ \otimes_{\top n}$ where $\otimes_{\top n}$ is the canonical map from $V^{\times_{\top^n}}\to V^{\otimes_{\top^n}}$. The map $\sy{\psi}:{\mathcal T}_{\top}(V)\to A$ defined as the sum of the $\sy{\psi}_n$'s is  a locality algebra morphism such that $f=\sy{\psi}\circ \otimes_\top$. \cy{It is unique due to the uniqueness of each of the maps $\sy{\psi}_n$ (Theorem \ref{thm:univpropltp-preloc}).}
	\end{proof}

Similarly to the universal property of the locality tensor product, this last statement is equivalent to the universal property of the usual (non-locality) tensor algebra. 
	
	\begin{thm}
	The universal property of the locality tensor algebra is equivalent to the universal property of the usual (non-locality) tensor algebra.
	\end{thm}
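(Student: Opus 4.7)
The plan is to establish both directions of the equivalence, following the pattern of Corollary \ref{cor:univpptyloctensprod-nolocalityrel}.

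First I would prove that the locality universal property implies the usual one by specializing to trivial locality relations. Given a vector space $V$, an algebra $A$, and a linear map $f:V\to A$, equip $V$ with $\top:=V\times V$ and $A$ with $\top_A:=A\times A$. Then $(V,\top)$ is a pre-locality vector space, $(A,\top_A)$ is a pre-locality algebra whose product $m_A$ is trivially a locality map, and $f$ is automatically locality linear. Under this full locality, $V^{\times_\top^n}=V^{\times n}$ for every $n$, so $\mathcal T_\top(V)=\mathcal T(V)$ and $\otimes_\top$ coincides with $\otimes$. Applying the locality universal property then yields a unique pre-locality algebra morphism $\psi:\mathcal T(V)\to A$ with $f=\psi\circ \iota$, which is precisely the usual universal property.

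For the converse, I would lift the locality data to the usual (non-locality) setting, apply the usual universal property, and then restrict. Given $(V,\top)$, $(A,\top_A)$ and $f:V\to A$ as in Theorem \ref{thm:univpptyloctenalg-preloc}, view $f$ as a plain linear map into the algebra $A$. The usual universal property of $\mathcal T(V)$ yields a unique algebra morphism $\tilde\psi:\mathcal T(V)\to A$ with $\tilde\psi\circ\iota=f$. By Proposition \ref{prop:CompareTensorAlgebras}, $\mathcal T_\top(V)$ is a linear subspace of $\mathcal T(V)$, so I would define $\psi:=\tilde\psi|_{\mathcal T_\top(V)}$. Multiplicativity of $\psi$ on $\top_\otimes$-compatible pairs follows by restriction from the algebra morphism $\tilde\psi$, and uniqueness of $\psi$ follows from uniqueness of $\tilde\psi$ on the generating set $V$.

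The main obstacle will be verifying that $\psi$ is a locality morphism, namely $(\psi\times\psi)(\top_\otimes)\subseteq\top_A$. Given $[a]\top_\otimes [b]$ in $\mathcal T_\top(V)$, one chooses representatives $a'=\sum_i\alpha_i(v_{i,1},\ldots,v_{i,m_i})$ and $b'=\sum_j\beta_j(w_{j,1},\ldots,w_{j,n_j})$ in $\K(\bigcup_k V^{\times_\top^k})$ with $v_{i,k}\top w_{j,l}$ for all admissible indices. The locality of $f$ gives $f(v_{i,k})\top_A f(w_{j,l})$, and iterated application of the hypothesis that $m_A$ is a locality map, in the same spirit as the construction of the locality $n$-linear maps $f_n$ in the proof of Theorem \ref{thm:univpptyloctenalg-preloc}, then yields $\psi((v_{i,1},\ldots,v_{i,m_i}))\top_A \psi((w_{j,1},\ldots,w_{j,n_j}))$ for every pair $i,j$. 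The linear extension of the locality relation defined in Equation \eqref{eq:locrelonthefreespan} together with the definition of $\top_\otimes$ as the quotient locality on $\mathcal T_\top(V)$ then conclude the verification.
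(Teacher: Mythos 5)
Your proof is correct and follows essentially the same route as the paper: specialize to the trivial locality $\top=V\times V$ for one direction, and for the other apply the usual universal property on $\mathcal T(V)\supset\mathcal T_\top(V)$ and restrict. The paper's own argument is terser in the converse direction (it asserts without detail that the restriction is a locality algebra morphism), so your explicit verification that $\psi$ is a locality map is a welcome but not essentially different addition.
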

	\begin{proof} 
		The fact that the universal property of the locality tensor algebra implies the usual one follows from choosing the trivial locality relation $\top=V\times V$. 
		
		For the converse: Assuming the universal property of the usual tensor algebra and given a locality linear map $f:V\to A$, it is in particular a linear map. Applying the usual universal property, we get an algebra morphism $\phi:{\mathcal T}(V)\to A$ such that $f=\phi\circ \otimes$ where $\otimes$ is the canonical injection of $V$ into ${\mathcal T}(V)$. Recall that ${\mathcal T}_{\top}(V)$ is a linear subspace of ${\mathcal T}(V)$ which contains $V$, so $\otimes$ is also the canonical injection of $V$ into ${\mathcal T}_{\top}(V)$. Restricting $\phi|_{{\mathcal T}_{\top}(V)}$ we therefore obtain a locality algebra morphism such that $f=\phi|_{{\mathcal T}_{\top}(V)}\circ \otimes$. We conclude that the usual universal property implies the locality one. 
	\end{proof}

\subsection{The  locality universal enveloping algebra}\label{subsec:luea}
	  Let us first introduce some terminology inspired by  \cite{CGPZ1}. Here, instead of considering  locality Lie algebras straightaway, we first introduce a notion of pre-locality  Lie algebra.
	\begin{defn} \label{defn:localityLieAlgebra}
			\begin{itemize}
			\item A {\bf pre-locality Lie algebra} is a triple $(\g, \top_{\g}, [,])$ where $(\g, \top_{\g})$ is a pre-locality vector space, and $[,]:\top_{\g}\to \g$ is a locality $\top_\times$-bilinear map antisymmetric which satisfies the following properties:
			\begin{itemize}
				\item For every $U\subset \g$,   the Lie bracket stabilises polar sets, i.e. it maps $(U^{\top}\times U^{\top})\cap \top_{\g}$ into $U^{\top}$.
				\item For $(a,b,c)\in V^{\times_{\top^3}}$ then $[[a,b],c]+[[c,a],b]+[[b,c],a]=0$.
			\end{itemize}
			\item A {\bf locality Lie algebra} is a pre-locality Lie algebra $(\g,\top_\g,[,])$ such that $(\g,\top_\g)$ is also a locality vector space.
			\item Let $(\g_1, \top_{\g_1}, [,]_1)$ and $(\g_2, \top_{\g_2}, [,]_2)$ be two (resp. pre-)locality Lie algebras. A locality linear map $f:\g_1\to\g_2$ is called a {\bf (resp. pre-)locality Lie algebra morphism} if $f([x,y]_1)=[f(x),f(y)]_2$, for every independent pair $x\top_1y$.
			
		\item	Let $(\mathfrak g_2, \top_{\mathfrak g_2}, [,]_2)$ be a (pre-)locality Lie algebra and and $\g_1\subseteq\g_2$. We call $(\mathfrak g_1, \top_{\mathfrak g_1}, [,]_1)$ a {\bf (pre-) locality  Lie subalgebra} of $(\mathfrak g_2, \top_{\mathfrak g_2}, [,]_2)$ if the inclusion map $\iota: \mathfrak g_1\hookrightarrow  \mathfrak g_2$ is a (pre-) locality Lie algebra morphism.
		\end{itemize}
	
 {The following example provides a  justification of the universal property of the enveloping algebra.}

 {\begin{ex}
If $(A,\top_A,m_A)$ is a (pre-)locality associative algebra, then it is a (pre-)locality Lie algebra, with the bracket defined by
\[\forall (x,y)\in A\times_\top A,\: [x,y]=xy-yx.\]
\end{ex}}
	
	\end{defn}
	We now define the locality universal enveloping algebra similarly to the usual universal enveloping algebra
	\begin{defn}\label{defn:luenvelopingalgebra}
		Let $(\g, \top_{\g}, [,])$ be a pre-locality Lie algebra. Consider the pre-locality ideal $J_\top(\g)$ (see Definition \ref{defn:prelocalg}) of ${\mathcal T}_{\top}(\g)$ generated by all terms of the form $a\otimes b-b\otimes a-[a,b]$ for $(a,b)\in\top_{\g}$. The {\bf  locality universal enveloping algebra} of $\g$ is defined as 
		\begin{equation} \label{eq:univlocalg} 
		U_{\top}(\g):=\sfrac{{\mathcal T}_{\top}(\g)}{J_\top(\g)}. 
		\end{equation}
		The locality relation $\top_U$ on $U_{\top}(\g)$ is the final locality relation for the quotient map on $U_\top(\g)$ induced by $\top_{\otimes}$ on ${\mathcal T}_{\top}(\g)$. This means that two equivalence classes $x$ and $y$ in $U_{\top}(\g)$ are locally independent i.e. $x\top_Uy$ if, and only if there are elements $a$ in $x$ and $b\in y$ in ${\mathcal T}_\top(\g)$ such that $a\top_{\otimes}b$. 
	\end{defn}
	
	Notice that the locality relation $\top_U$ is defined in a similar manner to $\top_{\otimes}$, namely it is induced by $\top_\times$ on $\K(\g\times_\top\g)$ (see Definition-Proposition \ref{defnprop:locrelonfilteredtenalg}).
	\begin{rk}
		The ideal $J_\top(\g)$ is not graded,  
		 so one does not expect   $U_{\top}(\g)$  to inherit a grading from  ${\mathcal T}_{\top}(\g)$. Yet it does inherit a filtration \[(U_{\top}(\g))^n=\sfrac{({\mathcal T}_{\top}(\g))^n}{J_\top(\g)\cap({\mathcal T}_{\top}(\g))^n},\]where $({\mathcal T}_{\top}(\g))^n=\bigoplus_{i=0}^{n}\g^{\otimes_{\top}i}$.
			It is easy to check that this is indeed a filtered pre-locality algebra which induces the grading $(U_{\top}(\g))_n=\sfrac{(U_{\top}(\g))^n}{(U_{\top}(\g))^{n-1}}$. 
		\end{rk}

		Similarly to Proposition \ref{prop:CompareTensorAlgebras}, the following Proposition compares the universal enveloping algebras of two locality Lie algebras.
\begin{prop}\label{prop:CompareUnivAlgebras}
Let $(\g',\top')$ be a (pre-)locality Lie subalgebra of $(\g,\top)$ a (pre-)locality Lie algebra. Then $(U_{\top'}(\g'),\top'_U)$ is a (pre-)locality subalgebra of $(U_\top(\g),\top_U)$.
\end{prop}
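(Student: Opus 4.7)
The plan is to mirror the proof of Proposition \ref{prop:CompareTensorAlgebras} one level higher, by comparing the defining ideals $J_{\top'}(\g')$ and $J_\top(\g)$ of the two enveloping algebras.

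First I would invoke Proposition \ref{prop:CompareTensorAlgebras} to obtain the canonical inclusion $\iota : \mathcal T_{\top'}(\g') \hookrightarrow \mathcal T_\top(\g)$ of (pre-)locality algebras. Since $(\g',\top')$ is a Lie subalgebra of $(\g,\top)$, for every $(a,b) \in \top'$ we have $(a,b) \in \top$ and the two Lie brackets agree, so each canonical generator $a \otimes b - b \otimes a - [a,b]$ of $J_{\top'}(\g')$ is sent by $\iota$ to the generator with the same expression in $J_\top(\g)$. Because $J_{\top'}(\g')$ is the smallest (pre-)locality ideal containing these generators and $\iota$ is a (pre-)locality algebra morphism which sends polar sets into polar sets, one deduces $\iota(J_{\top'}(\g')) \subseteq J_\top(\g)$. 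Hence the composite $\pi_\top \circ \iota : \mathcal T_{\top'}(\g') \to U_\top(\g)$ descends through $\pi_{\top'}$ to a canonical map $\phi : U_{\top'}(\g') \to U_\top(\g)$.

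I would then check that $\phi$ is a (pre-)locality algebra morphism. Multiplicativity is inherited from $\iota$, while the locality property follows by combining Proposition \ref{prop:CompareTensorAlgebras} with the fact, recalled in Definition \ref{defn:luenvelopingalgebra}, that $\top'_U$ and $\top_U$ are the quotient relations induced by $\top_\otimes$ on the respective locality tensor algebras; thus $(\phi \times \phi)(\top'_U) \subseteq \top_U$, and $\phi$ also respects units.

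The real obstacle is the injectivity of $\phi$, which is equivalent to the identity $\iota^{-1}(J_\top(\g)) = J_{\top'}(\g')$. The inclusion from right to left is immediate from the construction above. For the reverse one, one has to show that any element of $\mathcal T_{\top'}(\g')$ that happens to equal, inside $\mathcal T_\top(\g)$, a locality combination of generators $a_i \otimes b_i - b_i \otimes a_i - [a_i,b_i]$ with $(a_i,b_i) \in \top$, already admits such a presentation with $(a_i,b_i) \in \top'$ and surrounding factors drawn from $\mathcal T_{\top'}(\g')$. Classically, this is exactly the content afforded by the Poincaré-Birkhoff-Witt theorem applied to a basis of $\g'$ extended to a basis of $\g$; since no locality PBW is yet available in this paper, the natural strategy is a direct filtration argument, proving the equality inductively on the tensor degree by isolating the top-degree contribution and reducing it via the Lie relations internal to $\mathcal T_{\top'}(\g')$. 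This is the technical heart of the proof, and the step where the proof differs substantively from that of Proposition \ref{prop:CompareTensorAlgebras}.
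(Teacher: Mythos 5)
Your outline follows the same route as the paper's proof: invoke Proposition \ref{prop:CompareTensorAlgebras} to embed $\mathcal T_{\top'}(\g')$ into $\mathcal T_\top(\g)$, compare the ideals $J_{\top'}(\g')$ and $J_\top(\g)$, and then transfer the inclusion to the quotients together with $\top'_U\subseteq\top_U$. The difference lies in how the decisive ideal comparison is treated. The paper does not construct a map and prove it injective; it simply asserts that \emph{by construction} $J_{\top'}(\g')=J_\top(\g)\cap\mathcal T_{\top'}(\g')$, concludes that $U_{\top'}(\g')=\mathcal T_{\top'}(\g')/J_{\top'}(\g')$ is a subspace of $U_\top(\g)$ by the same elementary fact $(A\cap C)/(B\cap C)\subseteq A/B$ already used in Proposition \ref{prop:CompareTensorAlgebras}, obtains $\top'_U\subseteq\top_U$ exactly as you indicate, and notes that the subalgebra property is then straightforward. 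Your formulation via injectivity of the induced morphism $\phi:U_{\top'}(\g')\to U_\top(\g)$ is equivalent to that ideal identity, so structurally the two arguments coincide.

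The genuine gap is that your proposal stops precisely at that point: you identify the inclusion $J_\top(\g)\cap\mathcal T_{\top'}(\g')\subseteq J_{\top'}(\g')$ as ``the technical heart'', remark that classically it is where Poincar\'e--Birkhoff--Witt enters, and then only announce a filtration/induction strategy without executing it. As written, nothing in your text establishes the injectivity of $\phi$, so the conclusion that $U_{\top'}(\g')$ sits inside $U_\top(\g)$ is never reached. To align with the paper you should either adopt its reading --- both ideals are generated (as locality ideals) by the same elements $a\otimes b-b\otimes a-[a,b]$, with $\top'\subseteq\top$, and intersecting the ideal generated in $\mathcal T_\top(\g)$ with the subalgebra $\mathcal T_{\top'}(\g')$ is taken to return the ideal generated inside $\mathcal T_{\top'}(\g')$, which is all the paper means by ``by construction'' --- or actually carry out the inductive argument you sketch. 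Your instinct that this step is delicate is mathematically defensible (in the non-locality case it is exactly where PBW is used, and the paper stresses that no locality PBW is available), but as a proof of the proposition your text is incomplete at the one step the paper settles, however tersely, in a single line.
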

\begin{proof}

By Proposition \ref{prop:CompareTensorAlgebras} $\mathcal T_{\top'}(\g')\subset\mathcal T_\top(\g)$, and by construction $J_{\top'}(\g')=J_\top(\g)\cap \mathcal T_{\top'}(\g')$. It follows that $U_{\top'}(\g')$ is a subspace of $U_{\top}(\g)$. Proposition \ref{prop:CompareTensorAlgebras} also states   that $\top'_\otimes\subset\top_{\otimes}$ and we can   show the inclusion $\top'_U\subset\top_U$ in a similar manner. Therefore $U_{\top'}(\g')$ is (pre-)locality subspace of $U_\top(\g)$. It is straightforward to see that it is moreover a (pre-)locality subalgebra as expected.
\end{proof}
	We are now ready to prove the   universal property of the locality universal enveloping algebra  $U_{\top}(\g)$.
	\begin{thm}\label{thm:univproplocunivenvalg-preloc}  
		Let $(\g, \top_{\g}, [,])$ be a pre-locality Lie algebra, $(A,\top_A)$ a pre-locality algebra \cy{with} product $m_A:(A\times_\top A,\top_{A\times_\top A})\to(A,\top_A)$, and $f:\g\to A$ a pre-locality Lie algebra morphism \cy{for the Lie bracket on $A$ defined as the commutator of} the product. 
		 There is a unique pre-locality algebra morphism $\phi:U_{\top}(\g)\to A$ such that $f=\phi\circ \iota_\g$ where \sy{\begin{equation}\label{eq:iotag}\iota_\g:\g \longrightarrow U_{\top}(\g) \end{equation}}  
		 is the canonical  (locality) map from $\g$ to $U_{\top}(\g)$. \cy{In other words, the following diagram commutes}
		\begin{equation*}
		\begin{tikzcd}
		(\g,\top_\g)\arrow[r,"\iota_\g"]\arrow[rdd,"f"]&(U_\top(\g),{\top_U}) \arrow[dd,"\phi"]\\
		\\
		&(A,\top_A).
		\end{tikzcd}
		\end{equation*}
	\end{thm}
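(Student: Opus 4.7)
My plan is to reduce the statement to the universal property of the locality tensor algebra (Theorem \ref{thm:univpptyloctenalg-preloc}) and then pass to the quotient by the ideal $J_\top(\g)$. First I observe that the canonical map $\iota_\g:\g\to U_\top(\g)$ factors as $\iota_\g = \pi_J \circ \otimes_\top|_\g$, where $\otimes_\top:\g \to \mathcal T_\top(\g)$ is the canonical locality injection of Theorem \ref{thm:univpptyloctenalg-preloc} and $\pi_J:\mathcal T_\top(\g)\to U_\top(\g)$ is the quotient map. Since $f:\g\to A$ is, by assumption, a locality linear map into a pre-locality algebra whose product is locality, Theorem \ref{thm:univpptyloctenalg-preloc} yields a unique pre-locality algebra morphism $\tilde\psi:\mathcal T_\top(\g)\to A$ such that $f=\tilde\psi\circ\otimes_\top$. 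The plan is to show that $\tilde\psi$ descends to a morphism $\phi:U_\top(\g)\to A$ on the quotient, i.e. that $\tilde\psi$ vanishes on $J_\top(\g)$.

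The key computation is on the generators of $J_\top(\g)$: for any pair $(a,b)\in\top_\g$, one has
\begin{equation*}
\tilde\psi(a\otimes b - b\otimes a - [a,b]) = m_A(f(a),f(b)) - m_A(f(b),f(a)) - f([a,b]).
\end{equation*}
This quantity is well-defined because $f(a)\top_A f(b)$ (since $f$ is locality and $a\top_\g b$), so both products in $A$ exist; and it vanishes because $f$ is a pre-locality Lie algebra morphism with respect to the commutator bracket on $A$. Since $J_\top(\g)$ is by definition the pre-locality ideal generated by these elements and $\tilde\psi$ is a pre-locality algebra morphism, the image $\tilde\psi(J_\top(\g))$ is contained in the ideal of $A$ generated by zero, hence trivial. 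Therefore $\tilde\psi$ factors through the quotient $U_\top(\g)=\mathcal T_\top(\g)/J_\top(\g)$, yielding a $\K$-linear map $\phi:U_\top(\g)\to A$ with $\phi\circ\pi_J=\tilde\psi$, and in particular $\phi\circ\iota_\g=f$.

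It remains to verify that $\phi$ is a pre-locality algebra morphism and is unique. For the algebra morphism property, the product on $U_\top(\g)$ is by construction the one induced by $\bigotimes$ on $\mathcal T_\top(\g)$, so this follows from $\tilde\psi$ being an algebra morphism together with the definition of $\pi_J$. For locality, recall from Definition \ref{defn:luenvelopingalgebra} that $\top_U$ is the final (i.e.\ quotient) locality for $\pi_J$; hence if $x\top_U y$ in $U_\top(\g)$ there exist representatives $a\in x$, $b\in y$ with $a\top_\otimes b$ in $\mathcal T_\top(\g)$, and the locality of $\tilde\psi$ gives $\phi(x)=\tilde\psi(a)\top_A\tilde\psi(b)=\phi(y)$, so $\phi$ is locality. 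Uniqueness follows from the fact that $U_\top(\g)$ is generated as an algebra by $\iota_\g(\g)$: any pre-locality algebra morphism agreeing with $f$ on this generating set must coincide with $\phi$. The main subtlety I anticipate is ensuring that the locality ideal $J_\top(\g)$ really coincides with the linear span of products of its generators with elements of their polar sets (so that the image under $\tilde\psi$ is controlled purely by the value on generators); this is where the locality compatibility of the product on $A$ intervenes, guaranteeing that all intermediate products used to express elements of $J_\top(\g)$ are transported by $\tilde\psi$ to well-defined products in $A$.
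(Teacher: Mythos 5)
Your proof is correct and takes essentially the same route as the paper's: both lift $f$ to $\psi:\mathcal T_\top(\g)\to A$ via Theorem \ref{thm:univpptyloctenalg-preloc}, kill the generators of $J_\top(\g)$ by the same Lie-morphism computation, descend to $U_\top(\g)$, and check locality using the quotient description of $\top_U$. Your uniqueness argument (via $\iota_\g(\g)$ generating $U_\top(\g)$ as an algebra) is a minor variant of the paper's (which appeals directly to uniqueness in Theorem \ref{thm:univpptyloctenalg-preloc}), but the two amount to the same thing.
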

	\begin{proof}
	\begin{itemize}
		\item Existence:	Since $f$ is   linear map, we may apply the universal property of the locality tensor algebra (Theorem \ref{thm:univpptyloctenalg-preloc}) to obtain a pre-locality algebra morphism $\psi:{\mathcal T}_{\top}(\g)\to A$ such that $f=\psi\circ \otimes_\top$ where $\otimes_\top$ is the canonical map from $\g$ to ${\mathcal T}_{\top}(\g)$. We then define $\phi:U_{\top}(\g)\to A$ by $\phi([x]):=\psi(x)$. This is clearly well defined since $f$ is a pre-locality Lie algebra morphism for indeed we have:
		\[\psi(x+a\otimes b-b\otimes a-[a,b])=\psi(x)+\psi(a\otimes b-b\otimes a-[a,b])=\psi(x)+f(a) f(b)-f(b)f(a)-f([a,b])=\psi(x)\] 
		It also satisfies the equation $f=\phi\circ \iota_\g$ since $\iota_\g$ is the composition of $\otimes_\top$ with the map which takes every element of ${\mathcal T}_{\top}(\g)$ to its equivalence class on $U_{\top}(\g)$. \cy{For the same reason, and because $\psi$ is a locality map (Theorem \ref{thm:univpptyloctenalg-preloc}), $\phi$ is a locality map. The fact that $\phi$ is a pre-algebra morphism is direct: with a small abuse of notation we have
		\begin{equation*}
		 \phi([a]\otimes[b])=\phi([a\otimes b])=\psi(a\otimes b)=f(ab)=f(a)f(b)=\phi([a])\phi([b]).
		\end{equation*}}
	\item Uniqueness:	\sy{ Let $\phi:U_{\top}(\g)\to A$ be  pre-locality algebra morphism such that $f=\phi\circ \iota_\g$.    By the universal property of the locality tensor algebra, the pre-locality Lie algebra morphism $f:\g\to A$  uniquely extends to  a pre-locality algebra morphism $\psi: {\mathcal T}_\top(\g)\to A$. By the uniqueness of this extension, $\psi $ has to coincide with $x\mapsto \phi([x])$, which proves that $\phi$ is determined by $\psi$ and hence the uniqueness of $\phi$. }
\end{itemize}
	\end{proof}

\begin{prop} 
	The universal property of the locality universal enveloping algebra  $U_{\top}(\g)$ implies the universal property of the usual universal enveloping algebra  $U(\g)$.
\end{prop}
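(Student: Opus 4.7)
The plan is to deduce the usual universal property from the locality one by specialising to the maximal (trivial) locality relation. Given an ordinary Lie algebra $\g$, an ordinary associative algebra $A$ with its commutator Lie bracket, and an ordinary Lie algebra morphism $f:\g\to A$, I will equip both $\g$ and $A$ with the total locality relations $\top_\g:=\g\times\g$ and $\top_A:=A\times A$. With these choices, the conditions \eqref{eq:linearloc} and \eqref{eq:mlocalgm} are trivially satisfied, so $(\g,\top_\g,[,])$ becomes a pre-locality Lie algebra and $(A,\top_A,m_A)$ a pre-locality algebra whose product is automatically a locality map; moreover $f$ is automatically a pre-locality Lie algebra morphism, since every pair is independent.

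Next I would verify that $U_{\top_\g}(\g)=U(\g)$ in this trivial case. Indeed, since $\g\times_{\top_\g}\g=\g\times\g$, Definition \ref{defn:loctensalgebra} gives $\g^{\otimes_{\top_\g}^n}=\g^{\otimes n}$ for every $n$, so $\mathcal T_{\top_\g}(\g)=\mathcal T(\g)$ as algebras. The ideal $J_{\top_\g}(\g)$ from Definition \ref{defn:luenvelopingalgebra} is generated by all elements $a\otimes b-b\otimes a-[a,b]$ with $(a,b)\in\top_\g=\g\times\g$, which is exactly the defining ideal of the usual enveloping algebra. Hence $U_{\top_\g}(\g)=U(\g)$, and the canonical map $\iota_\g$ of \eqref{eq:iotag} coincides with the usual inclusion $\g\hookrightarrow U(\g)$.

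Applying Theorem \ref{thm:univproplocunivenvalg-preloc} to this setup then yields a unique pre-locality algebra morphism $\phi:U_{\top_\g}(\g)\to A$ such that $f=\phi\circ\iota_\g$. Since the source and target have trivial localities, being a pre-locality algebra morphism reduces to being an ordinary algebra morphism, and the commuting diagram gives precisely the statement of the usual universal property. Uniqueness in the ordinary sense follows from uniqueness in the pre-locality sense: any ordinary algebra morphism $\phi':U(\g)\to A$ with $f=\phi'\circ\iota_\g$ is in particular a pre-locality algebra morphism for the trivial localities, and so must coincide with $\phi$.

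No real obstacle is expected here, as the argument is a routine specialisation; the only point requiring a moment of care is checking that the constructions of Section \ref{sec:Tensoranduniversalpla} and Subsection \ref{subsec:luea} collapse to their classical counterparts when $\top=\g\times\g$, which is immediate from the definitions of $\times_\top$, $I_{\rm mult}$ and $J_\top(\g)$.
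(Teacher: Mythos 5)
Your proof is correct and follows exactly the same strategy as the paper's: specialise to the trivial locality relation $\top=\g\times\g$, observe that the locality constructions collapse to the classical ones, and apply the pre-locality universal property (Theorem \ref{thm:univproplocunivenvalg-preloc}). You just spell out the verification that $\mathcal T_{\top}(\g)=\mathcal T(\g)$ and $J_\top(\g)$ becomes the usual defining ideal, which the paper leaves implicit as immediate.
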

\begin{proof} 
Given a Lie algebra $\g$, an algebra $A$ and a Lie algebra morphism $f:\g\to A$, it is enough to consider the trivial locality relation $\top=\g\times\g$ and this yields the existence and uniqueness of the algebra morphism $\phi:U(\g)\to A$ required for the universal property of the universal enveloping algebra.
\end{proof}

	\begin{rk}
	For the equivalence of the universal properties of the universal enveloping algebras in the usual and locality set up, one would need to extend the Lie bracket of a pre-locality Lie algebra to the whole Lie algebra. However this is not always possible  as can be seen from Counterexample \ref{coex:loc_Lie_no_Lie}.
	
	   Proposition  \ref{prop:locbiltobil}  yields a bilinear map which is antisymmetric on $\g\times_\top\g$ by construction since $\top$ is symmetric. Outside of the span of the image of the original (non-extended map), the extended map vanishes identically so that the extended map is antisymmetric. However, as the next counter-example shows, it does not in general satisfy Jacobi identity.
	\end{rk}

\begin{coex} \label{coex:loc_Lie_no_Lie} 
 {Three-dimensional Lie algebras} are known, and there are finitely many (see  {Mubarakzyanov's Classification} \cite{Mub}). We build an infinite family of distinct locality Lie algebras, which as a consequence, does not correspond to an ordinary Lie algebra equipped with a locality algebra.

 Take $\g=\R^3$, $(e_1,e_2,e_3)$ its canonical basis and the locality relation $ \top_\g$ defined by the   subset of $\R^3\times \R^3$ obtained by symmetrising the following set
 \begin{equation*}
\langle e_1\rangle\times\langle e_1\rangle\bigcup\langle e_2\rangle\times\langle e_2\rangle\bigcup\langle e_3\rangle\times\langle e_3\rangle\bigcup \langle e_1,e_2\rangle\times\langle e_3\rangle.
 \end{equation*}
 Let   $[\cdot,\cdot]:\top_\g\longrightarrow\g$ be   the linear antisymmetric map defined by
 \begin{equation} \label{eq:locality_Lie}
  [e_1,e_3] = \lambda e_1+\mu e_3,\quad [e_2,e_3]=\mu' e_3
 \end{equation}
 with $ {(\lambda,\mu,\mu')\in(\R^*)^3}$. Notice that since $(e_1,e_2)\notin\top_\g$, $[e_1,e_2]$ does not need to be defined. We argue is that it \emph{cannot} be defined such that $(\g,[\cdot,\cdot])$ is a usual (non-locality) Lie algebra.  {Indeed, let us write $[e_1,e_2]=xe_1+ye_2+ze_3$. Then computing $[[e_1,e_2],e_3]$ and its permutations, we find   that the Jacobi identity is satisfied if, and only if:
 \begin{equation*}
  -\mu'\lambda e_1-\lambda ye_2+(x\mu+y\mu'-\lambda z)e_3=0.
 \end{equation*}
 This equation has no solutions since $\mu'\lambda\neq0$.}
 
  However, it is easy to see that for any  {$(\lambda,\mu,\mu')\in(\R^*)^3$}, $(\g,\top_\g,[\cdot ,\cdot ])$ is a locality 
 Lie algebra. This follows from the fact that since $(e_1, e_2)\notin \top_{ \mathfrak g}$,  $(e_i,e_j,e_k)$ cannot lie in $\g^{\times_\top^3}$ if $i$, $j$ and $k$ are all different,  $[\cdot ,\cdot ]$ trivially satisfies the locality Jacobi identity. 
\end{coex}

\begin{rk}
     The previous counter-example indicates that the category of locality Lie algebras is much richer than the one of usual (non-locality) Lie algebras, even when they are {finite-dimensional}. This hints to the difficulty of     classifying such structures. As pointed out in the introduction, this issue of classification is left for future studies. 
    \end{rk}

\vfill \eject \noindent 
\part{Enhancement to the locality setup}

We want  to enhance to a locality setup the constructions of Part I carried out in a pre-locality framework. Since they involve tensor products, the question arises of the stability of locality vector spaces under locality tensor products. Like ordinary tensor products,  locality tensor products are defined as quotient spaces,  which brings us to the study of quotients of locality vector spaces. 

\section{Quotient of locality vector spaces} \label{section:five}

This section is dedicated to the following natural question:

\begin{eqnarray} \label{question}
&&\text{\it When is the quotient of a locality vector space  by a linear  subspace,   a locality vector space,} \nonumber\\
&& \text{\it if equipped with the quotient locality relation of  Definition \ref{defn:quotientlocality}~?}
\end{eqnarray}

Given a locality vector space $(V, \top)$ and a subspace $W$, the question can be reformulated as follows: does the following implication 
\[\forall (v_1, v_2. v_3)\in V^3, \quad  \left( [v_1] \ttop [v_2]  \, \wedge\, [v_1] \ttop [v_3]\right)\Longrightarrow \left([v_1]\ttop[ v_2+v_3]\right),\]
hold, 
where $\ttop$  stands for the quotient locality of  Definition \ref{defn:quotientlocality}~? This amounts to
whether the subsequent implication holds:	
\begin{equation}\label{eq:questionbis}
\forall (v_1, v_2. v_3)\in V^3, \quad      \left(\exists v_i'\in  [v_i],\exists v_1'' \in [v_1],  \quad v_1'\top v_2'   \, \wedge\,  v_1'' \top  v_3'\right)\Longrightarrow \left(\exists  v_{23}'\in [ v_2+ v_3], \exists \tilde v_1 \in [v_1], \quad \tilde v_{1}\top v_{23}'\right).
\end{equation}
Investigating this question in the context of locality tensor products  leads to two main assumptions formulated in the conjectural statements \ref{conj:main1} and   \ref{conj:main2}.  

We first reformulate Question (\ref{question}) in order get a better grasp on it  and  to motivate  the above statements.

\subsection{ {A group  theoretic interpretation } }

 { We can rephrase  Question (\ref{question}),} namely   whether or not a quotient of locality vector spaces 
is a locality vector space as whether or not a union  of groups is a group.   This question in full generality, is to our knowledge, still an open question.   A  good description of the state of the art on the question of when a group is the union of $n$ proper subgroups is given in \cite{Bha}.   An   answer was provided in \cite{Coh}  for $n\leq6$ and in \cite{Tom} for $n=7$. To our knowledge, the question for any $n$ or for infinite unions of groups remains open. 

\begin{notation}
	In the subsequent sections, given a locality set $(S,\top)$ we will use the polar map 
	\begin{eqnarray*}
		P^\top: {\mathcal P}(S)&\longrightarrow &{\mathcal P}(S)\\
		U&\longmapsto & U^\top.
	\end{eqnarray*}
	By convention, we take $P^\top(\emptyset):=S$. Furthermore, in a small abuse of notation we write $P^\top(u)$ instead of $P^\top(\{u\})$ if 
	$U=\{u\}$ has only one element.
\end{notation}
Let us recall an elementary result.  
\begin{lem} \label{lem:charac_LVS}
	Given a vector space $V$ equipped with  a locality relation $\top$, the pair $(V,\top)$ is a locality vector space if, and only if, 
	$P^\top(u)$ is a vector subspace of $V$ for any $u\in V$.
\end{lem}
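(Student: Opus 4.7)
The plan is to reduce the locality condition on arbitrary subsets to the condition on singletons via the identity
\[U^\top = \bigcap_{u \in U} P^\top(u),\]
which follows directly from the definition $U^\top = \{x \in V : \forall u \in U,\, u \top x\}$. Since an arbitrary intersection of linear subspaces is again a linear subspace, this formula immediately converts pointwise information into set-wise information.

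For the forward implication, I would simply specialize the defining property of a locality vector space to singleton subsets: if $U^\top$ is a subspace for every $U \subset V$, then in particular $P^\top(u) = \{u\}^\top$ is a subspace for every $u \in V$.

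For the backward implication, assume $P^\top(u)$ is a subspace for every $u \in V$. First, I would verify the pre-locality condition $(0_V, 0_V) \in \top$: since $P^\top(0)$ is a subspace, it contains $0$, hence $0 \top 0$. Next, for any subset $U \subset V$ (including $U = \emptyset$, for which $U^\top = V$ is trivially a subspace), the formula $U^\top = \bigcap_{u \in U} P^\top(u)$ expresses $U^\top$ as an intersection of subspaces, hence itself a subspace. This establishes the defining property of a locality vector space.

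There is no real obstacle here; the lemma is essentially a repackaging of the definition using the fact that the polar operation sends unions to intersections, combined with the stability of subspaces under intersection. The only mild subtlety is ensuring the pre-locality axiom $(0,0) \in \top$ is recovered from the singleton hypothesis, which as noted follows from $0 \in P^\top(0)$.
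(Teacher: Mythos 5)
Your proof is correct and follows essentially the same route as the paper: both directions hinge on the identity $P^\top(U)=\bigcap_{u\in U}P^\top(u)$ together with the fact that arbitrary intersections of linear subspaces are linear subspaces. Your additional verification that $(0_V,0_V)\in\top$ follows from $0\in P^\top(0)$ is a small point of care that the paper's own proof omits, and it is a welcome addition since the lemma's hypothesis only asserts $\top$ is a symmetric relation rather than a pre-locality relation.
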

\begin{proof}
	By definition, $(V,\top)$ is a locality vector space if, and only if, 
	$P^\top(U)$ is a vector subspace of $V$ for any subset $U\subseteq V$. If this holds, then it is obvious that $P^\top(u)$ is a vector subspace 
	of $V$ for any $u\in V$.
	
	Conversely, assume $P^\top(u)$ is a linear subspace of $V$ for any $u\in V$. Then for any subset $U\subseteq V$, we have 
	\begin{equation*}
	P^\top(U) := \bigcap_{u\in U}P^\top(u)
	\end{equation*}
	which is an intersection of vector spaces and thus a vector space.
\end{proof}

In order to make a precise statement, let us first  recall some basic facts.  Given a locality vector space $(V,\top)$ , $W\subseteq V$ a linear subspace of $V$ and 
$ {\pi}:V\longrightarrow V/W$ the {linear}  quotient map, we write $[u]:= {\pi}^{-1}( {\pi}(u))\subseteq V$. We have $[u]=\{\bar u+ w, w\in W\}=\bar u+W$ where $\bar u$ is any representative in $V$ of the class $ {\pi}(u)$. Note that $0_V\in [u]$ implies $[u]=W$. In $V/W$, we write indifferently $[u]$ or $ {\pi}(u)$ as it is more convenient. 

We need one other intermediate result:
\begin{lem} \label{lem:charac_Hu}
	$(V,\top)$ a locality vector space, $W\subseteq V$ a linear subspace of $V$ and $ {\pi}:V\longrightarrow V/W$ the quotient map. Then 
	\begin{equation*}
	P^\ttop([u]) = \bigcup_{u'\in[u]} {\pi}(P^\top(u'))
	\end{equation*}
	for any $u$ in $V$.
\end{lem}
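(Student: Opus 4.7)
The plan is to verify the equality of the two sets by double inclusion, unfolding the three definitions at play: the definition of $\ttop$ from Definition-Proposition \ref{defn:quotientlocality}, the definition of the polar map $P^\top$ and $P^\ttop$, and the set-theoretic image $\pi(P^\top(u'))$. Since the statement is really a tautological rewriting, the proof amounts to carefully keeping track of existentially quantified representatives.

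First I would rewrite the left-hand side: an element $[v]\in V/W$ lies in $P^\ttop([u])$ if and only if $[u]\ttop [v]$, which by \eqref{eq:quotientloc} is equivalent to the existence of $u'\in [u]$ and $v'\in [v]$ with $u'\top v'$. Then I would rewrite the right-hand side: a class $[v]$ lies in $\bigcup_{u'\in[u]}\pi(P^\top(u'))$ iff there exist $u'\in [u]$ and $v'\in V$ with $v'\in P^\top(u')$ and $\pi(v')=[v]$, i.e.\ $u'\top v'$ and $v'\in [v]$. Comparing these two descriptions gives the equality directly.

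For clarity, I would present this as a short chain of equivalences:
\[
[v]\in P^\ttop([u]) \Longleftrightarrow \exists\,u'\in[u],\exists\,v'\in[v],\ u'\top v' \Longleftrightarrow \exists\,u'\in[u],\ [v]\in\pi(P^\top(u')) \Longleftrightarrow [v]\in\bigcup_{u'\in[u]}\pi(P^\top(u')).
\]
No linearity or locality axiom beyond those already encoded in $\ttop$ is needed; the fact that $W$ is a linear subspace only serves to make $[u]$ and $[v]$ well-defined cosets.

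I do not expect any genuine obstacle here: the lemma is a structural reformulation rather than a substantive statement, and its role (as suggested by the context) is presumably to enable the subsequent analysis of when $P^\ttop([u])$ is a linear subspace, i.e.\ when Question~\eqref{question} has a positive answer. The only mild care required is to remember that in $V/W$ the symbols $[u]$ and $\pi(u)$ denote the same object, and that $[u]$ appears both as a subset of $V$ (in the union on the right) and as a point of $V/W$ (in the polar on the left); keeping this notational duality straight is the one place where a reader could stumble.
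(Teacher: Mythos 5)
Your proposal is correct and follows essentially the same argument as the paper: both proofs simply unfold the definition of the quotient locality $\ttop$ and of the polar map, matching the existential quantifiers over representatives $u'\in[u]$ and $v'\in[v]$; the paper phrases it as a double inclusion while you phrase it as a chain of equivalences, which is the same content.
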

\begin{proof}
	Let $(V,\top)$, $W$ and $ {\pi}$ be as in the statement and let $u$ be any element of $V$.
	\begin{itemize}
		\item $\subseteq$: Let $[\alpha]$ be in $V/W$ such that $[\alpha]\in P^\ttop([u])$  i.e., such that $[\alpha]\ttop[u]$. Then by definition of $\ttop$ there is some $u'\in[u]$ and some $\alpha'\in[\alpha]$ such that $\alpha'\top u'\Leftrightarrow \alpha'\in P^\top(u')$. This implies
		\begin{equation*}
		[\alpha]= {\pi}(\alpha)= {\pi}(\alpha')\in  {\pi}(P^\top(u')) \subset \bigcup_{u'\in[u]} {\pi}(P^\top(u')).
		\end{equation*}
		Thus $P^\ttop([u]) \subseteq \bigcup_{u'\in[u]} {\pi}(P^\top(u'))$.
		\item $\supseteq$: Let $[\alpha]$ be in $V/W$ such that $[\alpha]\in\bigcup_{u'\in[u]} {\pi}(P^\top(u'))$. Then
		\begin{equation*}
		\exists u'\in[u]:[\alpha]\in  {\pi}(P^\top(u')) ~\Longrightarrow~\exists (u',w)\in[u]\times W:(\alpha+w)\in P^\top(u').
		\end{equation*}
		In $V$, we have $\alpha':=(\alpha+w)\in[\alpha]$ and this in turn implies
		\begin{equation*}
		\exists(u',\alpha')\in[u]\times[\alpha]:\alpha'\top u' ~\Longrightarrow~[\alpha]\ttop[u].
		\end{equation*}
		Thus $[\alpha]\in P^\ttop([u])$ and $ \bigcup_{u'\in[u]} {\pi}(P^\top(u'))\subseteq P^\ttop([u])$ which proves the statement.
		\qedhere
	\end{itemize}
\end{proof}
We now state the  main result of this section.
\begin{thm} \label{thm:cond_locality_quotient}
	Let $(V,\top)$ be a locality vector space, $W\subseteq V$ a linear subspace of $V$ and $ {\pi}:V\longrightarrow V/W$ the quotient map. Then 
	the following statements are equivalent:
	\begin{enumerate}
		\item $(V/W,\ttop)$ is a locality vector space,
		\item The set 
		\begin{equation*}
		H_u := \bigcup_{u'\in[u]} {\pi}(P^\top(u'))
		\end{equation*}
		is a commutative group (for the internal operation $+$ induced on the quotient space by the internal operation $+$ on $V$) for any $u$ in $V$.
		\item The set $H_u$
		is a commutative semigroup (for the same product) for any $u$ in $V$.
	\end{enumerate}
\end{thm}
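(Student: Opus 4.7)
The plan is to combine the two preceding lemmas and reduce the theorem to a clean observation about unions of linear subspaces. By Lemma \ref{lem:charac_LVS}, statement (1) is equivalent to $P^\ttop([u])$ being a linear subspace of $V/W$ for every $u\in V$, and by Lemma \ref{lem:charac_Hu}, this polar set is precisely $H_u$. So the whole theorem amounts to showing that, for each $u$, the following three properties of $H_u\subseteq V/W$ are equivalent: being a $\K$-linear subspace, being a commutative group, being a commutative semigroup. The implications ``linear subspace $\Rightarrow$ group $\Rightarrow$ semigroup'' are immediate, so the real content lies in the reverse direction.

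The key observation I would rely on is that each piece $\pi(P^\top(u'))$ in the union defining $H_u$ is already a linear subspace of $V/W$. Indeed, since $(V,\top)$ is a locality vector space, $P^\top(u')$ is a linear subspace of $V$, and $\pi$ is linear. Hence $H_u$ is a union of linear subspaces of $V/W$, and as such it automatically contains $[0]$ and is closed under scalar multiplication (for any $\lambda\in\K$ and $[\alpha]\in H_u$, pick $u'\in[u]$ with $[\alpha]\in\pi(P^\top(u'))$; then $\lambda[\alpha]\in\pi(P^\top(u'))\subseteq H_u$). The only axiom of a linear subspace that can possibly fail on a union of linear subspaces is closure under addition.

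From this observation the cycle (1) $\Rightarrow$ (2) $\Rightarrow$ (3) $\Rightarrow$ (1) closes easily. For (3) $\Rightarrow$ (1): by assumption $H_u$ is closed under $+$, and by the observation above it also contains $[0]$ and is closed under scalar multiplication, so it is a linear subspace of $V/W$; applying Lemmas \ref{lem:charac_LVS} and \ref{lem:charac_Hu} then gives (1). The implication (1) $\Rightarrow$ (2) is simply the fact that a $\K$-linear subspace is in particular a commutative group under addition, and (2) $\Rightarrow$ (3) is trivial.

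I do not anticipate a real obstacle here: the statement is essentially a repackaging, and the only substantive input is that a union of linear subspaces of a vector space is a linear subspace as soon as it is closed under addition. The proof requires no additional machinery beyond the two lemmas already proved.
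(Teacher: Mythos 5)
Your proof is correct and follows essentially the same route as the paper: both reduce everything through Lemmas \ref{lem:charac_LVS} and \ref{lem:charac_Hu} to the question of whether $H_u = P^\ttop([u])$ is a linear subspace of $V/W$, and both observe that the only axiom that can fail is closure under addition. Where the paper checks scalar stability of $H_u$ by hand (picking $\alpha'\in[\alpha]$, $u'\in[u]$ with $\alpha'\top u'$ and using $\lambda\alpha'\top u'$), you obtain it more transparently by noting that $H_u$ is a union of linear subspaces of $V/W$ (each $\pi(P^\top(u'))$ being the linear image of the linear subspace $P^\top(u')$); this also gives $[0]\in H_u$ for free, which the paper derives separately from $0\top u$. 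The two arguments are mathematically equivalent, but your packaging of $H_u$ as a union of subspaces makes the reduction to ``closed under $+$'' slightly cleaner and is a good observation.
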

\begin{rk}
	Notice that $P^\top(u')$ is a subset of $V$, thus $ {\pi}(P^\top(u'))$ is a subset of $V/W$, thus the $\bigcup$ notation in the Theorem.
\end{rk}
\begin{proof}
	\begin{itemize}
		\item 2. $\Leftrightarrow$ 3.
		If $H_u$ is a group, it is in particular a semigroup. Let us show that the converse is also true. Assume that $H_u$ is a semigroup (i.e. that 
		it is closed under summation) for any $u$ in $V$ and observe that by Lemma \ref{lem:charac_Hu} $H_u=P^\ttop([u])$. Since $(V,\top)$ is a locality 
		vector space we have $[0]\ni0\top u\in[u]$ thus $[0]=0_{V/W}\in P^\ttop([u])=H_u$ be definition of $\ttop$. Hence $H_u$ is a monoid 
		for any $u$ in $V$. We are left to show that if $H_u$ is stable under addition, it is also stable under taking the inverse, that is 
		multiplication by the scalar $-1$.
		
		For any $[\alpha]\in V/W$ we have:
		\begin{equation*}
		[\alpha]\in P^\ttop([u]) ~\Longrightarrow~\exists(\alpha',u')\in[\alpha]\times[u]:\alpha'\top u' ~\Longrightarrow~\exists(\alpha',u')\in[\alpha]\times[u]:\lambda\alpha'\top u'~\forall\lambda\in\K
		\end{equation*}
		since $P^\top(u)$ is a vector subspace of $V$. Then since $\alpha'\in[\alpha]\Rightarrow\lambda\alpha'\in\lambda[\alpha]$ we deduce that 
		\begin{equation*}
		[\alpha]\in P^\ttop([u]) ~\Longrightarrow~\lambda[\alpha]\in P^\ttop([u])~\forall\lambda\in\K
		\end{equation*}
		and in particular $H_u=P^\ttop([u])$ is a group.
		\item 1. $\Leftrightarrow$ 2. Let $V$, $W$ and $ {\pi}$ be as in the statement of the Theorem. By   Lemma \ref{lem:charac_LVS}, we know  that $(V/W,\ttop)$ is a locality vector 
		space if, and only if, $P^\ttop([u])=H_u$ is a vector subspace of $V/W$ for any $[u]\in V/W$.
		
		We have already shown that $H_u$ is stable by scalar multiplication, thus it is a vector space if, and only if, it is a group for the addition. \qedhere
	\end{itemize}
\end{proof}

\subsection{ {Locality on quotients and  locality exact sequences}}
 {In this paragraph, we rephrase Question (\ref{question})   in terms of locality exact sequences.}
Let us first recall a known result of linear algebra.
\begin{lem}Let $V_1, V_2, V$ be three linear spaces. The following  properties are equivalent:
\begin{enumerate}
	\item There is a short exact sequence 
	$0\to V_1\overset{\iota_1}{\to} V\overset{\pi_2}{ \to} V_2\to 0$   i.e., $\iota_1$ is an injective morphism, $\pi_2$ is a surjective morphism, and ${\rm Im}(\iota_1)={\rm Ker}(\pi_2)$.
	\item There is an injective morphism $\iota_1: V_1\to V$ such that $V/\iota_1( V_1)\simeq V_2$.
\end{enumerate}
\end{lem}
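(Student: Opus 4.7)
The plan is to prove the equivalence by constructing the relevant morphisms in each direction, relying on the first isomorphism theorem for vector spaces. This is a routine result of linear algebra, and I expect no real obstacle beyond checking that the constructed maps have the required properties.

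For the direction (1)$\Rightarrow$(2), I start with a short exact sequence $0\to V_1\overset{\iota_1}{\to} V\overset{\pi_2}{\to} V_2\to 0$. The map $\iota_1$ is already injective by assumption, so the only remaining content is the isomorphism $V/\iota_1(V_1)\simeq V_2$. Since $\pi_2$ is surjective, the first isomorphism theorem yields $V/\Ker(\pi_2)\simeq \im(\pi_2)=V_2$, and the exactness condition $\im(\iota_1)=\Ker(\pi_2)$ allows us to replace $\Ker(\pi_2)$ by $\iota_1(V_1)$, giving the desired isomorphism.

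For the direction (2)$\Rightarrow$(1), I am given an injective morphism $\iota_1: V_1\to V$ together with a linear isomorphism $\phi: V/\iota_1(V_1)\overset{\sim}{\to} V_2$. I define $\pi_2:=\phi\circ \pi$ where $\pi:V\to V/\iota_1(V_1)$ is the canonical projection. Then $\pi_2$ is surjective as the composition of two surjective linear maps, and $\Ker(\pi_2)=\Ker(\phi\circ \pi)=\Ker(\pi)=\iota_1(V_1)=\im(\iota_1)$ because $\phi$ is an isomorphism. The sequence $0\to V_1\overset{\iota_1}{\to} V\overset{\pi_2}{\to} V_2\to 0$ is then exact by construction, closing the equivalence.
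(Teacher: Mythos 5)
Your proof is correct and is the standard argument; the paper in fact omits a proof of this lemma (it is presented as a known result of linear algebra), but the isomorphism $\phi\colon V/\iota_1(V_1)\to V_2$, $[v]\mapsto\pi_2(v)$, that the paper records immediately after the lemma is precisely the map produced by your application of the first isomorphism theorem in the direction $(1)\Rightarrow(2)$, and the paper's description of $\phi^{-1}$ matches your construction of $\pi_2=\phi\circ\pi$ in the converse direction.
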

In the following, we denote by $\phi$ the resulting isomorphism:
\begin{align*}
 \phi:V/\iota_1(V_1) & \longrightarrow V_2\\
 [v] & \longmapsto \phi([v]):=\pi_2(v),
\end{align*} 
whose inverse map is given by $\phi^{-1}(v_2)=[w_2]$ for any $w_2\in\pi_2^{-1}(v_2)$. It is well defined since $\phi^{-1}(v_2)$ is independent of the choice of  $w_2$.
\begin{ex}
	If $V_1\subset V$ is a linear subspace of $V$, we can take $\iota_1$ to be the identity map $\iota_1: v_1\mapsto v_1$ so that item 2. reads $V/V_1\simeq V_2$.
\end{ex}
To give a locality counterpart of this result, we define locality short exact sequences.
\begin{defn} \label{defn:loc_short_ex_sequence}
 A {\bf locality short exact sequence} (resp. a {\bf pre-locality short exact sequence}) is a sequence $0\to (V_1,\top_1)\overset{f_1}{\to} (V_2,\top_2)\overset{f_2}{ \to} (V_3,\top_3)\to 0$ such that
 \begin{enumerate}
  \item $(V_i,\top_i)$ are locality vector spaces (resp. pre-locality vector spaces) for $i\in\{1,2,3\}$,
  \item  $f_1$ and $f_2$ are locality maps,
  \item $0\to V_1\overset{f_1}{\to} V_2\overset{f_2}{ \to} V_3\to 0$ is a short exact sequence.
 \end{enumerate}
Note that items 2. and 3. imply that $f_1$ and $f_2$ are locality linear maps.
\end{defn}
\begin{lem} \label{prop:loc_short_ex_sequence}
Let $(V_1, \top_1), (V_2, \top_2), (V,\top)$ be three locality (resp. pre-locality) linear spaces. The following  properties are equivalent:
	\begin{enumerate}
		\item There is a short locality (resp. pre-locality) exact sequence 
		$0\to (V_1,\top_1)\overset{\iota_1}{\to} (V,\top)\overset{\pi_2}{ \to} (V_2,\top_2)\to 0$, where $\top_2$ is the final locality relation for the map $\pi_2$ (See Definition \ref{defn:final_relation}).
		\item There is an injective locality morphism $\iota_1: V_1\to V$ such that the canonical isomorphism $(V/\iota_1( V_1))\overset{\phi}{\to} V_2$ is a locality (resp. pre-locality) isomorphism for the independence relations $\ttop$ and $\top_2$.
	\end{enumerate}
\end{lem}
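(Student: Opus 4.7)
My plan is to leverage the purely algebraic equivalence already recalled just before the statement, and reduce the proof to checking the compatibility of the locality relations on each side. The underlying short exact sequence on vector spaces and the underlying isomorphism $\phi:V/\iota_1(V_1)\to V_2$, $[v]\mapsto \pi_2(v)$ (with inverse $v_2\mapsto[w_2]$ for any $w_2\in\pi_2^{-1}(v_2)$) are already available, so only the locality content needs verification in each direction.

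For the implication $1.\Rightarrow 2.$, I would first observe that $\iota_1$ being a locality linear morphism (built into Definition \ref{defn:loc_short_ex_sequence}) is exactly what is required on the left, so only the claim on $\phi$ remains. That $\phi$ is a locality map follows directly from the definition of $\ttop$: if $[u]\ttop[v]$ there exist representatives $u'\in[u]$, $v'\in[v]$ with $u'\top v'$, and since $\pi_2$ is a locality map one gets $\phi([u])=\pi_2(u')\top_2\pi_2(v')=\phi([v])$. That $\phi^{-1}$ is a locality map uses the assumption that $\top_2$ is the \emph{final} locality for $\pi_2$: by Proposition \ref{prop:description_final_relation}, $a\top_2 b$ yields representatives $a',b'\in V$ with $\pi_2(a')=a$, $\pi_2(b')=b$, and $a'\top b'$; then $\phi^{-1}(a)=[a']$, $\phi^{-1}(b)=[b']$, and these are $\ttop$-related by the very definition of the quotient locality (Definition-Proposition \ref{defn:quotientlocality}).

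For $2.\Rightarrow 1.$, I would define $\pi_2:=\phi\circ\pi$, where $\pi:V\to V/\iota_1(V_1)$ is the canonical projection. Algebraic exactness is immediate. Since $\pi$ is a locality map (by Definition-Proposition \ref{defn:quotientlocality}) and $\phi$ is a locality isomorphism, $\pi_2$ is a locality map as the composition of two locality maps. The only substantive point is to verify that $\top_2$ coincides with the final locality $\bar\top_{\pi_2}$ for $\pi_2$ on $V_2$. This is the step I expect to require most care. Using Proposition \ref{prop:description_final_relation} one has
\[a\,\bar\top_{\pi_2}\, b \iff \exists (a',b')\in V^2:\ \pi_2(a')=a,\ \pi_2(b')=b,\ a'\top b'.\]
Rewriting $\pi_2(a')=a$ as $[a']=\phi^{-1}(a)$ shows that this is equivalent to $\phi^{-1}(a)\ttop\phi^{-1}(b)$, and applying the locality isomorphism property of $\phi$ (in both directions) rewrites this in turn as $a\top_2 b$. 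Hence $\top_2=\bar\top_{\pi_2}$, which completes item 1.

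The pre-locality version is identical: the final locality construction of Proposition \ref{prop:description_final_relation} and the quotient locality of Definition-Proposition \ref{defn:quotientlocality} were both formulated in the pre-locality setting, and at no point in the argument above is the linear-subspace property of polar sets invoked. Thus the same proof applies verbatim with ``locality'' replaced by ``pre-locality'' throughout, so that the two equivalences can be stated and proved simultaneously.
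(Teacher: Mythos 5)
Your proposal is correct and follows essentially the same route as the paper's own proof: locality of $\phi$ from the definition of the quotient relation $\ttop$ together with locality of $\pi_2$, locality of $\phi^{-1}$ from the finality of $\top_2$, and, in the converse direction, identification of $\top_2$ with the final relation for $\pi_2:=\phi\circ\pi$. The only (cosmetic) difference is that you verify this last point via the explicit description of the final relation in Proposition \ref{prop:description_final_relation}, whereas the paper checks minimality of $\top_2$ directly against an arbitrary competing relation $\top_2'$; the two arguments are interchangeable.
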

\begin{proof}
	\begin{itemize}
		\item 1$\Rightarrow 2$: since the short exact sequence in item 1. implies the isomorphism $V/\iota_1( V_1)\overset{\phi}{\to} V_2$ as vector spaces, all we need to check is the locality of the maps   $\phi$ and $\phi^{-1}$. If $[v]\ttop[v']$, there is some $w\in [v]$ and some $w'\in [v']$ such that $w\top w'$. The locality of $ \pi_2$ then implies that $\pi_2(w)\top_2\pi_2(w')$ so that $\phi([v])\top_2\phi([v'])$ which show the locality of $\phi$.
		
	The inverse map $\phi^{-1}$ is defined by	$\phi^{-1}(v) =  [w]$ for any $w\in\pi_2^{-1}(v)$. Since $\top_2$ is the final locality relation for $\pi_2$, $v_1\top_2 v_2$ implies that there are $w_1\in\pi_2^{-1}(v_1)$ and $w_2\in\pi_2^{-1}(v_2)$ such that $w_1\top w_2$. By definition of the locality $\ttop$ on the quotient space, $[w_1]\ttop[w_2]$ or equivalently $\phi^{-1}(v_1)\ttop\phi^{-1}(v_2)$.
		\item 2$\Rightarrow$ 1: since the isomorphism $V/\iota_1( V_1)\simeq V_2$ holds as vector spaces, we have  the short exact sequence of vector spaces in item 1.   We want to show that   the maps $\iota_1$ and $\pi_2$ are locality maps, and that $\top_2$ is the final locality relation for $\pi_2$. By assumption, $\iota_1$ is a locality map. We show the locality of $\pi_2$. Recall that the surjective  map $\pi_2:V\to V_2$ is given by
		$ \pi_2(v):=  \phi([v])$.  Since $v\top v'$ it follows that $[v]\ttop [v']$ so that from the locality of $\phi$ we infer that
			$ \pi_2(v) =  \phi([v])$  is independent of 	$ \pi_2(v') =  \phi([v'])$. We prove that $\top_2$ is the final locality for $\pi_2$. Let $\top_2'$ be another locality relation on $V_2$ such that $\pi_2:(V,\top)\to(V_2,\top_2')$ is a locality map, and let $(v,v')\in V_2^2$ such that $v\top_2v'$. Since $\phi^{-1}$ is a locality map $\phi^{-1}(v)\ttop\phi^{-1}(v')$. By definition of $\ttop$, there are elements $w\in\phi^{-1}(v)$ and $w'\in\phi^{-1}(v')$ such that $w\top w'$. Notice that in particular $\pi_2(w)=v$ and $\pi_2(w')=v'$. Since $\top_2'$ makes $\pi_2$ a locality map, then $v\top_2'v'$ which implies $\top_2\subset\top_2'$ as expected. \qedhere			
	\end{itemize}
\end{proof}
This has a simple consequence:
\begin{prop} \label{prop:simple_consequence_question}
 Let $(V,\top)$ and $(V_1,\top_1)$ be two locality vector spaces and let $\iota_1:V_1\longrightarrow V$ an injective locality linear map. Then $(V/\iota_1(V_1),\ttop)$ is a locality vector space if, and only if, there is  a locality vector space  $(V_2,\top_2)$ and a map $\pi_2:V\longrightarrow V_2$ such that $\top_2$ is the final locality relation for $\pi_2$, and $0\to (V_1,\top_1)\overset{\iota_1}{\to} (V,\top)\overset{\pi_2}{ \to} (V_2,\top_2)\to 0$ is a locality short exact sequence.
\end{prop}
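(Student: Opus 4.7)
The proposition is essentially a rephrasing of Lemma \ref{prop:loc_short_ex_sequence} in a form convenient for applications, so my plan is to prove both directions by direct appeal to that lemma, with one small additional verification needed for the backward direction.

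For the forward direction ($\Rightarrow$), I will assume that $(V/\iota_1(V_1),\ttop)$ is a locality vector space and simply make the tautological choice $V_2:=V/\iota_1(V_1)$, $\top_2:=\ttop$, and $\pi_2:=\pi$, the canonical projection. Then by Definition-Proposition \ref{defn:quotientlocality}, $\ttop$ is precisely the final locality relation for $\pi$. The canonical isomorphism $\phi:V/\iota_1(V_1)\to V_2$ from Lemma \ref{prop:loc_short_ex_sequence} is then the identity map, which is trivially a locality isomorphism. Applying the implication $2\Rightarrow 1$ of Lemma \ref{prop:loc_short_ex_sequence} yields the desired locality short exact sequence.

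For the backward direction ($\Leftarrow$), assume the data of a locality short exact sequence $0\to (V_1,\top_1)\overset{\iota_1}{\to} (V,\top)\overset{\pi_2}{\to} (V_2,\top_2)\to 0$ with $\top_2$ the final locality for $\pi_2$. Applying the implication $1\Rightarrow 2$ of Lemma \ref{prop:loc_short_ex_sequence}, I obtain a locality isomorphism $\phi:(V/\iota_1(V_1),\ttop)\to (V_2,\top_2)$. It then remains to check that the locality property (i.e., the linearity of polar sets) transfers across a locality isomorphism: for any subset $U\subseteq V/\iota_1(V_1)$, the identity $U^{\ttop}=\phi^{-1}\bigl(\phi(U)^{\top_2}\bigr)$ holds because $\phi$ and $\phi^{-1}$ are both locality maps, and since $\phi(U)^{\top_2}$ is a linear subspace of $V_2$ by assumption and $\phi^{-1}$ is linear, $U^{\ttop}$ is a linear subspace of $V/\iota_1(V_1)$. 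By Lemma \ref{lem:charac_LVS}, this suffices to conclude that $(V/\iota_1(V_1),\ttop)$ is a locality vector space.

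I do not anticipate any serious obstacle here: the proposition is essentially a corollary, and the only piece of content beyond citing Lemma \ref{prop:loc_short_ex_sequence} is the transfer of the ``polar sets are linear'' property across a locality isomorphism, which is a one-line verification.
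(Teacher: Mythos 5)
Your proof is correct and follows essentially the same route as the paper: both directions rest on Lemma \ref{prop:loc_short_ex_sequence}, with the tautological choice $(V_2,\top_2)=(V/\iota_1(V_1),\ttop)$, $\pi_2=\pi$ for the forward implication. Your only addition is to spell out, via the identity $U^{\ttop}=\phi^{-1}\bigl(\phi(U)^{\top_2}\bigr)$, that the linearity of polar sets transfers across a locality isomorphism — a verification the paper leaves implicit when it says the two spaces are isomorphic as locality vector spaces.
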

\begin{proof}
\begin{itemize}
 \item $\Leftarrow$: if   such a $(V_2,\top_2)$ exists then by Lemma \ref{prop:loc_short_ex_sequence},  $(V/\iota_1(V_1),\ttop)\simeq (V_2,\top_2)$ as locality vector space. It follows that $(V/\iota_1(V_1),\ttop)$ is a locality vector space.
 \item $\Rightarrow$: if $(V/\iota_1(V_1),\ttop)$ is a locality vector space, take $(V_2,\top_2):=(V/\iota_1(V_1),\ttop)$ and $\pi_2(v):=[v]$. Then by construction $0\to (V_1,\top_1)\overset{\iota_1}{\to} (V,\top)\overset{\pi_2}{ \to} (V_2,\top_2)\to 0$ is a locality short exact sequence, and by definition of the quotient locality $\ttop$, $\top_2$ is the final locality relation for $\pi_2$.\qedhere
\end{itemize}
\end{proof}

We have now rephrased Question \ref{question}  in terms of locality short exact sequences as follows:

\textit{Given a locality vector space $(V,\top)$ and $V_1$ a subspace of $V$, when is the pre-locality short exact sequence $0\to (V_1,\top\vert_{V_1\times V_1})\overset{\iota_1}{\to} (V,\top)\overset{\pi_2}{ \to} (\sfrac{V}{V_1},\ttop)\to 0$ a locality short exact sequence?}
 
 \subsection{Locality quotient vector spaces: examples and counterexamples}

 The following  statement provides a class of examples of locality quotient vector spaces   for the locality relation $\ttop$ (see Definition \ref{defn:quotientlocality}) induced by the orthogonality relation.
 \begin{prop} \label{ex:quotientlocHilbert}
 Take $V$ be any Hilbert space with scalar product $\langle,\rangle$.  We equip $V$ with the  locality relation $\top$ given by the orthogonality relation: $v\top v':\Longleftrightarrow \langle v, v'\rangle=0$. Then for any closed linear subspace $W$ of $V$ not reduced to $\{0\}$, the quotient locality $\ttop$  on $V/W$ , induced by $\top$ is the  complete locality relation: $\ttop=(V/W)\times(V/W)$.
 
 In particular, for any closed linear subspace $W$ of $V$, $(V/W,\ttop)$ is a locality vector space with $\ttop$ induced from $\top$, which is given by the orthogonality on $V$.
 \end{prop}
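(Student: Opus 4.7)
The plan is to show that for any two classes $[u], [v] \in V/W$, one can find representatives $u' \in [u]$ and $v' \in [v]$ that are orthogonal in $V$. This would directly establish $[u] \ttop [v]$ by the very definition of the quotient locality given in Definition \ref{defn:quotientlocality}, yielding $\ttop = (V/W)\times(V/W)$. The second part of the statement is then immediate, since a fully complete relation is trivially a locality relation compatible with the linear structure (and the case $W=\{0\}$ is trivial since $V/W\simeq V$).

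Since $W \neq \{0\}$, I would fix once and for all some $w_0 \in W$ with $\|w_0\|^2 = d > 0$, and look for representatives of the special form $u' = u + \lambda w_0$ and $v' = v + \mu w_0$ with $(\lambda,\mu) \in \K^2$ to be determined. Expanding (with the convention that $\langle\cdot,\cdot\rangle$ is linear in the first variable and conjugate-linear in the second) yields
\[
\langle u + \lambda w_0, v + \mu w_0\rangle \;=\; \langle u,v\rangle + \lambda\langle w_0,v\rangle + \bar\mu\langle u, w_0\rangle + \lambda\bar\mu\, d.
\]
For fixed $\lambda$, this is an affine function of $\bar\mu$ with leading coefficient $\langle u,w_0\rangle + \lambda d$. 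I would observe that this coefficient vanishes for at most one value of $\lambda$, so I can choose $\lambda \in \K$ with $\langle u,w_0\rangle + \lambda d \neq 0$, and then solve
\[
\bar\mu \;=\; -\,\frac{\langle u,v\rangle + \lambda\langle w_0,v\rangle}{\langle u,w_0\rangle + \lambda d}.
\]
This provides the desired orthogonal representatives and concludes that $[u]\ttop[v]$ for all $[u],[v]\in V/W$.

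The argument contains no real obstacle: the assumption $W \neq \{0\}$ is used only to pick $w_0$, while the Hilbert structure enters only through the positivity $\|w_0\|^2 > 0$, so closedness of $W$ is not even needed at this level (it may be stated so that $V/W$ retains a Hilbert structure, but plays no role in the algebraic statement). The only conceptual point worth highlighting is that the quotient relation is so coarse that a single non-trivial direction in $W$ is enough to rectify any obstruction to orthogonality between the fixed representatives $u$ and $v$; this explains the dichotomy mentioned after the statement between $W=\{0\}$ (where $\ttop = \top$ is non-trivial) and $W \neq \{0\}$ (where $\ttop$ collapses to the full relation).
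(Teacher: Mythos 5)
Your proof is correct, and it takes a genuinely different route from the paper's. The paper invokes the orthogonal decomposition $V = W \oplus W^\perp$ to write $v_i = u_i + \tilde w_i$ with $u_i \in W^\perp$ and $\tilde w_i \in W$ (this is exactly where closedness of $W$ is used), reduces the orthogonality condition to $\langle v_1, v_2\rangle + \langle \tilde w_1, w_2\rangle + \langle w_1, \tilde w_2\rangle + \langle w_1, w_2\rangle = 0$, and then splits into three cases ($\tilde w_1 \neq 0$; $\tilde w_1 = 0$ and $\tilde w_2 \neq 0$; $\tilde w_1 = \tilde w_2 = 0$) with explicit solutions in each. You instead fix a single nonzero $w_0 \in W$, restrict to representatives of the form $u + \lambda w_0$ and $v + \mu w_0$, and note that for fixed $\lambda$ the orthogonality condition is affine in $\bar\mu$ with leading coefficient $\langle u, w_0\rangle + \lambda\, d$, which vanishes for only one value of $\lambda$; a generic $\lambda$ then lets you solve uniquely for $\mu$. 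Your approach buys two things: it avoids the case analysis entirely, and since it never uses the projection onto $W$ it establishes the collapse $\ttop = (V/W) \times (V/W)$ for \emph{any} nonzero linear subspace $W$, closed or not — so your closing remark on the role of closedness is accurate. The paper's version is a bit more concrete in exhibiting a solution $(w_1,w_2)$ per case, but the underlying mechanism (one non-trivial direction in $W$ suffices to repair any failure of orthogonality) is the same, and your reformulation makes that mechanism more visible.
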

 \begin{rk} \label{rk:quotientlocHilbert}
  When $V$ is a finite dimensional euclidean space, then  $(V/W,\ttop)$ is a locality vector space for any linear subspace $W\neq\{0\}$ since any linear subspace of a finite dimensional euclidean space is closed.
 \end{rk}
 \begin{proof}
  We want to show that if $W\neq\{0\}$ then for any $(v_1,v_2)\in V^2$, there exist $(w_1,w_2)\in W^2$ such that $\langle v_1+w_1,v_2+w_2\rangle=0$. Write $v_i=u_i+\tilde w_i$ for $i\in\{1,2\}$ with $\tilde w_i\in W$ and $u_i\in W^\perp$. Then:
  \begin{equation} \label{eq:equation_perp_to_solve}
   \langle v_1+w_1,v_2+w_2\rangle=0~\Longleftrightarrow~\langle v_1,v_2\rangle + \langle \tilde w_1,w_2\rangle + \langle w_1,\tilde w_2\rangle + \langle w_1,w_2\rangle=0.
  \end{equation}
  and we want to find $w_1,w_2$ that solve \eqref{eq:equation_perp_to_solve}.
  Let us consider three different cases:
  \begin{itemize}
   \item If $\tilde w_1\neq 0$, 
   then 
   \begin{equation*}
    (w_1,w_2)=\left(0,-\frac{\langle v_1,v_2\rangle}{||\tilde w_1||^2}\tilde w_1\right)\in W^2
   \end{equation*}
   (with $||w||:=\sqrt{\langle w,w\rangle}$) solves \eqref{eq:equation_perp_to_solve}.
   \item If $\tilde w_1=0$ and $\tilde  w_2\neq 0$, then  we find a solution to \eqref{eq:equation_perp_to_solve} as in the first item by exchanging the roles of $w_1$ and $w_2$.
   \item If $\tilde w_1=\tilde w_2=0$, we pick $w\in W \neq \{0\} $ and  set
   \begin{equation*}
    (w_1,w_2)=\left(\frac{w}{||w||},- \frac{\langle v_1,v_2\rangle}{||w||}w\right)\in W^2
   \end{equation*}
   which solve \eqref{eq:equation_perp_to_solve}. \qedhere
  \end{itemize}
 \end{proof}

The following counterexample gives a  locality vector space with a linear subspace   whose quotient is 
not a locality vector space for the locality relation $\ttop$ (see Definition \ref{defn:quotientlocality}).  It shows that the answer to question \ref{question} cannot be always positive.
\begin{coex}
 We equip  the vector space $V$ of  {real valued maps on $\R$} with the locality relation $\top$ given by disjoint supports: $f\top g\Leftrightarrow {\rm supp}(f)\cap {\rm supp}(g)=\emptyset$. Let $W$ denote the linear subspace of constant functions. 
 Take three functions  $(u,v,w)\in V^3$ whose supports are respectively ${\rm supp}(u)=]1,+\infty[$, ${\rm supp}(v)=]0,+\infty[$ and ${\rm supp}(w)=]2,+\infty[$ and 
 $v(x)=u(x+1)=w(x+2)=1$ for any $x>0$. 
 Then in $V/W$ we have $[u]\ttop[v]$ since $u\top(v-1)$ and $[u]\ttop[w]$ since $(u-1)\top w$. However, 
 $[u]\cancel{\ttop}[v+w]$. Thus $V/W$ is not a locality vector space for $\ttop$. 
\end{coex} 

The   question (\ref{question})  is  particularly relevant if we want   the tensor product of a pair of locality vector spaces to be again a locality (not only pre-locality) vector space. The following is an example of a quotient of locality spaces which is again a locality vector space.

The subsequent folklore result provides a justification for the identification $V^{\otimes 1}=V$ made in Section \ref{sect:lta}. We give a constructive proof for it since it will be of use in Proposition \ref{prop:Visomorphism2}.
\begin{lem}\label{lem:Visomorphism}
Consider the subspace $I_{\rm lin}(V)$ of $\K(V)$ generated by all elements of the form
\begin{equation}\label{eq:1tuples1}
(a+b)-(a)-(b),
\end{equation}
and 
\begin{equation}\label{eq:1tuples2}
k(a)-(ka)
\end{equation}
for $a$ and $b$ in $V$ and $k$ in $\K$.

Then $V\simeq\sfrac{\K(V)}{I_{\rm lin}(V)}$ are isomorphic as vector spaces.
\end{lem}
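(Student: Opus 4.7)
The plan is to produce explicit mutually inverse linear maps between $V$ and $\K(V)/I_{\rm lin}(V)$.

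First I would build the map in the forward direction from the quotient to $V$. Since $V$ is a vector space, the universal property of the free span $\K(V)$ (the diagram recalled right before Definition \ref{defn:biliresplocality}) applied to the identity map $\mathrm{id}_V: V \to V$ yields a unique linear extension $\overline{\mathrm{id}_V}: \K(V) \to V$ sending a generator $(v)$ to $v$ and any finite formal sum $\sum_i \alpha_i (v_i)$ to $\sum_i \alpha_i v_i \in V$. A direct computation on the generators of $I_{\rm lin}(V)$ gives
\begin{equation*}
\overline{\mathrm{id}_V}\bigl((a+b)-(a)-(b)\bigr) = (a+b)-a-b = 0_V, \qquad \overline{\mathrm{id}_V}\bigl(k(a)-(ka)\bigr) = ka-ka = 0_V,
\end{equation*}
so $I_{\rm lin}(V) \subseteq \Ker(\overline{\mathrm{id}_V})$ and $\overline{\mathrm{id}_V}$ factors through a linear map $\phi : \K(V)/I_{\rm lin}(V) \to V$.

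Next I would construct a candidate inverse $\psi : V \to \K(V)/I_{\rm lin}(V)$ by $\psi(v) := [(v)]$. Linearity of $\psi$ is precisely what the two families of generators of $I_{\rm lin}(V)$ encode: for $(a,b) \in V^2$ and $k \in \K$, the relations (\ref{eq:1tuples1}) and (\ref{eq:1tuples2}) give
\begin{equation*}
\psi(a+b) = [(a+b)] = [(a)] + [(b)] = \psi(a) + \psi(b), \qquad \psi(ka) = [(ka)] = k[(a)] = k\,\psi(a).
\end{equation*}

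It then remains to verify that $\phi$ and $\psi$ are mutually inverse. For any $v \in V$, $\phi(\psi(v)) = \phi([(v)]) = v$, so $\phi \circ \psi = \mathrm{id}_V$. Conversely, any class in $\K(V)/I_{\rm lin}(V)$ admits a representative $\sum_i \alpha_i(v_i)$; using the two defining relations iteratively in the quotient we obtain the chain of identifications
\begin{equation*}
\Bigl[\sum_i \alpha_i(v_i)\Bigr] = \sum_i \alpha_i[(v_i)] = \sum_i [(\alpha_i v_i)] = \Bigl[\Bigl(\sum_i \alpha_i v_i\Bigr)\Bigr],
\end{equation*}
so every class has a representative of the form $(w)$ for some $w \in V$. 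On such a representative $\psi(\phi([(w)])) = \psi(w) = [(w)]$, hence $\psi \circ \phi$ is the identity on the quotient. No step here is really an obstacle; the only mild subtlety is realising that one must read (\ref{eq:1tuples1})--(\ref{eq:1tuples2}) \emph{both} as giving the vanishing of $\overline{\mathrm{id}_V}$ on $I_{\rm lin}(V)$ (for $\phi$ to descend) \emph{and} as giving the linearity of $v \mapsto [(v)]$ (for $\psi$ to be well-defined as a linear map).
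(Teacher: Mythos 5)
Your proposal is correct and matches the paper's first proof almost verbatim: your $\psi$ is the paper's map $\theta$ (with the two relations yielding its linearity), your $\phi$ is the paper's $\theta'$ obtained by factoring the linear extension of $\mathrm{id}_V$ through the quotient, and the verification that the two compositions are the identity is the same, differing only cosmetically in how $\psi\circ\phi = \mathrm{id}$ is argued (you reduce any class to the form $[(w)]$, while the paper observes that $\theta\circ\theta'$ is linear and agrees with the identity on the generating set).
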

\begin{rk} 
We use the notation $(a)$ with rounded brackets to denote elements of $\K V$ and to distinguish from elements of $V$. These brackets should not be confused with the squared one $[a]$ used to denote equivalence classes.

Notice that by construction of $I_{\rm lin}(V)$, the only element $a$ in $ V$ such that $(a)\in I_{\rm lin}(V)$ is $a=0$.
\end{rk}
 \begin{proof} We give two proofs, the first of which does not use Zorn's lemma. 
 	
 	Let us consider the map
\begin{align*}
\theta&:\left\{\begin{array}{rcl}
V&\longrightarrow&\K(V)/{I_{\rm lin}(V)}\\
a&\longmapsto&[a].
\end{array}\right.
\end{align*}
By definition of ${I_{\rm lin}(V)}$, for any $a,b\in V$, for any $k\in \K$,
\[  
[a+b] =[a]+[b],\quad [ka]=k[a],
\]
which implies that $\theta$ is linear. Let us consider the linear map
\[\overline{\theta'}:\left\{\begin{array}{rcl}
\K(V)&\longrightarrow&V\\
(a)&\longmapsto&a.
\end{array}\right.\]
For any $a,b\in V$, for any $k\in \K$,
\begin{align*}
\overline{\theta'}((a+b)-(a)-(b))&=a+b-a-b=0,\\
\overline{\theta'}((ka)-k(a))&=ka-ka=0,
\end{align*}
so ${I_{\rm lin}(V)}\subseteq \ker(\overline{\theta'})$. Consequently, we obtain a linear map
\[\theta':\left\{\begin{array}{rcl}
\K(V)/{I_{\rm lin}(V)}&\longrightarrow&V\\
\:[a]&\longmapsto&a.
\end{array}\right.\]
For any $a\in V$, $\theta'\circ \theta(a)=a$, so $\theta'\circ \theta=\mathrm{id}_V$. For any $a\in V$,
$\theta\circ \theta'([a])=[a]$. As $\theta\circ \theta'$ is linear and the elements $[a]$ generate $\K(V)/{I_{\rm lin}(V)}$,
$\theta\circ \theta'=\mathrm{id}_{\K(V)/{I_{\rm lin}(V)}}$. 
 \vskip 0,5cm 
\sy{ Here is an alternative proof which uses Zorn's lemma.}

 {We define the equivalence relation $\sim$ on $\K(V)$ as $x\sim y$ if, and only if, there is an element $w$ in $ I_{\rm lin}(V)$ such that $x+w=y$.} We show that the linear   map $V\mapsto \sfrac{\K(V)}{I_{\rm lin}(V)}$, which sends    the basis     $\{v_i\}_{i\in I}$ of $V$ to the family $\{[v_i]\}_{i\in I}$  of vectors in $ \sfrac{\K(V)}{I_{\rm lin}(V)}$  is bijective. For this purpose we need to show that $\{[v_i]\}_{i\in I}$   is a basis of $ \sfrac{\K(V)}{I_{\rm lin}(V)}$. Since $\{v_i\}_{i\in I}$ generates $V$,  for any $a\in V$, there are $\alpha_i$'s in $\K$ such that $a=\sum_{i\in I}\alpha_iv_i$, then $(a)\sim\sum_{i\in I}\alpha_i(v_i)$ (using equations (\ref{eq:1tuples1}) and (\ref{eq:1tuples2})) which implies $[a]=\sum_{i\in I}\alpha_i[v_i]$. Since the elements $(a)$, with $a\in V$ generate $\K(V)$, it therefore follows that $\{[v_i]\}_{i\in I}$ generates $\sfrac{\K(V)}{I_{\rm lin}(V)}$. We show now that $\{[v_i]\}_{i\in I}$ is free. Let $\sum_{i\in I}\alpha_i[v_i]=0$, this is equivalent to $\sum_{i\in I}\alpha_i(v_i)\sim(\sum_{i\in I}\alpha_iv_i)\in I_{\rm lin}(V)$, thus $\sum_{i\in I}\alpha_iv_i=0$. Since $\{v_i\}_{i\in I}$ is a basis of $V$, this implies that for all $i\in I$, $\alpha_i=0$ so that $\{[v_i]\}_{i\in I}$ is a basis of $V\simeq\sfrac{\K(V)}{I_{\rm lin}(V)}$ as announced.
\end{proof}

If $(V,\top)$ is a locality vector space, $\K(V)$ can be endowed with the locality relation $\top_{\times 1,1}$ (see \ref{eq:locreltimesmn}),  {which leads to an isomorphism of locality vector spaces, which generalises the isomorphism of Lemma \ref{lem:Visomorphism}.}

\begin{prop}\label{prop:Visomorphism2}
Let $(V,\top)$ be a locality vector space, then $(V,\top)$ and $ \left(\sfrac{\K(V)}{I_{\rm lin}(V)},\ttop\right)$ are isomorphic as locality vector spaces:
\begin{equation}\label{eq:lociso} {   (V,\top)\simeq\left(\sfrac{\K(V)}{I_{\rm lin}(V)},\ttop\right), }\end{equation} where $\ttop$ is the quotient locality (\textit{See Definition \ref{defn:quotientlocality}}).
\end{prop}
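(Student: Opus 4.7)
The plan is to upgrade the vector-space isomorphism $\theta: V \to \sfrac{\K(V)}{I_{\rm lin}(V)}$, $a \mapsto [(a)]$ from Lemma \ref{lem:Visomorphism} (with inverse $\theta'([(a)])=a$) to an isomorphism of locality vector spaces, by showing that both $\theta$ and $\theta^{-1}$ are locality maps. Note that the linearity was already verified in Lemma \ref{lem:Visomorphism}, so only the compatibility with the locality relations remains.

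For $\theta$ being a locality map: assume $a \top b$ in $V$. Then by definition of the linearly extended relation $\top_{\times 1,1}$ on $\K(V)$ given in \eqref{eq:locrelonthefreespan}, the one-term sums $(a)$ and $(b)$ satisfy $(a)\,\top_{\times 1,1}\,(b)$. Passing to the quotient via the projection $\pi:\K(V)\to \sfrac{\K(V)}{I_{\rm lin}(V)}$, Definition-Proposition \ref{defn:quotientlocality} of the quotient locality $\ttop$ immediately yields $\theta(a)=[(a)]\,\ttop\,[(b)]=\theta(b)$.

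For $\theta'$ being a locality map: assume $[(a)]\,\ttop\,[(b)]$ in $\sfrac{\K(V)}{I_{\rm lin}(V)}$. By the description \eqref{eq:quotientloc} of $\ttop$, there exist representatives $x \in [(a)]$ and $y \in [(b)]$ in $\K(V)$ with $x \,\top_{\times 1,1}\, y$. Write $x = \sum_{i\in I}\alpha_i(a_i)$ and $y = \sum_{j\in J}\beta_j(b_j)$ with all $\alpha_i, \beta_j \in \K \setminus \{0\}$; by the definition \eqref{eq:locrelonthefreespan} of $\top_{\times 1,1}$, this means $a_i \top b_j$ for every $(i,j)\in I\times J$. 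Since $x$ and $(a)$ lie in the same class mod $I_{\rm lin}(V)$, applying the well-defined linear map $\theta'$ of Lemma \ref{lem:Visomorphism} gives $\sum_i \alpha_i a_i = a$, and similarly $\sum_j \beta_j b_j = b$.

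The key step now invokes that $(V,\top)$ is a \emph{locality} (not merely pre-locality) vector space, so each polar set $P^{\top}(u)$ is a linear subspace of $V$. Fix $j\in J$: from $a_i \top b_j$ for all $i$, the linearity of $P^{\top}(b_j)$ gives $a = \sum_i \alpha_i a_i \in P^{\top}(b_j)$, i.e. $a\top b_j$. Then from $a\top b_j$ for all $j\in J$, the linearity of $P^{\top}(a)$ yields $b = \sum_j \beta_j b_j \in P^{\top}(a)$, i.e. $a\top b$. Thus $\theta'$ preserves locality, completing the proof. There is no real obstacle here; the only subtlety is that the argument genuinely requires $(V,\top)$ to be a locality (rather than pre-locality) vector space, since it is the linearity of the polar sets that lets us collapse the finite sums back into the single relation $a\top b$.
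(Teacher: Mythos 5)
Your proof is correct and follows essentially the same route as the paper: the forward direction is the immediate compatibility of $\top_{\times 1,1}$ with the quotient locality, and the converse collapses the finite sums $\sum_i\alpha_i a_i=a$, $\sum_j\beta_j b_j=b$ (via the well-definedness of $\theta'$ from Lemma \ref{lem:Visomorphism}) and then uses the linearity of polar sets, exactly as in the paper's argument. Your closing remark that the locality (rather than pre-locality) hypothesis is what makes this collapse work is precisely the point the paper illustrates with the counterexample that follows.
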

\begin{proof}  We need to show that the isomorphism   of Lemma \ref{lem:Visomorphism}  is a locality map as well as its inverse. Let $(x,y)\in \top$,  then $((x),(y))\in\top_{\times 1,1}$ and thus $([x],[y])\in\ttop$. Conversely if $([x],[y])\in\ttop$, this implies that there exist $\sum_{i\in I}\alpha_i(x_i)\in[x]$ and $\sum_{j\in J}\beta_j(y_j)\in[y]$ such that $\left(\sum_{i\in I}\alpha_i(x_i),\sum_{j\in J}\beta_j(y_j)\right)\in\top_{\times1,1}$ or equivalently $(x_i,y_j)\in \top$ for every $(i,j)\in I\times J$. Since $(V,\top)$ is a locality vector space the latter implies that $\left(\sum_{i\in I}\alpha_ix_i,\sum_{j\in J}\beta_jy_j\right)\in\top$. And finally, by means of Lemma \ref{lem:Visomorphism}, $\left[\sum_{i\in I}\alpha_ix_i\right]=[x]$ and $\left[\sum_{j\in J}\beta_jy_j\right]=[y]$ implies $\sum_{i\in I}\alpha_ix_i=x$ and $\sum_{j\in J}\beta_jy_j=y$, thus $(x,y)\in \top$ as  announced.
\end{proof}

 We note that the proof of the above proposition uses the locality property of the vector space $V$. The following counterexample shows that the isomorphism is not necessarily a locality morphism   when $V$ is only a  pre-locality vector space. 
\begin{coex}
Let $V=\R^2$ and $\top=\{(e_1,e_2),(e_1,3e_2),(e_2,e_1),(3e_2,e_1)\}$. Then $(V,\top)$ and $\left(\sfrac{\K(V)}{I_{\rm lin}(V)},\ttop\right)$ are not isomorphic as locality vector spaces, where $\ttop$ is the quotient locality. Indeed $((3e_2)-(e_2),(e_1))\in\top_{\times1,1}$ and thus $([3e_2-e_2]=[2e_2],[e_1])\in\ttop$, but $(2e_2,e_1)\notin \top$.
\end{coex}

\section{ Sufficient conditions for the locality of quotient vector spaces} \label{sec:main_conjectures}

In this section we discuss and compare sufficient conditions  for the   quotient of two vector spaces to be a locality vector space.
\subsection{Split locality exact sequences and  locality quotients }
 
We start by giving a first sufficient (but not necessary) condition to answer Question \ref{question}. 
\begin{defnprop}\label{defnprop:stronglocality} 

	We call a {\bf strong locality complement} of a subspace $W$ of a locality vector space $(V,\top)$, a complement subspace $\widetilde W$ of $W$  in $(V, \top)$ i.e. such that $W\oplus \widetilde W=V$,  which satisfies one the following equivalent properties:\begin{enumerate}
		\item   $\pi\top \pi$  and  $\pi\top \widetilde\pi$; 
		\item $ \pi\top {\rm Id}_V$; 
		\item $\widetilde\pi\top \widetilde \pi$ and $\pi\top \widetilde\pi$; 
		\item $\widetilde \pi\top{\rm Id}_V$; 
	\end{enumerate}
	where $\pi$ (resp. $\widetilde \pi$) is the projection onto $W$ (resp.  onto $\widetilde W$)  parallel to $\widetilde W$  (resp. parallel to $W$).
	
We call $W$ and $\widetilde W$ {\bf    strong locality complement subspaces} of the locality space $(V, \top)$.
\end{defnprop}
 \begin{rk} Note that any subspace $W$ of a   strongly non-degenerate locality vector space $(V, \top)$ discussed in \cite[Equation (30)]{CGPZ3} has an algebraic complement  given by its polar set $W^\top$ (or orthocomplement) so that $V=W\oplus W^\top$. However, unlike in the above definition,  in \cite{CGPZ3}   the projections $\pi: V\to W$ and $\tilde \pi: V\to W^\top$ were not required to be locality maps.\end{rk}
\begin{proof}
We use the fact that 
		\begin{equation}\label{eq:phi12}\left(\varphi\top \varphi_1\, {\rm and}\, \varphi \top \varphi_2\right)\Longrightarrow \left(\varphi\top   (\varphi_1\pm \varphi_2)\right) \end{equation}
		and \begin{equation}\label{eq:Idphi}\varphi\top {\rm Id}_V\Rightarrow \varphi\top\varphi.\end{equation} Indeed, by assumption $x\top y\Rightarrow \varphi (x)\top \varphi_i(y)$ for $i=1, 2$ and the locality of the vector space $(V, \top)$ implies that $\varphi (x)\top \varphi_1(y)\pm\varphi_2(y)$, which shows   (\ref{eq:phi12}).  Furthermore, if $\varphi\top {\rm Id}_V$ then $x\top y\Rightarrow \varphi(x)\top y$ which,  using the symmetry of $\top$ implies $\varphi(y)\top {\rm Id}_V(\varphi(y))$ and hence $\varphi(x)\top \varphi(y)$, which proves  (\ref{eq:Idphi}).  
 	\begin{itemize}
	\item $1)\Rightarrow 2)$  follows from (\ref{eq:phi12}) applied to $\varphi= \pi$ and $\varphi_1= \widetilde\pi, \varphi_2=  \pi$, which implies $\pi\top {\rm Id}_V\ $ since ${\rm Id}_V=\pi+\widetilde\pi$.  
	
		\item $2)\Rightarrow 1):$ Follows from  (\ref{eq:Idphi}) applied to $\varphi= \pi$ followed by (\ref{eq:phi12}) applied to $\varphi= \pi$ and $\varphi_1={\rm Id}_V$, $\varphi_2= \pi$ using the fact that $\widetilde \pi={\rm Id}_V-\pi$.
	
	\item $3)\Leftrightarrow 4):$ Analogous to the last two points exchanging the roles of $\widetilde\pi$ and $\pi$.
	
	\item $2)\Rightarrow4):$ Since ${\rm Id}_V\top{\rm Id}_V$, it follows from \eqref{eq:phi12} applied to $\varphi={\rm Id}_V$, $\varphi_1={\rm Id}_V$, and $\varphi_2= {\pi}$.
	\item $4)\Rightarrow 2):$ Analogous to the last point exchanging the roles of $\pi$ and $\widetilde\pi$.

	\end{itemize}	
	
	\end{proof} 
	\begin{rk}
     Notice that any two of the four points of Definition-Proposition \ref{defnprop:stronglocality} imply 
     \begin{equation*}
      \pi\top\pi ~\wedge~\tilde\pi\top\tilde\pi
     \end{equation*}
     which we will freely use later on.
     However, $\pi\top\pi ~\wedge~\tilde\pi\top\tilde\pi$ is not the fifth point of Definition-Proposition \ref{defnprop:stronglocality} since it does not imply any of the four points of the aformentionned Definition-Proposition.
    \end{rk}
	
	The previous definition can be expressed in terms of short split exact sequences. Indeed,  complement subspaces $W$ and $\widetilde W$ in $V$, i.e.  such that  $W\oplus\widetilde W=V$, give rise to a short exact sequence $0\to W\overset{\iota}{\to} V\overset{\widetilde\pi}{ \to} \widetilde W\to 0$, where $\iota$ is the inclusion map and $\widetilde\pi$ is the projection onto $\widetilde W$ along $W$.  The exact sequence is split since the projection $\pi$ onto $W$ along $\widetilde W$ yields a right  inverse for $\iota$. The equivalent conditions in Definition-Proposition \ref{defnprop:stronglocality}  lead to the following generalisation.
	  
\begin{cor}\label{cor:split_exact_seq}
Two complement linear subspaces $W$ and $\widetilde W$  of a locality space $(V, \top)$, i.e. such that $W\oplus \widetilde W=V$, are strong locality complements if and only if 
  the projection map $\widetilde \pi$ and  the right inverse $\pi$ of $\iota$  in the split short exact sequence 
 \begin{equation}\label{eq:splitexseq}  0\to W\underset{\pi}{\underset{\leftarrow}{ {\overset{\iota}{\to}}}} V\overset{\widetilde\pi}{ \to} \widetilde W\to 0\end{equation}  satisfy one of the equivalent conditions of  Definition \ref{defnprop:stronglocality}, where  ${\rm Id}_V$ corresponds to   the  sum $\pi+\widetilde \pi$. We call such sequence a {\bf locality split exact sequence}.  {Furthermore, in this case $(V/W,\ttop)$ is a locality vector space isomorphic to $(\widetilde W,\top\cap(\widetilde W\times\widetilde W))$}.
\end{cor}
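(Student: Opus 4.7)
My plan is to split the proof into two parts: the ``if and only if'' equivalence, which is essentially a reformulation, and then the substantive ``Furthermore'' part about the quotient being a locality vector space isomorphic to $(\widetilde W, \top_{\widetilde W})$ (where $\top_{\widetilde W} := \top \cap (\widetilde W \times \widetilde W)$).

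For the equivalence, I would observe that once we have the decomposition $V = W \oplus \widetilde W$, the short exact sequence \eqref{eq:splitexseq} is automatically produced by setting $\iota$ to be the inclusion, $\widetilde\pi$ the projection along $W$, and $\pi$ the projection along $\widetilde W$, which is a right inverse of $\iota$. The condition of being a \emph{strong} locality complement in Definition-Proposition \ref{defnprop:stronglocality} is literally stated in terms of $\pi$ and $\widetilde \pi$, so it transfers to a condition on the split short exact sequence verbatim. This half is therefore purely tautological.

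The main content lies in the ``Furthermore'' part. I would build an explicit candidate isomorphism
\[
\Phi : V/W \longrightarrow \widetilde W, \qquad \Phi([v]) := \widetilde\pi(v),
\]
which is well-defined (since $v - v' \in W$ implies $\widetilde\pi(v) = \widetilde\pi(v')$) and a linear bijection by elementary linear algebra (its inverse is $\Phi^{-1}(w) = [w]$). The key step is to promote this to an isomorphism of locality vector spaces for $\ttop$ on $V/W$ and $\top_{\widetilde W}$ on $\widetilde W$. For $\Phi$ being a locality map: if $[v_1] \ttop [v_2]$, then by Definition \ref{defn:quotientlocality} there exist $v_1' \in [v_1]$ and $v_2' \in [v_2]$ with $v_1' \top v_2'$; since $\widetilde\pi \top \widetilde\pi$ (one of the equivalent conditions of Definition-Proposition \ref{defnprop:stronglocality}), we get $\widetilde\pi(v_1') \top \widetilde\pi(v_2')$ in $V$; but $\widetilde\pi(v_i') = \widetilde\pi(v_i) = \Phi([v_i])$ and these lie in $\widetilde W$, hence they are related by $\top_{\widetilde W}$. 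For $\Phi^{-1}$ being a locality map: if $w_1 \top_{\widetilde W} w_2$, then in particular $w_1 \top w_2$ in $V$ with $w_i \in [w_i]$, so directly $[w_1] \ttop [w_2]$ by definition of the quotient locality.

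Thus $\Phi$ is a locality isomorphism. Since $\widetilde W$, being a linear subspace of the locality space $(V,\top)$ equipped with the restricted relation, is itself a locality vector space (its polar sets are intersections of polar sets of $V$ with $\widetilde W$, hence linear), the quotient $(V/W, \ttop)$ inherits the structure of a locality vector space through $\Phi$. Alternatively, one may invoke Lemma \ref{prop:loc_short_ex_sequence}: the split sequence \eqref{eq:splitexseq} then becomes a locality short exact sequence with $\ttop$ the final locality for $\widetilde\pi$, and Proposition \ref{prop:simple_consequence_question} immediately yields that $V/W$ is a locality vector space. I do not anticipate any serious obstacle; the only subtle point is to remember that the locality on $\widetilde W$ considered as a subspace of $V$ is the restricted one, so that the forward direction of $\Phi$ being a locality map lands in the correct relation.
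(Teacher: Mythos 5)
Your proposal is correct, and its first half coincides with the paper's (both treat the equivalence as essentially definitional). For the ``Furthermore'' part you take a more explicit route than the paper: you build the isomorphism $\Phi([v])=\widetilde\pi(v)$ by hand, check that $\Phi$ and $\Phi^{-1}$ are locality maps using $\widetilde\pi\top\widetilde\pi$ (which indeed follows from any one of the equivalent conditions of Definition-Proposition \ref{defnprop:stronglocality}) and the definition of $\ttop$, and then transport the locality-vector-space property from $\bigl(\widetilde W,\top\cap(\widetilde W\times\widetilde W)\bigr)$, which is a locality space by heredity, back to $(V/W,\ttop)$. The paper instead simply invokes Proposition \ref{prop:simple_consequence_question} (whose content, via Lemma \ref{prop:loc_short_ex_sequence}, is essentially the computation you carry out): to apply it one must know that the final locality relation for $\widetilde\pi$ coincides with the restricted relation on $\widetilde W$, a verification the paper leaves implicit and which your forward-direction argument for $\Phi$ (using $\widetilde\pi\top\widetilde\pi$ and $\widetilde\pi(v')=\widetilde\pi(v)$ for $v'-v\in W$) makes explicit. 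So your argument is self-contained and slightly more detailed, while the paper's is shorter by delegating to the earlier results on locality short exact sequences; both are sound, and you even note the paper's route as your alternative.
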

\begin{rk}
    For $W$ a subspace of $(V,\top)$, Proposition \ref{prop:simple_consequence_question} shows the equivalence between
     \begin{enumerate}
     \item  whether a quotient $(V/W,\ttop)$ of locality vector spaces is a locality vector space, and
     \item the existence of a locality vector space $(\widetilde W,\widetilde\top)$ which makes the short exact sequence \eqref{eq:splitexseq} a locality short exact sequence, where $\widetilde\top$ is the final locality relation for $\widetilde\pi$. 
     \end{enumerate}
     
     Corollary \ref{cor:split_exact_seq} is a specialisation of Proposition \ref{prop:simple_consequence_question} in so far as it requests the existence of a strong locality complement $\widetilde W$ of $W$ in $(V,\top)$, and yields a locality split exact sequence.
     
    \end{rk}

\begin{proof}
By definition strong locality complements are equivalent to the existence of a split short exact sequence \[  0\to W\underset{\pi}{\underset{\leftarrow}{ {\overset{\iota}{\to}}}} V\overset{\widetilde\pi}{ \to} \widetilde W\to 0\] such that $\pi$ and $\tilde\pi$ satisfy one of the four equivalent conditions of Proposition-Definition \ref{defnprop:stronglocality}.

In this case we have $(V/W,\ttop)\simeq(\widetilde W,\top\cap(\widetilde W\times\widetilde W))$ by Proposition \ref{prop:simple_consequence_question}
\end{proof}

\begin{rk}Recall from   \cite[Definition 6.2]{CGPZ4}, that a   locality relation $\top$ on $V$ is non degenerate if $v\top v\Longrightarrow v=0$ and    called strongly non-degenerate if moreover for any subspace  $U\subsetneq  V$, the
  polar space $U^\top$ is nonzero.  A strongly non degenerate locality  relation $\top$ on $V$ yields the splitting $V=W\oplus W^\top$ for any subspace $W$ of $V$, so  $W$ has a complement space $\widetilde W=W^\top$.  The corresponding projections $\pi: V\to W$, resp. $\widetilde \pi : V\to W^\top$  are locally independent of each other by construction.  By  \cite[Proposition 6.5]{CGPZ4}, a  strongly separating locality vector space of countable dimension  admits  a locality basis  for $\top$, in which case the projections can be shown to be locality maps.
	
\end{rk}
 \subsection{A weaker condition ensuring the locality of quotient spaces}
 
 This paragraph provides a weaker sufficient condition to answer (\ref{question})  positively. This new condition is also more computational friendly than the a priori more natural condition  (for a subspace to have a strong locality complement) given in the previous paragraph, which is why we   use  it in the sequel.
 \begin{defn}\label{defn:loccompatible}
 	Let $(V,\top)$ be a locality vector space and $W\subset V$ a linear subspace. We say that {\bf $W$ is locality compatible with $\top$} if
 	$\forall (x,y,z)\in V^3,~\forall w\in W,~$ \begin{equation}\label{eq:loccomp}
 	(x,y)\in\top\, \wedge (x+w,z)\in \top\Longrightarrow(\exists w'\in W): (x+w',y)\in\top\,\wedge (x+w',z)\in \top. \end{equation} 
 	
 	We shall see (Theorem \ref{thm:quotientlocality-vs}) that if $W$ is locality compatible with $\top$ then the condition (\ref{eq:questionbis}) is fulfilled so that the quotient $V/W$ is a locality vector space. 
 \end{defn}
 Here are first rather trivial examples.
 \begin{ex}
 	Let $(V,\top)$ be a locality vector space. Then $V$ and $\{0\}$ are locality compatible with $\top$. Indeed,  to see that $V$ is locality compatible with $\top$ it is enough to consider $w'=-x$ using the notations of Definition \ref{defn:loccompatible}. To  see that $\{0\}$ is locality compatible with $\top$ is enough to notice that $w=w'=0$ and thus $x\top y$ and $x\top z$.
 \end{ex}
	\begin{prop}\label{prop:strongloccomp}
If a  subspace  $W$ of $(V, \top)$   admits a strong locality complement, then  it is locality compatible with $\top$.  
\end{prop}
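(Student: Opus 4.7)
The plan is to exploit the splitting $V = W \oplus \widetilde W$ given by the strong locality complement $\widetilde W$ of $W$, and use the projection $\widetilde\pi : V \to \widetilde W$ (parallel to $W$) as a ``canonical representative chooser'' modulo $W$. Concretely, for any $x \in V$ the element $\widetilde\pi(x) = x - \pi(x)$ lies in the same class as $x$ modulo $W$, and the defining property of a strong locality complement ensures that passing from $x$ to $\widetilde\pi(x)$ preserves every locality relation $x$ had.

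Here is how I would carry it out. Fix a strong locality complement $\widetilde W$ of $W$ with associated projections $\pi : V \to W$ and $\widetilde\pi : V \to \widetilde W$. Given data $(x,y,z) \in V^3$ and $w \in W$ with $x \top y$ and $(x+w) \top z$, I would simply propose
\[
w' := -\pi(x) \in W, \qquad \text{so that} \qquad x + w' = \widetilde\pi(x).
\]
By Definition-Proposition \ref{defnprop:stronglocality}, point 4, we have $\widetilde\pi \top \mathrm{Id}_V$. Applying this to $x \top y$ yields $\widetilde\pi(x) \top y$, i.e.\ $(x+w') \top y$. Applying it to $(x+w) \top z$ yields $\widetilde\pi(x+w) \top z$; since $w \in W$ implies $\widetilde\pi(w) = 0$, linearity of $\widetilde\pi$ gives $\widetilde\pi(x+w) = \widetilde\pi(x) = x + w'$, so $(x+w') \top z$ as required.

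Thus condition \eqref{eq:loccomp} is verified, and $W$ is locality compatible with $\top$. There is essentially no obstacle here: the equivalence of the four characterisations of a strong locality complement in Definition-Proposition \ref{defnprop:stronglocality} already does all the work, in that it guarantees $\widetilde\pi$ is locality-independent of the identity, which is exactly the functorial property needed to transport locality relations along the projection onto $\widetilde W$. The only ``choice'' in the proof is picking the unique representative of $[x] \in V/W$ lying in $\widetilde W$; this bypasses the need to manipulate $w$ directly.
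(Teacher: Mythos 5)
Your proof is correct and essentially identical to the paper's: both set $w' = -\pi(x)$ so that $x + w' = \widetilde\pi(x)$, then invoke $\widetilde\pi \top \mathrm{Id}_V$ (one of the equivalent characterisations of a strong locality complement) together with $\widetilde\pi(x+w) = \widetilde\pi(x)$ for $w\in W$ to transport both locality relations to $\widetilde\pi(x)$. You make the step $\widetilde\pi(w)=0$ explicit, which the paper leaves implicit, but the argument is the same.
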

\begin{proof}With the notations of  (\ref{eq:loccomp}), since $\widetilde\pi$ is locality independent of ${\rm Id}_V$,
 \[(x,y)\in\top\, \wedge (x+w,z)\in \top\Longrightarrow (\widetilde\pi(x),y)\in\top\,\wedge (\widetilde\pi(x),z)\in \top.\]
 Setting $ -w':=x-\widetilde\pi(x)$ which lies in $W$, we have $(x+w'=\widetilde\pi(x),y)\in\top$ and $(x+w'=\widetilde\pi(x),z)\in \top$ as expected.

	\end{proof}

The following counterexample shows that for a locality vector space $(V,\top)$ and a subspace $W$, the condition of $W$ having a strong locality complement is stronger than the condition of $W$ being locality compatible with $\top$.

\begin{coex}
Consider the vector space $\R^7$, its subspace $W=\lan \{e_1,e_2,e_3\}\ran$ where $\{e_i\}_{i=1}^7$ are the elements of the canonical basis, and \begin{align*}\top=&\R^7\times\{0\}\bigcup\lan e_1+e_7\ran\times\lan \{e_4,e_5\}\ran\bigcup\lan e_2+e_7\ran\times\lan \{e_5,e_6\}\ran\bigcup\lan e_3+e_7\ran\times\lan\{e_4,e_6\}\ran\\
&\bigcup\lan\{e_1+e_7,e_3+e_7\}\ran\times\lan e_4\ran\bigcup\lan\{e_1+e_7,e_2+e_7\}\ran\times\lan e_5\ran\bigcup\lan\{e_2+e_7,e_3+e_7\}\ran\times\lan e_6\ran\bigcup {\rm Sym.}\, {\rm  terms}.\end{align*}

Notice that $\top$ is invariant under the natural action of the subgroup $\Omega:=\langle\sigma_1,\sigma_2\rangle$ of the group of permutations $S_7$ generated by $\sigma_1:=(1,2)(4,6)$ and $\sigma_2:=(1,3)(5,6)$, where $(i,j)$ stands for the transposition of $i$ and $j$. $\Omega$ has six elements:
\[\Omega=\{\sigma_1:=(1,2)(4,6),\,\sigma_2:=(1,3)(5,6),\,\sigma_3=(2,3)(4,5),\,\sigma_4=(1,2,3)(4,5,6),\,\sigma_5=(3,2,1)(6,5,4),\,{\rm Id}_7\}.\]  
This follows from the relations $\sigma_3=\sigma_1\circ\sigma_2\circ\sigma_1$, $\sigma_4=\sigma_1\circ\sigma_2$, $\sigma_5=\sigma_2\circ\sigma_1$, $\sigma_5^2=\sigma_4$ and $\sigma_4^2=\sigma_5$, combined with the involutivity of the transpositions. 

One can see that $W$ is locality compatible with $\top$ by checking all cases. For instance, if $k\in\R$, 
$k(e_1+e_7)\top e_4$ and $k(e_1+e_7)+k(e_2-e_1)=k(e_2+e_7)\top e_6$, there is $k(e_3+e_7)=k(e_1+e_7)+k(e_3-e_1)$ such that $k(e_3+e_7)\top e_4$ and $k(e_3+e_7)\top e_6$. In terms of equation \eqref{eq:loccomp}, $x=k(e_1+e_7)$, $w=k(e_2-e_1)$, $y=e_4$, $z=e_6$, and $w'=k(e_3-e_1)$. 
Another possible case is when $x=k(e_1+e_7)+q(e_3+e_7)$ for $k,q\in\K$, then $x\top e_4$. If $w=q(-e_3+e_2)$, then $x+w=k(e_1+e_7)+q(e_2+e_t)\top e_5$. In this case $w'=q(-e_3+e_1)$ makes $x+w'=(k+q)(e_1+e_7)$ locality independent to both $e_4$ and $e_5$. All other possible cases are analogous, in the sense that  they are obtained from the previous two via the action of a permutation in $\Omega$ on the subindices of the $e_i$'s.

We show that $W$ has no strong locality complement by proving there is no projection $\pi:\R^7\to W$ such that $\pi\top {\rm Id}_{\R^7}$. Indeed, if there were such projection, then $\pi(e_1+e_7)\top e_4$, but $\pi(e_1+e_7)=e_1+\pi(e_7)$ where $\pi(e_7)\in W$. From the construction of $\top$, the only option is $\pi(e_7)=-e_1$. On the other hand, $(e_3+e_7)\top e_4$ but $\pi(e_3+e_7)=e_3-e_1$ is not locality independent to $e_4$ which yields the contradiction. 
\end{coex}
	
 The following Proposition yields an example of a locality compatible subspace for a vector space freely generated by another locality vector space.
\begin{prop}
Let $(V,\top)$ be a locality vector space, then $I_{\rm lin}(V)\subseteq \K(V)$ (\emph{see} Lemma \ref{lem:Visomorphism}) is locality compatible with $\top_{\times 1,1}$.
\end{prop}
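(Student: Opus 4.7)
The strategy is to exploit the canonical isomorphism $\K(V)/I_{\rm lin}(V) \simeq V$ established in Lemma \ref{lem:Visomorphism}: every class in the quotient contains a distinguished representative that is a single basis element $(a)$ of $\K(V)$. So given $x$, I will replace it by the representative $(\overline x)$ of $[x]$, where $\overline x \in V$ is the image of $x$ under the linear extension $\overline{\theta'}\colon \K(V) \longrightarrow V$ of $(a)\mapsto a$. Because $(\overline x)$ has support reduced to the single vector $\overline x \in V$, checking the locality conditions will reduce to a single polar-set computation in $(V,\top)$.

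Concretely, given $(x,y,z)\in \K(V)^3$ and $w\in I_{\rm lin}(V)$ with $x\top_{\times 1,1} y$ and $(x+w)\top_{\times 1,1} z$, I will set
\[
\overline x := \overline{\theta'}(x) \in V, \qquad w' := (\overline x) - x \in \K(V).
\]
Since the proof of Lemma \ref{lem:Visomorphism} identifies $I_{\rm lin}(V)$ with $\ker(\overline{\theta'})$, the equality $\overline{\theta'}(w') = \overline x - \overline x = 0$ shows that $w'\in I_{\rm lin}(V)$. By construction $x+w' = (\overline x)$, a single basis element of $\K(V)$.

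It remains to verify that $(\overline x)\,\top_{\times 1,1}\, y$ and $(\overline x)\,\top_{\times 1,1}\, z$. Write $x = \sum_{i\in I}\alpha_i(x_i)$ and $y = \sum_{j\in J}\beta_j(y_j)$ with nonzero coefficients. The hypothesis $x\top_{\times 1,1} y$ gives $x_i\top y_j$ for all $(i,j)\in I\times J$. Here is where I will crucially use that $(V,\top)$ is a \emph{locality} (not merely pre-locality) vector space: the polar set $\{y_j\}^\top$ is a linear subspace of $V$, hence $\overline x = \sum_i \alpha_i x_i \in \{y_j\}^\top$, i.e.\ $\overline x \top y_j$ for every $j$. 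Since the support of $(\overline x)$ is $\{\overline x\}$, this yields $(\overline x)\,\top_{\times 1,1}\, y$. For the second condition, I will argue identically after observing that $w\in\ker(\overline{\theta'})$ implies $\overline{\theta'}(x+w) = \overline x$, so writing $x+w = \sum_k \gamma_k(u_k)$ and $z = \sum_l \delta_l(z_l)$ with nonzero coefficients, the hypothesis $(x+w)\top_{\times 1,1} z$ yields $u_k \top z_l$ for all $k,l$, and linearity of $\{z_l\}^\top$ delivers $\overline x = \sum_k \gamma_k u_k \top z_l$ for every $l$.

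There is no genuine obstacle: once the reduction to the canonical representative is made, the verification is essentially a one-line application of the linearity of polar sets. The only point worth emphasising is that the argument fails for a mere pre-locality vector space $(V,\top)$, consistently with the counterexample preceding Proposition \ref{prop:Visomorphism2}, where polar sets need not be linear. This confirms that the full locality hypothesis on $(V,\top)$ is the essential ingredient.
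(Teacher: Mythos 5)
Your proof is correct and follows essentially the same route as the paper's: both take the canonical single-generator representative $(x_V)$ with $x_V=\overline{\theta'}(x)$, set $w'=(x_V)-x\in I_{\rm lin}(V)$, and conclude via the linearity of polar sets in the locality vector space $(V,\top)$. The only cosmetic difference is that you verify the independence $x_V\top y_j$ and $x_V\top z_k$ directly from linearity of $\{y_j\}^\top$ and $\{z_l\}^\top$, whereas the paper phrases this step through the locality isomorphism of Proposition \ref{prop:Visomorphism2}, which rests on the same fact.
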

\begin{proof}
 Let $(x,y,z)\in\K(V)^3$, with  {$x=\sum_{i\in I}\alpha_i(x_i)$,}  $y=\sum_{j\in J}\beta_j(y_j)$ and $z=\sum_{k\in K}\gamma_k(z_k)$  {(where, as before, we use   brackets to distinguish   elements of $V$ from elements of $\K V$)}, and $w\in I_{\rm lin}(V)$ such that $(x,y)\in\top_{\times1,1}$ and $(x+w,z)\in\top_{\times1,1}.$ We search for an element $w'$ in $I_{\rm  lin}(V)$ such that  $(x'+w',y)\in\top_{\times1,1}$ and $(x+w,z)\in\top_{\times1,1}.$ 
 

Since  $(x,y)\in\top_{\times1,1}$ and $(x+w,z)\in\top_{\times1,1},$ we have $([x],[y])\in\ttop$ and $([x],[z])\in\ttop$.     {Take $x_V$ to be the preimage in $V$ of $[x]$ under the isomorphism of} Proposition \ref{prop:Visomorphism2}.  {Since $[x]=[x+w]$ and since the isomorphism of Proposition \ref{prop:Visomorphism2} is a locality isomorphism, we have that $x_V$ is independent of the preimages of $y$ and $z$. Thus, writing}
  {$x_V=\sum_{i\in I}\alpha_ix_i$, it follows that $x_V\top y_j$ and $x_V\top z_k$ for every $j\in J$ and every $k\in K$. In particular} $\left(x_V,\sum_{j\in J}\beta_j(y_j)\right)\in\top$ and $\left(x_V,\sum_{k\in K}\gamma_k(z_k)\right)\in\top$. Hence, $((x_V),y)\in\top_{\times1,1}$ and $((x_V),z)\in\top_{\times1,1}$. {We claim that the difference $  {(x_V)-x}$ is a candidate for $w'$.}  {Indeed, since $x_V$ was the preimage of $x$ by the isomorphism of Proposition \ref{prop:Visomorphism2}, we have $w'= (x_V)-x\in I_{\rm lin}(V)$ as required.} 
\end{proof}

As a result of the above constructions we get that the quotient by a locality compatible subspace yields  a locality quotient, a fact on which most of the forthcoming constructions will rely.
\begin{thm}\label{thm:quotientlocality-vs}
Let $(V,\top)$ be a locality vector space and $W\subset V$ a subspace locality compatible with $\top$, then $(V/W,\ttop)$ is a locality vector space, where $\ttop$ is the quotient locality (see Definition \ref{defn:quotientlocality}).
\end{thm}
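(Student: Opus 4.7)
The plan is to invoke Lemma \ref{lem:charac_LVS} and show that for every $[u]\in V/W$ the polar set $P^\ttop([u])$ is a linear subspace of $V/W$. Since $\ttop$ is symmetric by construction, this suffices to establish that $(V/W,\ttop)$ is a locality vector space. The set $P^\ttop([u])$ contains $[0]$: indeed $\{u\}^\top$ is a linear subspace of $V$ and so contains $0$, giving $0\top u$, whence $[0]\ttop[u]$. Closure under scalar multiplication is straightforward: if $[v]\ttop[u]$ with witnesses $v'\in[v]$, $u'\in[u]$ satisfying $v'\top u'$, then for any $\lambda\in\K$ we have $\lambda v'\top u'$ (again because $\{u'\}^\top$ is a linear subspace), and $\lambda v'\in[\lambda v]$ yields $[\lambda v]\ttop[u]$.

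The delicate step, and the one where the hypothesis that $W$ is locality compatible with $\top$ will be used, is closure of $P^\ttop([u])$ under addition. Suppose $[v_1]\ttop[u]$ and $[v_2]\ttop[u]$. Unpacking the quotient locality, there exist $w_1,w_2,a_1,a_2\in W$ such that, writing $v_i':=v_i+w_i$ and $u_i:=u+a_i$, one has $v_1'\top u_1$ and $v_2'\top u_2$. The subtlety is that the two locality witnesses use different representatives of $[u]$, and we need a single representative of $[u]$ that pairs with both $v_1'$ and $v_2'$. This is precisely what locality compatibility provides. I would apply Definition \ref{defn:loccompatible} with $x:=u_1$, $w:=a_2-a_1\in W$, $y:=v_1'$, $z:=v_2'$; the hypotheses $(x,y)\in\top$ and $(x+w,z)=(u_2,v_2')\in\top$ are satisfied (using symmetry of $\top$), and the conclusion yields some $w'\in W$ with $u_1+w'\top v_1'$ and $u_1+w'\top v_2'$.

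Since $(V,\top)$ is a locality vector space, the polar $\{u_1+w'\}^\top$ is a linear subspace of $V$, so it contains $v_1'+v_2'$. Writing $v_1'+v_2'=(v_1+v_2)+(w_1+w_2)\in[v_1+v_2]$ and $u_1+w'\in[u]$, the definition of $\ttop$ gives $[v_1+v_2]\ttop[u]$. Combined with the two preliminary steps, this shows $P^\ttop([u])$ is a linear subspace of $V/W$, completing the proof.

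The main obstacle, already anticipated by Question \ref{question} and its reformulation \eqref{eq:questionbis}, is exactly the alignment of representatives noted above: a priori the two witnesses for $[v_1]\ttop[u]$ and $[v_2]\ttop[u]$ need not share a common representative of $[u]$, and without such an alignment one cannot use the linearity of the polar sets in $V$ to sum $v_1'$ and $v_2'$. The locality-compatibility condition is designed to perform exactly this alignment, which is why it is the natural hypothesis.
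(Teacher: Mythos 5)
Your proof is correct and follows essentially the same route as the paper: the key step in both is to use the locality compatibility of $W$ to replace the two different representatives of $[u]$ by a single one $u'+w'$ that is simultaneously independent of the witnesses for $[v_1]$ and $[v_2]$, and then to invoke linearity of polar sets in $(V,\top)$ to conclude. The only cosmetic difference is that you reduce to singleton polar sets via Lemma \ref{lem:charac_LVS}, while the paper argues directly for the polar set of an arbitrary subset $U\subset V/W$ by treating each $[u]\in U$ exactly as you do.
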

\begin{proof} Given any subset $U$ of $ V/W$, we want to prove that $U^{\ttop}$ is a linear subspace. For any $[x]\in U^{\ttop}$, for every $[u]\in U$,    there is an element $u'$ of $[u]$ and an element $x'$of $[x]$, such that $(u',x')\in\top$. The fact that $V$ is a linear vector space implies that for $\lambda\in\K$, $(u',\lambda x')\in \top$, therefore $\lambda[x]\in U^{\ttop}$.  

On the other hand, for $[x]$ and $[y]$ equivalence classes in $U^{\ttop}$ and for every $[u]\in U$, there are $u'\in [u]$ and $w\in W$ such that $(x',u')\in\top$ and $(y',u'+w)\in\top$ for some $x'\in[x]$ and some $y'\in[y]$. Since $W$ is compatible with $\top$, there is a $w'\in W$ such that $(x',u'+w')\in\top$ and $(y',u'+w')\in\top$.  Since $(V,\top)$ is a locality vector space, it follows that $(x'+y',u'+w')\in\top$ and therefore $[x]+[y]\in U^{\ttop}$. Hence $V/W$ is a locality vector space.  
\end{proof}

 The following counterexample shows that  locality compatibility is not necessary to have a local quotient space (compare with  Example \ref{ex:quotientlocHilbert}  and Remark \ref{rk:quotientlocHilbert})).
 
\begin{coex}
  Take $V$ be any Hilbert space with scalar product $\langle,\rangle$.  We equip $V$ with the  locality relation $\top$ given by the orthogonality relation: $v\top v':\Longleftrightarrow \langle v, v'\rangle=0$. Then a closed linear subspace $  \{0\}\subsetneq W\subsetneq V$  is not locality compatible with $\top$.
\end{coex}
\begin{proof}
 Since $W\neq V$ and $W\neq\{0\}$,  we can choose vectors  $w^\perp\in W^\perp\setminus\{0\}$,  $w\in W\setminus\{0\}$ and we set 
 \begin{equation*}
  x:=\frac{w^\perp}{||w^\perp||},\quad y:=\frac{w}{||w||}=:w_0\in W,\quad z:=\frac{w}{||w||}-\frac{w^\perp}{||w^\perp||}.
 \end{equation*}
 We have $\langle x,y\rangle=\langle x+w_0,z\rangle=0$. However there is no $w_1\in W$ such that $\langle x+w_1,y\rangle=\langle x+w_1,z\rangle=0$. Indeed $\langle x+w_1,y\rangle=0$ implies $\langle w_1,y\rangle=0$. But since $z=y-x$, we have $\langle x+w_1,z\rangle=0$
  which implies $\langle w^\perp,w^\perp\rangle=0$ leading to   a contradiction.
\end{proof}

 In the case of a vector space freely generated by a locality set, the locality compatibility property actually implies a stronger version with $N\geq2$ elements instead of two:
\begin{prop}
Let $(S,\top)$ be a locality set, $(\K(S),\top)$ the locality vector space generated by extending linearly the relation $\top$, and $W\subset \K(S)$ a subspace locality compatible with $\top$. Let $x\in \K(S)$, $y_i\in \K(S)$ and $w_i\in W$ for $1\leq i\leq N$, such that for every $i$ $(x+w_i,y_i)\in\top$. Then there is an element $w'$ of $ W$ such that $(x+w',y_i)\in\top$ for every $1\leq i\leq N$.
\end{prop}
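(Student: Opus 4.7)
The plan is to proceed by induction on $N$. The base case $N=1$ is immediate with $w':=w_1$. The inductive step will rest on two ingredients: a single application of the locality compatibility of $W$, combined with a ``bundling'' trick specific to $\K(S)$, namely that locality in $(\K(S),\top)$ is determined elementwise on supports.

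Explicitly, by \eqref{eq:locrelonthefreespan}, for $a,b\in\K(S)$ one has $a\top b$ if and only if every basis element in the support of $a$ is $\top$-related in $S$ to every basis element in the support of $b$. Writing $\mathrm{supp}(y)\subseteq S$ for the set of basis vectors appearing with nonzero coefficient in $y\in\K(S)$, $Y^*:=\bigcup_{i=1}^{N-1}\mathrm{supp}(y_i)$ (a finite subset of $S$), and $y^*:=\sum_{s\in Y^*}s$, this yields the bundling equivalence: for any $a\in\K(S)$,
\[
(a,y^*)\in\top\ \Longleftrightarrow\ (a,y_i)\in\top\ \text{for all } i=1,\dots,N-1,
\]
because both sides amount to $\mathrm{supp}(a)\times Y^*\subseteq\top$ in $S$.

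With this in hand, the inductive step runs as follows. Apply the induction hypothesis to the first $N-1$ pairs to extract $v\in W$ with $(x+v,y_i)\in\top$ for $i=1,\dots,N-1$; by bundling, $(x+v,y^*)\in\top$. Setting $\tilde x:=x+v$, we have $(\tilde x,y^*)\in\top$ and $(\tilde x+(w_N-v),y_N)=(x+w_N,y_N)\in\top$. Apply Definition \ref{defn:loccompatible} to the triple $(\tilde x,y^*,y_N)$ with the element $w_N-v\in W$, obtaining $w''\in W$ satisfying $(\tilde x+w'',y^*)\in\top$ and $(\tilde x+w'',y_N)\in\top$. Setting $w':=v+w''\in W$ and unbundling the first relation yields $(x+w',y_i)\in\top$ for $i=1,\dots,N-1$, while the second gives $(x+w',y_N)\in\top$, completing the induction.

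The main obstacle, were one to attempt a direct iteration of the two-element locality compatibility condition, is that applying it once to merge the constraints from $y_1$ and $y_2$ and then again to add $y_3$ risks destroying independence with $y_1$, since the new modification lives inside $W$ but is otherwise unconstrained. The bundling trick circumvents this by compressing the $N-1$ inductive constraints into a single constraint against $y^*$, so that only one application of locality compatibility per induction step is needed. It is worth noting that this bundling uses in an essential way the fact that locality on $\K(S)$ is given by \eqref{eq:locrelonthefreespan}; such a maneuver would not survive in an arbitrary locality vector space, which is precisely why the statement is restricted to free spans of locality sets.
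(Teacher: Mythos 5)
Your proof is correct and follows essentially the same route as the paper: induction on $N$, with the supports of $y_1,\dots,y_{N-1}$ bundled into a single vector (your $y^*$ is the paper's $\bar y$) so that one application of locality compatibility per step suffices, exploiting that locality on $\K(S)$ is determined elementwise on supports. Your write-up even makes the application of Definition \ref{defn:loccompatible} (at $\tilde x=x+v$ with the element $w_N-v\in W$) more explicit than the paper does.
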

\begin{proof}
We prove the statement by induction on  $N$. The case $N=2$ is immediate since $W$ is locality compatible with $\top$. Assume it is true for $N-1$, and let $x\in \K(S)$, $y_i\in \K(S)$ and $w_i\in W$ for $1\leq i\leq N$, such that for every $i$ $(x+w_i,y_i)\in\top$. By induction there is a $w'_0\in W$ such that $(x+w'_0,y_i)\in \top$ for every $1\leq i\leq N-1$. We can write every $y_i$ in terms of the basis elements of $S$ as $y_i=\sum_{j\in  S}\alpha^i_js_j$ where only finitely many $\alpha^i_j\neq0$, and define $\bar y=\sum_{j\in S}M_j s_j$ where \[M_j=\begin{cases} 0 & {\rm if }\quad (\forall i\in[N-1]):\, \alpha^i_j=0\\1&{\rm if}\quad (\exists i\in [N-1]): \, \alpha^i_j\neq 0 \end{cases}\] 
Notice that $(x+w_0',\bar y)\in \top$ and moreover, if there is a $z\in\K(S)$ such that $(z,\bar y)\in \top$ then $(z,y_i)\in \top$ for every $i<N$. 

Since $W$ is locality compatible with $\top$, there is an element $w'$ in $ W$ such that $(x+w',\bar y)\in\top$  and $(x+w',y_N)\in \top$ which implies the expected result.
\end{proof}

\subsection{Two  conjectural statements}

 We now formulate two conjectural statements which will play an important role in the sequel. The first one is the conjectural statement \ref{conj:main1} which gives a sufficient condition for the tensor product of $n$ subspaces of a locality vector space to be again a locality vector space. The second one is the conjectural statement \ref{conj:main2} which enhances  the tensor algebra to  a locality algebra by giving sufficient conditions for the filtered components from Definition \ref{defn:filteredtensoralg} to become locality vector spaces.
In the following  $V$ and  $W$ are subspaces of a pre-locality vector space $(E,\top)$. We equip $ V\times_\top W$   with a locality relation $\top_{V\times_\top W} $ as in (\ref{eq:locreltoptimes}).
 
\begin{rk} 
 If $V=W$, we have    $ \top_{V\times_\top W} = V^{\times_{\top}4}$ with the notations of (\ref{eq:Vtopn}).
\end{rk}
 
As described in \eqref{eq:locrelonthefreespan},  $\top_{V\times_\top W}$ linearly extends to a locality relation on $\K(V\times_\top W)$, which we denote by the same symbol $ \top_{V\times_\top W}$. This induces a locality relation $\top_\otimes$ on $V\otimes_\top W$, (see Definition \ref{defn:quotientlocality}). Recall that the locality tensor product was defined in Definition   \ref{defn:loctensprod1}
as
\begin{equation*}
 V\otimes_\top W=\sfrac{\K( V\times_\top W)}{I_{\rm bil}\cap\K( V\times_\top W)}.
\end{equation*}
 We defined  the locality tensor product (see Definition \ref{defn:loctensprod1}) by means of  the quotient of a locality vector 
	space by a subspace of multilinear forms, and provided  a sufficient condition  called "locality compatibility" on the subspace  for the quotient to be a locality space. Whether or not the various subspaces of multilinear forms involved are locality compatible, are challenging questions which we formulate as conjectural statements. 
\begin{conj} \label{conj:main1} [Pair of locality vector spaces]
{\it  Given a locality vector space $( {E},\top)$ and \lf{$V_1$, $V_2$ two of its subspaces},  
the subspace  $I_{\rm bil} {(V_1, V_2)} {\cap\K(V_1\times_\top V_2)}\subset \K(V_1\times_\top V_2)$ is locality compatible with $\top_{V_1\times_\top V_2}$.}
\end{conj}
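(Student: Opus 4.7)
The plan is to unpack the definition of locality compatibility and recast the statement as a representation question about elements of the image of $\K(V_1\times_\top V_2)$ in the classical tensor product $V_1\otimes V_2$. First I would write the given data explicitly as $x=\sum_i\alpha_i(a_i,b_i)$, $x+w=\sum_j\beta_j(c_j,d_j)$, $y=\sum_l\gamma_l(e_l,f_l)$ and $z=\sum_r\eta_r(g_r,h_r)$, all in $\K(V_1\times_\top V_2)$. Unfolding the linearly extended relation \eqref{eq:locrelonthefreespan}, the hypotheses $(x,y)\in\top_{V_1\times_\top V_2}$ and $(x+w,z)\in\top_{V_1\times_\top V_2}$ become the pointwise conditions $a_i\top e_l$, $b_i\top f_l$, $c_j\top g_r$ and $d_j\top h_r$ for all indices with nonzero coefficients.

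Since $(E,\top)$ is a locality vector space, the polar sets $T_1:=\{e_l\}^\top\cap\{g_r\}^\top\cap V_1$ and $T_2:=\{f_l\}^\top\cap\{h_r\}^\top\cap V_2$ are linear subspaces (Lemma \ref{lem:charac_LVS}). Because $w\in I_{\rm bil}(V_1,V_2)$, the images of $x$ and $x+w$ in $V_1\otimes V_2$ coincide. The first set of pointwise conditions forces this common image to lie in $(\{e_l\}^\top\cap V_1)\otimes(\{f_l\}^\top\cap V_2)$, while the second set forces it to lie in $(\{g_r\}^\top\cap V_1)\otimes(\{h_r\}^\top\cap V_2)$. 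Applying the classical identity $(A\otimes B)\cap(C\otimes D)=(A\cap C)\otimes(B\cap D)$ valid for subspaces of a tensor product of vector spaces, I would deduce that the image of $x$ lies in $T_1\otimes T_2$.

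The key step would then be to produce a representative $\tilde x\in\K(T_1\times_\top T_2)$ of this image in $V_1\otimes V_2$. Granting such $\tilde x$, I would set $w':=\tilde x-x$. Then $w'$ lies in $\K(V_1\times_\top V_2)$ because $T_1\times_\top T_2\subseteq V_1\times_\top V_2$, and it lies in $I_{\rm bil}(V_1,V_2)$ because $\tilde x$ and $x$ have equal image in $V_1\otimes V_2$. Every pair appearing in $\tilde x=x+w'$ has its first component in $T_1$ and its second in $T_2$, which yields both $(x+w',y)\in\top_{V_1\times_\top V_2}$ and $(x+w',z)\in\top_{V_1\times_\top V_2}$, giving locality compatibility of $I_{\rm bil}(V_1,V_2)\cap\K(V_1\times_\top V_2)$.

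The hard part will be the construction of $\tilde x$: it amounts precisely to showing that the image of $\K(T_1\times_\top T_2)$ in $V_1\otimes V_2$ coincides with $(T_1\otimes T_2)\cap(V_1\otimes_\top V_2)$, i.e.\ that any tensor in $T_1\otimes T_2$ that is representable in the locality tensor product admits a representation by $\top$-compatible pairs supported in $T_1\times T_2$. A natural strategy is a Zorn-type argument producing a minimal representative and using the bilinearity relations to progressively eliminate pairs with entries outside $T_1\times T_2$; an alternative is to try to exploit a locality basis of $E$ when one exists. This mirrors exactly the distributivity issue addressed under extra hypotheses in Corollary \ref{cor:Localitytensorintersection}, which we take as evidence that this step is genuinely subtle and is the core reason the statement is formulated as a conjecture.
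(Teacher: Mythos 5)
You should first be aware that the paper contains no proof of this statement: it is deliberately left as Conjectural statement \ref{conj:main1}, with only an equivalent multilinear reformulation (Proposition \ref{prop:Conjecturefornbigger}) and conditional consequences (e.g.\ Corollary \ref{cor:ntensorproduct}) established. So there is no proof in the paper to compare against, and your attempt does not close the conjecture either.

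Your reduction is sound as far as it goes: unwinding the linearly extended relation \eqref{eq:locrelonthefreespan}, the required $w'$ exists precisely when the image of $x$ in $V_1\otimes V_2$ admits a representative in $\K(T_1\times_\top T_2)$, and your use of the polar subspaces $T_1,T_2$ together with the classical identity $(A\otimes B)\cap(C\otimes D)=(A\cap C)\otimes(B\cap D)$ correctly places that image in $T_1\otimes T_2$; the verification that $w':=\tilde x-x$ then lies in $I_{\rm bil}(V_1,V_2)\cap\K(V_1\times_\top V_2)$ and does the job is also correct. The genuine gap is the step you yourself flag: the inclusion $(T_1\otimes T_2)\cap(V_1\otimes_\top V_2)\subseteq T_1\otimes_\top T_2$ (inside $V_1\otimes V_2$) is asserted but not proved, and it is not a simpler auxiliary lemma — it is essentially a restatement of the conjectured compatibility. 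The paper only obtains intersection/distributivity statements of this type under additional hypotheses, namely projections onto the relevant subspaces that are locally independent of the identity (Proposition \ref{prop:tproductanddirectsum}, Corollary \ref{cor:Localitytensorintersection}), i.e.\ a strong-locality-complement-type condition which you do not have for the polar subspaces $T_1,T_2$. Moreover, the suggested elimination of pairs outside $T_1\times T_2$ via the bilinearity generators \eqref{eq:bilforms1}--\eqref{eq:bilforms4} faces the obstruction illustrated by Example \ref{ex:countex2} in the appendix: such rewritings may force one to leave the span of $\top$-compatible pairs, and no argument is given that one can return. In short, the proposal is a correct and useful reformulation of the difficulty, but the key step is missing, so it is not a proof.
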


 {\begin{prop}\label{prop:Conjecturefornbigger}
Statement \ref{conj:main1} is equivalent to the following statement:
 
 Let $n\geq 2$ and $V_1,\dots,V_n$ be linear subspaces of a locality vector space $( {E},\top)$. The space $I_{\rm mult, n}(V_1,\cdots,V_n)\cap\K(V_1\times_\top\cdots\times_\top V_n)\subset\K(V_1\times_\top\cdots\times_\top V_n)$ is locality compatible with the locality relation $\top_{\times n,n}$ (\textit{see \ref{eq:locreltimesmn}}) on the space $\K(V_1\times_\top\cdots\times_\top V_n)$.
\end{prop}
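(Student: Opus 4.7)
The plan is to prove the equivalence by noting that one direction is trivial and proving the other by induction on $n$, reducing the $n$-ary locality compatibility to the binary one via the associativity isomorphism (Theorem \ref{thm:asso_pre_loc_tensor_prod}).

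For the direction ``general implies pair'', it suffices to take $n=2$, since the generators \eqref{eq:multlinform1}--\eqref{eq:multlinform2} of $I_{\rm mult, 2}(V_1, V_2)$ agree with the generators \eqref{eq:bilforms1}--\eqref{eq:bilforms4} of $I_{\rm bil}(V_1, V_2)$, and the relation $\top_{\times 2, 2}$ restricted to $\K(V_1\times_\top V_2)$ agrees with $\top_{V_1\times_\top V_2}$ from \eqref{eq:locreltoptimes}. Thus Statement \ref{conj:main1} is exactly the $n=2$ instance.

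For the converse, I would proceed by induction on $n\geq 2$, the base case being the assumed binary compatibility. For the inductive step, assume the statement for $n-1$. By Theorem \ref{thm:quotientlocality-vs}, this inductive hypothesis implies that $V_1\otimes_\top\cdots\otimes_\top V_{n-1}$ is actually a locality vector space (not merely a pre-locality one), since its defining quotient is taken by a locality compatible subspace. The associativity isomorphism of Theorem \ref{thm:asso_pre_loc_tensor_prod} then identifies
\[
\K(V_1\times_\top\cdots\times_\top V_n)\big/\bigl(I_{\rm mult,n}(V_1,\ldots,V_n)\cap \K(V_1\times_\top\cdots\times_\top V_n)\bigr)
\]
with the binary locality tensor product $(V_1\otimes_\top\cdots\otimes_\top V_{n-1})\otimes_\top V_n$. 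Under this identification, the ideal $I_{\rm mult,n}$ corresponds to a sum of two subideals: the ``inner'' part arising from multilinearity in the first $n-1$ slots (which, by induction, is already locality compatible), and a ``bilinear'' part pairing the first $n-1$ entries with the last entry. Applying Statement \ref{conj:main1} to the two subspaces $V_1\otimes_\top\cdots\otimes_\top V_{n-1}$ and $V_n$ (both seen as subspaces of a common locality space such as $(V_1\otimes_\top\cdots\otimes_\top V_{n-1})\oplus V_n$ with the natural direct sum locality) handles the bilinear part. Combining the two compatibilities yields, given $x$ and witnesses $y,z$, $w$ in $\K(V_1\times_\top\cdots\times_\top V_n)$ as in Definition \ref{defn:loccompatible}, a common adjustment $w'$ in $I_{\rm mult,n}$ with $(x+w',y),\,(x+w',z)\in\top_{\times n,n}$.

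The main obstacle will be the bookkeeping in the inductive step: one must carefully translate the compatibility condition across the associativity isomorphism $\Phi_{n-1,1}$ of Theorem \ref{thm:asso_pre_loc_tensor_prod} (which is not injective at the level of free spaces, only after quotienting), decompose a given $w\in I_{\rm mult,n}$ into its ``inner multilinear'' and ``outer bilinear'' contributions, and combine the adjustments supplied by the inductive hypothesis and by Statement \ref{conj:main1} into a single $w'\in I_{\rm mult,n}\cap \K(V_1\times_\top\cdots\times_\top V_n)$. Once this translation is set up rigorously, the compatibility follows from the definitions, since locality compatibility is preserved under such sums of subideals when the ambient quotient is itself a locality vector space.
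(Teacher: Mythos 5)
Your first direction (specialising to $n=2$, where the generators of $I_{\rm mult,2}$ are exactly those of $I_{\rm bil}$ and $\top_{\times 2,2}$ restricts to $\top_{V_1\times_\top V_2}$) is correct and is exactly what the paper does. The converse, however, has genuine gaps as proposed. Locality compatibility (Definition \ref{defn:loccompatible}) is a statement about the \emph{free} vector space $\K(V_1\times_\top\cdots\times_\top V_n)$ and its subspace $I_{\rm mult,n}\cap\K(V_1\times_\top\cdots\times_\top V_n)$; the associativity map $\Phi_{n-1,1}$ of Theorem \ref{thm:asso_pre_loc_tensor_prod} only exists after quotienting, so there is no mechanism in your argument to bring the witness $w'$ produced at the quotient (or at the other free presentation $\K\bigl((V_1\otimes_\top\cdots\otimes_\top V_{n-1})\times_\top V_n\bigr)$) back into $I_{\rm mult,n}\cap\K(V_1\times_\top\cdots\times_\top V_n)$: the relevant comparison maps are neither injective nor surjective at the free-space level, and one cannot recover compatibility from knowing that quotients are locality spaces, since compatibility is strictly stronger (see the Hilbert-space counterexample following Theorem \ref{thm:quotientlocality-vs}). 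Moreover, invoking Statement \ref{conj:main1} for the pair $\bigl(V_1\otimes_\top\cdots\otimes_\top V_{n-1},\,V_n\bigr)$ presupposes a locality vector space containing both, whose relation restricts to $\top_{\otimes\, n-1,1}$, $\top_{\otimes(n-1)}$ and $\top$; your ``natural direct sum locality'' is not constructed in the paper, and checking that polar sets defined through representatives in the quotient are linear is itself a statement of the same conjectural nature as what you are trying to prove. Finally, the splitting of a general $w\in I_{\rm mult,n}\cap\K(V_1\times_\top\cdots\times_\top V_n)$ into an ``inner'' and an ``outer'' part each again lying in the span of the locality Cartesian product is delicate (the appendix shows that $I_{\rm bil,\top}$ can be strictly smaller than $I_{\rm bil}\cap\K(V\times_\top V)$), and merging the two separately obtained adjustments into a single $w'$ would need a chaining argument in the style of Proposition \ref{prop:quotientlocalityalgebra}, which you do not supply; ``compatibility is preserved under such sums of subideals'' is not a result available in the paper.

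The paper's proof avoids all of this by never leaving the free-space level and by using associativity of the \emph{Cartesian} product rather than of the tensor product: the set bijection $(V_1\times\cdots\times V_{n-1})\times V_n\simeq V_1\times\cdots\times V_n$, extended linearly, identifies $\K\bigl((V_1\times_\top\cdots\times_\top V_{n-1})\times_\top V_n\bigr)$ with $\K(V_1\times_\top\cdots\times_\top V_n)$ and lets one read $I_{\rm mult,n}$ as the bilinear ideal of the pair consisting of the Cartesian block $V_1\times\cdots\times V_{n-1}$ and $V_n$; a single application of Statement \ref{conj:main1} to that pair then yields the witness $w'$ directly, with no induction, no quotients, and no auxiliary locality structure to construct. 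If you want to keep your inductive route, you would need to first prove a transfer lemma for locality compatibility along the maps relating the two presentations (and justify the ambient locality space for the pair you feed into Statement \ref{conj:main1}); otherwise the reduction to the two-factor case should be done, as in the paper, by re-bracketing Cartesian products.
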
 
\begin{proof}

We can recover Statement \ref{conj:main1} from the statement  of Proposition \ref{prop:Conjecturefornbigger} in  the case $n=2$. Conversely, we  now prove that  Statenent \ref{conj:main1} implies that of Proposition \ref{prop:Conjecturefornbigger}.  

For $n\geq3$: let $x,y,z$ and $w$ be elements of $\K(V_1\times\cdots\times V_{n})$ where $w\in I_{\rm mult,n}(V_1\times\cdots\times V_{n})$ such that $(x,y)\in\top_{\times n,n}$ and $(x+w,z)\in\top_{\times n,n}$. Extending linearly the usual bijection between $(V_1\times\cdots\times V_{n-1})\times V_n$ and $V_1\times\cdots\times V_n$, one obtains an isomorphism of vector spaces $\K((V_1\times\cdots\times V_{n-1})\times V_n)\simeq\K(V_1\times\cdots\times V_{n})$. It is straightforward to show that the restriction of such an isomorphism to $\K((V_1\times_\top\cdots\times_\top V_{n-1})\times_\top V_n)$ (resp. $I_{\rm bil}(V_1\times\cdots\times V_{n-1},V_n)$) yields an isomorphism with $\K(V_1\times_\top\cdots\times_\top V_{n})$ (resp. $I_{\rm mult,n}(V_1\times\cdots\times V_{n})$). We may therefore see $x,y,z$ and $w$ as elements on $\K((V_1\times_\top\cdots\times_\top V_{n-1})\times_\top V_n)$ as well as view $w$ as an element  of $I_{\rm bil}(V_1\times\cdots\times V_{n-1},V_n)$. 
Assuming Statement  (\ref{conj:main1}) holds, then yields the existence of $w'\in I_{\rm bil}(V_1\times\cdots\times V_{n-1},V_n)\cap\K((V_1\times_\top\cdots\times_\top V_{n-1})\times_\top V_n)$ such that $(x+w',y)\in\top_{\times n,n}$ and $(x+w',z)\in\top_{\times n,n}$, which proves the statement.
\end{proof}

The following Corollary is important to ensure the stability of locality vector spaces stable under tensor products and is the main reason for introducing the conjectural statement \ref{conj:main1}.

 {\begin{cor}\label{cor:ntensorproduct}
 Let $n\geq 1$ and $V_1,\dots,V_n$ be linear subspaces of a locality vector space $(V,\top)$. Assuming the statement \ref{conj:main1} holds true, the locality tensor product $(V_1\otimes_\top\cdots\otimes_\top V_n,\top_\otimes)$ is a locality vector space.
\end{cor}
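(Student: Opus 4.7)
The plan is to directly assemble the tools from the preceding sections, since all of the hard work has already been packaged into Theorem \ref{thm:quotientlocality-vs}, Proposition \ref{prop:Conjecturefornbigger}, and Lemma \ref{lem:linearspan}. The corollary should follow by unpacking the definition of $\top_\otimes$ as a quotient locality and invoking these three results.

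First I would treat the case $n=1$ separately: by Proposition \ref{prop:Visomorphism2}, the canonical projection identifies $V_1^{\otimes_\top 1} := \K(V_1)/I_{\rm lin}(V_1)$ with $(V_1,\top)$ as locality vector spaces, so there is nothing to prove. From now on assume $n\geq 2$.

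Next I would observe that the relevant ambient space is a locality vector space: the locality set $(V_1\times_\top\cdots\times_\top V_n, \top_{\times n,n})$ (with $\top_{\times n,n}$ as in Definition \ref{defn:top_otimes_m_comma_n}) gives rise, by Lemma \ref{lem:linearspan}, to a locality $\K$-vector space $\bigl(\K(V_1\times_\top\cdots\times_\top V_n),\top_{\times n,n}\bigr)$ after linear extension of the relation. This is the object whose quotient defines the locality tensor product.

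Then I would invoke Proposition \ref{prop:Conjecturefornbigger} (whose equivalence with Conjectural statement \ref{conj:main1} is established there): the subspace
\[
I_{\rm mult,n}(V_1,\dots,V_n)\,\cap\,\K(V_1\times_\top\cdots\times_\top V_n)
\]
is locality compatible, in the sense of Definition \ref{defn:loccompatible}, with the linear extension of $\top_{\times n,n}$. By the very Definition \ref{defn:loctensalgebra} of $V_1\otimes_\top\cdots\otimes_\top V_n$ together with Definition \ref{defn:loctensalgebra} (third item) of $\top_\otimes$, the locality tensor product is exactly the quotient of this ambient locality vector space by this locality compatible subspace, endowed with the quotient locality. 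Theorem \ref{thm:quotientlocality-vs} then immediately yields that $(V_1\otimes_\top\cdots\otimes_\top V_n,\top_\otimes)$ is a locality vector space, which concludes the proof.

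There is no real obstacle: once Statement \ref{conj:main1} is granted, the corollary is purely a bookkeeping exercise matching the hypotheses of Theorem \ref{thm:quotientlocality-vs}. The only subtlety worth flagging in the write-up is to make sure that the locality relation used on $\K(V_1\times_\top\cdots\times_\top V_n)$ to invoke locality compatibility is the same as the one used to define $\top_\otimes$ as a quotient locality, so that Theorem \ref{thm:quotientlocality-vs} applies verbatim.
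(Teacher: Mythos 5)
Your proof is correct and follows the same route as the paper's one-line argument: invoke Proposition \ref{prop:Conjecturefornbigger} to establish locality compatibility of the multilinear relations, then apply Theorem \ref{thm:quotientlocality-vs}. Your write-up is in fact slightly more careful than the paper's, since you separately dispose of $n=1$ via Proposition \ref{prop:Visomorphism2} (a case Proposition \ref{prop:Conjecturefornbigger} formally excludes, being stated only for $n\geq 2$) and explicitly check via Lemma \ref{lem:linearspan} that the ambient freely generated space is a locality vector space before quotienting.
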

\begin{proof}
This statement follows from Proposition \ref{prop:Conjecturefornbigger} and Theorem \ref{thm:quotientlocality-vs}.
\end{proof}}

Even though the last statement is essential for the rest of the paper, it is not enough to make the locality tensor algebra a locality vector space, since it fails to relate the different graded components. Therefore we formulate the following conjectural statement whose consequences will be used in Section  \ref{sec:Firstconsequences}.
}

\begin{conj} \label{conj:main2}[Locality tensor algebra] {\it   Given a locality vector space $(V,\top_V)$ and any $n\in\N$, the subspace $\left(I_{\rm mult}^n {(V)} {\cap \K(\bigcup_{k=0}^nV^{ {\times_\top^k}})}\right)\subset \K(\bigcup_{k=0}^nV^{ {\times_\top^k}})$ is locality compatible with $\top_\times^n$. (See Definition-Proposition \ref{defnprop:locrelonfilteredtenalg}).} 
	\end{conj}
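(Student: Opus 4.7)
Plan: I would reduce Statement \ref{conj:main2} to a locality distributivity identity for iterated tensor products over intersections of polar subspaces, and then attack the residual distributivity inductively. First, exploit the bigrading: $\K\bigl(\bigcup_{k=0}^n V^{\times_\top^k}\bigr)=\bigoplus_{k=0}^n\K(V^{\times_\top^k})$, $I_{\rm mult}^n(V)=\bigoplus_{k=1}^n I_{\rm mult,k}(V)$, and the relation $\top_\times^n$ decouples along this grading, in the sense that $(x,y)\in\top_\times^n$ iff every pair of homogeneous components satisfies $(x_k,y_l)\in\top_{\times k,l}$ (the degree $0$ components being related to everything by Remark \ref{rk:unit_loc_tensor_prod}). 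Writing $x=\sum_k x_k$, $y=\sum_l y_l$, $z=\sum_l z_l$, $w=\sum_k w_k$, it suffices to find, for each fixed $k\in\{1,\ldots,n\}$, an element $w'_k\in I_{\rm mult,k}(V)\cap\K(V^{\times_\top^k})$ satisfying $(x_k+w'_k,y_l)\in\top_{\times k,l}$ and $(x_k+w'_k,z_l)\in\top_{\times k,l}$ for every $l\in\{1,\ldots,n\}$.

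Next, I would translate each degree-$k$ problem into a statement about polar subspaces of $V$. For $u\in\K(V^{\times_\top^m})$, denote by $\mathrm{ent}(u)\subset V$ the set of entries of tuples in the support of $u$. The relation $(x_k+w'_k,y_l)\in\top_{\times k,l}$ is then equivalent to $\mathrm{ent}(x_k+w'_k)\times\mathrm{ent}(y_l)\subset\top_V$. Setting $A:=\bigcup_l\mathrm{ent}(y_l)$, $B:=\bigcup_l\mathrm{ent}(z_l)$ and $U:=A^{\top_V}\cap B^{\top_V}=(A\cup B)^{\top_V}$ (all linear subspaces of $V$, since $(V,\top_V)$ is a locality vector space), the hypotheses become $\mathrm{ent}(x_k)\subset A^{\top_V}$ and $\mathrm{ent}(x_k+w_k)\subset B^{\top_V}$, and the goal is to find $w'_k$ with $\mathrm{ent}(x_k+w'_k)\subset U$. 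Passing to the quotient $V^{\otimes_\top^k}$ using the canonical embeddings of Proposition \ref{prop:CompareTensorAlgebras}, this is equivalent to requiring that the class $[x_k]$, which lies in $(A^{\top_V})^{\otimes_\top^k}\cap(B^{\top_V})^{\otimes_\top^k}$ by hypothesis, should actually belong to $U^{\otimes_\top^k}$. Statement \ref{conj:main2} is thereby reduced to the $k$-fold locality distributivity inclusions
\begin{equation*}
(A^{\top_V})^{\otimes_\top^k}\cap(B^{\top_V})^{\otimes_\top^k}\;\subseteq\; U^{\otimes_\top^k}\quad\text{inside } V^{\otimes_\top^k},\qquad k=1,\ldots,n,
\end{equation*}
for arbitrary polar subspaces $A^{\top_V}$ and $B^{\top_V}$ with intersection $U$.

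The hard part will be establishing these distributivity identities for $k\geq 2$; they are the $k$-fold analogues of Corollary \ref{cor:Localitytensorintersection}. The natural plan is to induct on $k$: the case $k=1$ is trivial since $(A^{\top_V})\cap(B^{\top_V})=U$, and for the inductive step one uses the associativity Theorem \ref{thm:asso_pre_loc_tensor_prod} to factor $V^{\otimes_\top^k}\simeq V\otimes_\top V^{\otimes_\top^{k-1}}$ and then invokes Corollary \ref{cor:Localitytensorintersection} (combined with the inductive hypothesis applied to the second tensor slot). Each invocation however demands the hypothesis of that corollary, namely that the projections onto $A^{\top_V}$, $B^{\top_V}$ and $U$ along suitably chosen complements be locally independent of the identity map on the complementary tensor factor, i.e.\ that these polar subspaces admit strong locality complements in the sense of Definition-Proposition \ref{defnprop:stronglocality}. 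Producing such complements for arbitrary polar subspaces of a general locality vector space is not automatic, and this is the main obstacle. An alternative route would be to bypass the distributivity by a direct combinatorial construction of $w'_k$ that uses only the relations generating $I_{\rm mult,k}(V)$ to progressively replace the entries of tuples in $x_k$ by entries in $U$; but this manipulation also boils down to projecting entries onto $U$ compatibly with $\top_V$, so it faces essentially the same obstruction, which is precisely what makes Statement \ref{conj:main2} genuinely conjectural.
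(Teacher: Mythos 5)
Your statement is labelled a \emph{conjectural statement} in the paper and is deliberately left unproved; the authors state that establishing it is outside the scope of the work, so there is no proof in the paper to compare yours against. What you have written is an honest and, as far as the reductions go, mathematically sound \emph{reduction} rather than a proof. The bigrading reduction is correct: $I_{\rm mult}^n(V)$ and $\K\bigl(\bigcup_{k=0}^n V^{\times_\top^k}\bigr)$ both decompose along the tensor degree, and the linear extension of $\top_\times^n$ does decouple degree by degree (with the scalar component independent of everything by Remark \ref{rk:unit_loc_tensor_prod}), so it is enough to construct $w'_k$ separately for each $k$. The translation into polar sets $A^{\top_V}$, $B^{\top_V}$, $U=(A\cup B)^{\top_V}$, and the passage to classes in $V^{\otimes_\top^k}$ via Proposition \ref{prop:CompareTensorAlgebras} are also correct, and a small refinement you could add is that $A$ and $B$ are always finite, so one only needs the distributivity inclusion for polars of finite sets. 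Your reformulation as $(A^{\top_V})^{\otimes_\top^k}\cap(B^{\top_V})^{\otimes_\top^k}\subseteq U^{\otimes_\top^k}$ inside $V^{\otimes_\top^k}$ is an accurate statement of what has to be shown, and it clarifies the combinatorial content of the conjecture.

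The gap, however, is genuine and you name it yourself: the inductive step, whether run through Corollary \ref{cor:Localitytensorintersection} and Theorem \ref{thm:asso_pre_loc_tensor_prod} or through a direct rewriting of entries using $I_{\rm mult,k}$ relations, requires locality-independent projections onto the relevant polar subspaces, i.e.\ strong locality complements in the sense of Definition-Proposition \ref{defnprop:stronglocality}. Nothing in the axioms of a locality vector space guarantees such complements exist for arbitrary polar sets, and the paper's Section \ref{sec:main_conjectures} contains an explicit counterexample showing that locality compatibility (let alone the stronger complement condition) can fail for natural subspaces. So the proposal stops precisely where the conjecture becomes genuinely open; it does not prove Statement \ref{conj:main2}, and should not be presented as doing so. As a reduction to a cleaner target it has value, but it would need either a new idea to bypass the complement hypothesis or additional structural assumptions on $(V,\top_V)$ under which the distributivity inclusions can be established.
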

	
	 {Each of the statements \ref{conj:main1} and \ref{conj:main2} implies the following useful property:
		
	\begin{prop}\label{conj:main3}[Locality vector space]
(the case $V=W$) {\it  Given a locality vector space  $(V,\top_V)$ and assuming the statement \ref{conj:main1} holds true, the subspace $I_{\rm bil} {(V)}\subset \K(V\times_\top V)$ is locality compatible with $\top_{V\times_\top V}$.}
 
\end{prop}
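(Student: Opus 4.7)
The plan is immediate: Proposition \ref{conj:main3} is literally the specialisation of Conjectural statement \ref{conj:main1} to the case $V_1=V_2=V$ (with $E:=V$). So the "proof" amounts to invoking the assumed conjectural statement, after checking that notational conventions are consistent between the two statements.

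More precisely, I would proceed as follows. First, observe that every locality vector space $V$ is a (locality) subspace of itself, so we may take $E=V_1=V_2=V$ in the hypothesis of Conjectural statement \ref{conj:main1}. By definition, $I_{\rm bil}(V,V)=I_{\rm bil}(V)$ (the ideal generated by the relations \eqref{eq:bilforms1}--\eqref{eq:bilforms4} with $W$ replaced by $V$), and the locality relation $\top_{V_1\times_\top V_2}$ specialises to $\top_{V\times_\top V}$ introduced in \eqref{eq:locreltoptimes}.

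Next, one should reconcile the slightly loose notation of the statement: in the formulation of \ref{conj:main3}, the expression "$I_{\rm bil}(V)\subset\K(V\times_\top V)$" implicitly refers to $I_{\rm bil}(V)\cap\K(V\times_\top V)$, consistently with the earlier convention $I_{\rm bil}^{\top_\times}:=\K(V\times_\top W)\cap I_{\rm bil}$ introduced in Section \ref{section:one}. With this reading, applying Conjectural statement \ref{conj:main1} to $V_1=V_2=V$ yields exactly the desired locality compatibility of $I_{\rm bil}(V)\cap\K(V\times_\top V)$ with $\top_{V\times_\top V}$.

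There is no real obstacle here; the only subtlety is the notational check. It is worth pointing out that the statement of \ref{conj:main3} is labelled as a "Proposition" (rather than a conjectural statement) precisely because, under the assumption that \ref{conj:main1} holds, the conclusion follows by direct specialisation rather than by any additional argument. In particular, no use is made of the symmetry $V_1=V_2$ beyond setting the two subspaces equal in the hypothesis.
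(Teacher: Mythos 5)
Your proposal is correct and coincides with the paper's treatment: the paper gives no separate argument, labelling the proposition ``(the case $V=W$)'' and presenting it as the direct specialisation of Conjectural statement \ref{conj:main1} to $E=V_1=V_2=V$, exactly as you do. Your notational remark that $I_{\rm bil}(V)\subset\K(V\times_\top V)$ should be read as $I_{\rm bil}(V)\cap\K(V\times_\top V)$, in line with the convention $I_{\rm bil}^{\top_\times}:=\K(V\times_\top W)\cap I_{\rm bil}$, is also consistent with the paper's intent.
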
}
Even though they might seem rather natural, these conjectural statements turn out to be rather challenging. We  devote the following paragraphs to getting  a better grasp of these assumptions and their consequences.

\section{First consequences: enhanced universal properties}\label{sec:Firstconsequences}

We first show how assuming that statements \ref{conj:main1} and \ref{conj:main2}  hold true, enables us to enhance  the results of Part \ref{part:one} to a full-fledged locality setup.
We transpose the results of  {Sections \ref{subsec:LTPofPL}, \ref{sect:lta} and \ref{subsec:luea} (in particular Theorem \ref{thm:univpropltp-preloc}, Proposition \ref{prop:tensprodaslocmap}, Theorem \ref{thm:univpptyloctenalg-preloc} and Theorem \ref{thm:univproplocunivenvalg-preloc})} to the  locality framework. We do not dwell on  the proofs that are similar to the usual setup, and instead  we focus on results whose proofs require a treatment that differs from the pre-locality context.

\subsection{The locality tensor algebra}

	 The following proposition is in  the same spirit as Theorem \ref{thm:quotientlocality-vs}.

	\begin{prop}\label{prop:quotientlocalityalgebra}
	Let $(A,\top_A,m)$ be a non-unital locality algebra and $I\subset A$ a locality ideal of $A$ which is locality compatible with $\top_A$. Then $(A/I,\ttop,\bar m)$ is a non-unital locality algebra where $\ttop$ is the quotient locality (See Definition \ref{defn:quotientlocality}).
	\end{prop}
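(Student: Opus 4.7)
The plan is to verify that $(A/I, \ttop, \bar m)$ satisfies each clause of Definition \ref{defn:prelocalg}. The locality vector space structure on $(A/I, \ttop)$ is immediate from Theorem \ref{thm:quotientlocality-vs}, since $I$ is assumed locality compatible with $\top_A$. I then define the partial product by $\bar m([a], [b]) := [m(a', b')]$ for representatives $a' \in [a]$, $b' \in [b]$ with $a' \top_A b'$; such a choice exists precisely because $[a] \ttop [b]$.

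The crux is to show that this definition is independent of the chosen representatives. Given another pair $a'' \in [a]$, $b'' \in [b]$ with $a'' \top_A b''$, write $a'' = a' + i$ and $b'' = b' + j$ with $i, j \in I$. Applying the locality compatibility of $I$ to the triple $(x, y, z) = (a', b', b'')$ with $w = i$ yields $w' \in I$ such that $\tilde a := a' + w'$ satisfies $\tilde a \top_A b'$ and $\tilde a \top_A b''$. The telescoping chain
\[
m(a', b') - m(\tilde a, b') = -m(w', b'), \qquad m(\tilde a, b') - m(\tilde a, b'') = -m(\tilde a, j),
\]
\[
m(\tilde a, b'') - m(a'', b'') = m(w' - i, b'')
\]
is legitimate: each partial product on the right is defined because the linearity of the locality relation in $A$ (applied to the polar sets $\{b'\}^{\top_A}$, $\{\tilde a\}^{\top_A}$, $\{b''\}^{\top_A}$) ensures the required independence. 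Each difference then lies in $I$ by the ideal property of $I$, so $[m(a', b')] = [m(a'', b'')]$ in $A/I$.

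Given well-definedness, the remaining verifications are largely formal. The $\ttop_\times$-bilinearity of $\bar m$ is inherited from the $\top_\times$-bilinearity of $m$ by passing to common representatives, again aligned via one further application of the locality compatibility of $I$. Associativity, wherever the triple product is defined, descends from the associativity of $m$ through the same lifting procedure. That $\bar m$ satisfies Condition \eqref{eq:mlocalgm} is proved as follows: given $[a], [b] \in U^{\ttop}$ with $[a] \ttop [b]$ and a choice $a' \top_A b'$, for each $[u] \in U$ locality compatibility of $I$ produces a common $\tilde u \in [u]$ with $\tilde u \top_A a'$ and $\tilde u \top_A b'$; the corresponding property of $m$ in $A$ then gives $\tilde u \top_A m(a', b')$, hence $[u] \ttop \bar m([a], [b])$.

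The main obstacle is the well-definedness step: the locality compatibility of $I$ is needed to align two arbitrary representative pairs through an intermediary $\tilde a$, while the ideal property of $I$ is what ultimately forces the residual terms into $I$. Orchestrating this interplay — and in particular ensuring that every partial product appearing in the telescoping chain is legitimately defined — is the technically delicate point; all the other clauses then follow by analogous representative-lifting arguments.
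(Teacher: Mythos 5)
Your emphasis is inverted relative to the paper's. The paper disposes of well-definedness, $\top_\times$-bilinearity and associativity of $\bar m$ in a single sentence ("analogous to the usual set up") and concentrates entirely on verifying condition \eqref{eq:mlocalgm}, i.e.\ $\bar m(U^\ttop\times_{\ttop}U^\ttop)\subset U^\ttop$ for every $U\subset A/I$, which it proves by three successive applications of the locality compatibility of $I$ that simultaneously adjust the representatives of $[x]$, $[y]$ and $[u]$. You instead present well-definedness as the crux and reduce \eqref{eq:mlocalgm} to a one-line sketch. Both of your focal arguments contain gaps.

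On well-definedness: the telescoping chain and the verification that each intermediate partial product is defined (using that polar sets are linear) are fine, but the conclusion that each residual term lies in $I$ does not follow from what you cite. You invoke "the ideal property of $I$" for terms such as $m(w',b')$ with $w'\in I$ and $w'\top_A b'$, but the locality ideal condition \eqref{eq:locideal} reads $m(I\times I^\top)\subset I$: the second factor must belong to the polar set $I^\top$, hence be independent of \emph{every} element of $I$, not merely of the particular element $w'$. Nothing in your set-up places $b'$ (or $\tilde a$, or $b''$) in $I^\top$. What your step actually needs is the stronger inclusion $m(I\times_\top A)\subset I$; this does hold when $I$ is the kernel of a locality algebra morphism (see the proof of Lemma \ref{lem:Kerlocideal}), but it is not a consequence of the definition of locality ideal as stated, so the claim is not justified.

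On condition \eqref{eq:mlocalgm}: your sketch asserts that, having fixed $a'\top_A b'$, locality compatibility produces a single $\tilde u\in[u]$ with $\tilde u\top_A a'$ and $\tilde u\top_A b'$. That is not available. The hypotheses $[a]\ttop[u]$ and $[b]\ttop[u]$ only furnish $a_1\top_A u_1$ and $b_1\top_A u_2$ for \emph{some} representatives $a_1\in[a]$, $b_1\in[b]$, $u_1,u_2\in[u]$, none of which need coincide with the $a'$, $b'$ you have already chosen, so the premises of Definition \ref{defn:loccompatible} are not met with $a'$ and $b'$ held fixed. The paper's proof resolves this precisely by moving all three representatives through the chain of equations \eqref{eq:localg1}--\eqref{eq:localg9}; one cannot keep $a'$ and $b'$ fixed, and the fact that the value of $\bar m$ is unaffected by this replacement is exactly the well-definedness you were trying to establish in the previous step.
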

	\begin{proof}
	By means of Theorem \ref{thm:quotientlocality-vs} $(A/I,\ttop)$ has the structure of a locality vector space. Analogous to the usual (non locality) set up, the induced product $\bar m:A/I\times_{\ttop}A/I\to A/I$ is an associative $\top_\times-$bilinear map. We are left to prove that for any $U\subset A/I$, $\bar m(U^\ttop\times_{\ttop}U^\ttop)\subset U^\ttop$. Given $([x],[y])\in\ttop$ such that both $[x]$ and $[y]$ are in $U^\ttop$, consider also a $[u]\in U$. Then there are $x'\in[x]$, $y'\in[y]$, $u'\in[u]$, and $(w,w_1,w_2)\in I^3$ such that 
	\begin{equation}\label{eq:localg1}(x',y')\in\top_A,\end{equation}
	\begin{equation}\label{eq:localg2}(x'+w_1,u')\in\top_A,\end{equation}
	\begin{equation}\label{eq:localg3}(y'+w_2,u'+w)\in\top_A.\end{equation}
	
Since $I$ is locality compatible with $\top_A$, from \eqref{eq:localg1} and \eqref{eq:localg2} we conclude that there is $w_1'\in I$ such that 
	\begin{equation}\label{eq:localg4}(x'+w_1',y')\in\top_A,\ \ {\rm and}\end{equation}
	\begin{equation}\label{eq:localg5}(x'+w_1',u')\in\top_A.\end{equation}
From \eqref{eq:localg3} and \eqref{eq:localg4}, there is a $w_2'\in I$ such that
	\begin{equation}\label{eq:localg6}(y'+w_2',x'+w_1')\in\top_A,\ \ {\rm and}\end{equation}
	\begin{equation}\label{eq:localg7}(y'+w_2',u'+w)\in\top_A.\end{equation}
And finally from \eqref{eq:localg5} and \eqref{eq:localg7}, there is a $w'\in I$ such that
	\begin{equation}\label{eq:localg8}(x'+w_1',u'+w')\in\top_A,\ \ {\rm and}\end{equation}
	\begin{equation}\label{eq:localg9}(y'+w_2',u'+w')\in\top_A.\end{equation}
By \eqref{eq:localg6} $m(x'+w_1',y'+w_2')$ is well defined and the fact that $A$ is a locality algebra together with \eqref{eq:localg8} and \eqref{eq:localg9} imply that $(m(x'+w_1',y'+w_2'),u'+w')\in\top_A$. Hence $\bar m([x],[y])\in U^\ttop$.
	\end{proof}

	\begin{defn}\label{defn:localg}
		\begin{itemize}
			
			\item A {\bf graded locality algebra} is a locality algebra together with a sequence of vector spaces  $\{A_n\}_{n\in\N}$ called the grading, such that \[A=\bigoplus_{n\in\N}A_n,\quad m(A_p\otimes_{\top}A_q)\subset A_{p+q},\quad u(\K)\subset A_0.\]
			\item A {\bf filtered locality algebra} is a locality algebra together with a sequence of nested vector spaces  $A^0\subset A^1\subset\dots\subset A^n\subset\cdots$ called the filtration, such that \[A=\bigcup_{n\in\N}A^n,\quad m(A^p\otimes_{\top}A^q)\subset A^{p+q},\quad u(\K)\subset A^0.\]
			 {A {\bf graded (resp. filtered) locality Lie algebra} is a locality Lie algebra which is also a graded (resp. filtered) locality Lie algebra.}
		\end{itemize}
	\end{defn}
 {This is the locality counterpart of  Proposition \ref{prop:tensprodaslocmap}.}
%
	\begin{thm} \label{prop:tensor_alg_graded_loc}
			Assuming   {the conjectural statement \ref{conj:main2}} holds true, then the locality tensor algebra over  a locality vector space is  a graded locality algebra. 
	\end{thm}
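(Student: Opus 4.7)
The plan is to upgrade the pre-locality algebra structure on $\mathcal T_\top(V)$ established in Proposition \ref{prop:tensprodaslocmap} to a full-fledged locality algebra structure with grading. This amounts to verifying two things: that the underlying pre-locality vector space is in fact a locality vector space (polar sets are linear subspaces), and that the product satisfies the compatibility condition \eqref{eq:mlocalgm}. The grading requirements of Definition \ref{defn:localg} are immediate from the construction of $\mathcal T_\top(V) = \bigoplus_{n\geq 0} V^{\otimes_\top^n}$, since by Theorem \ref{thm:asso_pre_loc_tensor_prod} the product $\bigotimes$ maps $V^{\otimes_\top^p} \otimes_\top V^{\otimes_\top^q}$ into $V^{\otimes_\top^{p+q}}$, while $u(\K) \subset V^{\otimes_\top^0}$ by definition.

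For the first point, I would proceed at finite filtration level first. By Proposition \ref{prop:talgaseqclasses},
\[\mathcal T_\top^N(V) = \K\bigl(\cup_{k=0}^N V^{\times_\top^k}\bigr)\,/\,\bigl(I_{\rm mult}^N(V)\cap \K(\cup_{k=0}^N V^{\times_\top^k})\bigr),\]
and by Lemma \ref{lem:linearspan} the free span $\K(\cup_{k=0}^N V^{\times_\top^k})$ equipped with (the linear extension of) $\top_\times^N$ is a locality vector space. The conjectural statement \ref{conj:main2} says exactly that the subspace being quotiented out is locality compatible with $\top_\times^N$, so Theorem \ref{thm:quotientlocality-vs} yields that $(\mathcal T_\top^N(V), \top_\otimes^N)$ is a locality vector space. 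To pass to $\mathcal T_\top(V)$, given any $U \subset \mathcal T_\top(V)$ and $x_1, x_2 \in U^{\top_\otimes}$, one observes that $x_1$ and $x_2$ lie in a common $\mathcal T_\top^N(V)$; by construction of $\top_\otimes$ as a direct limit, $x_i \top_\otimes u$ for $u$ of degree $\le M$ is detected inside $\mathcal T_\top^{\max(N,M)}(V)$, where linearity of the polar set has just been proved. Iterating over the grading of $u$ gives $\alpha x_1 + \beta x_2 \in U^{\top_\otimes}$.

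The delicate point is the compatibility $\bigotimes(U^{\top_\otimes} \times_{\top_\otimes} U^{\top_\otimes}) \subset U^{\top_\otimes}$. Pick $x, y \in U^{\top_\otimes}$ with $x \top_\otimes y$, and any $u \in U$. After reducing to homogeneous $x \in V^{\otimes_\top^p}$ and $y \in V^{\otimes_\top^q}$ (using that $U^{\top_\otimes}$ is a subspace) and $u$ of bounded total degree $N$, unfolding the Definition \ref{defn:top_otimes} of $\top_\otimes$ yields representatives $x', y'$ of $x, y$ and two possibly different representatives $u_1', u_2'$ of $u$ in $\K(\cup_{k=0}^N V^{\times_\top^k})$ satisfying $x' \top_\times^N u_1'$ and $y' \top_\times^N u_2'$. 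The two representatives differ by an element $w := u_2' - u_1' \in I_{\rm mult}^N(V) \cap \K(\cup_{k=0}^N V^{\times_\top^k})$. Invoking the locality compatibility hypothesis of Statement \ref{conj:main2} with $(u_1', x', y')$ in the role of $(x, y, z)$ and $w$ as above produces $w'$ in the same ideal with $u' := u_1' + w'$ satisfying simultaneously $u' \top_\times^N x'$ and $u' \top_\times^N y'$. Concatenating tuples of $x'$ with tuples of $y'$ yields a representative of $x \otimes y$ in $\K(V^{\times_\top^{p+q}})$ each of whose entries comes from $x'$ or $y'$, hence is $\top$-independent of every entry of $u'$. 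Consequently $(x \otimes y) \top_\otimes u$, which gives $x \otimes y \in U^{\top_\otimes}$.

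The main obstacle is this last step: one needs to pass from two separately compatible representatives of the same class $u$ to a single representative compatible with both $x$ and $y$ at once, and this is precisely where the locality compatibility of $I_{\rm mult}^N(V)$ formulated in Statement \ref{conj:main2} is indispensable. Everything else reduces to bookkeeping with the filtration, the grading, and the definitions of $\top_\otimes^N$ and $\top_\otimes$.
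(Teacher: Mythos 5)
Your grading argument and your treatment of the locality vector space structure (finite filtration level via Proposition \ref{prop:talgaseqclasses}, Lemma \ref{lem:linearspan}, Statement \ref{conj:main2} and Theorem \ref{thm:quotientlocality-vs}, then a direct limit) are sound, and they parallel what the paper obtains more compactly by viewing $\mathcal T_\top(V)$ as the quotient of the locality algebra $\K(V^{\times_\top^\infty})$ (concatenation product) by the locality ideal $I_{\rm mult}$ and invoking Proposition \ref{prop:quotientlocalityalgebra}.

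However, there is a genuine gap in your verification of the compatibility condition \eqref{eq:mlocalgm}. Unfolding $x\top_\otimes u$ and $y\top_\otimes u$ gives representatives $x'$ and $y'$ together with two representatives $u_1',u_2'$ of $u$ such that $x'\top_\times^N u_1'$ and $y'\top_\times^N u_2'$, and your single application of Statement \ref{conj:main2} indeed produces one representative $u'$ of $u$ that is $\top_\times^N$-independent of both $x'$ and $y'$. But your final step assumes that concatenating the tuples of $x'$ with those of $y'$ yields a representative of $x\otimes y$ in $\K(V^{\times_\top^{p+q}})$; for the concatenated tuples to lie in $V^{\times_\top^{p+q}}$ at all, every entry of $x'$ must be $\top$-independent of every entry of $y'$, i.e.\ $x'\top_\times^N y'$. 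The hypothesis $x\top_\otimes y$ only guarantees the existence of \emph{some} mutually independent pair of representatives of $x$ and $y$, which need not be the pair $(x',y')$ you selected to witness independence from $u$. Reconciling all three pairwise relations so that they are witnessed by a single coherent triple of representatives requires iterating the locality compatibility assumption: first replace the representative of $x$ (so that one representative of $x$ is simultaneously independent of representatives of $y$ and of $u$), then the representative of $y$, then that of $u$ --- this is exactly the chain \eqref{eq:localg1}--\eqref{eq:localg9} in the proof of Proposition \ref{prop:quotientlocalityalgebra}, which the paper's proof of the theorem simply invokes. Your argument can be repaired along these lines, but as written the crucial step does not go through with only one application of Statement \ref{conj:main2}.
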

	
	\begin{proof}
	Let $(V,\top_V)$ be a locality vector space, it is straightforward that the locality vector space $(\K(V^{\times_\top\infty}),\top_V)$, where $V^{\times_\top^\infty}:=\bigcup_{k\geq1}V^{\times_\top k}$, together with the concatenation product $m_c$ between locally independent elements is a locality algebra. The subspace $I_{\rm mult}$ is moreover a locality ideal. Therefore Proposition \ref{prop:quotientlocalityalgebra} 
 {implies $(\mathcal T_\top(V),\top_\otimes, \bigotimes)$ is a locality algebra. Since for any $p$ and $q$, the concatenation product $m_c$ preserves the grading of $\K(V^{\times_\top^\infty})$, namely $m_c(V^{\times_\top^p}\times_\top V^{\times_\top^q})\subset V^{\times_\top^{p+q}}$, it follows that $\bigotimes(V^{\otimes_\top^p}\otimes_\top V^{\otimes_\top^q})\subset V^{\otimes_\top^{p+q}}$. Then, the convention that $V^0=\K$ yields the result.}
	\end{proof}

	\subsection{The universal property of the locality tensor and universal enveloping algebras}
	
 Assuming the statements \ref{conj:main1} and \ref{conj:main2} hold true, we can enhance the previous universal properties we had on Part \ref{part:one} and introduce a new conjectural statement to enhance the universal property of the universal enveloping algebra. 

The following theorem generalises Theorem \ref{thm:univpropltp-preloc}.
	\begin{thm}[Universal property of the  locality tensor product]  \label{thm:univpropltp}  
		 {Given $V$ and $W$ linear subspaces of a locality vector space $(E,\top)$ over $\K$, $(G,\top_G)$ a locality vector space}
		and $f:(V\times_\top W,\top_\times)\to (G,\top_G)$ a locality  {$\top_\times$-}bilinear map. Assuming that conjectural statement \ref{conj:main1} holds true for the locality  {vector spaces $V$ and $W$}, there is a unique locality linear map $\phi:V\otimes_\top W\to G$ such that the following diagram commutes.
		\begin{equation*}
		\begin{tikzcd}
		(V\times_\top W,\top_\times)\arrow[r,"\otimes_\top"]\arrow[rdd,"f"]&(V\otimes_\top W,\top_\otimes) \arrow[dd,"\phi"]\\
		\\
		&(G,\top_G)
		\end{tikzcd}
		\end{equation*}
	\end{thm}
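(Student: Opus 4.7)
The plan is to deduce Theorem \ref{thm:univpropltp} directly from its pre-locality counterpart (Theorem \ref{thm:univpropltp-preloc}) by using conjectural statement \ref{conj:main1} only to upgrade the codomain $V \otimes_\top W$ from a pre-locality to a genuine locality vector space. Since the definition of a locality linear map (a linear map sending the source relation into the target relation) is formally identical in the pre-locality and locality categories, no new work is required for the existence, the uniqueness, or the verification of the commuting diagram; these are immediate from what has already been proved.

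Concretely, the first step will be to invoke Corollary \ref{cor:ntensorproduct} with $n=2$ applied to the subspaces $V$ and $W$ of the locality vector space $(E,\top)$. Under the assumed conjectural statement \ref{conj:main1}, this yields that $(V\otimes_\top W,\top_\otimes)$ is a locality vector space, which is the only genuine enhancement over the pre-locality situation.

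The second step is simply to note that $(V\otimes_\top W,\top_\otimes)$ and $(G,\top_G)$, being locality vector spaces, are in particular pre-locality vector spaces, so the hypotheses of Theorem \ref{thm:univpropltp-preloc} are met: $V$ and $W$ are (pre-)locality subspaces of $(E,\top)$, and $f:(V\times_\top W,\top_{V\times_\top W})\to(G,\top_G)$ is a locality $\top_\times$-bilinear map. Applying Theorem \ref{thm:univpropltp-preloc} produces the unique locality linear map $\phi:V\otimes_\top W\to G$ making the diagram commute. This same $\phi$ now qualifies as a locality linear map between the locality vector spaces $(V\otimes_\top W,\top_\otimes)$ and $(G,\top_G)$, proving the theorem.

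There is essentially no hard step, since all of the real difficulty has been packaged into Corollary \ref{cor:ntensorproduct} and thence into conjectural statement \ref{conj:main1}. The only point worth double-checking while writing up the proof is that the locality relation $\top_{V\times_\top W}$ used in Theorem \ref{thm:univpropltp} is indeed the same relation appearing in the statement of Theorem \ref{thm:univpropltp-preloc} (it is, by Definition \ref{defn:locbiliresplocality} and \eqref{eq:locreltoptimes}), and that the two notions of \emph{locality linear map} coincide between the pre-locality and locality categories, which is clear from Definition \ref{defn:basic_loc_defn}.
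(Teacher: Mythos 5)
Your proof is correct and amounts to the same underlying argument as the paper's. The only presentational difference is that the paper re-derives the locality of $\phi$ by repeating verbatim the reasoning already given in the proof of Theorem \ref{thm:univpropltp-preloc}, whereas you factor the step through Theorem \ref{thm:univpropltp-preloc} directly and then use Corollary \ref{cor:ntensorproduct} (itself a consequence of the conjectural statement \ref{conj:main1}) solely to upgrade $(V\otimes_\top W,\top_\otimes)$ from a pre-locality to a locality vector space; your version is the cleaner write-up, and your observation that the notion of locality linear map is identical across the two categories is exactly the point that makes the factorisation legitimate.
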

	\begin{proof}
		Theorem (\ref{thm:univpptyloctensprod-nolocalityrel}) yields the existence and uniqueness of the linear map $\phi$.
		 {Assuming the statement \ref{conj:main1} holds true, implies that $V\otimes_\top W$ is a locality vector space. }We are only left to show that $\phi$ is a locality map.  Recall that two equivalence classes $[a]$ and $[b]$ in $V\otimes_{\top}W$ verify $[a]\top_{\otimes}[b]$ if there are $\sum_{i=1}^n\alpha_i(x_i,y_i)\in[a]$ and $\sum_{j=1}^m\beta_j(u_j,v_j)\in[b]$ such that every possible pair taken from the set $\{x_i,y_i,u_j,v_j\}$ lies in $V\times_\top W$ for every $1\leq i\leq n$ and every $1\leq j\leq m$. Since $f$ is locality $\top_\times$ bilinear, then $f(\sum_{i=1}^n\alpha_i(x_i,y_i))\top_Af(\sum_{j=1}^m\beta_j(u_j,v_j))$ which amounts  to $\phi([a])\top_A\phi([b])$. Therefore $\phi$ is as expected.
	\end{proof}
	Assuming the conjectural statement  \ref{conj:main2} holds true, as a consequence of  the previous Theorem, we can   state and prove an enhanced universal property   {Theorem \ref{thm:univpptyloctenalg-preloc}}	 for  the locality tensor algebra.
	
	\begin{thm}[Universal property of locality tensor algebra]\label{thm:univpptyloctenalg}		
		Let $(V,\top)$ be a locality vector space, $(A,\top_A)$ a locality algebra and $f:V\to A$ a locality linear map. Assuming the conjectural statement \ref{conj:main2}  holds for tensors powers of $V$, there is a unique locality algebra morphism $\phi:{\mathcal T}_{\top}(V)\to A$ such that the following diagram commutes.
		\begin{equation*}
		\begin{tikzcd}
		(V,\top)\arrow[r,"\otimes_\top"]\arrow[rdd,"f"]&({\mathcal T}_\top(V),\top_\otimes) \arrow[dd,"\phi"]\\
		\\
		&(A,\top_A)
		\end{tikzcd}
		\end{equation*}
		
		where $\otimes:V\to {\mathcal T}_{\top}(V)$ is the canonical (locality) injection map.
	\end{thm}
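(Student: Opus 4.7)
The plan is to bootstrap the locality version directly from the pre-locality version (Theorem \ref{thm:univpptyloctenalg-preloc}), which was established without any conjectural assumption. The role of conjectural statement \ref{conj:main2} is only to upgrade the structural type of $\mathcal T_\top(V)$ from pre-locality algebra to locality algebra; the morphism itself is produced by the already-proven pre-locality theorem.

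First I would verify that the hypotheses of Theorem \ref{thm:univpptyloctenalg-preloc} are all satisfied. The locality vector space $(V,\top)$ is a fortiori a pre-locality vector space. The locality algebra $(A,\top_A)$ is in particular a pre-locality algebra, and by Remark \ref{rk:partial-product-is-locality} its product $m_A:(A\times_\top A,\top_{A\times_\top A})\to(A,\top_A)$ is a locality map. The map $f:V\to A$ is locality linear by assumption. Applying Theorem \ref{thm:univpptyloctenalg-preloc} therefore produces a unique pre-locality algebra morphism $\phi:\mathcal T_\top(V)\to A$ such that $f=\phi\circ \otimes_\top$.

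Second, I would invoke the assumption: since conjectural statement \ref{conj:main2} is supposed to hold for tensor powers of $V$, Theorem \ref{prop:tensor_alg_graded_loc} ensures that $(\mathcal T_\top(V),\top_\otimes,\bigotimes)$ is in fact a graded locality algebra, not merely a pre-locality algebra. Thus both source and target of $\phi$ now carry genuine locality algebra structures.

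Third, I would observe that Definition \ref{defn:prelocalg} imposes exactly the same two conditions on a (pre-)locality algebra morphism -- locality linearity together with compatibility with the partial products on the graph of the locality relation -- irrespective of whether the source and target are pre-locality or locality algebras. The map $\phi$ obtained in the first step therefore qualifies directly as a locality algebra morphism between $\mathcal T_\top(V)$ and $A$. Uniqueness in the locality setting follows immediately from the uniqueness clause of Theorem \ref{thm:univpptyloctenalg-preloc}, because any locality algebra morphism fitting into the commutative diagram is in particular a pre-locality algebra morphism fitting into the same diagram.

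The main obstacle is not encountered inside the present argument at all: it has been displaced to the conjectural statement \ref{conj:main2}, without which $\mathcal T_\top(V)$ cannot be guaranteed to be a locality algebra and the very statement would not typecheck. The only technical point to be careful about in the proof itself is the observation that a locality algebra automatically satisfies the hypothesis on $m_A$ required by the pre-locality theorem, which is provided by Remark \ref{rk:partial-product-is-locality}.
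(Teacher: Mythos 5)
Your proof is correct and follows essentially the same route as the paper's: apply the pre-locality universal property (Theorem \ref{thm:univpptyloctenalg-preloc}) together with Remark \ref{rk:partial-product-is-locality}, then use conjectural statement \ref{conj:main2} via Theorem \ref{prop:tensor_alg_graded_loc} to upgrade $\mathcal T_\top(V)$ to a locality algebra, so that the resulting morphism between two locality algebras automatically qualifies as a locality algebra morphism. Your explicit remark that the defining condition on a (pre-)locality algebra morphism is insensitive to whether source and target are pre-locality or locality is a useful clarification of what the paper states tersely.
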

	
	\begin{proof}
	The locality tensor algebra is a pre-locality algebra by Proposition \ref{prop:tensprodaslocmap}, and a locality algebra since we have assumed that the conjectural statement  \ref{conj:main2}  holds true. Thus it is a locality algebra.
	
	By means of Theorem \ref{thm:univpptyloctenalg-preloc}  {and Remark \ref{rk:partial-product-is-locality}} the pre-locality algebra morphism $\phi$ exists and is unique. Given that $\mathcal T_\top(V)$ and $A$ are locality algebras, then $\phi$ is also a locality algebra morphism as expected.
	\end{proof}
	We study the universal property of the locality universal enveloping algebra  $U_\top(\g)$ of a locality Lie algebra $\g$. In the remaining part of the paper, assuming  that  the conjectural statement \ref{conj:main2}  holds true, we make the following further assumption.
	
\begin{conj}\label{conj:univenvalg} {\bf   for the universal enveloping algebra:} given a locality Lie algebra $(\g,\top_\g,[,])$, the ideal $J_\top(\g)$ of $\mathcal T_\top(\g)$ introduced in Definition \ref{defn:luenvelopingalgebra}  is locality compatible with $\top_\otimes$. 
\end{conj}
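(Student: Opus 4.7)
The plan is to reduce this conjecture to a tractable statement about the generators of $J_\top(\mathfrak{g})$, then exploit the structure of the locality tensor algebra assuming Conjectural statement \ref{conj:main2} holds. By the definition of a locality ideal, an arbitrary $w \in J_\top(\mathfrak{g})$ can be written as a finite linear combination $w = \sum_{i} u_i \otimes g_i \otimes v_i$, where each generator $g_i = a_i \otimes b_i - b_i \otimes a_i - [a_i,b_i]$ with $(a_i, b_i) \in \top_{\mathfrak{g}}$, and the tensor products are locality-compatible. The goal is then, given $(x,y)\in\top_\otimes$ and $(x+w,z)\in\top_\otimes$, to construct $w' \in J_\top(\mathfrak{g})$ such that $x + w'$ is locally independent of both $y$ and $z$.

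First, I would try to reduce to the case of a single generator by induction on the number of summands in $w$. At each inductive step, peel off one term $u_i \otimes g_i \otimes v_i$: by applying Conjectural statement \ref{conj:main2} to the locality tensor algebra, the residual difference is again an element of the ideal, so we may apply the induction hypothesis. The key algebraic tool is that the polar sets in $\mathcal{T}_\top(\mathfrak{g})$ are linear subspaces under Conjectural statement \ref{conj:main2}, which permits us to recombine the adjustments found at each step without breaking locality independence.

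Second, for a single-generator element $w = u \otimes g_{ab} \otimes v$ with $g_{ab} = a\otimes b - b\otimes a - [a,b]$, I would attempt an explicit construction of $w'$. The strategy would be to use the locality compatibility of $I_{\rm mult}$ inside $\K(V^{\times_\top \infty})$ (Conjectural statement \ref{conj:main2}) applied to the ``lifts'' of $x$, $y$, $z$ and $u\otimes a \otimes b \otimes v$, $u\otimes b\otimes a\otimes v$, $u\otimes [a,b]\otimes v$ separately, then project the resulting correction back into $J_\top(\mathfrak{g})$. The antisymmetry of $[,]$ and the compatibility of the bracket with polar sets (the first axiom of a locality Lie algebra) should guarantee that this projection remains inside the ideal.

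The main obstacle lies in the inhomogeneity of the generators $g_{ab}$: the terms $a\otimes b$ and $b\otimes a$ live in $\mathfrak{g}^{\otimes_\top 2}$ whereas $[a,b]$ lives in $\mathfrak{g}^{\otimes_\top 1}$, so $J_\top(\mathfrak{g})$ is not a graded ideal of the graded algebra $\mathcal{T}_\top(\mathfrak{g})$. Standard degree-based arguments therefore cannot isolate the correction at each graded piece independently; instead, any candidate $w'$ must simultaneously adjust different homogeneous components of $x$, and ensuring all of these corrections land inside $J_\top(\mathfrak{g})$ is where the real difficulty lies. A possible route around this obstruction would be to work with the induced filtration on $U_\top(\mathfrak{g})$ and a locality analogue of the symmetrisation map $\mathrm{Sym}\colon S(\mathfrak{g})\to U(\mathfrak{g})$, which might provide a graded substitute well-behaved with respect to $\top_U$, although establishing such an analogue in the locality setup is itself a nontrivial step closely related to a locality Poincaré-Birkhoff-Witt theorem.
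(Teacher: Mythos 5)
Your attempt addresses a statement that the paper itself explicitly leaves \emph{unproven}: it is labeled a \emph{Conjectural statement} (Conjectural statement \ref{conj:univenvalg}), and the authors assume it, together with Conjectural statement \ref{conj:main2}, in order to derive the locality Milnor--Moore theorem. There is therefore no proof in the paper to compare yours against, and any correct proof would in fact settle an open problem the authors deliberately postponed.

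As it stands, your proposal has a genuine gap: it is a roadmap rather than a proof, and you yourself flag the unresolved step at the end. You correctly isolate the central difficulty --- the generators $a\otimes b - b\otimes a - [a,b]$ mix degrees $2$ and $1$, so $J_\top(\g)$ is not a graded ideal of $\mathcal{T}_\top(\g)$ and degree-by-degree corrections cannot work --- but you then defer the resolution to a hypothetical locality symmetrisation map or a locality Poincar\'e--Birkhoff--Witt theorem, neither of which you establish. Your first reduction step also glosses over the key point: ``peel off one term $u_i\otimes g_i\otimes v_i$ \dots\ the residual difference is again an element of the ideal'' does not explain how the locality constraints on the peeled piece and the residual are to be recombined. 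Locality independence does not generally distribute over sums, and the whole content of the locality compatibility condition (Definition \ref{defn:loccompatible}) is precisely to make such recombinations possible; invoking it term-by-term without justification is circular.

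For what it is worth, your diagnosis aligns with the paper's ``Openings'' section, which proposes exactly the locality PBW theorem you posit as a future goal and asks whether the three conjectural statements suffice to prove it. This confirms that you have identified the right obstruction, but it also confirms that your final step is an open problem, not an argument.
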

	
\begin{prop}\label{prop:Utopgloc}
Assuming the conjectural statement \ref{conj:univenvalg} holds true for   a locality Lie algebra $(\g, \top_{\g}, [,])$,   then $U_{\top}(\g)$ defines a locality algebra.
\end{prop}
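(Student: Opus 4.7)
The plan is to apply Proposition \ref{prop:quotientlocalityalgebra} to the quotient defining the locality universal enveloping algebra, namely $U_\top(\g) = \mathcal T_\top(\g)/J_\top(\g)$ (see Definition \ref{defn:luenvelopingalgebra}). Since this part of the paper stands under the conjectural statement \ref{conj:main2}, Theorem \ref{prop:tensor_alg_graded_loc} already provides that $(\mathcal T_\top(\g), \top_\otimes, \bigotimes)$ is a graded locality algebra, and in particular a (unital) locality algebra in the sense of Definition \ref{defn:prelocalg}.

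Next I would verify that $J_\top(\g)$ is a locality ideal of $\mathcal T_\top(\g)$. By construction it is the (pre-)locality ideal generated by the elements $a\otimes b - b\otimes a - [a,b]$ with $(a,b)\in \top_\g$. Since $\mathcal T_\top(\g)$ is now known to be a locality algebra, its polar sets are linear subspaces, so the pre-locality ideal of Definition \ref{defn:luenvelopingalgebra} is automatically a locality ideal in the sense of Definition \ref{defn:prelocalg}. By the conjectural statement \ref{conj:univenvalg}, this ideal is furthermore locality compatible with $\top_\otimes$.

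The hypotheses of Proposition \ref{prop:quotientlocalityalgebra} are therefore met, and it yields that the quotient $\mathcal T_\top(\g)/J_\top(\g) = U_\top(\g)$, equipped with the quotient locality $\top_U$ and the induced product, is a (non-unital) locality algebra. To upgrade this to a unital locality algebra, I would observe that the unit $u:\K \to \mathcal T_\top(\g)$ composes with the canonical projection $\pi:\mathcal T_\top(\g) \to U_\top(\g)$ to give a unit map $u_U:=\pi\circ u$. By Remark \ref{rk:unit_loc_tensor_prod}, $u(\K)\subset \mathcal T_\top(\g)^{\top_\otimes}$; since $\pi$ is a locality map, it follows that $u_U(\K)\subset U_\top(\g)^{\top_U}$, and commutativity of the unit diagram in $U_\top(\g)$ descends from its commutativity in $\mathcal T_\top(\g)$ because $\pi$ is an algebra morphism.

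The proof is essentially a matter of unwinding definitions and invoking the right earlier results, so no step is a real obstacle once the two conjectural statements \ref{conj:main2} and \ref{conj:univenvalg} are granted. The genuine difficulty, which lies outside the scope of this proposition, is precisely the verification of \ref{conj:univenvalg} itself: as the paper already notes, the ideal $J_\top(\g)$ is not graded, so one cannot argue degree by degree, and this is why the compatibility must be postulated rather than proved here.
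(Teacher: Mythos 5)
Your proposal is correct and follows exactly the paper's route: the paper's own proof is a one-line invocation of Proposition \ref{prop:quotientlocalityalgebra} applied to $U_\top(\g)=\mathcal T_\top(\g)/J_\top(\g)$, with Conjectural statement \ref{conj:main2} (hence Theorem \ref{prop:tensor_alg_graded_loc}) as a standing assumption making $\mathcal T_\top(\g)$ a locality algebra and Conjectural statement \ref{conj:univenvalg} supplying the locality compatibility of $J_\top(\g)$. Your additional checks that $J_\top(\g)$ is a locality ideal and that the unit descends to the quotient are details the paper leaves implicit, and they are consistent with its argument.
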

\begin{proof}
This is a direct consequence of   Proposition \ref{prop:quotientlocalityalgebra}.\end{proof}
	
This theorem is the locality counterpart of Theorem \ref{thm:univproplocunivenvalg-preloc}. 
	\begin{thm}\label{thm:univproplocunivenvalg}  
		Let $(\g, \top_{\g}, [,])$ be a locality Lie algebra, $(A,\top_A)$ a locality algebra and $f:\g\to A$ a locality Lie algebra morphism where the Lie bracket on $A$ is the commutator defined by the product. Assuming that the conjectural statements \ref{conj:main2} and  \ref{conj:univenvalg} hold true for $\g$, there is a unique locality algebra morphism $\phi:U_{\top}(\g)\to A$ such that the following diagram commutes, and \sy{where $\iota_\g: \g\longrightarrow \to U_{\top}(\g) $  was defined   in (\ref{eq:iotag})}.
		\begin{equation*}
		\begin{tikzcd}
		(\g,\top_\g)\arrow[r,"\iota_\g"]\arrow[rdd,"f"]&(U_\top(\g),\top_\g) \arrow[dd,"\phi"]\\
		\\
		&(A,\top_A)
		\end{tikzcd}
		\end{equation*}
		
	\end{thm}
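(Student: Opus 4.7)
The plan is to reduce to the pre-locality version of the statement (Theorem \ref{thm:univproplocunivenvalg-preloc}) and then upgrade the ambient structures to full locality using the two conjectural statements.

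First, I would observe that $(\g,\top_\g,[,])$ is in particular a pre-locality Lie algebra, that $(A,\top_A)$ is a pre-locality algebra whose partial product $m_A:(A\times_\top A,\top_{A\times_\top A})\to(A,\top_A)$ is a locality map (this is Remark \ref{rk:partial-product-is-locality} applied to the locality algebra $A$), and that the locality Lie algebra morphism $f:\g\to A$ is in particular a pre-locality Lie algebra morphism for the commutator bracket on $A$. Applying Theorem \ref{thm:univproplocunivenvalg-preloc} therefore produces a (unique) pre-locality algebra morphism $\phi:U_\top(\g)\to A$ such that $f=\phi\circ\iota_\g$, making the required diagram commute.

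Next, I would invoke the conjectural statements to promote $U_\top(\g)$ to a genuine locality algebra. Under conjectural statement \ref{conj:main2}, Theorem \ref{prop:tensor_alg_graded_loc} ensures that $({\mathcal T}_\top(\g),\top_\otimes,\bigotimes)$ is a (graded) locality algebra. Under conjectural statement \ref{conj:univenvalg}, Proposition \ref{prop:Utopgloc} then ensures that the quotient $(U_\top(\g),\top_U)$ is itself a locality algebra. Thus both source and target of $\phi$ are now locality algebras.

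Finally, I would observe that in Definition \ref{defn:prelocalg} the notion of (pre-)locality algebra morphism is formulated by the very same two conditions --- locality of the underlying linear map together with compatibility with the partial products --- independently of whether one is in the pre-locality or the locality category. Consequently the map $\phi$ constructed above is automatically a locality algebra morphism once source and target are known to be locality algebras, and uniqueness is inherited from Theorem \ref{thm:univproplocunivenvalg-preloc}. The main obstacle is entirely concentrated in the two conjectural statements: once they are granted, the upgrade from the pre-locality result to the locality result is essentially bookkeeping, using Propositions \ref{prop:tensor_alg_graded_loc} and \ref{prop:Utopgloc} to ensure the polar sets of the relevant relations are linear subspaces.
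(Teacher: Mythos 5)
Your proposal is correct and follows essentially the same route as the paper: the paper's proof likewise invokes Theorem \ref{thm:univproplocunivenvalg-preloc} for existence and uniqueness of $\phi$ and then concludes by noting that, under the conjectural statements (via Proposition \ref{prop:Utopgloc}), both $U_\top(\g)$ and $A$ are locality algebras, so the pre-locality morphism is a locality algebra morphism. Your write-up merely makes explicit the bookkeeping (Remark \ref{rk:partial-product-is-locality}, Theorem \ref{prop:tensor_alg_graded_loc}) that the paper leaves implicit.
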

	\begin{proof}
	The proof follows from Theorem \ref{thm:univproplocunivenvalg-preloc} and the fact that $U_\top(\g)$ and $A$ are locality algebras.
	\end{proof}
\vfill \eject \noindent 


\part{The Milnor-Moore theorem in the locality setup}

\section{ {Prerequisites on coalgebraic locality structures}} \label{section:seven}  
We review here  preliminary results we shall use for the actual proof of the locality version of the Milnor-Moore theorem.

\subsection{Graded connected locality Hopf algebras}

Let us   recall  some definitions from \cite{CGPZ1}. Just as a locality algebra was defined as a locality vector space equipped with a partial product  and a unit compatible with the locality relation (see Definition \ref{defn:prelocalg}), a locality coalgebra is a locality vector space equipped  with partial coproduct and a  counit  compatible with the locality relation.
\begin{defn}\label{defn:lochopfalg}\cite[Definition 4.3]{CGPZ1}
  A locality  $\K$-coalgebra is a quadruple $(C,\Delta,\epsilon,\top)$ where  $(C,\top)$ is a locality $\K$-vector space,  $\Delta:(C,\top)\to (C\otimes_{\top}C,\top_\otimes)$ and $\epsilon:C\to\K$
 are  linear maps such that
 \begin{itemize}	\item  the coproduct $\Delta$  is coassociative, namely $(Id_C\otimes\Delta)\circ \Delta=(\Delta\otimes Id_C)\circ \Delta$ on $C$;
  	\item and compatible with the locality structure i.e., for any $U\subset C$, $\Delta(U^{\top})\subset U^{\top}\otimes_{\top}U^{\top}$; 
  	\item  the counit $\epsilon: C\to \K$  satisfies $(Id_C\otimes\epsilon)\Delta=(\epsilon\otimes Id_C)\Delta=Id_C$. 
  	\end{itemize}
 \begin{rk} Note that the second condition tells us that $\Delta$ is a locality linear map in the sense of (\ref{eq:localf}) in Definition \ref{defn:basic_loc_defn}.
  	\end{rk}
 \end{defn}
 \begin{rk}
  The above definition only makes sense  in the locality framework, where   the polar set  $U^\top$ of a set is required  to be a vector space. In the pre-locality setup,  the fact that this condition is relaxed  prevents us from building locality tensor products such as $U^{\top}\otimes_{\top}U^{\top}$. This   suggests that the Milnor-Moore theorem we are about to prove does not hold in the more general pre-locality setup.
     \end{rk}
 We now refine this definition.
 \begin{defn}\label{defn:loccobialg}
 	\begin{itemize}
\item A {\bf graded locality $\K$-coalgebra} is a locality $\K$-coalgebra together with a sequence  of vector spaces $\{C_n\}_{n\in\N}$  called a {\bf grading}, such that \[C=\bigoplus_{n\in\N}C_n,\quad \Delta(C_n)\subset\bigoplus_{p+q=n}C_p\otimes_{\top}C_q, \quad \bigoplus_{n\geq 1}C_n\subset {\rm ker}(\epsilon).\]
We denote by $\vert x\vert$ the degree of $x$.

Moreover we call  a graded locality 
$\K$-coalgebra  {\bf connected} if $C_0$ has dimension $1$ and therefore $\bigoplus_{n\geq 1}C_n = {\rm ker}(\epsilon)$. 
\item A {\bf filtered locality $\K$-coalgebra} is a locality $\K$-coalgebra together with a nested sequence of vector spaces $C^0\subset C^1\subset \dots \subset C^n\dots$,  called a {\bf filtration},   such that \[C=\bigcup_{n\in\N}C^n,\quad \Delta(C^n)\subset\sum_{p+q=n}C^p\otimes_{\top}C^q,\quad  {\bigoplus_{n\geq1}C^n\subset\rm{ker}(\epsilon)}.\]
\end{itemize}
\end{defn}
The following definition provides  the coalgebraic counterpart of a locality ideal  see (\ref{eq:locideal}) and a locality morphism, see  (\ref{eq:localgmorp}).

 \begin{defn}
  \begin{enumerate}
 			\item A  locality linear subspace $J$ of a locality  $\K$-coalgebra $\left(C,\top ,\Delta  \right) $ is called a  left, (resp. right) {\bf locality coideal} of $C$, if 
 			\begin{equation}\label{eq:loccoideal}  
 			\Delta(J) \subset J\otimes_\top C; \, ({\rm resp.}\,    \Delta(J) \subset C\otimes_\top J)\quad \lf{{\rm and}\quad \epsilon(J)=(0)}.
 			\end{equation}
 	We call it a {\bf  locality coideal} if  
 	\begin{equation}\label{eq:locbicoideal}  
 	\Delta(J) \subset J\otimes_\top C+ C\otimes_\top J\quad \lf{{\rm and}\quad \epsilon(J)=(0).}
 	\end{equation} 
\sy{Note that the condition $\epsilon(J)=(0)$ does not involve the locality relation.}
 			\item Given two locality coalgebras $(C_i,\top_i,\Delta_i,\epsilon_i), i=1,2 $, a locality linear map $f: C_1\to C_2$ is called a {\bf locality $\K$-coalgebra morphism} if 
 			\begin{equation}\label{eq:loccoalgmorp}
 			(f\otimes f)\circ \Delta_1 = \Delta_2\circ f, \quad \cy{\text{and}\quad\epsilon_1=\epsilon_2\circ f.}
 			\end{equation}
 			In other words,  it  is   a $\K$-coalgebra morphism which is a locality map.
 	 \item Let $(C, \top, \Delta)$ be a locality $\K$-coalgebra. A locality $\K$-coalgebra $(C_1, \top_1, \Delta_1)$ with $C_1\subset C$ is a locality sub-coalgebra of $(C, \top, \Delta)$ if  the inclusion map $\iota: (C_1, \top_1, \Delta_1)\hookrightarrow (C, \top, \Delta)$ is a $\K$-coalgebra morphism.
 		\end{enumerate}
    \end{defn} 
    \begin{rk}
         This definition of sub-locality coalgebra is more general than the one given in \cite{CGPZ1}. This level of generalisation will be needed later.
        \end{rk}
    We first prove an elementary result of linear algebra.
    \begin{lem}\label{lem:kernelofSumnonloc}
For $i\in\{1,2\}$, let $f_i:V_i\to W_i$ be linear maps from a vector space $V_i$ to a vector space $W_i$. We have
\[\ker(f_1\otimes f_2)=\ker f_1\otimes V_2+V_1\otimes \ker f_2.\]
\end{lem}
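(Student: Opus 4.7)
The plan is to prove the two inclusions separately, the non-trivial one being $\subseteq$, which we handle by decomposing the tensor product along suitable complements of $\ker f_1$ and $\ker f_2$.

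The inclusion $\supseteq$ is immediate from the definition of the tensor product of linear maps: for $x \in \ker f_1$ and $y \in V_2$, we have $(f_1 \otimes f_2)(x \otimes y) = f_1(x) \otimes f_2(y) = 0$, and extending by linearity gives $\ker f_1 \otimes V_2 \subseteq \ker(f_1 \otimes f_2)$. Symmetrically, $V_1 \otimes \ker f_2 \subseteq \ker(f_1 \otimes f_2)$, whence the sum is contained in $\ker(f_1 \otimes f_2)$.

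For the reverse inclusion, I would use the fact that every linear subspace of a vector space admits an algebraic complement (which in general requires Zorn's lemma). Write
\[V_1 = \ker f_1 \oplus C_1, \qquad V_2 = \ker f_2 \oplus C_2,\]
so that $f_1|_{C_1} : C_1 \to W_1$ and $f_2|_{C_2} : C_2 \to W_2$ are both injective. Distributivity of the tensor product over direct sums then yields
\[V_1 \otimes V_2 = (\ker f_1 \otimes \ker f_2) \oplus (\ker f_1 \otimes C_2) \oplus (C_1 \otimes \ker f_2) \oplus (C_1 \otimes C_2).\]
The first three summands are contained in $\ker f_1 \otimes V_2 + V_1 \otimes \ker f_2$, so any $z \in V_1 \otimes V_2$ can be uniquely written as $z = z' + z''$ with $z' \in \ker f_1 \otimes V_2 + V_1 \otimes \ker f_2$ and $z'' \in C_1 \otimes C_2$.

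If moreover $z \in \ker(f_1 \otimes f_2)$, then since $z'$ is already in the kernel by the easy inclusion, we have $(f_1 \otimes f_2)(z'') = 0$. Now $f_1|_{C_1} \otimes f_2|_{C_2}$ is the tensor product of two injective linear maps, hence injective (this is a standard fact: the tensor product of injections between vector spaces is an injection, by flatness of vector spaces, or equivalently by extending bases of the domains to bases of the codomains). Therefore $z'' = 0$ and $z = z' \in \ker f_1 \otimes V_2 + V_1 \otimes \ker f_2$, establishing the reverse inclusion. The main (minor) obstacle is invoking the injectivity of the tensor product of injections and, at the outset, the existence of complementary subspaces; both are classical and require the axiom of choice in the infinite-dimensional setting.
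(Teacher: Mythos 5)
Your proof is correct and follows essentially the same route as the paper's: both choose algebraic complements of the kernels, decompose $V_1\otimes V_2$ by distributivity of $\otimes$ over direct sums, and conclude by injectivity of $f_1\otimes f_2$ on the complementary summand $C_1\otimes C_2$. The paper merely packages that last step through the isomorphisms $X_i\simeq \mathrm{Im}(f_i)$ given by the first isomorphism theorem, whereas you invoke directly that a tensor product of injective linear maps of vector spaces is injective; this is the same underlying idea.
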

\begin{proof}
Let $K_i=\ker(f_i)\subset V_i$ and let $X_i\subset V_i$ be any direct complement space in $V_i$ so 
  that $V_i=K_i\oplus X_i$. It follows from the distributivity property (\ref{eq:distributivity}) that   \[V_1\otimes V_2=(K_1\otimes K_2)\oplus (V_1\otimes K_2)\oplus (K_1\otimes X_2) \oplus (X_1\otimes X_2).\] 
As a consequence of the first isomorphism theorem for vector spaces there are isomorphisms $\phi_i:X_i\to {\rm Im}(f_i)$ from which we build a linear map $(\phi_1^{-1}\otimes\phi_2^{-1})\circ(f_1\otimes f_2)$  from $V_1\otimes V_2$ onto $X_1\otimes X_2$ which does not vanish on $X_1\otimes X_2$ outside the null tensor. 

Since $(K_1\otimes K_2)\oplus (V_1\otimes K_2)\oplus (K_1\otimes V_2)\subset \ker ((\phi_1^{-1}\otimes\phi_2^{-1})\circ(f_1\otimes f_2))$ and since by construction $(\phi_1^{-1}\otimes\phi_2^{-1})\circ(f_1\otimes f_2)$ does not vanish on $X_1\otimes X_2\setminus\{0\}$ (where $0$ is the null tensor), we have 
$\ker(((\phi_1^{-1}\otimes\phi_2^{-1})\circ(f_1\otimes f_2))=(K_1\otimes K_2)\oplus (V_1\otimes K_2)\oplus (K_1\otimes V_2)=\ker f_1\otimes V_2+V_1\otimes \ker f_2$. The result follows from the fact that $\phi_1\otimes\phi_2$ is  an isomorphism of vector spaces.
\end{proof}
    The following is the locality version of Lemma \ref{lem:kernelofSumnonloc}. As in Proposition \ref{prop:tproductanddirectsum}, we require a compatibility between the locality relation and  direct sums.
    \begin{prop}\label{prop:KernelofTensorPoduct}
   Let $(E,\top)$ and $(F,\top_F)$ be two locality vector spaces. For $i\in\{1,2\}$, let $f_i:V_i\to W_i$ be locality linear maps from  {locality subspaces} $V_i {\subset E}$ to  {vector subspaces} $W_i$ of $F$. We moreover assume that $f_1$ and $f_2$ are mutually  locally independent and the existence of  surjective projections $\pi_i:V_i\to \ker(f_i)$ 
    such that $\pi_i$ and ${\rm Id}_{V_j}$ are locally independent for $i\neq j$ 
    (See Proposition \ref{prop:tproductanddirectsum}). Then 
    \begin{equation} \label{eq:Kerf1otimesf2}
     \ker(f_1\otimes f_2)\cap (V_1\otimes_\top V_2)=\ker f_1\otimes_\top V_2+V_1\otimes_\top \ker f_2. 
    \end{equation}
    \end{prop}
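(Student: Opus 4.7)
The plan is to prove both inclusions of \eqref{eq:Kerf1otimesf2} by reducing to a direct-sum decomposition in $V_1\otimes_\top V_2$ that respects the locality tensor product, mirroring the strategy of Lemma \ref{lem:kernelofSumnonloc} but using Proposition \ref{prop:tproductanddirectsum} to transfer the classical distributivity to the locality setting.

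First I will set $K_i:=\ker(f_i)$ and $X_i:=\ker(\pi_i)$, so that $V_i=K_i\oplus X_i$ and the restriction $f_i|_{X_i}:X_i\to \mathrm{Im}(f_i)$ is a linear isomorphism. Since by hypothesis $\pi_i$ and $\mathrm{Id}_{V_j}$ are locally independent for $i\neq j$, and since $\mathrm{Id}_{V_i}-\pi_i$ is the projection onto $X_i$ along $K_i$ (which inherits the same independence by linearity of $\top$), I can apply Proposition \ref{prop:tproductanddirectsum} once in the first slot and once in the second slot. This yields the four-fold direct sum decomposition
\[
V_1\otimes_\top V_2 = (K_1\otimes_\top K_2)\oplus(K_1\otimes_\top X_2)\oplus(X_1\otimes_\top K_2)\oplus(X_1\otimes_\top X_2),
\]
together with
\[
K_1\otimes_\top V_2+V_1\otimes_\top K_2 = (K_1\otimes_\top K_2)\oplus(K_1\otimes_\top X_2)\oplus(X_1\otimes_\top K_2).
\]
A subtle point is that applying the proposition twice requires the second projection to remain locally independent of $\mathrm{Id}$ when restricted to the summands obtained from the first application; this follows because the polar sets of a locality vector space are linear subspaces, so the hypothesis $\pi_i\top\mathrm{Id}_{V_j}$ transfers to the subspaces $K_j$ and $X_j$ of $V_j$.

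The inclusion $\supseteq$ is then immediate: any element of $\ker f_1\otimes_\top V_2+V_1\otimes_\top \ker f_2$ visibly lies in $V_1\otimes_\top V_2$ and is annihilated by $f_1\otimes f_2$. For the reverse inclusion, I take $z\in\ker(f_1\otimes f_2)\cap(V_1\otimes_\top V_2)$ and decompose $z=z_{KK}+z_{KX}+z_{XK}+z_{XX}$ along the above direct sum. The first three summands are sent to $0$ by $f_1\otimes f_2$, so $(f_1\otimes f_2)(z_{XX})=0$. The key observation is that $f_1|_{X_1}\otimes f_2|_{X_2}:X_1\otimes X_2\to \mathrm{Im}(f_1)\otimes\mathrm{Im}(f_2)$ is an isomorphism (being the tensor of two isomorphisms), and since $X_1\otimes_\top X_2\subseteq X_1\otimes X_2$, its restriction is injective; hence $z_{XX}=0$, and $z$ lies in the sum of the first three summands, which is exactly $K_1\otimes_\top V_2+V_1\otimes_\top K_2$.

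The main obstacle I expect is ensuring that the local independence hypotheses propagate cleanly through the two successive applications of Proposition \ref{prop:tproductanddirectsum} and that the four summands really are in direct sum inside $V_1\otimes_\top V_2$; the fact that $V_1\otimes_\top V_2$ is a linear subspace of $V_1\otimes V_2$ (together with the classical four-fold direct sum in $V_1\otimes V_2$) guarantees the latter, so the essential work is a careful bookkeeping of locality conditions rather than a new conceptual step.
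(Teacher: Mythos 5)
Your proof is correct, and it takes a noticeably different route from the paper's. The paper first quotes the non-locality Lemma \ref{lem:kernelofSumnonloc} to get $\ker(f_1\otimes f_2)=\ker f_1\otimes V_2+V_1\otimes\ker f_2$, intersects with $V_1\otimes_\top V_2$, and then invokes Corollary \ref{cor:Localitytensorintersection} (whose hypotheses are secured via the second item of Proposition \ref{prop:tproductanddirectsum}) to identify $(\ker f_1\otimes V_2)\cap(V_1\otimes_\top V_2)=\ker f_1\otimes_\top V_2$ and its mirror image, before concluding. You instead replay the proof of the classical lemma directly inside the locality tensor product: two applications of Proposition \ref{prop:tproductanddirectsum} (one per slot, with the independence hypotheses correctly restricted to the subspaces $K_j$, $X_j$ and the complementary projections handled by linearity of polar sets) give the four-fold direct sum of $V_1\otimes_\top V_2$, and injectivity of $f_1|_{X_1}\otimes f_2|_{X_2}$ kills the $X_1\otimes_\top X_2$ component of any element of the kernel. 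What each approach buys: the paper's is shorter and recycles its established corollaries, but its last equality silently distributes an intersection over a sum, i.e. it uses $(A+B)\cap C=(A\cap C)+(B\cap C)$, which is not a general identity and really needs exactly the kind of compatible direct-sum decomposition you construct; your version makes that step explicit through the four-fold decomposition and is therefore self-contained and, on this point, more airtight, at the cost of redoing the bookkeeping that Lemma \ref{lem:kernelofSumnonloc} and Corollary \ref{cor:Localitytensorintersection} would otherwise package for you.
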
 
    \begin{proof} 
    We know from Lemma \ref{lem:kernelofSumnonloc} that 
   	$\ker(f_1\otimes f_2) =\ker f_1\otimes  V_2+V_1\otimes  \ker f_2$. Taking the intersection with $V_1\otimes_\top V_2$ yields
   	\[\ker(f_1\otimes f_2)\cap (V_1\otimes_\top V_2)=\left(\ker f_1\otimes  V_2+V_1\otimes  \ker f_2\right)\cap (V_1\otimes_\top V_2)\]
   	
   	
   	From elementary linear algebra, we obtain $V_1=\ker(f_1)\oplus {\ker}(\pi_1)$. Since by hypothesis $\pi_1$ and ${\rm Id}_{V_2}$ are locally independent and since $(E,\top)$ is a locality vector space, by the second point of Proposition \ref{prop:tproductanddirectsum}, the projection $\tilde\pi_1:V_1\mapsto {\ker}(\pi_1)$ onto $ {\ker}(\pi_1)$ along $\ker(f_1)$ is also independent of ${\rm Id}_{V_2}$. Thus we can use Corollary \ref{cor:Localitytensorintersection} with $\ker(f_1)$, $V_1$ and $V_2$ respectively playing the roles of $V_1$, $V$ and $W$ in Corollary \ref{cor:Localitytensorintersection}. This yields $\left(\ker f_1\otimes  V_2\right)\cap (V_1\otimes_\top V_2)=\ker(f_1)\otimes_\top V_2$. Similarly,
   	$\left(V_1\otimes  \ker f_2\right)\cap (V_1\otimes_\top V_2)=V_1\otimes_\top\ker(f_2)$ and hence
   	\[\ker(f_1\otimes f_2)\cap (V_1\otimes_\top V_2)=\left(\ker f_1\otimes  V_2+V_1\otimes  \ker f_2\right)\cap (V_1\otimes_\top V_2)= \ker f_1\otimes_\top  V_2+V_1\otimes_\top  \ker f_2. \qedhere\]
    \end{proof}
    
 	The following lemma is the coalgebraic counterpart of Lemma \ref{lem:Kerlocideal}.
 	\begin{lem}\label{lem:Kerloccoideal} 
 	Let  $(C_i, \Delta_i, \top_i), i\in \{1, 2\}$ be locality $\K$-coalgebras. The range of a locality $K$-coalgebra morphism $f: C_1\longrightarrow C_2$  is a locality $\K$-subcoalgebra of $C_2$. Moreover, if  {there is a} projection $\pi:C_1\to\ker(f_1)$, which is locally independent of  the identity map ${\rm Id}_{C_1}$  on $C_1$, then $\ker(f_1)$ is a locality coideal of $C_1$.
 	\end{lem}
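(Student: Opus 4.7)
The plan is to split the statement in two: first I would handle the image, which does not require the projection hypothesis, and then handle the kernel, where the projection hypothesis will let me invoke Proposition \ref{prop:KernelofTensorPoduct}.

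For the image of $f$: I would equip $\mathrm{Im}(f)$ with the locality relation $\top_{\mathrm{Im}(f)}:=\top_2\cap (\mathrm{Im}(f)\times \mathrm{Im}(f))$ and the counit $\epsilon_2\vert_{\mathrm{Im}(f)}$. The core observation is that for any $c\in C_1$, the coalgebra morphism identity in \eqref{eq:loccoalgmorp} gives
\[\Delta_2(f(c))=(f\otimes f)\circ \Delta_1(c).\]
Since $\Delta_1(c)\in C_1\otimes_{\top_1} C_1$, it is a finite sum $\sum a_i\otimes b_i$ with $(a_i,b_i)\in \top_1$, and because $f$ is a locality map, $(f(a_i),f(b_i))\in \top_2$. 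Hence $(f\otimes f)(\Delta_1(c))\in \mathrm{Im}(f)\otimes_{\top_2} \mathrm{Im}(f)$, showing the coproduct restricts correctly. Coassociativity and the counit axiom are inherited pointwise from $C_2$. The locality compatibility $\Delta_2(U^{\top_{\mathrm{Im}(f)}})\subset U^{\top_{\mathrm{Im}(f)}}\otimes_\top U^{\top_{\mathrm{Im}(f)}}$ follows using the same argument as in Lemma \ref{lem:Kerlocideal}: the polar set $U^{\top_{\mathrm{Im}(f)}}=U^{\top_2}\cap \mathrm{Im}(f)$ is a vector space, and we combine locality of $\Delta_2$ on $C_2$ with closure of $\mathrm{Im}(f)$ under the image of $f$.

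For the kernel of $f$ (correcting the apparent typo $f_1$ to $f$): the counit condition $\epsilon_1(\mathrm{Ker}(f))=0$ is immediate from $\epsilon_1=\epsilon_2\circ f$. For the coideal condition, let $c\in\mathrm{Ker}(f)$. Applying $f\otimes f$ to $\Delta_1(c)$ gives
\[(f\otimes f)\circ \Delta_1(c)=\Delta_2\circ f(c)=\Delta_2(0)=0,\]
so $\Delta_1(c)\in \mathrm{Ker}(f\otimes f)\cap (C_1\otimes_{\top_1} C_1)$. This is precisely the setting of Proposition \ref{prop:KernelofTensorPoduct} applied with $V_1=V_2=C_1$, $f_1=f_2=f$, $W_1=W_2=C_2$: the hypothesis that $f$ is a locality map ensures $f$ is locally independent of itself, and the assumed projection $\pi\colon C_1\to \mathrm{Ker}(f)$ locally independent of $\mathrm{Id}_{C_1}$ supplies the remaining datum. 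I therefore conclude
\[\Delta_1(c)\in \mathrm{Ker}(f)\otimes_{\top_1} C_1+C_1\otimes_{\top_1}\mathrm{Ker}(f),\]
which is \eqref{eq:locbicoideal}. Finally, $\mathrm{Ker}(f)$ is a locality subspace of $(C_1,\top_1)$ since the inclusion is locality by restriction of $\top_1$.

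The only genuinely nontrivial input is Proposition \ref{prop:KernelofTensorPoduct}; the remaining obstacle is merely bookkeeping, in particular the need to argue that the projection $\pi$ being locally independent of $\mathrm{Id}_{C_1}$ is exactly the hypothesis required to play the role of both $\pi_1$ and $\pi_2$ in that proposition. Everything else reduces to diagram chases using the coalgebra morphism identity \eqref{eq:loccoalgmorp}.
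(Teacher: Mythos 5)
Your proof is correct and follows essentially the same route as the paper: the image is handled by the morphism identity \eqref{eq:loccoalgmorp} plus locality of $f$, and the kernel by noting $\Delta_1(\ker f)\subset\ker(f\otimes f)\cap(C_1\otimes_{\top_1}C_1)$ and invoking Proposition \ref{prop:KernelofTensorPoduct} with $V_1=V_2=C_1$, $f_1=f_2=f$, the single projection $\pi$ serving as both $\pi_1$ and $\pi_2$ and $f\top f$ giving the mutual independence of $f_1$ and $f_2$. If anything, you are slightly more complete than the paper, since you also verify the counit condition $\epsilon_1(\ker f)=0$, which the paper's proof leaves implicit.
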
 
 \begin{proof}\begin{itemize}\item 
 We prove that the kernel ${\rm ker}(f)$ is a locality coideal.   Let $c\in {\rm ker}(f)\subset C_1 $. Since $f$ is a locality coalgebra morphism, by (\ref{eq:loccoalgmorp}), we have $(f\otimes f)(\Delta_1 c) =\Delta_2(f(c))=0.$ 
 	   Since $\Delta_1$ is a locality coproduct,   $\Delta_1( {\rm ker}(f))\subset \left(C_1\otimes_{\top_1}C_1\right)\cap {\rm ker}(f\otimes f)$. Thus $\Delta_1(\ker(f))\subseteq\ker(f\otimes f)$, which shows that $\ker(f_1)$ is a locality coideal of $C_1$.
 	   
 	  	We now apply Proposition \ref{prop:KernelofTensorPoduct} to $f_i=f$ and $V_i=C_1$, with $f\otimes f$ acting on $C_1\otimes_{\top_1}C_1$. Since $f$ is a locality morphism, it follows that ${\rm ker}(f \otimes f)\vert_{C_1\otimes_{\top_1}C_1 }= {\rm ker}(f)\otimes_{\top_1} C_1+   C_1\otimes_{\top_1} {\rm ker}(f).$  Consequently,   $\Delta_1\left(  {\rm ker}(f)\right)\subset{\rm ker}(f)\otimes_{\top_1} C_1+   C_1\otimes_{\top_1} {\rm ker}(f).$
 	 \item 
To prove that the range $ {\rm Im}(f)$ is a locality coalgebra, for any $c\in C_1$ such that 	$ \Delta_1 c= \sum_{(c)} c_1\otimes c_2$ and $(c_1, c_2) \in  \top_1$, using (\ref{eq:loccoalgmorp}) we write $ \Delta_2 f(c)=(f\otimes f) \circ \Delta_1 c= \sum_{(c)} f(c_1)\otimes f(c_2)$. Since $f$ is a locality map, $(f(c_1), f(c_2))\in  \top_2$, which proves that  $ \Delta_2\left({\rm Im}(f)\right)\subset  {\rm Im}(f)\otimes_{ {\top_2}} {\rm Im}(f) $, showing that ${\rm Im}(f)$ is a subcoalgebra of $C_2$. \qedhere
\end{itemize}
 \end{proof}
We are now ready to introduce further useful definitions.
\begin{defn}
	\begin{itemize}
\item \cite[\S 5.1]{CGPZ1}  A {\bf locality $\K$-bialgebra} is a sextuple $(B,\top,m,u,\Delta,\epsilon)$ consisting of a locality $\K$-algebra $(B,m,u,\top)$ and a locality $\K$-coalgebra 
$(B,\Delta,\epsilon,\top)$ that are locality compatible in the sense that $\Delta$ and $\epsilon$ are locality $\K$-algebra morphisms (\ref {eq:localgmorp}) and $m$ and $u$ are locality  $\K$-coalgebra morphisms\footnote{This condition was missing in \cite{CGPZ1}.} i.e.,
 \[\Delta\circ m\vert_{B^{\otimes_{\top} 2}} =\underbrace{(m\otimes m)}_{\text{domain}\, B^{\otimes_{\top} 4}}\circ \, \, ({\rm Id}_B\otimes \tau_{23}\otimes {\rm Id}_B)\, \, \circ \underbrace{ (\Delta\otimes \Delta)\vert_{B^{\otimes_{\top} 2}}}_{\text{range }\, B^{\otimes_{\top} 4}} ; \quad \epsilon \circ m= \epsilon \otimes \epsilon; \quad \Delta\circ u= u\otimes u;\quad \epsilon \circ u=  {{\rm Id}_\K} ,\] 
where $B^{\otimes_{\top} n}$ was defined in \eqref{eq:Votimesn}, and $\tau_{23}:B^{\otimes_\top^4}\to B^{\otimes_\top^4}$ is the map that switches the terms on the second and third position of the tensor.

\item Let $(B_i,\top_i,m_i,u_i,\Delta_i,\epsilon_i)$ ($i\in\{1,2\}$) be two locality $\K$-bialgebras. A {\bf locality $\K$-bialgebra morphism} from $B_1$ to $B_2$ is a locality map $f:B_1\longrightarrow B_2$ that is a morphism of locality algebras and of locality $\K$-coalgebras.

\item \cite[Proposition 4.9]{CGPZ1} Let $(B,\top,m,u,\Delta,\epsilon)$ be a locality bialgebra, and $\phi,\psi:B\to B$ two mutually independent locality linear maps. The locality convolution product of $\phi$ and $\psi$ is a locality linear map $B\to B$ defined by \[(\phi\star\psi) =m(\phi\otimes\psi)\Delta .\]
\item\cite[Definition 5.3 and Remark 5.4]{CGPZ1} A {\bf locality Hopf algebra} is a locality bialgebra $(H,\top,m,u,\Delta,\epsilon)$ together with a  locality linear map $S:B\to B$ such that $S$ and $Id_H$ are mutually independent and \[S\star Id_H=Id_H\star S=u {\circ}\epsilon.\]

\item Let $(H_i,\top_i,m_i,u_i,\Delta_i,\epsilon_i,S_i)$ be ($i\in\{1,2\}$) be two locality Hopf algebra. A {\bf locality Hopf algebra morphism} between $B_1$ and $B_2$ is a morphism of locality bialgebras $f:B_1\longrightarrow B_2$ such that $f\circ S_1=S_2\circ f$.

\item A {\bf sub-locality Hopf algebra} of $H$  of a locality Hopf algebra $(H,\top,m,u,\Delta,\epsilon,S)$ is a locality Hopf algebra $(H',\top',m',u',\Delta',\epsilon',S')$ contained in $H$   such that the injection map $f:H\hookrightarrow H'$ is a locality Hopf algebra morphism.

\item A {\bf graded (resp. filtered) locality Hopf algebra} is a locality Hopf algebra together with a grading (resp. filtration) which makes it a graded (resp. filtered) algebra and coalgebra  and such that \[S(H_n)\subset H_n \ ({\rm resp.}\ S(H^n)\subset H^n).\]
A {\bf connected locality Hopf algebra} is a graded locality algebra such that  $H_0$ has dimension $1$.
\end{itemize}
\end{defn}
\begin{rk}
     As in the algebra and coalgebra cases, our definition of sub-locality Hopf algebra is more general than the one used in \cite{CGPZ1} and will be used later.
    \end{rk}

\begin{ex}[The locality tensor algebra as a locality  Hopf algebra]\label{ex:tensorHopf}
	${\mathcal T}_\top (V)$  is a locality Hopf algebra in the sense of \cite[Definition 5.3]{CGPZ1} when equipped with the tensor product restricted to pairs in $\top_\otimes$  and the deconcatenation coproduct  defined  on  $x\in V$    by $\Delta_\shuffle(x )= 1\otimes x +x \otimes 1$  and inductively on the degree by 
\begin{equation}\label{eq:Deltashuffle}\Delta_\shuffle(x_{i_1}\otimes \cdots \otimes x_{i_n})=\sum_{J\subset (i_1, \cdots, i_n), \, w_J\top_\otimes w_{\bar J}}w_J\otimes w_{\bar J},
\end{equation} where we have set $w_J:=  x_{j_1}\otimes \cdots\otimes x_{j_k} $ for $J=(j_1,\cdots, j_k)$ and where $\bar J$ stands for the complement of $J$ in $\{i_1, \cdots, i_n\}$. The counit is defined by $\epsilon (x )=0$ for $x\in V$ and the antipode   is given by $S (x_{j_1}\otimes \cdots \otimes x_{j_k})= (-1)^k \, x_{j_k}\otimes \cdots \otimes x_{j_1}$.  

It is a connected graded cocommutative locality Hopf algebra of finite type.
\end{ex}

The usual result for the existence of an antipode in a graded connected bialgebra also holds in the locality setup.
\begin{prop}\label{prop:locantipodeforfree}\cite[Proposition 5.5]{CGPZ1}
Let $(B,\top,m,u,\Delta,\epsilon)$ be a graded connected locality bialgebra. There exists an antipode $S:B\to B$ such that $(B,\top,m,u,\Delta,\epsilon, S)$ is a locality Hopf algebra.
\end{prop}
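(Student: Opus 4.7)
The plan is to construct the antipode by induction on the grading, imitating the classical proof for graded connected bialgebras while tracking locality at each step. Writing $B=\bigoplus_{n\geq 0}B_n$ with $B_0=\K\cdot 1$, I would set $S(1):=1$ and, for $x\in B_n$ with $n\geq 1$, introduce the reduced coproduct
\[
\widetilde{\Delta}(x):=\Delta(x)-x\otimes 1-1\otimes x\in\bigoplus_{p+q=n,\,p,q\geq 1}B_p\otimes_\top B_q,
\]
which is well defined since $\Delta$ is a locality coproduct and $1$ is locally independent of every element of $B$. Then define recursively
\[
S(x):=-x-m\bigl((S\otimes \mathrm{Id})\,\widetilde{\Delta}(x)\bigr).
\]
The first tensor factor appearing in $\widetilde{\Delta}(x)$ has degree strictly smaller than $n$, so the right-hand side uses only values of $S$ already defined by induction. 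Extending $S$ by linearity produces a linear endomorphism of $B$.

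Once $S$ is defined, the convolution identities $S\star\mathrm{Id}=\mathrm{Id}\star S=u\circ\epsilon$ reduce on $B_0$ to the trivial identity on $\K$ and on $B_n$ (for $n\geq 1$) to a telescoping cancellation that is formally identical to the classical argument: the very definition of $S(x)$ forces $(m\circ(S\otimes\mathrm{Id})\circ\Delta)(x)=0=\epsilon(x)$, and a symmetric induction using the alternative recursion $S(x)=-x-m((\mathrm{Id}\otimes S)\widetilde{\Delta}(x))$ (shown equal to the first by induction) gives $\mathrm{Id}\star S=u\circ\epsilon$. These computations only require evaluating $m$ and $\Delta$ on pairs that are already locally independent, which is guaranteed at each stage.

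The main obstacle, and the genuinely locality-specific part, is to verify that $S$ satisfies the two locality conditions required by Definition~\ref{defn:lochopfalg}, namely that $S$ is a locality map and that $S\top\mathrm{Id}_B$, both by induction on the degree. The key input is the compatibility $\Delta(\{y\}^\top)\subset\{y\}^\top\otimes_\top\{y\}^\top$ together with the fact that $\{y\}^\top$ is a linear subspace stable under the partial product $m$, since $(B,\top,m)$ is a locality algebra. Concretely, fix $y\in B$ and assume inductively that $S$ maps $B_p\cap\{y\}^\top$ into $\{y\}^\top$ for all $p<n$. For $x\in B_n$ with $x\top y$, every tensor factor of $\widetilde{\Delta}(x)$ lies in $\{y\}^\top$; applying $S$ to the left factor keeps it in $\{y\}^\top$ by the inductive hypothesis, and then $m$ returns an element of $\{y\}^\top$. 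Since $-x\in\{y\}^\top$ as well, we conclude $S(x)\in\{y\}^\top$, i.e.\ $S(x)\top y$, which establishes $S\top\mathrm{Id}_B$.

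The locality of $S$ itself (i.e.\ $x\top y\Rightarrow S(x)\top S(y)$) is proven by the same bootstrap applied twice: first use $S\top\mathrm{Id}_B$ just obtained to ensure that for $x\top y$ every factor of $\widetilde{\Delta}(x)$ is mapped by $S$ into $\{S(y)\}^\top$ (using the inductive version of $S\top S$ on lower degrees and $S\top\mathrm{Id}$ on the right-hand factor), then pass through $m$ and add $-x$, both of which preserve membership in $\{S(y)\}^\top$. The delicate point to keep in check throughout is that every intermediate expression involving $\otimes$ actually lies in the appropriate locality tensor product, which follows automatically from the locality of $\Delta$ and the hereditary nature of the polar conditions.
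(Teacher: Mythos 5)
The paper does not actually prove this statement: it is quoted from \cite[Proposition 5.5]{CGPZ1}, so there is no in-paper argument to compare against. Your proposal is the natural one — the classical degreewise recursion $S(1)=1$, $S(x)=-x-m\bigl((S\otimes \mathrm{Id})\widetilde{\Delta}(x)\bigr)$, with locality tracked through polar sets — and its core ideas (using $\Delta(\{y\}^\top)\subset\{y\}^\top\otimes_\top\{y\}^\top$, the stability \eqref{eq:mlocalgm} of polar sets under $m$, and the linearity of polar sets) are exactly the right ingredients; modulo the points below the argument goes through.

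Two places need tightening, and the first is more than cosmetic in the locality setting. You present the construction of $S$ first and the locality statements ($S\top\mathrm{Id}_B$, locality of $S$) as verifications made afterwards. But $m$ is only a \emph{partial} product, defined on $B\times_\top B$, so the defining formula for $S(x)$ in degree $n$ is not even meaningful until you know $S(x')\top x''$ for the tensor factors of $\widetilde{\Delta}(x)$ — that is, until $S\top\mathrm{Id}_B$ is established in degrees $<n$. The proof must therefore be organized as a single induction on the degree that simultaneously defines $S$ on $B_n$ and proves $S\top\mathrm{Id}_B$ (and $S\top S$) up to degree $n$; and in that induction the hypothesis must be quantified over \emph{all} partners $y$, not a fixed one, since the inductive step applies it with $y$ replaced by the right-hand factors $x''$ of $\widetilde{\Delta}(x)$. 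Your text contains all the material needed to run this interleaved induction, but as written the logical order is circular.

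The second point is the claim that the relevant tensor expressions ``lie in the appropriate locality tensor product automatically''. Since $\widetilde{\Delta}(x)$ is an equivalence class in $B\otimes_\top B$, applying $m\circ(S\otimes\mathrm{Id})$ requires knowing that the assignment $(a,b)\mapsto m(S(a),b)$ on $\K(B\times_\top B)$ (restricted to the degrees where $S$ is already defined) annihilates $I_{\rm bil}^{\top_\times}$, i.e.\ is independent of the chosen representative. This does hold — it follows from $S\top\mathrm{Id}_B$ in lower degrees, the fact that $\{b\}^\top$ is a linear subspace, and the $\top_\times$-bilinearity of $m$ (equivalently, one can invoke the universal property of Theorem \ref{thm:univpptyloctensprod-nolocalityrel}, or phrase everything through the locality convolution of two mutually independent locality maps as in \cite[Proposition 4.9]{CGPZ1}) — but it is a check, not an automatism, and it should be made explicit. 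Finally, for $\mathrm{Id}\star S=u\circ\epsilon$ your plan of introducing the second recursion and proving by induction that it yields the same $S$ is fine; just note that the usual shortcut via associativity of the convolution algebra also needs the mutual-independence hypotheses to be in place.
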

 
We end this paragraph with a transposition to the locality setup of the known Lie algebra structure of the space $\text{Prim}(B)$ of primitive elements of a bialgebra $B$ (see for example \cite[Theorem 2.1.3]{Abe} for the usual result). Recall that a element $x\in B$ is called {\bf  primitive} if, and only if $\Delta x=x\otimes 1+1\otimes x$ and that a {\bf  graded locality Lie algebra} is a locality Lie algebra which is also a graded algebra for the Lie product (see Definitions \ref{defn:localityLieAlgebra} and \ref{defn:localg}).

\begin{prop} \label{prop:primitive_Lie}
	The space  $ (\text{\rm Prim}(B),\top_{\text{\rm Prim}(B)},[\cdot;\cdot ])$  of a (resp. graded) locality bialgebra $(B,\top)$  equipped with $\top_{\text{\rm Prim}(B)}$ the restriction of the locality relation $\top$ to primitive elements and   the usual commutator  
	$[x,y]=xy-yx$ (resp. $[x,y]=xy-(-1)^{|x|.|y|}yx$), is a (resp. graded) locality Lie algebra.
\end{prop}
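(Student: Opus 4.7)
The plan is to verify in turn each of the conditions from Definition \ref{defn:localityLieAlgebra} for the triple $(\text{Prim}(B), \top_{\text{Prim}(B)}, [\cdot,\cdot])$, working with the ungraded commutator first and then remarking on the sign modifications needed in the graded case. First I would note that $\text{Prim}(B)$ is a linear subspace of $B$: indeed it is the kernel of the linear map $x \mapsto \Delta(x) - x\otimes 1 - 1\otimes x$. Equipped with the restriction $\top_{\text{Prim}(B)} := \top \cap (\text{Prim}(B)\times \text{Prim}(B))$, it is automatically a locality vector space, since for any subset $U\subseteq \text{Prim}(B)$ one has $U^{\top_{\text{Prim}(B)}} = U^\top \cap \text{Prim}(B)$, which is an intersection of two linear subspaces.

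Next I would show the bracket is well defined as a map $[\cdot,\cdot]:\top_{\text{Prim}(B)}\to \text{Prim}(B)$. Given $(x,y)\in\top_{\text{Prim}(B)}$, both $xy$ and $yx$ are defined in $B$ since $x\top y$, so $[x,y]=xy-yx \in B$ makes sense. The main computation is then to check that $[x,y]$ is itself primitive. Since $\Delta$ is a locality algebra morphism (condition from the locality bialgebra definition), we have $\Delta(xy) = \Delta(x)\cdot\Delta(y)$ and $\Delta(yx) = \Delta(y)\cdot\Delta(x)$ in $B\otimes_\top B$; expanding using $\Delta(x) = x\otimes 1 + 1\otimes x$ and similarly for $y$, the cross terms $x\otimes y + y\otimes x$ appear in both products and cancel in the difference, giving $\Delta([x,y]) = [x,y]\otimes 1 + 1\otimes [x,y]$. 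This requires knowing that the terms $x\otimes y$ and $y\otimes x$ live in $B\otimes_\top B$, which follows from $x\top y$. Bilinearity of the bracket on its domain (i.e.\ $\top_\times$-bilinearity) is inherited from the $\top_\times$-bilinearity of $m$; antisymmetry is immediate.

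Then I would verify that the bracket is a locality map and stabilises polar sets. For the locality property, given $(x,y),(x',y')\in \top_{\text{Prim}(B)}$ with $(x,y)\top_{\times 2}(x',y')$, the locality of $m$ yields $xy\top x'y'$, $xy\top y'x'$, $yx\top x'y'$, $yx\top y'x'$, and since $B$ is a locality vector space the polar set of $\{x'y'-y'x'\}$ is linear, so $[x,y]\top[x',y']$. For the stabilisation of polar sets, given $U\subset \text{Prim}(B)$ and $(x,y)\in (U^{\top_{\text{Prim}(B)}}\times U^{\top_{\text{Prim}(B)}})\cap \top$, I would apply the locality algebra condition \eqref{eq:mlocalgm} for $m$ to the set $U$ (viewed as a subset of $B$): since $x,y\in U^\top$ and $x\top y$, we get $xy,yx\in U^\top$, hence $[x,y]\in U^\top\cap \text{Prim}(B) = U^{\top_{\text{Prim}(B)}}$.

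Finally, for the locality Jacobi identity, I would take $(x,y,z)\in \text{Prim}(B)^{\times_\top^3}$, so that all six products $xy,yx,xz,zx,yz,zy$ are defined in $B$, together with the further products $(xy)z,(yx)z,\ldots$ needed to form the three iterated brackets. Associativity of $m$ on triples in $\top$ then gives $[[x,y],z]+[[z,x],y]+[[y,z],x]=0$ by the standard expansion and cancellation. The hardest step, and the one requiring the most care, is ensuring that all the mixed products appearing in the three iterated brackets actually lie in the domain of $m$; this is exactly what the hypothesis $(x,y,z)\in V^{\times_\top^3}$ gives us, combined with the locality algebra axiom \eqref{eq:mlocalgm} applied iteratively. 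In the graded case, one replaces $[x,y]$ by $xy - (-1)^{|x||y|}yx$; the primitivity computation works identically because the signs are engineered so that the cross terms cancel, and the graded Jacobi identity follows by the same associativity argument with Koszul signs.
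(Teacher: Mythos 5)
Your proposal is correct and follows essentially the same route as the paper's proof: the key step is the computation of $\Delta([x,y])$ using that $\Delta$ is a locality algebra morphism, and the locality-algebra axiom \eqref{eq:mlocalgm} is what guarantees that the iterated brackets in the Jacobi identity are defined and that polar sets are stabilised. The paper simply treats the graded case first (recovering the ungraded one by setting degrees to zero) and leaves the routine verifications you spell out -- that $(\text{Prim}(B),\top_{\text{Prim}(B)})$ is a locality vector space, $\top_\times$-bilinearity, and the locality of the bracket -- to the reader.
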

\begin{proof} 
		We carry out the proof for the graded case, since the ungraded case can be obtained by setting all degrees to zero. 
		
	Let $m$ be the locality product of the locality bialgebra $(B,\top)$. Since for any $U\subseteq B$, $m(U^\top\otimes_\top U^\top)\subset U^\top$, 
	we have that $[[x,y],z]$ (and its permutations) is well-defined for any triplet $(x,y,z)\in B^{\times_\top 3}$. The rest of the proof goes 
	exactly as for the non locality case. In particular, for any $(x,y)\in\text{Prim}(B)\times_\top\text{Prim}(B)$ since 
	$\Delta$ is a locality algebra morphism, we have:
	\begin{align*}
	\Delta(xy) & = (m\otimes m)\circ\tau_{23}\circ(\Delta\otimes\Delta)(x,y) \\
	& = (m\otimes m)\circ\tau_{23}((x\otimes 1+1\otimes x)\otimes(y\otimes 1+1\otimes y)) \quad\text{since $x$ and $y$ are primitive elements}\\
	& = xy\otimes 1+(-1)^{|x||y|}y\otimes x + x\otimes y + 1\otimes xy. 
	\end{align*}
	$\Delta(yx)$ is obtained by exchanging $x$ and $y$. Putting everything together we obtain  {by linearity of $\Delta$}
	\begin{equation*}
	\Delta([x,y]) = [x,y]\otimes1 + 1\otimes[x,y]\in B\otimes_\top B
	\end{equation*}
	as needed. This, together with the fact that $\Delta$ is linear, implies that $(\text{Prim}(B),\top,[;])$ is a locality algebra.
\end{proof}

\subsection{The reduced coproduct and primitive elements} \label{subsect:technical_results}

This paragraph reviews preliminary well-known technical results, which we transpose to the locality setup.  As  before,  $\K$   is a commutative field of  characteristic zero.
\begin{lem}
Let $H$ be a locality graded, connected $\K$-bialgebra. The projection onto  ker$(\epsilon)$ along $\K\, (1_H)$
 	\begin{equation}
	\begin{array}{rlcl}
	\rho:&H&\longrightarrow&H\\
	&x&\longmapsto&x-\epsilon(x)1_H 
	\end{array}
	\end{equation}  is a locality linear map, which is independent of $\Delta$ in the following sense:
	 \begin{equation} \label{eq:localityrho}
	     (x,y)\in \top \Longrightarrow \left(\rho(x), \Delta(y)\right)\in\top_\otimes
	    \end{equation}
        with $\top_\otimes$ the locality relation on $\mathcal{T}_\otimes(H)\supseteq H$ of Definition \ref{defn:loc_rel_tensor_alg}.
\end{lem}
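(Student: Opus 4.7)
My plan is to reduce both claims to the single observation that $1_H$ is locality independent of every element of $H$, and then exploit the linearity of the locality relation.

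\textbf{Preliminary input.} Since $H$ is a locality algebra, the unit map $u:\K\to H$ satisfies $u(\K)\subseteq H^{\top}$ by Definition \ref{defn:prelocalg}. Hence $1_H = u(1_\K)$ satisfies $1_H\top z$ for every $z\in H$. This is the only structural fact I will need beyond the linear locality of $(H,\top)$ and the locality compatibility of $\Delta$ (second bullet of Definition \ref{defn:lochopfalg}).

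\textbf{Step 1: an auxiliary one-sided statement.} I first show that whenever $x\top y$, one has $\rho(x)\top y$. Indeed, $x\top y$ by hypothesis and $1_H\top y$ by the preliminary input, so $\{y\}^{\top}$ being a linear subspace of $H$ (locality of $(H,\top)$) yields $x-\epsilon(x)\,1_H = \rho(x)\in \{y\}^{\top}$, i.e. $\rho(x)\top y$.

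\textbf{Step 2: $\rho$ is a locality linear map.} Linearity is automatic. For locality, fix $x\top y$. By Step 1 applied to $(x,y)$, we have $\rho(x)\top y$. Moreover, $\rho(x)\top 1_H$ by the preliminary input (applied with $z=\rho(x)$ and symmetry of $\top$). Using once more that $\{\rho(x)\}^{\top}$ is a linear subspace, we deduce $\rho(x)\top (y-\epsilon(y)\,1_H)=\rho(y)$, which is the desired $\rho\top\rho$.

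\textbf{Step 3: independence of $\rho$ and $\Delta$.} Assume $x\top y$, so that $y\in\{x\}^{\top}$. The locality compatibility of $\Delta$ gives
\[
\Delta(y)\in \Delta\!\left(\{x\}^{\top}\right)\subseteq \{x\}^{\top}\otimes_{\top}\{x\}^{\top}.
\]
Hence one can write $\Delta(y)=\sum_{(y)} y_{(1)}\otimes y_{(2)}$ with $y_{(i)}\in\{x\}^{\top}$, that is $x\top y_{(1)}$ and $x\top y_{(2)}$. Applying Step 1 to each pair, we obtain $\rho(x)\top y_{(1)}$ and $\rho(x)\top y_{(2)}$. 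By the very definition of $\top_\otimes$ on $\mathcal{T}_\top(H)$ (Definition \ref{defn:loc_rel_tensor_alg}, built out of $\top_{\times m,n}$), this exactly says $(\rho(x),\Delta(y))\in\top_\otimes$, which is \eqref{eq:localityrho}.

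I do not expect a genuine obstacle; the only delicate point is the careful interplay between choosing a representative of $\Delta(y)$ in $\{x\}^{\top}\otimes_{\top}\{x\}^{\top}$ and the quotient definition of $\top_\otimes$, which is handled cleanly by the locality compatibility axiom for $\Delta$.
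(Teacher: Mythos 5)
Your proposal is correct and follows essentially the same route as the paper: both use $\K\,1_H\subseteq H^{\top}$ together with linear locality to get $\rho\top\rho$, and both reduce \eqref{eq:localityrho} to $x\top y_{(1)}$, $x\top y_{(2)}$ via the compatibility $\Delta(\{x\}^{\top})\subseteq \{x\}^{\top}\otimes_{\top}\{x\}^{\top}$. The only difference is cosmetic (your explicit Step 1 is folded into the paper's argument), so nothing further is needed.
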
 
\begin{proof} 
	Let  $x\top y$. Since $\K \subset H^{\top}$ we have $\epsilon(x)1_H\top y$ implying by linear locality that  $(x-\epsilon(x)1_H)\top y$. 
	Since $\epsilon(y)1_H\top (x-\epsilon(x)1_H)$, again by linear locality, we deduce that $x-\epsilon(x)1_H\top y-\epsilon(y)1_H$ and  conclude 
	that $\rho(x)\top\rho(y)$.
	
	To check the mutual independence of $\rho$ and $\Delta$, we consider again $x\in H$ and $y\in H$ such that $x\top y$.  {WE write $\Delta y=\sum y_i\otimes y_i^\prime$, so that Equation \eqref{eq:localityrho} amounts} to show that $\rho(x)\top  y_i$ and $\rho(x)\top  y_i^\prime$. Since $\rho(x)=x-\epsilon(x)\, 1_H$ and $\K\, 1_H \top H$, by linearity, it suffices to show that $x\top y_i$ and $x\top y_i^\prime$. But this follows from the fact that  $\Delta$ maps $ \{x\}^\top$ to $\{x\}^\top\otimes_{\top}\{x\}^\top $.
\end{proof} 
	On  a locality graded, connected $\K$-bialgebra  $H$, equipped  with a locality coproduct $\Delta$, we consider the coassociative locality linear map $\tilde{\Delta}:H\to \ker(\epsilon)\otimes_{\top} \ker(\epsilon)$ defined as 
	$\tilde{\Delta}(1)=0$, and for $x\in{\rm ker}(\epsilon)$ as $\tilde{\Delta}(x)=\Delta(x)-1\otimes  x - x\otimes 1$. 
	For   $n\geq 0$, we then define inductively $\tilde{\Delta}^{(n)}:H\to H^{\otimes_{\top}(n+1)}$   by:
	\begin{itemize}
		\item $\tilde{\Delta}^{(0)}=\rho$.
		\item $\tilde{\Delta}^{(1)}=\tilde{\Delta}$.
		\item $\tilde{\Delta}^{(n+1)}=(\tilde{\Delta}\otimes Id^{\otimes n})\circ \tilde{\Delta}^{(n)}$.
	\end{itemize}
	Since $\Delta$ and $\rho$ are locality linear maps, $\tilde{\Delta}$ and $\tilde{\Delta}^{(n)}$ are  also locality maps. Moreover, $\rho$ and $\Delta$ are mutually independent, and therefore $\rho$ and $\tilde{\Delta}$ are also mutually independent in the sense of  (\ref{eq:localityrho}). 
	\begin{prop}\label{prop:deltilasrho} 
		Let $(H,\top)$ with $H=\oplus_{k\in \Z_{\geq 0}}H_k$,  {be} a locality graded, connected $\K$-bialgebra. For every $n\geq1$ 
		\[ \tilde{\Delta}^{(n)}=(\rho\otimes\cdots\otimes\rho)\circ\Delta^{(n)}.\]
	\end{prop}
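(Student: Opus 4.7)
The plan is to proceed by induction on $n$, with base case $n=1$ and then using coassociativity of $\Delta$ together with the inductive hypothesis.

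For the base case $n=1$, I would verify directly that $\tilde\Delta=(\rho\otimes\rho)\circ\Delta$. On $1_H$ both sides vanish: $\tilde\Delta(1_H)=0$ by definition and $(\rho\otimes\rho)\Delta(1_H)=(\rho\otimes\rho)(1_H\otimes 1_H)=0$ since $\rho(1_H)=0$. For $x\in\ker(\epsilon)$, by definition $\Delta(x)=1_H\otimes x+x\otimes 1_H+\tilde\Delta(x)$ with $\tilde\Delta(x)\in\ker(\epsilon)\otimes_\top\ker(\epsilon)$, so applying $\rho\otimes\rho$ kills the first two terms and leaves $\tilde\Delta(x)$, since $\rho$ restricts to the identity on $\ker(\epsilon)$. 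As $H=\K 1_H\oplus\ker(\epsilon)$ by connectedness, linearity then yields the equality on all of $H$.

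For the inductive step, assume $\tilde\Delta^{(n)}=\rho^{\otimes(n+1)}\circ\Delta^{(n)}$. By definition,
\[\tilde\Delta^{(n+1)}=(\tilde\Delta\otimes\mathrm{Id}^{\otimes n})\circ\tilde\Delta^{(n)}=(\tilde\Delta\otimes\mathrm{Id}^{\otimes n})\circ\rho^{\otimes(n+1)}\circ\Delta^{(n)}.\]
On a simple tensor $a\otimes b_1\otimes\cdots\otimes b_n$ in the image of $\Delta^{(n)}$, the right-hand side reads $\tilde\Delta(\rho(a))\otimes\rho(b_1)\otimes\cdots\otimes\rho(b_n)$, while $\rho^{\otimes(n+2)}\circ\Delta^{(n+1)}=\rho^{\otimes(n+2)}\circ(\Delta\otimes\mathrm{Id}^{\otimes n})\circ\Delta^{(n)}$ (by coassociativity of $\Delta$) reads $(\rho\otimes\rho)(\Delta(a))\otimes\rho(b_1)\otimes\cdots\otimes\rho(b_n)$. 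So it suffices to show $\tilde\Delta\circ\rho=(\rho\otimes\rho)\circ\Delta$. Decomposing $a=\epsilon(a)1_H+\rho(a)$: on the $1_H$-summand both sides vanish as in the base case, and on $\rho(a)\in\ker(\epsilon)$ one has $\tilde\Delta(\rho(a))=(\rho\otimes\rho)\Delta(\rho(a))$ by the base case itself, while $(\rho\otimes\rho)\Delta(\epsilon(a)1_H)=0$, concluding the induction.

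The main obstacle, such as it is, is checking that all compositions are well-defined in the locality setting, i.e.\ that the tensor factors produced by $\Delta^{(n)}$ land in the appropriate locality tensor products so that $\tilde\Delta\otimes\mathrm{Id}^{\otimes n}$ and $\rho^{\otimes(n+2)}$ can be applied. This is automatic because $\Delta$ is a locality coproduct (so $\Delta^{(n)}$ lands in $H^{\otimes_\top(n+1)}$), $\rho$ is a locality linear map, and $\rho$ and $\Delta$ are mutually independent in the sense of \eqref{eq:localityrho}; hence the iterated compositions make sense throughout. Once this is granted, the argument is essentially the standard one, transposed verbatim to the locality framework.
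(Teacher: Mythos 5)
Your proof is correct and follows essentially the same route as the paper's: induction on $n$, with the base case handled by splitting $H=\K 1_H\oplus\ker(\epsilon)$, and the inductive step reduced via the recursive definition of $\tilde\Delta^{(n)}$ and coassociativity to the identity $\tilde\Delta\circ\rho=(\rho\otimes\rho)\circ\Delta$, which is the paper's combination of the base case with $\tilde\Delta\circ\rho=\tilde\Delta$. The locality well-definedness remark matches the paper's preliminary observations, so nothing is missing.
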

	\begin{proof}Recall that $H= {u}(\K)\oplus \ker(\epsilon)\simeq \K\oplus\ker(\epsilon) $.  
			We proceed by induction over $n$. For $n=1$
			we have $(\rho\otimes\rho)\circ\Delta(1)=\rho(1)\otimes\rho(1)=0=\tilde{\Delta}(1)$. By linearity, this extends replacing $1$ by
				any  $x\in\K$. Let $x\in \ker(\epsilon)$. Since  $\tilde{\Delta}(x)\in \ker(\epsilon) {\otimes_\top}\ker(\epsilon)$ and $(\rho\otimes \rho)\circ\tilde{\Delta}=\tilde{\Delta}$, it follows that 
			$(\rho\otimes \rho)\circ\Delta(x)=\rho(1)\otimes \rho(x)+\rho(x)\otimes \rho(1)+(\rho\otimes \rho)\circ\tilde{\Delta}(x)=\tilde{\Delta}(x)$, so that  the proposition holds for $n=1$.
			
			Suppose now that  the proposition holds true for $n-1$. We note that $\tilde{\Delta}(1)=0$ implies that $\tilde{\Delta}\circ\rho=\tilde{\Delta}$. Hence, 
			\begin{align*}
			(\rho\otimes \cdots\otimes \rho)\circ\Delta^{(n)} &= ((\rho\otimes\rho)\circ\Delta\otimes\rho\otimes\cdots\otimes\rho)\circ\Delta^{(n-1)}\\
			&=(\tilde{\Delta}\otimes\rho\otimes\cdots\otimes\rho)\circ\Delta^{(n-1)}\\
			&=(\tilde{\Delta}\otimes Id\otimes\cdots\otimes Id)\circ(\rho\otimes\cdots\otimes\rho)\circ\Delta^{(n-1)}\quad \text{by the initial induction step}\\
			&= (\tilde{\Delta}\otimes Id\otimes\cdots\otimes Id)\circ \tilde{\Delta}^{(n-1)}\quad \text{ by the induction step}\\
			&= \tilde{\Delta}^{(n)}. \qedhere
			\end{align*}
\end{proof}

The subsequent proposition combines elementary known results, which we recall for the sake of completeness. If $H=\bigoplus_{n\in\Z}H_n$ is a graded algebra, we say $|x|=n$ if $x\in H$ is a homogeneous component of degree $n$, i.e. $x\in H_n$.
\begin{prop}\label{prop:useful}
Let $(H,\top)$ with    $H=\oplus_{n\in \Z_{\geq 0}}H_n$, be a locality graded, connected  $\K$-bialgebra. Let $k\geq1$. 
\begin{enumerate}

	\item \label{lem:allesprimitn}For any $x\in H$ such that $\tilde{\Delta}^{(k-1)}(x)\neq 0$, we have 
	$\tilde{\Delta}^{(k)}(x)=0\Longrightarrow\tilde{\Delta}^{(k-1)}(x)\in{\rm Prim}(H)^{\otimes_{\top}k}$. 

	 \item \label{lem:lem69foissy}
	For every $x\in H$, $\tilde{\Delta}^{(k)}(x)=0$ if $k\geq \vert x\vert$ and  $\tilde{\Delta}^{(k-1)}(x)\in {\rm Prim}(H)^{\otimes_{\top}k}$ if $\vert x\vert=k$.

 \item \label{lem:deltcopprim}  
Let $n\geq 2$.
For any   $(v_1,\dots,v_n)\in {\rm Prim}(H)^{\times_{\top}n}$, we have 
$\tilde{\Delta}^{(k)}(v_1\cdots v_n)=0$ for any $k\geq n$ and the following refinement of the first item holds:
\begin{equation}\label{eq:deltan-1sigma}
 \tilde{\Delta}^{(n-1)}(v_1\cdots v_n) =\sum_{\sigma\in \lf{\mathfrak{S}_n}}v_{\sigma(1)}\otimes \cdots \otimes v_{\sigma(n)}~\in  {\rm Prim}(H)^{\otimes_\top^n},
\end{equation}
where $\lf{\mathfrak{S}_n}$ is the $n$-th symmetric group. 
\end{enumerate}
\end{prop}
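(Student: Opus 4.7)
The plan is to treat the three items in order, moving from the graded/degree information (item 2) through the coassociative structure (item 1) to the explicit computation with primitives (item 3). Throughout I will rely on Proposition \ref{prop:deltilasrho} and on the fact that on $\ker(\epsilon)$ the reduced coproduct $\tilde{\Delta}$ is coassociative: from $\Delta = 1\otimes(\cdot) + (\cdot)\otimes 1 + \tilde{\Delta}$ on $\ker(\epsilon)$, one directly verifies $(\tilde\Delta\otimes \mathrm{Id})\tilde\Delta = (\mathrm{Id}\otimes\tilde\Delta)\tilde\Delta$ by expanding coassociativity of $\Delta$, which by induction yields
\[
\tilde\Delta^{(k)} \;=\;(\mathrm{Id}^{\otimes i}\otimes\tilde\Delta\otimes \mathrm{Id}^{\otimes(k-1-i)})\circ\tilde\Delta^{(k-1)}\qquad \text{for all } 0\le i\le k-1.
\]
I will also use the standard observation that for $v\in \ker(\epsilon)$ one has $v\in \mathrm{Prim}(H)\Leftrightarrow \tilde\Delta(v)=0$, and that primitive elements lie in $\ker(\epsilon)$ (apply $(\mathrm{Id}\otimes\epsilon)$ to $\Delta v = v\otimes 1 + 1\otimes v$).

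For item 2, I use the grading. Since $\tilde\Delta(H_n)\subset \bigoplus_{p+q=n,\,p,q\ge 1} H_p\otimes_\top H_q$, an induction gives $\tilde\Delta^{(k)}(H_n)\subset \bigoplus H_{n_1}\otimes_\top\cdots\otimes_\top H_{n_{k+1}}$ with $n_1+\cdots+n_{k+1}=n$ and each $n_i\ge 1$. If $k\ge |x|=n$, no such decomposition exists, so $\tilde\Delta^{(k)}(x)=0$. When $|x|=k$, the same count forces $\tilde\Delta^{(k-1)}(x)\in H_1^{\otimes_\top k}$; since a counit argument shows $H_1\subseteq \mathrm{Prim}(H)$, this gives $\tilde\Delta^{(k-1)}(x)\in \mathrm{Prim}(H)^{\otimes_\top k}$, which will also follow a posteriori from item 1.

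For item 1, having $\tilde\Delta^{(k)}(x)=0$ and coassociativity gives $(\mathrm{Id}^{\otimes i}\otimes \tilde\Delta\otimes \mathrm{Id}^{\otimes(k-1-i)})(\tilde\Delta^{(k-1)}(x))=0$ for every $i$. In the ambient (non-locality) tensor $\ker(\epsilon)^{\otimes k}$, the kernel of $\mathrm{Id}^{\otimes i}\otimes\tilde\Delta\otimes \mathrm{Id}^{\otimes(k-1-i)}$ equals $\ker(\epsilon)^{\otimes i}\otimes \mathrm{Prim}(H)\otimes \ker(\epsilon)^{\otimes(k-1-i)}$: choosing a basis of $\mathrm{Prim}(H)$ and extending to $\ker(\epsilon)$, a standard coefficient argument (this is the ungraded version of the fact used in Lemma \ref{lem:kernelofSumnonloc}) identifies the kernel with the expected subspace. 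Intersecting over $i=0,\ldots,k-1$ gives $\tilde\Delta^{(k-1)}(x)\in \mathrm{Prim}(H)^{\otimes k}$. The remaining (and genuinely delicate) point is to upgrade this from $\mathrm{Prim}(H)^{\otimes k}$ to $\mathrm{Prim}(H)^{\otimes_\top k}$: since $\tilde\Delta^{(k-1)}(x)\in H^{\otimes_\top k}$ already by construction, one applies Corollary \ref{cor:Localitytensorintersection} (with the projection onto $\ker(\epsilon)/\mathrm{Prim}(H)$ playing the role of $\pi$, and under the conjectural statement \ref{conj:main1} so that $\mathrm{Prim}(H)^{\otimes_\top k}$ is well defined) to identify $H^{\otimes_\top k}\cap \mathrm{Prim}(H)^{\otimes k}$ with $\mathrm{Prim}(H)^{\otimes_\top k}$. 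This last identification is the main obstacle: it requires the compatibility of locality with the splitting $\ker(\epsilon)=\mathrm{Prim}(H)\oplus \mathrm{complement}$, which is the type of statement the conjectural framework of Section \ref{sec:main_conjectures} is designed to accommodate.

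For item 3, I compute explicitly via Proposition \ref{prop:deltilasrho}. Primitivity gives $\Delta^{(n-1)}(v_i) = \sum_{j=1}^n 1^{\otimes(j-1)}\otimes v_i\otimes 1^{\otimes(n-j)}$, and since the $v_i$'s are mutually $\top$-independent and $\Delta$ is a locality algebra morphism, the product
\[
\Delta^{(n-1)}(v_1\cdots v_n)\;=\;\prod_{i=1}^n \sum_{j_i=1}^n 1^{\otimes(j_i-1)}\otimes v_i\otimes 1^{\otimes(n-j_i)}
\]
is well defined in $H^{\otimes_\top n}$. Expanding and applying $\rho^{\otimes n}$ kills every term containing a slot equal to $1$, i.e.\ every non-surjective map $f:[n]\to[n]$; only the bijections survive and produce exactly the sum $\sum_{\sigma\in\mathfrak{S}_n} v_{\sigma(1)}\otimes\cdots\otimes v_{\sigma(n)}$, which proves \eqref{eq:deltan-1sigma}. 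Since each term of this sum is a pure tensor of primitives, applying $\tilde\Delta$ in the first slot yields $\tilde\Delta^{(n)}(v_1\cdots v_n)=0$, and an immediate induction on $k\ge n$ using $\tilde\Delta^{(k)}=(\tilde\Delta\otimes \mathrm{Id}^{\otimes(k-1)})\circ \tilde\Delta^{(k-1)}$ gives the vanishing for every $k\ge n$.
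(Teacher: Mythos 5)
Your items 2 and 3 are essentially correct and stay close to the paper's own argument: item 3 is the same computation (expand the iterated coproduct of a product of mutually independent primitives, apply $\rho^{\otimes}$ via Proposition \ref{prop:deltilasrho}, and observe that only the unit-free terms, i.e.\ the permutations, survive), and item 2 replaces the paper's appeal to item 1 by the direct observation that degree counting forces all slots into $H_1\subseteq{\rm Prim}(H)$, which is a harmless variant.

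The genuine problem is your item 1. You work in the ambient (non-locality) tensor power, obtain $\tilde{\Delta}^{(k-1)}(x)\in{\rm Prim}(H)^{\otimes k}$ there, and then try to descend to ${\rm Prim}(H)^{\otimes_\top k}$ by identifying $H^{\otimes_\top k}\cap{\rm Prim}(H)^{\otimes k}$ with ${\rm Prim}(H)^{\otimes_\top k}$ via Corollary \ref{cor:Localitytensorintersection}. That corollary is not available here: it requires a splitting of $\ker(\epsilon)$ with ${\rm Prim}(H)$ as a summand such that the projection onto ${\rm Prim}(H)$ and the complementary projection are locally independent of the identity map, i.e.\ essentially a strong locality complement of ${\rm Prim}(H)$ in the sense of Definition-Proposition \ref{defnprop:stronglocality}. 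Nothing in the hypotheses provides such a complement, and the conjectural statements \ref{conj:main1} and \ref{conj:main2} do not supply it either: they assert locality compatibility of the subspaces $I_{\rm bil}$ and $I_{\rm mult}$ (so that the locality tensor powers are locality vector spaces), not compatibility of the locality relation with a chosen splitting of $\ker(\epsilon)$. So the step you yourself flag as ``the main obstacle'' is a real gap, and it is not one the paper's conjectural framework is designed to close. The paper avoids the detour altogether: it notes that $\tilde{\Delta}^{(k)}=({\rm Id}^{\otimes(i-1)}\otimes\tilde{\Delta}\otimes{\rm Id}^{\otimes(k-i)})\circ\tilde{\Delta}^{(k-1)}$ for every $i$, identifies the kernel of each such map directly inside the locality tensor power as $H^{\otimes_\top(i-1)}\otimes_\top(\K\oplus{\rm Prim}(H))\otimes_\top H^{\otimes_\top(k-i)}$ (the kernel of $\tilde{\Delta}$ on $H$ being $\K\oplus{\rm Prim}(H)$), uses $\tilde{\Delta}^{(k-1)}(x)\in\ker(\epsilon)^{\otimes_\top k}$ (from Proposition \ref{prop:deltilasrho}) to discard the $\K$ component, and intersects over $i$; no splitting of $\ker(\epsilon)$ and no additional locality-compatibility assumption enter. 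To repair your proof you should argue slot-wise inside $H^{\otimes_\top k}$ in this intrinsic way rather than pass through the ambient tensor product and back.
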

\begin{proof}
	 \begin{enumerate} 
	 	\item 
	 	The coassociativity of $\tilde{\Delta}$ implies 
	 	\[(Id^{\otimes_{\top}(i-1)}\otimes\tilde{\Delta}\otimes Id^{\otimes_{\top}(k-i)})\circ\tilde{\Delta}^{(k-1)}(x)= \tilde{\Delta}^{(k)}(x)=0 \quad \forall 1\leq i\leq k.\]
	We infer that
	 	\[\tilde{\Delta}^{(k-1)}(x)\in \text{ker}(Id^{\otimes_{\top}(i-1)}\otimes\tilde{\Delta}\otimes Id^{\otimes_{\top}(k-i)})=H^{\otimes_{\top}(i-1)}\otimes_{\top}(\K\oplus {\rm Prim}(H))\otimes_{\top} H^{\otimes_{\top}(k-i)}.\]
	 Assuming that $\tilde{\Delta}^{(k-1)}(x)\neq 0$, since $\tilde{\Delta}^{(k-1)}(x)\in \text{ker}(\epsilon)^{\otimes_{\top}k}$  (which is a consequence of Im$(\rho)\subseteq\ker(\epsilon)$ and Proposition \ref{prop:deltilasrho}), it follows that
	 	\[\tilde{\Delta}(x)\in H^{\otimes_{\top}(i-1)}\otimes_{\top}{\rm Prim}(H)\otimes_{\top} H^{\otimes_{\top}(k-i)} \quad \forall 1\leq i\leq k.\]
	 	Thus,  $\tilde{\Delta}^{(k-1)}(x)$ lies in the intersection of all such spaces, which yields the statement.
	 	
	 	\item Using the coassociativity of $\widetilde \Delta$,  for any $x\in H$ with $\vert x\vert=n$,   we have (by induction on $k$)  $\widetilde\Delta^{(k)} x=\sum_{x} x^{(1)}\otimes \cdots \otimes x^{(k+1)}$, with $(x^{(1)},\dots, x^{(k+1)})\in H^{\times_{\top}k+1}$,  $\sum_{j=1}^{k+1} \vert x^{(j)}\vert=\vert x\vert=n$. If   $k\geq n$, this imposes that $\widetilde\Delta^{(k)} x=0$. In particular, $\widetilde\Delta^{(n)} x=0$. It then follows from the previous item that $\tilde{\Delta}^{(n-1)}(x)\in{\rm Prim}(H)^{\otimes_{\top}n}$.

 \item Let $(v_1, \cdots, v_n)\in {\rm Prim}(H)^{\times_\top^n}$ and $x:= v_1\cdots v_n$. To compute $\tilde{\Delta}^{(k)}(x)$ for any $k\leq n-1$, we proceed by induction on $ {k}$.
 	We prove first that $\Delta(v_1\cdots v_n)=\sum_{I\subset [n]}v_I\otimes v_{I^C}$ by induction over $n$, setting 
   $v_{\emptyset}=1$. For $n=1$, \[\Delta(v_1)=v_1\otimes 1+ 1\otimes v_1=\sum_{I\subset[1]}v_I\otimes v_{I^C}~\in\text{Prim}(H)^{\otimes_\top2}.\]
 	
 	Now assume it is true for $n-1$. The compatibility of the product and the coproduct yields
 	\begin{align*}
 	\Delta(v_1\cdots v_{n-1}v_n)&=\Delta(v_1\cdots v_{n-1})\Delta(v_n)\\
 	&=\left(\sum_{I\subset[n-1]}v_I\otimes  v_{I^C}\right)(v_n\otimes 1+1\otimes v_n)\\
 	&=\sum_{I\subset[n-1]}v_I {\cdot} v_n\otimes v_{I^C} + \sum_{I\subset[n-1]}v_I\otimes v_{I^C} {\cdot} v_n\\
 	&=\sum_{I\subset [n]}v_I\otimes v_{I^C}~\in\text{Prim}(H)^{\otimes_\top^n}.
 	\end{align*}
 	It then follows by induction on $k$ that \[\Delta^{(k)}(v_1\cdots v_n)=\sum_{I_1\sqcup\dots\sqcup I_{k+1}=[n]}v_1\otimes\cdots\otimes v_{I_{k+1}}~\in\text{Prim}(H)^{\otimes_\top^n}.\]
 	
Using  Proposition \ref{prop:deltilasrho}, we then easily derive the expression  of $\tilde{\Delta}^k(v_1\cdots v_n)$   composing with $\rho^{\otimes}(k+1)$. Note that for $I=\emptyset$, $\rho(v_I)=0$, otherwise $\rho(v_I)=v_I$. We have
 	\[\Delta^{(k)}(v_1\cdots v_n)=\sum_{{{I_1\sqcup\dots\sqcup I_{k+1}=[n]}\atop{I_1,\dots,I_{k+1}\neq\emptyset}}}v_1\otimes\cdots\otimes v_{I_{k+1}}~\in\text{Prim}(H)^{\otimes_\top^{k+1}}.\]
 	
 	For $k=n-1$ each of the sets $I_j$   only contains one element so that we get the expected formula. For  {$k\geq n$} this expression vanishes since some of the sets are empty. This ends the proof of the statement.
 \end{enumerate}
 \end{proof}
The following result will be useful in the sequel.
\begin{prop}\label{prop:lem70foissy} 
Let $H$ be a filtered connected (with respect to the grading induced by the filtration) locality $\K$-bialgebra and $J\neq\{0\}$ a locality left, right or two-sided coideal of $H$ such that $J\cap H^0=\{0\}$. Then $J$ contains non zero primitive elements of $H$.
\end{prop}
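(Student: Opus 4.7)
The plan is to imitate, in the locality framework, the classical argument: take a nonzero element of $J$, apply the iterated reduced coproduct until it vanishes, and read off a nonzero primitive element from the last nonzero iterate using that $J$ is a coideal. I will treat the left coideal case, the right coideal case being strictly symmetric, and the two-sided case following from either.

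First I would fix a nonzero $x\in J$ and observe that, since $J$ is a coideal, $\epsilon(J)=(0)$, hence $\rho(x)=x$, so $\tilde\Delta^{(0)}(x)=x\neq 0$. Next I would show the filtered analogue of item~2 of Proposition~\ref{prop:useful}: if $x\in H^n$, then by induction on $k$ one has $\Delta^{(k-1)}(x)\in \sum_{p_1+\cdots+p_k=n}H^{p_1}\otimes\cdots\otimes H^{p_k}$, and applying $\rho^{\otimes k}$ (which kills any factor in $H^0$ by connectedness) together with Proposition~\ref{prop:deltilasrho} gives $\tilde\Delta^{(k-1)}(x)=0$ whenever $k>n$. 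Hence there exists a largest $k\geq 1$ with $\tilde\Delta^{(k-1)}(x)\neq 0$ and $\tilde\Delta^{(k)}(x)=0$. By item~1 of Proposition~\ref{prop:useful} (whose proof was purely coalgebraic and applies unchanged in the filtered setting), we deduce
\[
y:=\tilde\Delta^{(k-1)}(x)\in \mathrm{Prim}(H)^{\otimes_\top k}\setminus\{0\}.
\]

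Now I would exploit the left coideal assumption $\Delta(J)\subset J\otimes_\top H$: iterating coassociativity gives $\Delta^{(k-1)}(J)\subset J\otimes_\top H\otimes_\top\cdots\otimes_\top H$. Using Proposition~\ref{prop:deltilasrho}, $y=(\rho\otimes\cdots\otimes\rho)(\Delta^{(k-1)}(x))$, and since $\rho$ restricts to the identity on $J$ (again because $\epsilon(J)=0$), one concludes
\[
y\in J\otimes_\top \mathrm{Prim}(H)^{\otimes_\top(k-1)}.
\]
If $k=1$, then $y=x\in J\cap\mathrm{Prim}(H)$ is already the desired nonzero primitive element. If $k\geq 2$, I would decompose $y$ along a basis of the subspace of $\mathrm{Prim}(H)^{\otimes_\top(k-1)}$ generated by the tensors appearing in $y$: write $y=\sum_{j}p_j\otimes b_j$ with $p_j\in\mathrm{Prim}(H)$ and $\{b_j\}$ linearly independent. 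The inclusion $y\in J\otimes_\top H^{\otimes(k-1)}$ then forces each $p_j\in J$ by a standard linear independence argument, and $y\neq 0$ ensures some $p_j\neq 0$, producing a nonzero element of $J\cap\mathrm{Prim}(H)$.

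The main technical obstacle is the linear independence step in the last paragraph: one must ensure that the decomposition $y=\sum p_j\otimes b_j$ takes place within the locality tensor product $J\otimes_\top H^{\otimes(k-1)}$, and that the identification with the usual tensor product (where linear independence gives $p_j\in J$) is compatible with the locality structure. This relies on the locality tensor product being a subspace of the usual tensor product (Definition~\ref{defn:loctensprod1}), so that linear independence statements transfer back and forth. The filtered analogue of item~2 of Proposition~\ref{prop:useful} is the other step requiring care, but it is essentially a bookkeeping exercise using the filtration-compatibility of $\Delta$ together with the connectedness hypothesis $\dim H^0=1$ that makes $\rho$ vanish on $H^0$.
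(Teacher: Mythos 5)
Your treatment of the one-sided cases is essentially sound (modulo the usual level of rigour about the identification $V\otimes_\top W\subset V\otimes W$, which you correctly flag), and it follows a genuinely different route from the paper: you iterate the reduced coproduct on an arbitrary nonzero $x\in J$ and extract a primitive from the last nonzero $\tilde{\Delta}^{(k-1)}(x)$ via Proposition \ref{prop:useful}, whereas the paper takes $x\in J$ of \emph{minimal filtration degree} and shows with a single application of $\tilde{\Delta}$ that this $x$ is itself primitive. The genuine gap is your dismissal of the two-sided case as ``following from either'': this is backwards. By definition \eqref{eq:locbicoideal}, a two-sided locality coideal only satisfies $\Delta(J)\subset J\otimes_\top H+H\otimes_\top J$, a condition \emph{implied by} (not implying) the left or right coideal conditions, so proving the statement for left and right coideals says nothing about two-sided ones. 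And this is exactly the case that matters downstream: in the proof of Theorem \ref{thm:CQMMTheorem} the proposition is applied to $J=\ker\phi$, which Lemma \ref{lem:Kerloccoideal} only exhibits as a two-sided coideal.

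The gap is not cosmetic for your method, because the final extraction step breaks. For a two-sided coideal, iteration only gives $\Delta^{(k-1)}(J)\subset\sum_{i=1}^{k}H^{\otimes(i-1)}\otimes J\otimes H^{\otimes(k-i)}$, so the $J$-leg of different summands of $y$ can sit in different slots; applying $\mathrm{Id}\otimes\beta_j$ (or any coordinate functional on the remaining legs) then produces elements like $\beta(q)p+\beta(p)q$ with only $p\in J$, and the linear-independence argument yields no element of $J$. To repair it one needs an intersection identity in the spirit of Lemma \ref{lem:kernelofSumnonloc} and Proposition \ref{prop:KernelofTensorPoduct}, e.g. $(J\otimes H+H\otimes J)\cap(P\otimes P)=(J\cap P)\otimes P+P\otimes(J\cap P)$ with $P=\mathrm{Prim}(H)$, obtained by writing $J\otimes H+H\otimes J$ as the kernel of $\pi_J\otimes\pi_J$ for the quotient map $\pi_J:H\to H/J$ (together with its locality-compatible version). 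Alternatively, adopt the paper's strategy: take $x\in J\setminus\{0\}$ of minimal filtration degree $k\geq1$; since $\Delta$ respects the filtration and $\tilde{\Delta}(x)$ lies in $(J\otimes_\top H+H\otimes_\top J)\cap(\ker(\epsilon)\otimes_\top\ker(\epsilon))$ with all legs of degree $<k$, the same intersection identity forces every $J$-leg into $J\cap H^{n}=\{0\}$ for some $n<k$, whence $\tilde{\Delta}(x)=0$ and $x$ is primitive. Note that the paper's own write-up also only spells out the left and right cases, so whichever route you choose, the two-sided case requires this extra argument rather than being a corollary of the one-sided ones.
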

\begin{proof}
Consider the filtration  $J^n=J\cap H^n, n\in \Z_{\geq 0}$ on $J$ induced by the one on $H$. Since   $J\neq\{0\}$, there is some element $0\neq x\in J$   of minimum degree $k$ among the elements of $J$. Explicitly,  $x\in J^k$ and $J^n=\{0\}$ for every $n<k$. The existence of $x$ is guaranteed. Indeed, let us write 
\[\Delta(x)=x\otimes 1+1\otimes x+ \sum x'\otimes x''\]
with $\sum x'\otimes x''\in \ker(\epsilon)\otimes_{\top}\ker(\epsilon)$. Since $\Delta$ respects the filtration, if $x'$ and $x"$ are non zero, there are   integers $0<m, n<k$   such that $ x'\in H^n$ and $x''\in H^m$ with $n+m\leq k$. If  $J$ is a left, resp. right coideal, then $x'$, resp.  $x''$  lies in $J$, which contradicts the minimality of $k$. Therefore at least one of the two elements $x'$ or $x''$ vanishes, which implies $\sum x'\otimes x''=0$ so that $x$ is primitive.
\end{proof}
  
\section{The Milnor-Moore theorem: locality version}  \label{section:eight}

 The results of Subsection \ref{subsect:technical_results} hold  independently of  the validity of the statements \ref{conj:main2} and \ref{conj:univenvalg}. However, assuming these statements hold true, they will be used in an essential way in the proof of  a locality version of a central theorem of Hopf algebra theory, namely the Milnor-Moore theorem. 
 
 In the sequel we  therefore assume that  the conjectural statements  {\ref{conj:main2} and \ref{conj:univenvalg}} hold true and transpose to the locality setup the well-known  the Milnor-Moore theorem, also known as Cartier-Quillen-Milnor-Moore.  The original theorem was proven by Milnor and Moore in 1965 \cite{MM}  using a rather categorical language. The idea of the proof we  follow stems from Cartier \cite{Car1} and Patras \cite{Pat}. We found a nice explanation of this proof in   lecture notes by Loïc Foissy \cite{Foi1}, on which most of  the sequel is based.

\subsection{The locality universal enveloping algebra as a Hopf algebra}  

The Milnor-Moore theorem relates a Hopf algebra $H$ with a Hopf algebra built from the universal envelopping algebra of the Lie algebra of primitive elements of $H$. In order to show the Milnor-Moore theorem in the locality setup, we need to show that the locality universal enveloping algebra of a locality Lie algebra $(\g,\top)$  indeed admits a locality Hopf algebra structure.

\begin{prop} \label{prop:universal_envelopping_alg_is_Hopf}
Given a locality Lie algebra $(\g, \top)$ and assuming the statement \ref{conj:univenvalg} holds true for $(\g, \top)$, the universal enveloping algebra $U_{\top}(\g)$ together with the locality relation $\top_U$ (from Definition \ref{defn:luenvelopingalgebra}) can be equipped with a locality cocommutative Hopf algebra structure.
\end{prop}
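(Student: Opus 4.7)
The plan is to construct the cocommutative Hopf algebra structure on $U_\top(\g)$ by defining its structural maps on the generating set $\iota_\g(\g)$ and extending them via the universal property of the locality universal enveloping algebra (Theorem \ref{thm:univproplocunivenvalg}), which is available under the standing assumptions \ref{conj:main2} and \ref{conj:univenvalg}. By Proposition \ref{prop:Utopgloc}, $U_\top(\g)$ itself is already a locality algebra, so the ``source'' side of every universal-property argument is in good shape.

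First, I would build the coproduct. Using conjectural statement \ref{conj:main1} (with $V_1=V_2=U_\top(\g)$), the locality tensor product $U_\top(\g)\otimes_\top U_\top(\g)$ is a locality vector space; combining this with the fact that $U_\top(\g)$ is a locality algebra, one checks that the componentwise product $(a\otimes b)(c\otimes d)=ac\otimes bd$, defined on locally compatible pairs, endows $U_\top(\g)\otimes_\top U_\top(\g)$ with the structure of a locality algebra whose unit is $1\otimes 1$. The assignment $\delta:\g\to U_\top(\g)\otimes_\top U_\top(\g)$, $x\mapsto \iota_\g(x)\otimes 1+1\otimes\iota_\g(x)$, is a locality linear map because $u(\K)\subset U_\top(\g)^\top$ by the definition of a unital locality algebra; a direct computation using the fact that $\iota_\g$ is a locality Lie algebra morphism then shows that $\delta$ is itself a locality Lie algebra morphism with respect to the commutator bracket on its target. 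Theorem \ref{thm:univproplocunivenvalg} produces a unique locality algebra morphism $\Delta:U_\top(\g)\to U_\top(\g)\otimes_\top U_\top(\g)$ extending $\delta$.

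Second, I would construct the counit $\epsilon:U_\top(\g)\to \K$ as the unique locality algebra morphism extending the zero locality Lie algebra morphism $\g\to \K$, again by Theorem \ref{thm:univproplocunivenvalg}. The bialgebra axioms are then verified by a standard uniqueness argument: both sides of coassociativity, both counit identities, and cocommutativity $\tau\circ\Delta=\Delta$ are equalities between locality algebra morphisms out of $U_\top(\g)$ that visibly agree on the generating set $\iota_\g(\g)$, hence are equal by the uniqueness part of Theorem \ref{thm:univproplocunivenvalg}. For cocommutativity one uses that the twist $\tau$ on $U_\top(\g)\otimes_\top U_\top(\g)$ is a locality algebra morphism (the componentwise product being commutative-twist-compatible) and that $\delta$ is manifestly symmetric. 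For the antipode, I would apply the universal property to the locality Lie algebra morphism $\bar S:\g\to U_\top(\g)^{\mathrm{op}}$, $x\mapsto -\iota_\g(x)$, noting that $U_\top(\g)^{\mathrm{op}}$ is again a locality algebra since $\top_U$ is symmetric; this yields a locality algebra antihomomorphism $S:U_\top(\g)\to U_\top(\g)$. The antipode identities $S\star\mathrm{id}=\mathrm{id}\star S=u\circ\epsilon$ hold on $\iota_\g(x)$ since $-\iota_\g(x)+\iota_\g(x)=0=u\circ\epsilon(\iota_\g(x))$, and the uniqueness argument promotes this identity to all of $U_\top(\g)$.

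The main obstacle I anticipate lies in the first step: promoting the locality algebra structure from $U_\top(\g)$ to $U_\top(\g)\otimes_\top U_\top(\g)$ in a way that is fully compatible with the induced quotient locality $\top_U\otimes\top_U$ (in particular, showing that polar subsets are stable under the componentwise product and that the product is a locality $\top_\times$-bilinear map as required by Remark \ref{rk:partial-product-is-locality}). This is the step that genuinely uses the full force of \ref{conj:main1} and \ref{conj:main2}; once it is in place, the construction and verification of $\Delta$, $\epsilon$, and $S$ proceed by uniqueness in the universal property, very much in the spirit of the classical non-locality proof.
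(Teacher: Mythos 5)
Your overall route is the paper's: define $\delta(x)=\iota_\g(x)\otimes 1+1\otimes\iota_\g(x)$, the zero map, and $x\mapsto-\iota_\g(x)$ on $\g$, and extend each to $U_\top(\g)$ by Theorem \ref{thm:univproplocunivenvalg}, using Proposition \ref{prop:Utopgloc} for the algebra structure. Two of your refinements are genuinely more careful than the paper's text: you flag that the target $U_\top(\g)\otimes_\top U_\top(\g)$ must itself be a locality algebra before the universal property can be invoked (the paper uses this tacitly), and you route the antipode through $U_\top(\g)^{\mathrm{op}}$, which is the correct fix since $x\mapsto-\iota_\g(x)$ is a Lie morphism into the opposite algebra, not into $U_\top(\g)$ with its commutator (the paper calls it a Lie algebra morphism into $U_\top(\g)$, which is only true when the bracket vanishes). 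Your uniqueness argument is also legitimate for coassociativity, the counit identities and cocommutativity, because there both sides are locality algebra morphisms out of $U_\top(\g)$ that agree on $\iota_\g(\g)$.

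The gap is in the very last step: the identities $S\star\mathrm{id}=\mathrm{id}\star S=u\circ\epsilon$ cannot be ``promoted from the generators by uniqueness''. The uniqueness clause of Theorem \ref{thm:univproplocunivenvalg} applies only to locality \emph{algebra} morphisms, and a convolution product such as $S\star\mathrm{id}$ is not an algebra morphism a priori (it becomes one only once the identity you are trying to prove is established), so agreement with $u\circ\epsilon$ on $\iota_\g(\g)$ does not propagate to products; checking it on degree-one elements says nothing about, say, $\iota_\g(x)\iota_\g(y)$. The paper closes this by a direct computation on the spanning set of products $x_1\cdots x_k$ with $x_i\in\iota_\g(\g)$: since such elements satisfy $\Delta(x_1\cdots x_k)=\sum_{J\subseteq[k]}x_J\otimes x_{[k]\setminus J}$ (cf.\ Proposition \ref{prop:useful}), one finds $(S\star\mathrm{id})(x_1\cdots x_k)=0$ for $k\geq1$ because $\sum_{J\subseteq[k]}(-1)^{|J|}=0$, while $(S\star\mathrm{id})(1)=1$; linearity then gives the identity on all of $U_\top(\g)$. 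Alternatively one can argue by induction on the filtration degree using connectedness of the filtered bialgebra, but note that Proposition \ref{prop:locantipodeforfree} does not apply as stated, since $U_\top(\g)$ is only filtered, not graded. Your proof needs one of these arguments to be complete; everything else stands.
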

\begin{proof}
So far, assuming that the conjectural statement \ref{conj:univenvalg} holds true for $(\g,\top)$,  we  know from Proposition \ref{prop:Utopgloc}, that \[ (U_{\top}(\g),\top_U,\bigotimes,u)\] is an associative unital locality algebra. In order to equip it with a coproduct, we consider the map $\delta:\g\to U_{\top}(\g)\otimes_{\top} U_{\top}(\g)$ defined by $\delta(x)= {\iota_\g(x)}\otimes 1+1\otimes  {\iota_\g(x)}$,  {where $\iota_\g:\g\to U_\top(\g)$ is the canonical map}. One can check that it is locality linear and $\delta([x,y])=\delta(x)\delta(y)-\delta(y)\delta(x)$. Hence, Theorem \ref{thm:univproplocunivenvalg} (which applies since we have assumed that the conjectural statement \ref{conj:univenvalg} holds  true for $(\g,\top)$) gives the existence and uniqueness of a locality algebra morphism $\Delta:U_{\top}(\g)\to U_{\top}(\g)\otimes_{\top}U_{\top}(\g)$ which extends $\delta$. Note that by construction the elements in $ {\iota_\g(\g)}$ are primitive and $\Delta$ is cocommutative. For the counit we consider the zero map from $\g$ to $\K$. This is indeed a locality Lie algebra morphism and once again by Theorem \ref{thm:univproplocunivenvalg}, there is a unique locality algebra morphism $\epsilon:U_{\top}(\g)\to \K$ which vanishes identically on $ {\iota_\g(\g)}$. Therefore $U_{\top}(\g)$ with this coproduct and counit is a filtered connected bialgebra over $\K$. Consider the locality Lie algebra morphism $\sigma:\g\to U_\top(\g)$ defined by $\sigma(x)=- {\iota_\g(x)}$. Once more, Theorem  \ref{thm:univproplocunivenvalg} gives the existence and uniqueness of a locality algebra morphism $S:U_\top(\g)\to U_\top(\g)$ which extends $\sigma$. To prove that it is an antipode, for $\alpha\in\K\subset U_\top(\g)$, we see that $S\star I(\alpha)=\alpha\, (S(1)I(1))=\alpha$ and for $x_i\in {\iota_\g(\g)}$, we have $S\star I(x_1\cdots x_k)=\sum_{J\subset [k]}(-1)^{|J|}x_1\cdots x_k=0$ 
	which shows that $S\star I=u\circ\epsilon$. Similarly, one shows that $I\star S=u\circ\epsilon$, so that $S$ is an antipode and $U_\top(\g)$ is a locality Hopf algebra.
\end{proof}

\begin{prop} \label{prop:sub_Lie_envelopping}
     Let $(\g,\top,[,]_\top)$ be locality Lie algebra and $(\mathfrak{g}',\top',[,]|_{\top'})$ a sub-locality Lie algebra of $\mathfrak{g}$. Assuming that the conjectural statement \ref{conj:univenvalg} holds true for $\g'$ and $\g$, then $U_{\top'}(\g')$ is a sub-locality Hopf algebra of $U_{\top}(\g)$.
    \end{prop}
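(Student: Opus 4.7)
The plan is to reduce the proposition to an application of the universal property of the locality universal enveloping algebra. First, Proposition \ref{prop:CompareUnivAlgebras} immediately yields that $(U_{\top'}(\g'),\top'_U)$ is a locality subalgebra of $(U_\top(\g),\top_U)$. Under the assumption that statement \ref{conj:univenvalg} holds for both $\g'$ and $\g$, Proposition \ref{prop:universal_envelopping_alg_is_Hopf} equips both algebras with locality cocommutative Hopf algebra structures, which we denote by $(\Delta_{\g'},\epsilon_{\g'},S_{\g'})$ and $(\Delta_\g,\epsilon_\g,S_\g)$ respectively. It remains to show that the canonical inclusion $\iota:U_{\top'}(\g')\hookrightarrow U_\top(\g)$ is compatible with the coproduct, the counit, and the antipode, so that $\iota$ becomes a locality Hopf algebra morphism.

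For the coproduct, I would consider the two locality algebra morphisms $\Phi_1 := (\iota\otimes\iota)\circ\Delta_{\g'}$ and $\Phi_2 := \Delta_\g\circ\iota$ from $U_{\top'}(\g')$ to $U_\top(\g)\otimes_\top U_\top(\g)$. For any $x\in\g'$, using the construction of the coproducts in Proposition \ref{prop:universal_envelopping_alg_is_Hopf} together with $\iota\circ\iota_{\g'}=\iota_\g|_{\g'}$, one gets
\[\Phi_1(\iota_{\g'}(x)) = \iota_\g(x)\otimes 1 + 1\otimes\iota_\g(x) = \Phi_2(\iota_{\g'}(x)),\]
so $\Phi_1$ and $\Phi_2$ coincide on $\iota_{\g'}(\g')$. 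Both arise from the locality Lie algebra morphism $x\mapsto \iota_\g(x)\otimes 1 + 1\otimes\iota_\g(x)$ from $\g'$ into the commutator Lie algebra of $U_\top(\g)\otimes_\top U_\top(\g)$, so the uniqueness part of Theorem \ref{thm:univproplocunivenvalg} forces $\Phi_1=\Phi_2$. The same uniqueness argument, applied to the zero morphism $\g'\to\K$ and to the morphism $x\mapsto -\iota_\g(x)$, respectively yields $\epsilon_\g\circ\iota=\epsilon_{\g'}$ and $\iota\circ S_{\g'}=S_\g\circ\iota$, completing the verification.

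The main obstacle lies in verifying that Theorem \ref{thm:univproplocunivenvalg} can indeed be invoked. One needs to ensure that $U_\top(\g)\otimes_\top U_\top(\g)$ is a bona fide locality algebra, which requires the conjectural statement \ref{conj:main2} applied to $U_\top(\g)$, and that $\Phi_1$ actually lands inside $U_\top(\g)\otimes_\top U_\top(\g)$ rather than only inside $U_\top(\g)\otimes U_\top(\g)$. The latter rests on an embedding $U_{\top'}(\g')\otimes_{\top'} U_{\top'}(\g')\hookrightarrow U_\top(\g)\otimes_\top U_\top(\g)$, which is the natural coalgebraic counterpart of Proposition \ref{prop:CompareTensorAlgebras} combined with the inclusion $\top'_U\subset\top_U$ inherited from $\top'\subset\top$ through the constructions of Section \ref{subsec:luea}. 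Once this is in place, the above universal-property argument closes the proof.
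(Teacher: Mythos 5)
Your proposal is correct and its skeleton matches the paper's proof: first invoke Proposition \ref{prop:CompareUnivAlgebras} to get the sub-locality algebra inclusion $\iota:U_{\top'}(\g')\hookrightarrow U_\top(\g)$, then verify compatibility of coproduct, counit and antipode by reducing to their values on $\iota_{\g'}(\g')$. The only real difference is how that reduction is justified. The paper argues directly: $\Delta$, $\epsilon$ and $S$ are (locality) algebra morphisms, and since $\iota_{\g'}(\g')$ generates $U_{\top'}(\g')$ as an algebra (every class is a combination of products of pairwise independent elements of $\iota_{\g'}(\g')$), two such morphisms agreeing there coincide; one then checks the agreement on generators exactly as you do, e.g. $S_{U'}(x)=-x=S_U(\iota(x))$ for $x\in\iota_{\g'}(\g')$. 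You instead route the reduction through the uniqueness clause of Theorem \ref{thm:univproplocunivenvalg} applied to $\g'$, which is why you run into the ``main obstacle'' you flag: the universal property requires the target to be a locality algebra, so for the coproduct you must worry about $U_\top(\g)\otimes_\top U_\top(\g)$ and pull in further conjectural input, plus the embedding $U_{\top'}(\g')\otimes_{\top'}U_{\top'}(\g')\hookrightarrow U_\top(\g)\otimes_\top U_\top(\g)$. The elementary generator argument dissolves that obstacle (only the embedding of tensor products, which is needed anyway to make sense of $\iota$ being a coalgebra morphism, remains, and it follows from $\top'_U\subset\top_U$ as you indicate). So your argument works, but the paper's route is leaner: it buys the same conclusion without re-invoking the universal property and hence without the extra locality-algebra hypothesis on the tensor-product target.
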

    \begin{proof} 
    By means of Proposition \ref{prop:CompareUnivAlgebras}, $U_{\top'}(\g')$ is a sub-locality algebra of $U_{\top}(\g)$. \sy{The inclusion map $ \g'\hookrightarrow \g$ induces an injective map}  { $\iota: U_{\top'}(\g')\to U_\top(\g)$ it is easy to see that}  {the counits relate by} $\epsilon'=\epsilon\circ\iota$. Moreover, since the coproduct on the universal enveloping algebra are completely determined by the coproduct on $ {\iota_\g(\g)}$ (resp $ {\iota_{\g'}(\g')}$) (see the proof of Proposition \ref{prop:universal_envelopping_alg_is_Hopf}), it is easy to see that   {$U_\top(\g')$} is a sub-locality bialgebra of $U_\top(\g)$. 
    
    { Let $S_{U'}$ be the antipode on $U_\top(\g')$, then $$S_{U'}=S_U\circ\iota$$ follows from the fact the antipodes are completely determined by their action on $\iota_g(\g)$ (resp. $\iota_{\g'}(\g')$), and for $x\in\iota_{\g'}(\g')$,
\[S_{U'}(x)=-x=-\iota(x)=S_U(\iota(x)).\]
This proves that $U_\top(\g')$ is indeed a sub-locality Hopf algebra of $U_\top(\g)$.}   
\end{proof}

For the proof of  locality version of the Milnor-Moore theorem, we use 
a description of the primitive elements of the universal enveloping algebra, which  usually comes as a corollary of the Poincar\'e-Birkhof-Witt theorem.
Since there is no locality version of this theorem available yet,   we provide an  alternative proof using Zorn's lemma, of the following classical statement extended to a locality set up.
\begin{prop}\label{prop:pbwcor}
    Given a locality Lie algebra $(\g, \top)$ over   $\K$,  the conjectural statement \ref{conj:univenvalg} holds true,   and assuming    the axiom of choice, the  set of primitive elements of the locality universal enveloping algebra $U_\top(\mathfrak{g})$ coincides with $ {\iota_\g(\mathfrak{g})}$: 
    \[{\rm Prim}(U_\top(\mathfrak{g}))= {\iota_\g(\mathfrak{g})},\]
  {where $\iota_\g$ is the canonical map from $\g$ to $U_\top(\g)$.}
 
\end{prop}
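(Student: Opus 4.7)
The proof splits into two inclusions. The easy one, $\iota_\g(\g)\subseteq \text{Prim}(U_\top(\g))$, is built into the construction of the coproduct in the proof of Proposition~\ref{prop:universal_envelopping_alg_is_Hopf}: the map $\Delta$ was defined as the unique locality algebra morphism extending $\delta(x)=\iota_\g(x)\otimes 1+1\otimes \iota_\g(x)$, so every element of $\iota_\g(\g)$ is primitive tautologically. Zorn's lemma is not needed here, and I would dispatch this first.

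The plan for the reverse inclusion $\text{Prim}(U_\top(\g))\subseteq \iota_\g(\g)$ is to use the canonical filtration $(U_\top(\g))^n$ and argue by induction on the minimal $n$ such that a primitive element $x$ lies in $(U_\top(\g))^n$. Applying $(\epsilon\otimes\mathrm{Id})\circ\Delta=\mathrm{Id}$ to $\Delta x=x\otimes 1+1\otimes x$ yields $\epsilon(x)=0$, so $x$ is in the augmentation ideal. The cases $n\leq 1$ are immediate because $(U_\top(\g))^0=\K$ and $(U_\top(\g))^1=\K\oplus\iota_\g(\g)$, and $\epsilon(x)=0$ forces $x\in\iota_\g(\g)$ already in filtration degree $1$. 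The core work is ruling out primitive elements of minimal filtration degree $n\geq 2$, and this is where Zorn's lemma enters.

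For $n\geq 2$, I would lift $x$ to a representative $\tilde x\in \mathcal T_\top^n(\g)$, which involves only finitely many elements $v_1,\ldots,v_k\in\g$. I would then introduce the poset $\mathcal{F}$ of pairs $(\g',\mathcal B')$, where $\g'$ is a locality Lie subalgebra of $\g$ containing $\{v_1,\ldots,v_k\}$ and $\mathcal B'\subseteq \g'$ is a linearly independent \emph{pairwise local} family spanning $\g'$, ordered by componentwise inclusion. Any chain in $\mathcal{F}$ admits an upper bound given by the union of the $\g'$ and the union of the $\mathcal B'$ (linear independence and pairwise locality are preserved under unions of nested families). Zorn's lemma produces a maximal element $(\g^\star,\mathcal B^\star)$. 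Because every pair in $\mathcal B^\star$ lies in $\top$, the locality relation on $\g^\star$ is total, so $\g^\star$ is an ordinary Lie algebra, and by Proposition~\ref{prop:sub_Lie_envelopping} its universal enveloping algebra $U_{\top}(\g^\star)$ coincides with the classical $U(\g^\star)$ and embeds as a sub-locality Hopf algebra of $U_\top(\g)$. The classical consequence of PBW then forces $\text{Prim}(U(\g^\star))=\iota_{\g^\star}(\g^\star)$, and once $x$ is shown to live in $U(\g^\star)$ I reach the desired contradiction with $n\geq 2$.

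The main obstacle is bridging the gap between monomials of $\tilde x$ and a \emph{single} pairwise-local family $\mathcal B^\star$: distinct monomials of $\tilde x$ in general use different subsets of pairwise-local vectors, so there is no reason for $\tilde x$ to land in $\mathcal T(\g^\star)$ for any one such $\g^\star$. I expect the proof to handle this in one of two ways. Either one enlarges the Zorn argument so as to extend the chain of $\mathcal B'$'s through the primitive condition $\tilde\Delta x=0$ using Proposition~\ref{prop:useful}, iteratively absorbing obstructions produced by the reduced coproduct into $\mathcal B^\star$; or one exploits Conjectural statement~\ref{conj:univenvalg} (which has been assumed) to argue that $J_\top(\g)$ is locality compatible enough that a Zorn-maximal pairwise-local completion of $\{v_1,\ldots,v_k\}$ already suffices to express $x$ modulo $J_\top(\g)$. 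In either route, once $x$ is shown to lie in a sub-Hopf algebra of $U_\top(\g)$ isomorphic to a classical $U(\g^\star)$, primitivity forces $x\in\iota_{\g^\star}(\g^\star)\subseteq\iota_\g(\g)$, contradicting the assumption $n\geq 2$ and closing the induction.
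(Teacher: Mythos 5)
Your first inclusion and the reduction to ruling out primitives of minimal filtration degree $n\geq 2$ are fine, but the core of your argument is not closed, and you say so yourself: the step ``once $x$ is shown to live in $U(\g^\star)$'' is exactly the missing content, and neither of the two routes you sketch for bridging it is carried out. Worse, the bridge you need is of the type that genuinely fails in the locality setting. Your Zorn poset requires a locality Lie subalgebra $\g'$ containing $v_1,\ldots,v_k$ that is \emph{spanned} by a linearly independent pairwise-local family; the existence of such a pair is a ``locality basis'' statement, and the paper stresses that one cannot in general reconstruct a locality vector space from a locality basis --- this is precisely why no locality Poincar\'e--Birkhoff--Witt theorem is available. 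So the poset may be empty, and even when it is not, maximality of $(\g^\star,\mathcal B^\star)$ gives you no control over the monomials of $\tilde x$, which mix different pairwise-local families. (A smaller issue: pairwise locality of $\mathcal B^\star$ does not make the restricted relation on $\g^\star$ total unless each $b\in\mathcal B^\star$ also satisfies $b\top b$, so the identification $U_{\top}(\g^\star)=U(\g^\star)$ needs more care.)

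The paper's proof avoids any reduction to a classical enveloping algebra. It applies Zorn's lemma (through Lemma \ref{lem:compbasis}) not to Lie subalgebras but to the generating sets $G_n$ of monomials $x_1^{k_1}\cdots x_m^{k_m}$ in pairwise-local elements of $\iota_\g(\g)$, producing a nested Hamel basis $B_1\subset B_2\subset\cdots$ of the filtration of $U_\top(\g)$ made of such monomials. A primitive $y$ is expanded in this basis; if its top total degree is $N\geq 2$, then $\tilde\Delta^{(N-1)}(y)=0$, and by the explicit formula \eqref{eq:deltan-1sigma} of Proposition \ref{prop:useful} only the degree-$N$ monomials contribute. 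Applying the $(N-1)$-fold product and reordering via commutators (which drop the filtration degree) yields a linear relation among the degree-$N$ basis monomials modulo lower-order terms, which linear independence of $B$ forbids; hence $N=1$ and $y\in\iota_\g(\g)$. If you want to salvage your plan, this is the mechanism you would have to reproduce: the primitivity condition is exploited through the reduced-coproduct formula on products of primitives and a filtration-adapted basis, not through an embedding into a classical $U(\g^\star)$.
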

\begin{proof}
By the very construction of the coproduct on $U_\top(\mathfrak{g})$, we have $ {\iota_\g(\g)}\subset {\rm Prim}(U_\top(\mathfrak{g}))$. 
For the other inclusion, for any $n\in \N$ we consider the set
\begin{equation}\label{eq:grading_U_g}
 G_n:=\{x_1^{k_1}\cdots x_m^{k_m}\in U_\top(\mathfrak{g})|\ (x_1,\dots,x_m)\in {\iota_\g(\g)}^{\times_{\top_{U}}^m}\ \wedge\ k_i\in\N\  \wedge\sum_{i=[m]} k_i\leq n\}.
\end{equation}
which by construction, generates the space $(U_\top(\mathfrak{g}))^n$ of filtration degree $n$ in the natural filtration of $U_\top(\g)$.  Lemma \ref{lem:compbasis} applied to $E=\K\, 1$ yields the existence of a vector space basis $\{1\}\subset B_1\subset G_1$   of $(U_\top(\mathfrak{g}))^1$, so for filtration degree $1$. Since
$B_1 \subset G_2$, Lemma \ref{lem:compbasis} yields a basis $B_2$ of $(U_\top(\g))^2$ such that $B_1\subset B_2\subset G_2$. 
We proceed inductively to build  $B:=\bigcup_{n\in\N}B_n$ which is a Hammel (vector space) basis of $U_\top(\g)$. We use the simplified notation 
$\vec{x}^{\vec{k}}:=x_1^{k_1}\cdots x_n^{k_n}$ and $|\vec{k}|:=k_1+\cdots+k_n$. Note that for $\vec{x}^{\vec{k}}\in B$ with $|\vec{k}|=n$, 
it is linearly independent of $B_{n-1}$.

A primitive element $y$ of $U_\top(\mathfrak{g})$  can be expressed in terms of the basis $B$ as 
\[y=\sum_{\vec{x}^{\vec{k}}\in B}\alpha_{\vec{x}^{\vec{k}}}\, \vec{x}^{\vec{k}}\] 
where only finitely many $\alpha_{\vec{x}^{\vec{k}}}$ are non zero. Let $N={\rm  max}\{|\vec{k}|:\alpha_{\vec{x}^{\vec{k}}}\neq0\}$. If $N=1$ we have $y\in {\iota_\g(\g)}$ as required.  Let us now assume that $N>1$. Then $\tilde{\Delta}^{(N-1)}(y)=0$ since $y$ is primitive. By (\ref{eq:deltan-1sigma}), one can write  
\[0=\tilde{\Delta}^{(N-1)}(y)=\sum_{|\vec{k}|=N}\alpha_{\vec{x}^{\vec{k}}}\sum_{\sigma\in S_n}x_{\sigma(1)}\otimes\cdots\otimes x_{\sigma(N)}.\] 
with  $x_{\sigma(1)}\otimes\cdots\otimes x_{\sigma(N)}\in {\rm Prim}(H)^{\otimes_\top^N}$.

Applying the   product $m$ yields
\[0=m^{(N-1)}(\tilde{\Delta}^{(N-1)}(y))=\sum_{|\vec{k}|=N}\alpha_{\vec{x}^{\vec{k}}}\sum_{\sigma\in S_n}x_{\sigma(1)}\cdots x_{\sigma(N)}\in U_{\top}(\g).\] 
Since  
${\iota_\g(\g)}=(U_\top(\g))^1\ni[x_i,x_j]=x_i x_j-x_j x_i$   for every $i$ and $j$, we may reorder the $x_i$'s to get the original elements of $B$ at the cost 
of adding some lower order terms (l.o.t.) (with respect to the natural filtration of $U_{\top}(\g)$ given by the sets \eqref{eq:grading_U_g}). The resulting products arising in the new  linear combination are linearly independent of the leading term due to the very manner the basis $B$ was constructed. Hence, we have  
\[0=\sum_{|\vec{k}|=N}\frac{\alpha_{\vec{x}^{\vec{k}}}}{N!}\vec{x}^{\vec k}+ {\rm l.o.t.}\] 
Since the elements of the basis $B$ are linearly independent, we may conclude that all $\alpha_{\vec{x}^{\vec{k}}}=0$ except if $N=1$. 
Therefore ${\rm Prim}(U_\top(\mathfrak{g}))\subset\di{\iota_\g(\g)}$. Thus ${\rm Prim}(U_\top(\mathfrak{g}))=\di{\iota_\g(\g)}$.
\end{proof}

 {\begin{rk}
In the non-locality case, the \sy{injectivity of} the canonical map $\iota_\g:\g\to U(\g)$,  \sy{see (\ref{eq:iotag})},  follows from the Poincar\'e-Birkhof-Witt theorem. However, this is not necessarily true in the locality setup. In general $\iota_\g(\g)$ is a quotient of $\g$, with the property that $U_\top(\g)\simeq U_\top(\iota_\g(\g))$.

A case which will be of particular interest in the following section, is when $\g$ is the Lie algebra of primitive elements of a locality bialgebra $(B,\top)$. In that case, the universal property of the locality universal enveloping algebra (Theorem \ref{thm:univproplocunivenvalg-preloc}) yields the existence of a pre-locality algebra morphism $\phi:U_\top(\g)\to B$ which extends the canonical injection $\iota:\g\to B$. Therefore, the canonical map $\iota_\g:\g\to U_\top(B)$ is indeed injective in such cases.
\end{rk}
}

\subsection{A locality Cartier-Quillen-Milnor-Moore theorem}   

Assuming  that the conjectural statement \ref{conj:main2} holds true, we can now prove a locality version of the  Cartier-Quillen-Milnor-Moore theorem.

\begin{thm}[Locality Cartier-Quillen-Milnor-Moore theorem] \label{thm:CQMMTheorem} Let $(H,\top)$ be a graded, connected, cocommutative locality Hopf algebra. $H$ is generated by its primitive elements ${\rm Prim}(H)$ as a locality algebra, which we assume satifies    the conjectural statements \ref{conj:univenvalg} and \ref{conj:main2}. In that case, we   have the following isomorphism of    
locality Hopf algebras:
\begin{equation}\label{eq:iso}(H,\top)\underset{{\rm loc}}{\simeq}(U_\top({\rm Prim}(H)),\top_U)
\end{equation}
with $\top_U$ the locality relation of Definition \ref{defn:luenvelopingalgebra} in the case $\g={\rm Prim}(H)$.
\end{thm}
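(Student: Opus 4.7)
The plan is to adapt the classical Cartier--Quillen--Milnor--Moore argument to the locality framework, exploiting the machinery developed so far (universal property of the locality enveloping algebra, Proposition \ref{prop:pbwcor} on primitives of $U_\top(\g)$, and the coideal criterion Proposition \ref{prop:lem70foissy}). Throughout, $\g:=\text{Prim}(H)$ equipped with $\top_\g := \top \cap (\g\times\g)$ and the commutator bracket is a graded locality Lie algebra by Proposition \ref{prop:primitive_Lie}; since the conjectural statements are assumed for this $\g$, Proposition \ref{prop:universal_envelopping_alg_is_Hopf} turns $U_\top(\g)$ into a graded connected cocommutative locality Hopf algebra.

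\textbf{Step 1: Construction of the comparison map.} The inclusion $\iota:\g\hookrightarrow H$ is a locality Lie algebra morphism into $(H,\top,[\,,\,])$ (with $[x,y]=xy-yx$). Theorem \ref{thm:univproplocunivenvalg} then yields a unique locality algebra morphism
\[
\phi:(U_\top(\g),\top_U)\longrightarrow (H,\top)
\]
such that $\phi\circ\iota_\g=\iota$.

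\textbf{Step 2: $\phi$ is a locality Hopf algebra morphism.} Consider the two locality algebra morphisms $\Delta_H\circ\phi$ and $(\phi\otimes\phi)\circ\Delta_{U_\top(\g)}$ from $U_\top(\g)$ to $H\otimes_\top H$. On $\iota_\g(\g)$ both evaluate to $x\otimes 1 + 1\otimes x$ (since elements of $\iota_\g(\g)$ are primitive in $U_\top(\g)$ and their images are primitive in $H$). By the uniqueness clause in Theorem \ref{thm:univproplocunivenvalg} applied to the Lie map $x\mapsto x\otimes 1+1\otimes x$, the two algebra extensions coincide, so $\phi$ commutes with $\Delta$. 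The analogous argument (applied to the zero Lie morphism $\g\to\K$ and to $\sigma(x)=-x$) shows that $\phi$ commutes with the counits and with the antipodes. Hence $\phi$ is a morphism of locality Hopf algebras.

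\textbf{Step 3: Injectivity.} The kernel $K:=\ker(\phi)$ is a locality two-sided ideal by Lemma \ref{lem:Kerlocideal}; since $\phi$ is a coalgebra morphism, the locality analogue of Lemma \ref{lem:Kerloccoideal} gives that $K$ is also a locality coideal, and $K\cap H^0 = \{0\}$ because $\phi(1_{U_\top(\g)})=1_H\neq 0$. The induced filtration on $U_\top(\g)$ is connected, so if $K\neq\{0\}$, Proposition \ref{prop:lem70foissy} supplies a non-zero primitive element $y\in K$. By Proposition \ref{prop:pbwcor}, $y=\iota_\g(x)$ for some $x\in\g\subset H$; then $0=\phi(y)=\iota(x)=x\in H$, forcing $x=0$ and hence $y=0$, a contradiction. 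Thus $\phi$ is injective.

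\textbf{Step 4: Surjectivity (the main obstacle).} This is where cocommutativity is crucial. Let $H'=\phi(U_\top(\g))$, a graded sub-locality Hopf algebra of $H$. We prove $H_n\subseteq H'$ by induction on $n$, the case $n=0$ being trivial. Assume the inclusion holds up to degree $n-1$ and pick $x\in H_n$. By Proposition \ref{prop:useful}(\ref{lem:lem69foissy}), $\tilde\Delta^{(n-1)}(x)\in \text{Prim}(H)^{\otimes_\top n} = \g^{\otimes_\top n}$. Cocommutativity of $\Delta_H$ makes $\tilde\Delta^{(n-1)}(x)$ symmetric under the $\mathfrak{S}_n$-action, so by Proposition \ref{prop:useful}(\ref{lem:deltcopprim}) one can write $\tilde\Delta^{(n-1)}(x) = \tilde\Delta^{(n-1)}(\phi(z))$ for a suitable symmetrised product $z\in U_\top(\g)$ of $n$ primitive elements (lifted via $\phi^{-1}$ on $\g$). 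Then $x-\phi(z)$ lies in $\ker\tilde\Delta^{(n-1)}$ and, together with Proposition \ref{prop:useful}(\ref{lem:allesprimitn}) applied at lower depths, is expressible in terms of components of degree $<n$ up to a primitive element; the induction hypothesis and the obvious inclusion $\g\subseteq H'$ imply $x\in H'$. Hence $\phi$ is surjective. The main difficulty here is ensuring that the lift $z$ can be chosen inside the locality tensor product $\g^{\otimes_\top n}$, which uses the symmetry coming from cocommutativity together with Corollary \ref{cor:ntensorproduct} (which requires conjectural statement \ref{conj:main1}) to guarantee that the symmetrisation lives in the correct locality space.

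\textbf{Step 5: The inverse is locality.} Since $\phi$ is an algebra isomorphism and $H$ is generated as a locality algebra by $\g$, any two elements $x,y\in H$ with $x\top y$ can be written as finite sums of products of primitive elements, and the pairwise locality in $H$ is inherited by the corresponding generators in $\g$; pulling back through $\iota=\phi\circ\iota_\g$ shows that the primitive generators are $\top_U$-independent in $U_\top(\g)$, whence $\phi^{-1}(x)\top_U\phi^{-1}(y)$. Thus $\phi$ is an isomorphism of locality Hopf algebras, establishing \eqref{eq:iso}. The assertion that $H$ is generated by its primitive elements as a locality algebra then follows directly from the surjectivity of $\phi$ together with the fact that $U_\top(\g)$ is generated by $\iota_\g(\g)$.
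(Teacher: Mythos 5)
Your overall route is the same as the paper's: build $\phi$ from the universal property of $U_\top(\mathrm{Prim}(H))$, get injectivity from the coideal argument (Lemma \ref{lem:Kerloccoideal}, Proposition \ref{prop:lem70foissy}) together with Proposition \ref{prop:pbwcor}, get surjectivity from cocommutativity via the symmetrisation trick with $\tilde\Delta^{(n-1)}$, and then check the inverse is a locality map. Two points, however, need repair. In Step 4 your induction bookkeeping is off: the correction term $x-\phi(z)$ is still homogeneous of degree $n$, so it is \emph{not} ``expressible in terms of components of degree $<n$''; what decreases is the primitive depth $\deg_p$, i.e.\ the minimal $k$ with $\tilde\Delta^{(k)}=0$, and the clean argument (as in the paper) is an induction on $\deg_p(x)$: cocommutativity lets you write $\tilde\Delta^{(n-1)}(x)=\tilde\Delta^{(n-1)}\bigl(\tfrac{1}{n!}\sum_i x_1^{(i)}\cdots x_n^{(i)}\bigr)$ inside $H$ itself, so the difference has strictly smaller primitive depth. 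Note also that no appeal to Conjectural statement \ref{conj:main1} is needed (and it is not among the hypotheses of the theorem): Proposition \ref{prop:useful} already places $\tilde\Delta^{(n-1)}(x)$ in $\mathrm{Prim}(H)^{\otimes_\top n}$ and supplies formula \eqref{eq:deltan-1sigma}, which is all the locality control the symmetrisation requires.

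The more serious issue is Step 5. The assertion that ``the pairwise locality in $H$ is inherited by the corresponding generators in $\g$'' is exactly the locality-specific crux and is not justified by generation alone: an arbitrary decomposition of $x$ and $y$ into products of primitives gives no control on the mutual independence of the chosen primitive factors. The argument that works, and the one the paper uses, extracts the primitive generators \emph{through the reduced coproduct}: for $h=x_1\cdots x_n$ a product of primitives one has $\phi^{-1}(h)$ read off from $\tilde\Delta^{(n-1)}(h)=\sum_{\sigma\in\mathfrak{S}_n}x_{\sigma(1)}\otimes\cdots\otimes x_{\sigma(n)}$ (Proposition \ref{prop:useful}, item 3), in which the identity permutation occurs as one summand; since $\rho$ and $\Delta$ are locality maps, so is $\tilde\Delta^{(n-1)}$, and this is what transports $x\top y$ in $H$ to $\top_U$-independence of the images in $U_\top(\mathrm{Prim}(H))$ (for $n=1$ one only needs that the quotient map defining $\top_U$ is a locality map). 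Without this mechanism your Step 5 is a gap; with it, your proof coincides with the paper's.
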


\begin{proof}
\begin{itemize}
	\item We   first prove that $H$ is generated by its primitive elements ${\rm Prim}(H)$ as a locality algebra. Let $H'$ be the locality subalgebra 
of $H$ generated by ${\rm Prim}(H)\cup\{1\}$ and let  $0\neq x\in H$. By Proposition \ref{prop:useful} \ref{lem:lem69foissy}., for $k$ big enough, $\tilde{\Delta}^{(k)}(x)=0$. 
Set ${\rm deg}_p(x)$ to be the minimum of all such integers $k$. Using induction over ${\rm deg}_p(x)$, we show that $H\subset H'$. If ${\rm deg}_p(x)=0$, then by 
definition of $\tilde{\Delta}^{(0)}$, $\rho(x)=0$. Hence $x\in\K\subset H'$. If ${\rm deg}_p(x)=1$, then $x$ is a primitive element  so that $x\in H'$. If $n={\rm deg}_p(x)>0$, by means of   Proposition \ref{prop:useful}   \ref{lem:allesprimitn}. We have
\[\tilde{\Delta}^{(n-1)}(x)=\sum_ix_1^{(i)}\otimes\cdots\otimes x_n^{(i)}\]
where all the $x_j^{(i)}$ are primitive elements. The cocommutativity of $H$ implies invariance under the natural action of the symmetric group 
so we have:
\[\tilde{\Delta}^{(n-1)}(x)=\frac{1}{n!}\sum_{\sigma\in S_n}\sum_i x_{\sigma(1)}^{(i)}\otimes\cdots\otimes x_{\sigma(n)}^{(i)} \]
Exchanging the two sums (both over finite sets), we then recognize   in the innermost sum the expression of 
$\tilde\Delta^{(n-1)}(x^{(i)}_1\cdots x^{(i)}_n)$ given by   Proposition \ref{prop:useful}  \ref{lem:deltcopprim}. The linearity of $\tilde{\Delta}^{(n-1)}$ then yields 
\[\tilde{\Delta}^{(n-1)}(x)=\tilde{\Delta}^{(n-1)}\left(\frac{1}{n!}\sum_ix_1^{(i)}\cdots x_n^{(i)}\right)~\Longleftrightarrow~\tilde{\Delta}^{(n-1)}\left(x-\frac{1}{n!}\sum_ix_1^{(i)}\cdots x_n^{(i)}\right)=0.\]
Hence,
by definition of the degree, 
\[{\rm deg}_p\left(x-\frac{1}{n!}\sum_ix_1^{(i)}\cdots x_n^{(i)}\right)<n.\]
By the induction hypothesis, this element lies in $H'$, and so does $x\in H'$. We infer that $H'=H$ and $H$ is generated by its primitive elements ${\rm Prim}(H)$ as a locality algebra.
 \item 
To build the isomorphism   (\ref{eq:iso}) we assume  that the conjectural statement \ref{conj:main2} holds true.  By Proposition \ref{prop:primitive_Lie}, the set Prim$(H)$ of primitive elements of $H$ has a graded locality Lie algebra structure. 
We can therefore build its enveloping algebra $U_{\top}({\rm Prim}(H))$. Assuming   that the conjectural statement \ref{conj:main2} holds true, we can apply the universal property  (Theorem 
\ref{thm:univproplocunivenvalg})  to extend  the locality Lie 
algebra morphism given by the injection $i:{\rm Prim}(H)\to H$ to a locality algebra morphism 
\[\phi:U_{\top}({\rm Prim}(H))\to H,\] 
which stabilizes
the elements in ${\rm Prim}(H)$. 
%
Moreover, since $\phi$ is a locality algebra morphism, again by \cy{Lemma \ref{lem:Kerlocideal}} the range of $\phi$ is a locality subalgebra of $H$ which contains $H'$. Hence, by the 
first part of this proof, $\phi$ is surjective. 

\cy{\item Let us show that $\phi$ is a coalgebra morphism. The coproducts $\Delta_U$ on $U_\top({\rm Prim}(H))$ and $\Delta$ on $H$ are by definition algebra morphisms. Since they coincide on the set ${\rm Prim}(H)$,  $(\phi\otimes\phi)\circ\Delta_U=\Delta\circ\phi$ on ${\rm Prim}(H)$. This identity  extends  everywhere since ${\rm Prim}(H)$ generates both $U_{\top}({\rm Prim})$ and $H$ as locality algebras. We still need to show $\epsilon_U=\epsilon\circ\phi$, with $\epsilon_U$ the counit of $U_\top({\rm Prim}(H))$ and $\epsilon$ the counit of $H$. Again, since ${\rm Prim}(H)$ generates both $U_{\top}({\rm Prim})$ and $H$ as locality algebras, it is enough to show that these maps coincide on Prim$(H)$. On the one hand, by definition of $\epsilon_U$ (see the proof of Proposition \ref{prop:universal_envelopping_alg_is_Hopf}), $\epsilon_U$ vanishes on Prim$(H)$. On the other hand, for any $h\in{\rm Prim}(H)$, using the property of $\epsilon$ and the canonical identification $\K\otimes H\simeq H$, we have:
\begin{equation*}
 h=(\epsilon\otimes {\rm Id}_H)\circ\Delta(h)=\epsilon(h)\otimes 1_H+\epsilon(1)\otimes h=\epsilon(h)1_H+h.
\end{equation*}
Therefore $\epsilon$ vanishes on Prim$(H)$ as required and $\phi$ is a coalgebra morphism.
}

\item We prove the injectivity of $\phi$ ad absurdum. The fact that $\phi$ is a locality coalgebra morphism,  implies by Lemma \ref{lem:Kerloccoideal},  that its 
kernel ${\rm Ker}(\phi)$ is a locality coideal of $U_{\top}({\rm Prim}(H))$. 
Since $\phi$ is an algebra morphism, $\phi(1)=1$ so   $\rm{Ker}(\phi)\cap\K\, 1=\{0\}$. By  Proposition  \ref{prop:lem70foissy} 
applied to $J=\ker(\phi)$, assuming the latter is non trivial,  it must contain  a primitive element of 
$U_{\top}({\rm Prim}(H))$. 
However this leads to  a contradiction since, by Proposition \ref{prop:pbwcor}, ${\rm Prim}(U_{\top}({\rm Prim}(H)))={\rm Prim}(H)$ which is 
fixed by $\phi$, 
therefore none of them lies in the kernel. Hence $\phi$ is injective.

\item To show that $\phi$ is an locality isomorphism  of Hopf algebras, we still need to prove that $\phi^{-1}:H\longrightarrow U_\top({\rm Prim}(H))$ is a locality algebra morphism .The previous items give the existence of the inverse map $\phi^{-1}$. By definition of $\phi$, on any element $h$ of $H$, $\phi^{-1}$ acts as:
\begin{equation*}
 \phi^{-1}(h) = x_1\otimes\cdots\otimes x_n
\end{equation*}
for some $n\in\N^*$ and primitive elements $x_i$ such that $h=x_1\cdots x_n$. Here the right-hand-side  actually stands for an equivalence class of tensor products, which we write as a tensor to simplify notations. We now distinguish two cases.

If  $n=1$, then $h$ is a primitive element of $H$ and $\phi^{-1}$ restricted to primitive elements is simply the projection from $H$ to $U_\top({\rm Prim}(H))$. This map is a locality map by definition of the locality on the quotient space $U_\top({\rm Prim}(H))$.

If $n\geq2$,  using Equation \eqref{eq:deltan-1sigma} of Proposition \ref{prop:useful},  we have
\begin{equation*}
 \tilde{\Delta}^{(n-1)}(h) = \tilde{\Delta}^{(n-1)}(x_1\cdots x_n) = \sum_{\sigma\in S_n}x_{\sigma(1)}\otimes \cdots \otimes x_{\sigma(n)}.
\end{equation*}
The fact that $\phi^{-1}$ is a locality map then follows from combining together the  fact that $\tilde{\Delta}^{(n-1)}$   is a locality map (which, as already pointed out in Section \ref{subsect:technical_results}, follows from  $\rho$ and $\Delta$ being locality maps),   that the identity permutation is one term of the right-hand-side of the above equation, and the definitions of the locality relations on $\mathcal{T}_\top({\rm Prim}(H))$ and $U_\top({\rm Prim}(H))$.

 \item Then by Proposition \ref{prop:universal_envelopping_alg_is_Hopf}, $U_\top({\rm  Prim}(H))$ is a Hopf algebra, and since $\phi$ is a morphism of graded locality bialgebras and an isomorphism, it is an isomorphism of locality Hopf algebras.  \qedhere
 
\end{itemize}
\end{proof}

\subsection{Consequences of the Milnor-Moore theorem}

We end the paper with some useful consequences of the locality Milnor-Moore Theorem.

Throughout this paragraph, we assume that for any locality Hopf algebra $H$ under consideration,  the set ${\rm Prim}(H)$ of its primitive elements, viewed as a locality Lie algebra, satifies    the conjectural statements \ref{conj:univenvalg} and \ref{conj:main2}.

It follows from  the locality Milnor-Moore Theorem \ref{thm:CQMMTheorem}, that the locality relation on a graded, connected, cocommutative Hopf algebra is entirely determined  by the locality relation on its primitive elements.

More precisely:
\begin{cor}
 Let $(H_1,m_1,\Delta_1,\top_1)$ and $(H_2,m_2,\Delta_2,\top_2)$ be two graded, connected, cocommutative locality Hopf algebras. Then if
 \begin{equation*}
  ({\rm Prim}(H_1),[,]_1,\tilde\top_1)\simeq({\rm Prim}(H_2),[,]_2,\tilde\top_2)
 \end{equation*}
 as locality Lie algebras (heere $[,]_i$ is the antisymmetrisation of the product $m_i$) and wehe we have set   $\tilde\top_i:=\top_i\cap({\rm Prim}(H_i)\times{\rm Prim}(H_i))$), then 
 \begin{equation*}
  (H_1,m_1,\Delta_1,\top_1)\simeq(H_2,m_2,\Delta_2,\top_2)
 \end{equation*}
 as locality Hopf algebras.
\end{cor}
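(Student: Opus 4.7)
The plan is to leverage the locality Milnor--Moore theorem (Theorem \ref{thm:CQMMTheorem}) twice and then transport the given locality Lie algebra isomorphism along it. More precisely, under the standing assumption that the conjectural statements \ref{conj:univenvalg} and \ref{conj:main2} hold for ${\rm Prim}(H_1)$ and ${\rm Prim}(H_2)$, Theorem \ref{thm:CQMMTheorem} yields locality Hopf algebra isomorphisms
\begin{equation*}
\Phi_i:(H_i,m_i,\Delta_i,\top_i)\;\overset{\sim}{\longrightarrow}\;(U_{\tilde\top_i}({\rm Prim}(H_i)),\top_{U_i}),\qquad i=1,2.
\end{equation*}
It therefore suffices to produce an isomorphism of locality Hopf algebras $\Psi:U_{\tilde\top_1}({\rm Prim}(H_1))\overset{\sim}{\to} U_{\tilde\top_2}({\rm Prim}(H_2))$, since the composition $\Phi_2^{-1}\circ\Psi\circ\Phi_1$ will then be the required isomorphism of locality Hopf algebras.

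To build $\Psi$, I would proceed as follows. Let $f:({\rm Prim}(H_1),[,]_1,\tilde\top_1)\to({\rm Prim}(H_2),[,]_2,\tilde\top_2)$ be the given isomorphism of locality Lie algebras, with inverse $g=f^{-1}$. Composing $f$ with the canonical locality Lie algebra morphism $\iota_{{\rm Prim}(H_2)}:{\rm Prim}(H_2)\to U_{\tilde\top_2}({\rm Prim}(H_2))$ gives a locality Lie algebra morphism into $U_{\tilde\top_2}({\rm Prim}(H_2))$ (viewed as a locality Lie algebra via its commutator). Applying the universal property of the locality universal enveloping algebra (Theorem \ref{thm:univproplocunivenvalg}) produces a unique locality algebra morphism $\Psi:U_{\tilde\top_1}({\rm Prim}(H_1))\to U_{\tilde\top_2}({\rm Prim}(H_2))$ extending $\iota_{{\rm Prim}(H_2)}\circ f$. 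Applying the same construction to $g=f^{-1}$ yields $\Psi':U_{\tilde\top_2}({\rm Prim}(H_2))\to U_{\tilde\top_1}({\rm Prim}(H_1))$; the uniqueness part of the universal property, together with the fact that both $\Psi'\circ\Psi$ and $\mathrm{Id}$ extend $\iota_{{\rm Prim}(H_1)}\circ g\circ f=\iota_{{\rm Prim}(H_1)}$, forces $\Psi'\circ\Psi=\mathrm{Id}$, and symmetrically $\Psi\circ\Psi'=\mathrm{Id}$. Hence $\Psi$ is a locality algebra isomorphism.

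It remains to upgrade $\Psi$ to a locality Hopf algebra isomorphism. Recall from the proof of Proposition \ref{prop:universal_envelopping_alg_is_Hopf} that on $U_{\tilde\top_i}({\rm Prim}(H_i))$ the coproduct, counit and antipode are the unique locality algebra morphisms (respectively algebra morphism, algebra morphism, algebra antimorphism) extending $x\mapsto x\otimes 1+1\otimes x$, $x\mapsto 0$ and $x\mapsto -x$ on primitive elements. A direct check on primitives shows that $(\Psi\otimes\Psi)\circ\Delta_{U_1}$ and $\Delta_{U_2}\circ\Psi$ agree on $\iota_{{\rm Prim}(H_1)}({\rm Prim}(H_1))$, and similarly for the counits and antipodes; by the uniqueness clauses of Theorem \ref{thm:univproplocunivenvalg} combined with the fact that ${\rm Prim}(H_1)$ generates $U_{\tilde\top_1}({\rm Prim}(H_1))$ as a locality algebra (shown in the first bullet of the proof of Theorem \ref{thm:CQMMTheorem}), these identities extend everywhere, so $\Psi$ is a locality Hopf algebra morphism.

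The step I expect to require most care is not any single one of these in isolation but the verification that $\Psi$ is a \emph{locality} map and not merely a morphism of ordinary Hopf algebras; this is precisely where the assumption that $f$ is an isomorphism of \emph{locality} Lie algebras (i.e. both $f$ and $f^{-1}$ preserve the locality relation) is used, combined with the fact that $\top_{U_i}$ is the final locality relation induced by the construction of Definition \ref{defn:luenvelopingalgebra}. The inverse $\Psi^{-1}=\Psi'$ is then automatically a locality map as well, being itself produced by the universal property, which closes the argument.
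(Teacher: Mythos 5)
Your proposal is correct and follows essentially the same route as the paper: the paper's (much terser) proof likewise lifts the locality Lie algebra isomorphism to an isomorphism of the locality universal enveloping algebras via the universal property, observes that it is a locality Hopf algebra isomorphism because the coproducts (and counits, antipodes) are determined by their values on primitives as in Proposition \ref{prop:universal_envelopping_alg_is_Hopf}, and then concludes with Theorem \ref{thm:CQMMTheorem}. Your write-up simply makes explicit the standard uniqueness arguments (for $\Psi'\circ\Psi=\mathrm{Id}$ and for the Hopf compatibility) that the paper leaves implicit.
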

\begin{proof}
By  the universality property of universal locality algebras, the isomorphism $({\rm Prim}(H_1),[,]_1,\tilde\top_1)\simeq({\rm Prim}(H_2),[,]_2,\tilde\top_2)$ implies that the universal envelopping algebras of these Lie algebras are isomorphic as locality algebras. As a result of the construction of their respective coproducts (presented in Proposition \ref{prop:universal_envelopping_alg_is_Hopf}), they are isomorphic as locality Hopf algebras. The result then follows from Theorem \ref{thm:CQMMTheorem}.
\end{proof}
This further leads to the observation that  locality Hopf algebras are not generally speaking  ordinary locality Hopf algebras with an "added" locality relation. In other words, one cannot simply ``turn on'' locality, at least when the locality Milnor-Moore theorem applies.

\begin{cor}\label{cor:Nogocorollary}
Let $(H,m,\Delta)$ be a graded, connected, cocommutative Hopf algebra. The   trivial locality relation  $\top=H\times H$ is the only locality relation $\top$ on $H$ such that  such that $(H,\top,m|_\top,\Delta)$ is a locality Hopf algebra.
\end{cor}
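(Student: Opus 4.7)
The plan is to invoke Theorem \ref{thm:CQMMTheorem} in tandem with the classical Milnor--Moore theorem and compare the two resulting isomorphisms. Let $\top$ be any locality relation on $H$ for which $(H,\top,m|_\top,\Delta)$ is a locality Hopf algebra. Since the underlying Hopf algebra $(H,m,\Delta)$ is by hypothesis graded, connected and cocommutative, Theorem \ref{thm:CQMMTheorem} applies and yields an isomorphism of locality Hopf algebras
\[\phi\colon(U_\top(\mathrm{Prim}(H)),\top_U)\xrightarrow{\sim}(H,\top)\]
extending the canonical inclusion $\mathrm{Prim}(H)\hookrightarrow H$. In particular $\phi\times\phi$ maps $\top_U$ bijectively onto $\top$, so it suffices to prove that $\top_U$ is the full relation on $U_\top(\mathrm{Prim}(H))$.

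Write $\tilde\top:=\top\cap(\mathrm{Prim}(H)\times\mathrm{Prim}(H))$. The inclusions $\mathcal{T}_\top(\mathrm{Prim}(H))\hookrightarrow \mathcal{T}(\mathrm{Prim}(H))$ and $J_\top(\mathrm{Prim}(H))\subseteq J(\mathrm{Prim}(H))\cap \mathcal{T}_\top(\mathrm{Prim}(H))$ induce a canonical linear map $\iota\colon U_\top(\mathrm{Prim}(H))\to U(\mathrm{Prim}(H))$ that restricts to the identity on primitive elements. Now, the classical Milnor--Moore theorem applied to $(H,m,\Delta)$ produces an isomorphism of Hopf algebras $\psi\colon U(\mathrm{Prim}(H))\xrightarrow{\sim}H$, again extending the inclusion of primitives. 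Since $U_\top(\mathrm{Prim}(H))$ is generated as a partial algebra by $\mathrm{Prim}(H)$ together with the unit, any algebra morphism out of it extending the inclusion of primitives is uniquely determined, whence $\phi=\psi\circ\iota$ at the level of plain algebra morphisms. The bijectivity of $\phi$ and $\psi$ then forces $\iota$ to be a bijection, so $U_\top(\mathrm{Prim}(H))=U(\mathrm{Prim}(H))$ as underlying Hopf algebras.

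The main obstacle lies in the final step, namely deducing from this identification that $\top_U$ is the full relation. The idea is that $\top_U$ is built explicitly from $\tilde\top$ via the final locality relations of Definitions \ref{defn:loc_rel_tensor_alg} and \ref{defn:luenvelopingalgebra}. Given an arbitrary pair $(x,y)\in U_\top(\mathrm{Prim}(H))^2$, the identification with $U(\mathrm{Prim}(H))$ lets one write both $x$ and $y$ as polynomials in primitives; the compatibility condition $\Delta(U^\top)\subseteq U^\top\otimes_\top U^\top$ from Definition \ref{defn:lochopfalg}, applied to suitably chosen elements and combined with the linearity of the polar sets of a locality vector space, should force the existence of $\tilde\top$-compatible representatives of $x$ and $y$ in $\mathcal{T}_\top(\mathrm{Prim}(H))$, and hence $(x,y)\in\top_U$. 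Transporting along $\phi$ would then give $\top=H\times H$. Making this propagation argument rigorous is the principal technical hurdle; I expect the cleanest route to combine the universal property of $U_\top$ (Theorem \ref{thm:univproplocunivenvalg}) with Proposition \ref{prop:pbwcor} to track how locality propagates through the enveloping algebra construction.
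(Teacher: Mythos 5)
Your plan identifies the right starting point (the locality Milnor--Moore theorem forces the locality on $H$ to be governed by the locality $\tilde\top$ on $\mathrm{Prim}(H)$), but the route you take afterwards is both more complicated than necessary and, as you yourself flag, stalls precisely at the step where the actual content lies. Establishing via $\iota$ and the classical Milnor--Moore isomorphism $\psi$ that $U_\top(\mathrm{Prim}(H))$ and $U(\mathrm{Prim}(H))$ agree as underlying Hopf algebras would only tell you about the vector-space/coalgebra structure; it says nothing directly about $\top_U$, and the ``propagation'' argument you sketch for the final step is not filled in. (There is also a subtlety you gloss over: $\psi\circ\iota$ is not a priori a \emph{locality} algebra morphism into $(H,\top)$ unless $\top$ is already trivial, so invoking the universal property of $U_\top$ to force $\phi=\psi\circ\iota$ requires an extra argument.) In short, the proposal as written has a genuine gap, and the gap sits exactly where the conclusion is supposed to come from.

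The paper argues much more directly, and you should compare. Suppose for contradiction that $\top\neq H\times H$. From the locality Milnor--Moore theorem one deduces only the following (and nothing about $\iota$ or $\psi$): if $\tilde\top=\top\cap(\mathrm{Prim}(H)\times\mathrm{Prim}(H))$ were the trivial relation $\mathrm{Prim}(H)\times\mathrm{Prim}(H)$, then $\top_U$ would be trivial, hence so would $\top$ under the isomorphism $\phi$ --- contradiction. So there exist primitive $a,b$ with $(a,b)\notin\top$. Now set $y:=m(a,b)\in H$ (this exists because $m$ is the \emph{full} Hopf product, of which $m|_\top$ is a restriction). Since $a$ and $b$ are primitive,
\begin{equation*}
\Delta(y)\;=\;\Delta(a)\,\Delta(b)\;=\;y\otimes 1+a\otimes b+b\otimes a+1\otimes y,
\end{equation*}
and the term $a\otimes b+b\otimes a$ cannot lie in $H\otimes_\top H$ because $(a,b)\notin\top$ (one uses here that polar sets are linear subspaces: if $a\otimes b$ could be re-expressed via elements local to $b$, then $a$ itself would be local to $b$). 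This contradicts the requirement $\Delta(H)\subseteq H\otimes_\top H$ in the definition of a locality coalgebra. This argument is self-contained once you know the locality on $H$ is controlled by $\tilde\top$; it never needs the classical Milnor--Moore theorem, the map $\iota$, or any statement about $U_\top$ agreeing with $U$. I recommend replacing your detour by this direct computation of $\Delta(ab)$.
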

\begin{proof}
We proceed by contradiction, assuming  such a non-trivial locality relation $\top$ exists. 

 Since we saw that  locality on $H$ is determined by the locality on primitive elements,  if $\top$ is not the trivial locality relation, there are   primitive elements $a$ and $b$ such that $(a,b)\notin\top$. 
 Indeed, if this were not true, then $\top|_{{\rm Prim}(H)\times{\rm Prim}(H)}={\rm Prim}(H)\times{\rm Prim}(H)$ would  be the trivial locality on ${\rm Prim}(H)$ and hence on $H$ thanks to the locality Milnor-Moore theorem.  

Let $a$ and $b$ be primitive elements such that $(a,b)\notin \top$. Setting  $y=m(a,b)$, on the one hand $y\in(H,\top)$ implies that $\Delta(y)\subset H\otimes H$. On the other hand \[\Delta(y)=\Delta(a)\Delta(b)=(a\otimes1+1\otimes a)(b\otimes1+1\otimes b)=y\otimes 1+a\otimes b+b\otimes a+1\otimes y\notin H\otimes_\top H.\]
But this contradicts the inclusion $\Delta(H)\subset H\otimes_\top H$, which follows from the fact that the  coproduct $\Delta$ of the Hopf algebra coincides with the locality coproduct of the locality Hopf algebra.
\end{proof}

Theorem \ref{thm:CQMMTheorem}  \cy{also} allows to describe examples of locality Hopf algebras.
\begin{ex}
With the notations of Example	\ref{ex:tensorHopf}, let $P_V$ be the subspace of primitive elements of $\mathcal{T}_\top(V)$. Endowed with the locality Lie bracket induced by the commutator, namely $[a,b]=a\otimes b-b\otimes a$ whenever $(a,b)\in\top_\otimes$, $P_V$ is a locality lie algebra. It then follows  from Theorem \ref{thm:CQMMTheorem} that
\[\left({\mathcal T}_\top(V), \top_\otimes\right)\simeq\left( {\mathcal U}_\top( P_V), \top_U\right), \]
this last isomorphism is an isomorphism of locality Hopf algebras.
\end{ex}
Theorem \ref{thm:CQMMTheorem} allows to build sub-locality Hopf algebras from sub-locality Lie algebras.
	\begin{cor}\label{cor:subalgsubHop}   Let $(H,\top)$ be a graded, connected, cocommutative locality Hopf algebra whose set  ${\mathfrak g}:={\rm Prim}(H)$ of primitive elements obeys    the conjectural statements \ref{conj:main2} and \ref{conj:univenvalg}.
	 There is    a one-to-one correspondence
		\begin{equation} \label{eq:correspondence_Hg}\left((H, \top)\supset (H', \top')\right)\quad \longleftrightarrow \quad \left((\g', \top')\subset (\g, \top)\right) 
		\end{equation}
between    graded, connected, cocommutative sub-locality Hopf algebras of $H$ and  sub-locality Lie algebras of $(\g={\rm Prim}(H), \top)$ where the localities $\top$ and $\top'$ on the r.h.s. are the restrictions of the ones of the l.h.s.
	\end{cor}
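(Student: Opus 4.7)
The strategy is to invoke the locality Milnor--Moore Theorem \ref{thm:CQMMTheorem} on both $H$ and on each candidate sub-locality Hopf algebra $H'$, and to combine it with the comparison result of Proposition \ref{prop:sub_Lie_envelopping} for universal enveloping algebras of sub-locality Lie algebras. The plan is to exhibit two mutually inverse maps between the two classes described in \eqref{eq:correspondence_Hg} and to check that each construction stays inside the correct category (graded, connected, cocommutative sub-locality Hopf algebras on one side, and sub-locality Lie algebras on the other).

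First, I would define the forward map $\Phi:(H',\top')\longmapsto (\mathfrak g',\top')$ by $\mathfrak g':=\mathrm{Prim}(H')$, equipped with the restriction of $\top'$. Since $H'$ is a locality sub-bialgebra, primitivity with respect to $\Delta'$ coincides with primitivity with respect to $\Delta$ on elements of $H'$, so $\mathfrak g'\subseteq \mathfrak g$, and Proposition \ref{prop:primitive_Lie} applied to $(H',\top')$ makes it a locality Lie subalgebra of $(\mathfrak g,\top)$ in the sense of Definition \ref{defn:localityLieAlgebra}. Next, I would define the backward map $\Psi:(\mathfrak g',\top')\longmapsto (H',\top')$ by $H':=U_{\top'}(\mathfrak g')$, and use Proposition \ref{prop:sub_Lie_envelopping} to conclude that $H'$ is a sub-locality Hopf algebra of $U_\top(\mathfrak g)\simeq H$ (the last isomorphism coming from Theorem \ref{thm:CQMMTheorem}). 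The grading on $H'$ is inherited from the filtration of $U_{\top'}(\mathfrak g')$, and connectedness follows from $(H')_0=\K\cdot 1$, while cocommutativity is automatic because $U_{\top'}(\mathfrak g')$ is cocommutative (see the proof of Proposition \ref{prop:universal_envelopping_alg_is_Hopf}).

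The core of the proof is then to verify that $\Phi\circ\Psi=\mathrm{Id}$ and $\Psi\circ\Phi=\mathrm{Id}$. For $\Phi\circ\Psi$: starting from $(\mathfrak g',\top')$, one has $\mathrm{Prim}(U_{\top'}(\mathfrak g'))=\iota_{\mathfrak g'}(\mathfrak g')=\mathfrak g'$ by Proposition \ref{prop:pbwcor}, and the locality on primitive elements inherited from $\top'_U$ coincides with $\top'$ by Definition \ref{defn:luenvelopingalgebra}. For $\Psi\circ\Phi$: starting from $(H',\top')$, the sub-locality Hopf algebra $H'$ is itself graded, connected and cocommutative, so Theorem \ref{thm:CQMMTheorem} applied to $H'$ (using that the conjectural statements \ref{conj:main2} and \ref{conj:univenvalg} also hold for $\mathrm{Prim}(H')\subseteq \mathrm{Prim}(H)=\mathfrak g$, since locality compatibility is preserved by restriction) yields a locality Hopf algebra isomorphism $U_{\top'}(\mathrm{Prim}(H'))\simeq H'$, as required.

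The main obstacle is the verification that the locality relation on the two sides of \eqref{eq:correspondence_Hg} is preserved in a coherent way under the two assignments. Concretely, one has to check that the locality $\top'_U$ on $U_{\top'}(\mathfrak g')$ coincides with the restriction of $\top_U$ on $U_\top(\mathfrak g)$ to $U_{\top'}(\mathfrak g')$: this is the content of Proposition \ref{prop:CompareUnivAlgebras} and the argument around it in Proposition \ref{prop:sub_Lie_envelopping}, but it requires carefully tracking the quotient locality relations through the successive constructions $\mathcal T_{\top'}(\mathfrak g')\hookrightarrow \mathcal T_\top(\mathfrak g)$ and $J_{\top'}(\mathfrak g')=J_\top(\mathfrak g)\cap \mathcal T_{\top'}(\mathfrak g')$. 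Once this bookkeeping is in place, the one-to-one correspondence follows immediately from the combination of Theorem \ref{thm:CQMMTheorem} with Proposition \ref{prop:pbwcor} applied simultaneously to $H$ and to $H'$.
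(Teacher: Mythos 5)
Your proposal is correct and follows essentially the same route as the paper: the forward and backward maps are exactly the paper's (take $\mathrm{Prim}(H')$ via Proposition \ref{prop:primitive_Lie} in one direction, take $U_{\top'}(\g')$ via Proposition \ref{prop:sub_Lie_envelopping} and Theorem \ref{thm:CQMMTheorem} in the other). The only difference is that you spell out the mutual-inverse check — $\mathrm{Prim}(U_{\top'}(\g'))=\g'$ by Proposition \ref{prop:pbwcor} and $U_{\top'}(\mathrm{Prim}(H'))\simeq H'$ by Milnor--Moore applied to $H'$ — where the paper simply says the maps are inverse ``by construction'', so your version is if anything more explicit (modulo the fact that the validity of the conjectural statements for $\mathrm{Prim}(H')$ is covered by the paper's standing assumption rather than by your claim that locality compatibility restricts automatically).
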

	\begin{proof}
		\begin{itemize}
			\item Let $\g'$ be a subset of $\g=$Prim$(H)$  with locality $\top'\subset\top$ so that $(\g',\top',[, ]|_{\top'})$ is sub-locality Lie algebra of $(\g,\top,[,]_\top)$. Then by Proposition \ref{prop:sub_Lie_envelopping}, $U_{\top'}(\g')$ is a sub-locality algebra of $U_\top(\g)$. It is therefore  isomorphic to a sub-locality Hopf algebra $(H', \top')\simeq U_{\top'}(\g')$ of $(H, \top)\simeq U_\top(\g)$. \
			
			\item Conversely, let $H'$ be a sub-locality Hopf algebra of $H$ with locality $\top'\subseteq\top$. Then $\g:={\rm Prim}(H')\subseteq\g={\rm Prim}(H)$. By Proposition \ref{prop:primitive_Lie}, setting $\top'':=\top'\cap(\g'\times\g')$, then $(\g',\top'',[,]|_{\top''})$ is  a sub-locality Lie algebra of $\g$. Thus we have a map from sub-locality Hopf algebras of $H$ and sub-locality Lie algebras of $\g$.
			
			\item The two maps built above are inverse of each other by construction, which proves the corollary.
		\end{itemize}
		
	\end{proof}
	
 \begin{rk} \label{rk:important}
		It follows from  Corollary \ref{cor:Nogocorollary} that  $H=H'$ and ${\mathfrak g}={\mathfrak g}'$  implies $\top=\top'$. Note that this statement is trivially satisfied in the usual (non-locality) setup where $\top=\top'$ is the trivial locality. The subsequent Proposition \ref{prop:locality_GL} illustrates the case when $\mathfrak g=\mathfrak g'$ but $H\neq H'$, which is specific to the locality setup, \cy{since} it cannot occur in the non-locality setup due to the Milnor-Moore theorem.
\end{rk}
We apply Corollary \ref{cor:subalgsubHop} to the Hopf  algebra of rooted forests. Let us start by recalling some classical  combinatorial concepts  { \cite{Foi2, Kre}}. 
\begin{defn} \label{defn:rooted_forests_GL}
 \begin{itemize}
  \item An {\bf admissible cut} of a rooted forest $F$ is a subset $c$ of edges of $F$ such that any path from one the roots of $F$ to a leaf of $F$ meet $c$ at most once. We write Adm$(F)$ the set of admissible cuts of $F$. For $c\in{\rm Adm}(F)$, we write $R_c(F)$ the subforest of $F$ below the  $c$ and $T_c(F)$ the subforest of $F$ above the cut $c$. Notice that we have $|V(F)|=V(R_c(F))|+|V(T_c(F))|$.
  \item For $F_1$, $F_2$ and $F$ three rooted forests, we set 
  \begin{equation*}
   n(F_1,F_2,F):=|\{c\in{\rm Adm}(F)|R_c(F)=F_1~\wedge~T_c(F)=F_2\}|.
  \end{equation*}
  Notice that for $F_1$ and $F_2$ two given rooted forests, $n(F_1,F_2,F)=0$ except for a finite number of rooted forests $F$.
  \item We define the product of two rooted forests $F_1$ and $F_2$ by
  \begin{equation*}
   F_1*F_2 = \sum_{F\in\mathcal{F}}n(F_1,F_2,F)F.
  \end{equation*}
It is well-defined since the sum contains finitely many non-zero terms.
  \item We further define the coproduct $\Delta_*$ by its action on rooted forests $F=T_1\cdots T_n$ defined as
  \begin{equation*}
   \Delta_*(F=T_1\cdots T_n) = \sum_{I\subseteq [n]}T_I\otimes T_{[n]\setminus I}
  \end{equation*}
  with, for $I\subseteq[n]$, we have $T_I:=\prod_{i\in I}T_i$.
 \end{itemize}
\end{defn}
  $(\mathcal{F},*,\Delta_*)$ is a Hopf algebra, which is the dual of the Connes-Kreimer Hopf algebra, sometimes  called the Grossman-Larson Hopf algebra to which it is isomophic, see \cite{Pan}  (with corrections in \cite{Hof,Foi2}). The Grossman-Larson algebra, whose primitive elements are rooted trees, was introduced in \cite{GL1,GL2,GL3}.

In order to introduce locality, we first decorate the rooted forests. Recall that, for a set $\Omega$, a {\bf $\Omega$-decorated} forest is a pair $(F,d_F)$ with $F$ a forest and $d_F:V(F)\longrightarrow\Omega$ a map. We often omit the $d_F$ to lighten the notation. We also write $\mathcal{F}_\Omega$ the set of $\Omega$-decorated forests. The notions of Definition \ref{defn:rooted_forests_GL} easily generalise to decorated forests. 
\begin{defn}\cite[Definition 3.1]{CGPZ2}
 Let $(\Omega,\top)$ be a locality set. A {\bf properly} $\Omega$-decorated forest is a $\Omega$-decorated forest $(F,d_F)$ such that any disjoint pair of vertices of $F$ are decorated by independent elements of $\Omega$:
 \begin{equation*}
  \forall (v_1,v_2)\in V(F)\times V(F),~v_1\neq v_2~\Longrightarrow~d_F(v_1)\top d_F(v_2).
 \end{equation*}
 We write $\mathcal{F}_\Omega^{\rm prop}$ the set of properly $\Omega$-decorated forests. We endow $\mathcal{F}_\Omega^{\rm prop}$ with a locality relation $\top_{\mathcal{F}_\Omega}$ induced from the relation $\top$ on $\Omega$:
 \begin{equation*}
  (F_1,d_1)\top_{\mathcal{F}_\Omega}(F_2,d_2)~:\Longleftrightarrow~\forall(v_1,v_2)\in V(F_1)\times V(F_2),~d_1(F_1)\top d_2(F_2).
 \end{equation*}
\end{defn}
The Hopf algebra $(\mathcal{F},*\vert_{\top_{\mathcal{F}_\Omega}} ,\Delta_*)$ induces a locality Hopf algebra structure on $\mathcal{F}_\Omega^{\rm prop}$:
\begin{prop} \label{prop:locality_GL}
Given a locality set $(\Omega,\top)$, the quadruple $(\mathcal{F}_\Omega^{\rm prop},\top_{\mathcal{F}_\Omega},*_{\top_{\mathcal{F}_\Omega}} ,\Delta_*)$ is a graded, connected, cocommutative locality Hopf algebra equipped with the product $*_{\top_{\mathcal{F}_\Omega}} $ given by the restriction of the $*$ product of Definition \ref{defn:rooted_forests_GL} to the graph  $\top_{\top_{\mathcal{F}_\Omega}} $ of the locality relation.
\end{prop}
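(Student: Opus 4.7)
The plan is to leverage the known Hopf algebra structure on $(\mathcal{F},*,\Delta_*)$ and  verify that the proper decoration condition and the induced locality relation $\top_{\mathcal{F}_\Omega}$ are preserved by all the structural maps. The strategy proceeds in four stages: the underlying locality vector space, the product, the coproduct (with bialgebra compatibility), and finally the graded/connected/cocommutative structure plus the antipode.

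First I would establish that $(\mathcal{F}_\Omega^{\rm prop},\top_{\mathcal{F}_\Omega})$ is a locality vector space. By construction $\mathcal{F}_\Omega^{\rm prop}$ is a set with a symmetric relation inherited from $\top$, and linearly extending this relation as in Lemma~\ref{lem:linearspan} yields a locality vector space structure on $\K\mathcal{F}_\Omega^{\rm prop}$. Next, for the product: if $(F_1,d_1)$ and $(F_2,d_2)$ are properly $\Omega$-decorated forests with $F_1\top_{\mathcal{F}_\Omega} F_2$, then every forest $F$ appearing in $F_1*F_2$ is obtained by grafting $F_2$ onto a subset of vertices of $F_1$, so $V(F)=V(F_1)\sqcup V(F_2)$ with the induced decoration. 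Pairwise independence inside $V(F_1)$ and $V(F_2)$ together with $F_1\top_{\mathcal{F}_\Omega} F_2$ ensures that $F$ is properly decorated, and $*_{\top_{\mathcal{F}_\Omega}}$ is therefore a well-defined $\top_\times$-bilinear map valued in $\mathcal{F}_\Omega^{\rm prop}$. Locality compatibility (\ref{eq:mlocalgm}) follows from the same observation: any decoration appearing in $F_1*F_2$ already appears in $F_1$ or $F_2$, so the product of two elements of $U^\top$ lies in $U^\top$.

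For the coproduct, I would observe that for a properly $\Omega$-decorated forest $F=T_1\cdots T_n$, each term $T_I\otimes T_{[n]\setminus I}$ in $\Delta_*(F)$ consists of two sub-forests whose vertex decorations are disjoint subsets of $V(F)$. Since $F$ is properly decorated, every vertex of $T_I$ is decorated independently of every vertex of $T_{[n]\setminus I}$, so $T_I\top_{\mathcal{F}_\Omega} T_{[n]\setminus I}$ and both are properly decorated, which shows $\Delta_*(F)\in \mathcal{F}_\Omega^{\rm prop}\otimes_\top \mathcal{F}_\Omega^{\rm prop}$. The locality compatibility $\Delta_*(U^\top)\subset U^\top\otimes_\top U^\top$ again follows from the fact that decorations in $T_I$ and $T_{[n]\setminus I}$ are among the decorations of $F$. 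The counit $\epsilon$ picks out the coefficient of the empty forest, and hence is trivially a locality map. Coassociativity and cocommutativity are inherited from $(\mathcal{F},\Delta_*)$ since they hold as identities of linear maps, and the restriction to $\mathcal{F}_\Omega^{\rm prop}$ does not affect them.

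The key bialgebra compatibility $\Delta_*\circ * = (*\otimes *)\circ\tau_{23}\circ(\Delta_*\otimes\Delta_*)$ holds in the ambient Hopf algebra $(\mathcal{F},*,\Delta_*)$; on the domain $(\mathcal{F}_\Omega^{\rm prop})^{\otimes_\top 2}$ all the intermediate tensors lie in locality tensor products by the same bookkeeping of decorations as above, so the identity restricts cleanly. Grading by the total number of vertices is preserved by $*$ and $\Delta_*$, and the only properly decorated forest of degree zero is the empty forest, so $\mathcal{F}_\Omega^{\rm prop}$ is graded connected; the antipode $S$ then exists and is automatically a locality map by Proposition~\ref{prop:locantipodeforfree}. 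The main obstacle I foresee is the careful verification of the locality compatibility of the bialgebra axiom, since one must check that each intermediate term in the expression $(*\otimes *)\circ\tau_{23}\circ(\Delta_*\otimes\Delta_*)$ lives in an appropriate locality tensor product rather than merely the full tensor product; however, the combinatorial description of $\Delta_*$ and $*$ in terms of disjoint unions of vertex sets makes this reduction to the set-theoretic locality $\top$ on $\Omega$ essentially automatic.
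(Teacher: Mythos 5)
Your proof is correct but takes a genuinely different route from the paper. The paper's argument is extremely short: it identifies $\mathcal{F}_\Omega^{\rm prop}$ with the universal enveloping algebra $\mathcal{U}_{\top_{\mathcal{F}_\Omega}}(\mathfrak g')$ of the locality Lie algebra $\mathfrak g'$ of properly decorated rooted trees and then invokes Corollary~\ref{cor:subalgsubHop} (itself a consequence of the locality Milnor--Moore Theorem~\ref{thm:CQMMTheorem}), comparing $\mathfrak g'$ against the same Lie algebra equipped with the trivial locality relation. Your proof instead performs a direct, hands-on verification of each locality Hopf axiom via the combinatorics of decorated forests (disjointness of vertex sets under $*$ and $\Delta_*$). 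Both routes are legitimate. What yours buys is independence from the Milnor--Moore machinery: the paper's proof necessarily inherits the standing hypotheses that the conjectural statements~\ref{conj:main2} and~\ref{conj:univenvalg} hold, whereas your bookkeeping of decorations establishes the locality compatibility of $*$ and $\Delta_*$ by elementary means, at the cost of length. What the paper's proof buys is brevity and the structural insight that this example is precisely an instance of the correspondence~\eqref{eq:correspondence_Hg} between sub-locality Lie algebras and sub-locality Hopf algebras (see Remark~\ref{rk:important}), which is what the authors want to illustrate in this section. One small gap to acknowledge in your version: you treat $\mathcal{F}_\Omega^{\rm prop}\otimes_\top\mathcal{F}_\Omega^{\rm prop}$ (and the higher tensor powers appearing in the bialgebra axiom) as locality vector spaces without remark; in the paper's general framework this relies on the conjectural statement~\ref{conj:main1}, so either you should invoke the standing assumption explicitly, or you should observe that for the free locality vector space $\K\mathcal{F}_\Omega^{\rm prop}$ over the locality set of properly decorated forests, Lemma~\ref{lem:linearspan} and the combinatorial description of the basis make this step unconditional.
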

\begin{proof} 
Applying Corollary \ref{cor:subalgsubHop}   to the locality Lie algebra $\left(\mathfrak g:={\rm Prim}({\mathcal F}_\Omega^{\rm prop}), \top_{\rm triv}\right)$ equipped with the trivial locality relation $\top_{\rm triv}:=\mathfrak g\times \mathfrak g$  and $\left(\mathfrak g':={\rm Prim}({\mathcal F}_\Omega^{\rm prop}), \top_{{\mathcal F}_\Omega}\right)$ shows that  $\mathcal{F}_\Omega^{\rm prop}:= {\mathcal U}_{\top_{\mathcal{F}_\Omega }}\left( \mathfrak g'\right)$ is a locality sub-Hopf algebra of $\left(\mathcal{F}_\Omega={\mathcal U}_{\top_{\rm triv}}\left( \mathfrak g\right), \top_{\rm triv}\right)$.
\end{proof}

The locality Milnor-Moore theorem \ref{thm:CQMMTheorem} also provides refinements of properties in the ordinary setup, here a decomposition  involving mutually independent arguments.
\begin{cor}
Given  a locality set $(\Omega,\top)$,   any properly $\Omega$-decorated rooted forest $F=T_1\cdots T_n$  can be expressed as a linear combination of $*$-products of  finitely many pairwise independent properly $\Omega$-decorated rooted trees $t^{(i)}_j$: 
 \begin{equation*}
  F=T_1\cdots T_n = \sum_{n=1}^N \alpha_n t_1^{(n)}*\cdots *t_{p_n}^{(n)}
 \end{equation*}
 for   $\alpha_1, \cdots, \alpha_N\in\R$.
\end{cor}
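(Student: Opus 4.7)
The plan is to apply the locality Milnor--Moore Theorem \ref{thm:CQMMTheorem} to the graded, connected, cocommutative locality Hopf algebra $(\mathcal{F}_\Omega^{\rm prop},\top_{\mathcal{F}_\Omega},*_{\top_{\mathcal{F}_\Omega}},\Delta_*)$ provided by Proposition \ref{prop:locality_GL}. First I would verify its hypotheses: the grading by number of vertices makes it graded, the degree-zero component is $\K$ so it is connected, and cocommutativity follows from the symmetric subset summation in $\Delta_*(T_1\cdots T_n) = \sum_{I\subseteq [n]} T_I\otimes T_{[n]\setminus I}$.

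Next, I would identify the space of primitive elements. Applying the coproduct formula with $n=1$ shows that every properly decorated tree $T$ satisfies $\Delta_*(T) = T\otimes 1 + 1\otimes T$ and is therefore primitive. Conversely, for any properly decorated forest $F = T_1\cdots T_n$ with $n\geq 2$, the reduced coproduct $\tilde{\Delta}_*(F) = \sum_{\emptyset \subsetneq I \subsetneq [n]} T_I\otimes T_{[n]\setminus I}$ is non-zero and is supported on tensors of forests with strictly fewer than $n$ trees; expanding a candidate primitive element in the basis of properly decorated forests and imposing that its reduced coproduct vanishes forces the coefficients of all forests with at least two trees to be zero. Hence $\mathfrak{g} := {\rm Prim}(\mathcal{F}_\Omega^{\rm prop})$ is the linear span of properly decorated trees, equipped with the restriction of $\top_{\mathcal{F}_\Omega}$ and with the Lie bracket given by the commutator of $*_{\top_{\mathcal{F}_\Omega}}$.

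Theorem \ref{thm:CQMMTheorem}, applicable since the conjectural statements \ref{conj:main2} and \ref{conj:univenvalg} are assumed throughout this subsection, then yields a locality Hopf algebra isomorphism $\Phi: U_{\top_{\mathcal{F}_\Omega}}(\mathfrak{g}) \xrightarrow{\sim} \mathcal{F}_\Omega^{\rm prop}$ sending the canonical image of a tree to the tree itself and sending the $\otimes$-product to the $*$-product. By the construction of $U_\top(\mathfrak{g})$ as a quotient of the locality tensor algebra (Definitions \ref{defn:loctensalgebra} and \ref{defn:luenvelopingalgebra}), the preimage $\Phi^{-1}(F)$ is a finite sum $\sum_n \alpha_n\, [x_1^{(n)}\otimes \cdots \otimes x_{p_n}^{(n)}]$ with elements $x_j^{(n)}\in\mathfrak{g}$ pairwise $\top_{\mathcal{F}_\Omega}$-independent. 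Expanding each $x_j^{(n)}$ in the basis of trees and invoking the linear extension \eqref{eq:locrelonthefreespan} of the locality relation---which is the only subtle step, as it guarantees that the individual trees appearing in the supports of pairwise independent linear combinations are themselves pairwise independent---refines this expression to a linear combination of tensors of pairwise independent trees. Applying $\Phi$ yields the desired decomposition of $F$ as a finite linear combination of $*$-products of pairwise independent properly decorated rooted trees.
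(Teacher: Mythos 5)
Your proof is correct and follows essentially the same route as the paper's: both invoke the locality Milnor--Moore isomorphism $\phi\colon U_{\top}({\rm Prim}(\mathcal{F}_\Omega^{\rm prop}))\to\mathcal{F}_\Omega^{\rm prop}$ from Theorem \ref{thm:CQMMTheorem}, pull $F$ back to the locality universal enveloping algebra, and use that $\phi$ is an algebra morphism fixing trees to obtain the decomposition into $*$-products of pairwise independent trees. You merely spell out two points the paper leaves implicit, namely that ${\rm Prim}(\mathcal{F}_\Omega^{\rm prop})$ is the span of properly decorated trees and that the passage from pairwise independent primitive elements to pairwise independent trees is justified by the linear extension \eqref{eq:locrelonthefreespan} of the locality relation, which is a harmless elaboration rather than a different argument.
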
 \begin{proof}  For the sake of simplicity, throughout the proof, we set $\top:=\top_{\mathcal{F}_\Omega}$.

 Let $\phi:U_\top({\rm Prim}(\mathcal{F}_\Omega^{\rm prop}))\longrightarrow\mathcal{F}_\Omega^{\rm prop}$ be the isomorphism of locality Hopf algebra given by Theorem \ref{thm:CQMMTheorem}. Since $U_\top({\rm Prim}(\mathcal{F}_\Omega^{\rm prop}))$ is the set $\Omega$-decorated rooted trees, for any such tree $t$ we have $\phi([t])=t$.
 
The map $\phi$ being an isomorphism, for any properly $\Omega$-decorated rooted forest $F=T_1\cdots T_n$ there  is an element $\sum_{n=1}^N\alpha_n t_1^{(n)}\cdot\cdots \cdot t_{p_n}^{(n)}\in U_\top({\rm Prim}(\mathcal{F}_\Omega^{\rm prop})$ (where we write $\cdot$ for the product of $U_\top({\rm Prim}(\mathcal{F}_\Omega^{\rm prop})$) such that 
 \begin{equation*}
  F=T_1\cdots T_n = \phi\left(\sum_{n=1}^N\alpha_n t_1^{(n)}\cdot\cdots \cdot t_{p_n}^{(n)}\right) = \sum_n\alpha_{n=1}^N \phi([t_1^{(n)}])*\cdots *\phi([t_{p_n}^{(n)}]) = \sum_{n=1}^N\alpha_n t_1^{(n)}*\cdots *t_{p_n}^{(n)},
 \end{equation*}
 where we have used that $\phi$ is a morphism of algebras.
\end{proof}


\vspace{0.5cm}

\newpage

\part*{Appendix: Alternative locality tensor product}

\addcontentsline{toc}{part}{Appendix: Alternative locality tensor product}

\renewcommand\thesection{\Alph{section}}

\setcounter{section}{1}

\setcounter{subsection}{0}

\setcounter{theorem}{0}

For the sake of completeness we provide an alternative construction of the locality tensor product  already suggested on \cite{CGPZ1}. As in \cite{CGPZ1}, in practice, we choose to work with Definition \ref{defn:loctensprod1} since the alternative tensor product is in general not a subspace of   the usual (non-locality) tensor product. Notice that this alternative locality tensor product induces an alternative definition of $\top_\times$-bilinearity as  mentioned in the sequel.

Alternatively to $I_{\rm bil}(V)$ defined in Equations (\ref{eq:bilforms1}) to (\ref{eq:bilforms4}), we can consider the linear subspace ${\rm I_{bil,\top}}\subseteq \K( V\times_\top W)$ generated by all 
elements in $V\times_\top V$ of the forms (\ref{eq:bilforms1}) to (\ref{eq:bilforms4}) such that each argument in the linear combinations (\ref{eq:bilforms1})--(\ref{eq:bilforms4}), lies  in $\top$. In some cases ${\rm I_{bil,\top}}$ will coincide with ${\rm I_{bil}}\cap\K(\top)$ but as we see in the following, this does not generally hold.

\begin{ex}\label{ex:countex2} 
Consider the locality vector space $V=\R^2$, 
\begin{equation*}
 \top=\R^2\times\{0\}\cup\{0\}\times\R^2\cup\K( e_1+e_2)\times\K(e_1)\cup\K(e_1+2e_2)\times\K(e_2)\cup\K(e_1)\times\K(e_1+e_2)\cup\K(e_2)\times\K(e_1+2e_2)
\end{equation*}
and the element of $\K(V\times_\top V)$ \[y=(-e_1-e_2,e_1)+(-e_1-2e_2,e_2)+(e_1,e_1+e_2)+(e_2,e_1+2e_2).\]
Now we can write $(-e_1-e_2,e_1)=\Big((-e_1-e_2,e_1)+(e_1+e_2,e_1)\Big) - (e_1+e_2,e_1) =: y_1 - (e_1+e_2,e_1)$ with $y_1\in{\rm I_{bil}}$. Writing 
the same type of expansions for $(-e_1-2e_2,e_2)$, $(e_1,e_1+e_2)$ and $
 y = y_1+y_2+y_3+y_4  - (e_1+e_2,e_1) - (e_1+2e_2,e_2) + (e_1,e_1)+(e_1,e_2) + (e_2,e_1) + (e_2,2e_2)$
with $y_i\in{\rm I_{bil}}$ for $i=1,\cdots,4$. Writing further 
$- (e_1+e_2,e_1) = \Big(- (e_1+e_2,e_1) + (e_2,e_1)+(e_1,e_1)\Big) - (e_2,e_1)-(e_1,e_1)=:y_5- (e_2,e_1)-(e_1,e_1)$ with $y_5\in{\rm I_{bil}}$. 
Writing the same type of expansions for $- (e_1+2e_2,e_2)$ and $(e_2+2e_2)$ we obtain
\begin{align*}
 y & = y_1+y_2+y_3+y_4+y_5- (e_2,e_1)-(e_1,e_1) + y_6 - (e_1,e_2) - (2e_2,e_2) \\
 &+ (e_1,e_1)+(e_1,e_2) + (e_2,e_1) + y_7 + (e_2,2e_2) \\
   & = y_1+y_2+y_3+y_4+y_5+ y_6 + y_7 - (2e_2,e_2) + (e_2,2e_2)\\
   & = y_1+y_2+y_3+y_4+y_5+ y_6 + y_7 +y_8
\end{align*}
with $y_i\in{\rm I_{bil}}$ for $i=1,\cdots,8$. Thus $y\in{\rm I_{bil}}$. 
Note that we left $\K(V\times_\top V)$ in the last step to get to this conclusion: $y_i\notin{\rm I_{bil,\top}}$ for $i=1,\cdots,8$, thus 
$y\notin{\rm I_{bil,\top}}$.
\end{ex}

In general only the inclusion ${\rm I_{bil,\top}}\subset{\rm I_{bil}}\cap\K(V\times_\top V)$ holds, which leaves us with another possible definition of the locality tensor product.

\begin{defn}\label{defn:loctensprod2} 				
Let $(V,\top)$ be a locality vector space, the alternative locality tensor product is defined as \begin{equation}\label{eq:loctensprod2}
V\otimes^{\top}V:=\sfrac{\K(V\times\top V)}{{\rm I_{bil,\top}}}
\end{equation} 
\end{defn}

\begin{ex}\label{ex:countex3} 
Going back to Example (\ref{ex:countex2}), the alternative locality tensor product is \[V\otimes^{\top}V=\K\{(e_1+e_2)\otimes e_1, (e_1+2e_2)\otimes e_2, e_1\otimes(e_1+e_2),e_2\otimes(e_1+2e_2)\}\nsubseteq V\otimes V\] It has dimension $4$ like the usual tensor product since we already saw that all those elements that span it are linearly independent as elements in $V\otimes^{\top}V$ but is not the same as the usual one, because in the usual tensor product those $4$ elements mentioned above are not linearly independent. However, the locality tensor product in this case is \[V\otimes_{\top}V=\K\{(e_1+e_2)\otimes e_1, (e_1+2e_2)\otimes e_2, e_1\otimes(e_1+e_2)\}\subset V\otimes W\] it has dimension 3 since some of the tensor products involve arguments that are not linearly independent as elements of $V\otimes_{\top}V$, and this is a vector subspace of the usual tensor product.
\end{ex}
 {Using the same methods as in the paper, one can show that this alternative tensor product also enjoys universal properties (under similar conjectural assumptions as above) using an alternative definition} of $\top_\times$-bilinearity, namely requiring that $\bar f(I_{bil,\top})=\{0\}$.

 \end{document}